\documentclass[11pt,a4paper]{article}

\usepackage[utf8]{inputenc}
\usepackage[T1]{fontenc}
\usepackage{lmodern}
\usepackage{dsfont}
\usepackage{indentfirst}

\usepackage{amsfonts}
\usepackage{amsthm}
\usepackage{amsmath}
\usepackage{amssymb}
\usepackage{amscd}
\usepackage{mathrsfs}
\usepackage[mathscr]{eucal}
\usepackage{graphicx}
\usepackage{epstopdf}
\usepackage{pict2e}
\usepackage{epic}
\usepackage{xcolor}
\usepackage{float}
\usepackage{mathtools}
\usepackage{centernot}
\usepackage{theoremref}
\usepackage{xcolor}

\usepackage{cite}
\usepackage{hyperref}

\hypersetup{colorlinks=true, urlcolor= black, linkcolor=black, citecolor=black}
\usepackage{tikz}
\usepackage{orcidlink}

\numberwithin{equation}{section}

\theoremstyle{plain}
\newtheorem{Thm}{Theorem}[section]
\newtheorem{Lem}[Thm]{Lemma}
\newtheorem{Coro}[Thm]{Corollary}
\newtheorem{Prop}[Thm]{Proposition}

\theoremstyle{definition}
\newtheorem{Def}[Thm]{Definition}

\newtheorem{Rem}[Thm]{Remark}

\newtheorem{Ex}[Thm]{Example}

\newtheorem{Ass}{Assumption}

\usepackage[margin=3cm]{geometry}

\newcommand\1{\mathds{1}}

\newcommand\bfc{\mathbf{c}}
\newcommand\bfC{\mathbf{C}}
\newcommand\bfco{\mathbf{c}_0}
\newcommand\bfCo{\mathbf{C}_0}
\newcommand\Crw{\mathsf{C}_{\textup{RW}}}
\newcommand\crw{\mathsf{c}_{\textup{RW}}}
\newcommand\CGreen{\mathsf{C}}
\newcommand\cGreen{\mathsf{c}}
\newcommand\CM{\mathsf{C}_{\textup{reg}}}
\newcommand\cM{\mathsf{c}_{\textup{reg}}}
\newcommand\creg{\mathsf{c}_{\textup{reg}}}
\newcommand\Creg{\mathsf{C}_{\textup{reg}}}
\newcommand{\Cstab}{C_{\textup{stab}}}

\newcommand{\CAC}{\mathsf C_{\textup{ac}}}
\newcommand{\deltastab}{\delta_{\textup{stab}}}
\newcommand{\deltareg}{\delta_{\textup{reg}}}
\newcommand{\deltamain}{\boldsymbol{\delta}}

\newcommand{\D}{\mathrm{d}}

\newcommand{\connect}{\xleftrightarrow}

\renewcommand{\b}{\beta}

\definecolor{darkmagenta}{rgb}{0.55, 0.0, 0.55}

\title{A random walk approach to \\ high-dimensional critical phenomena}
\begin{document}

\author{Hugo Duminil-Copin
\orcidlink{0000-0002-7609-2816}
\footnotemark[1]\footnote{Section de Math\'ematiques, Universit\'e de Gen\`eve, 7-9 Rue du Conseil G\'en\'eral, 1205 Gen\`eve, Suisse. \url{hugo.duminil@unige.ch}, \url{aman.markar@unige.ch}}\:\:\footnotemark[2]\footnote{Institut des Hautes Études Scientifiques, 35 Rte de Chartres, 91440 Bures-sur-Yvette, France. \url{duminil@ihes.fr}}\:,
Aman Markar
\orcidlink{0009-0009-1387-3605}
\footnotemark[1]\:,
Romain Panis
\orcidlink{0009-0001-4604-8398}
\thanks{Universit\'e Lyon 1, Centrale Lyon, INSA Lyon, Universit\'e Jean Monnet, CNRS, ICJ UMR 5208, 69622, Villeurbanne, France. \url{panis@math.univ-lyon1.fr}.}\:, Gordon Slade
\orcidlink{0000-0001-9389-9497}
  \thanks{Department of Mathematics,
     University of British Columbia,
     Vancouver BC, Canada V6T 1Z2.
       \url{slade@math.ubc.ca}}
}

\date{\vspace{-5ex}} 

\maketitle

\begin{abstract}
We present a ``black box'' proof of mean-field near-critical behaviour for
a family of functions on $\mathbb Z^d$ (${d>2}$) satisfying a short list of assumptions.
The functions represent two-point functions of a lattice statistical mechanical
model in the subcritical or critical regimes, and are proved to have
decay of the form $|x|^{-d+2+\varepsilon}\exp[-c|x|/\xi]$, for any $\varepsilon>0$.
The black box applies to
several models for which commonplace methods can be used to verify the
assumptions.
Applications include models of self-avoiding walk, percolation, spins (Ising, XY, $|\varphi|^4$), and  lattice trees, all above their upper critical dimensions.
The proof is based on random walk techniques, and
provides a new, unified, probabilistic,
and relatively simple proof of mean-field near-critical behaviour.
\end{abstract}

\tableofcontents

\section{Introduction and results}

\subsection{Background and motivation}
\label{sec:motivation}

A central goal in statistical mechanics is to understand the
behaviour of lattice models which undergo a \emph{phase transition} at a \emph{critical point}. Of particular interest is
the intricate fractal behaviour of the model at and near its critical point.
This behaviour can be described by \emph{critical exponents}.
To prove the existence of these critical exponents, let alone to compute their values,
is in general an open problem.
A profound interplay between the dimension of the underlying lattice and the critical behaviour is expected.
We focus on the case of models defined on the
hypercubic lattice $\mathbb Z^d$ in dimensions $d\geq 2$, and particularly on high $d$.

A striking observation was made in the physics literature around half a century
ago \cite{Ginz61,WF72}: above an \emph{upper critical} dimension $d_c$, the critical behaviour of the model simplifies in the sense that its critical exponents match those obtained when the
model is formulated on a tree rather than on $\mathbb Z^d$. The regime $d\geq d_c$ constitutes the \emph{mean-field} regime of the model. It is characterised by the emergence of Gaussian features, with logarithmic corrections at the upper critical dimension $d=d_c$.

Our purpose in this paper is to provide a general analysis of dimensions $d>d_c$ for a
wide variety of models---a black box for proving mean-field behaviour.
Models covered by our method include:
the self-avoiding walk,
Bernoulli bond percolation,
the Ising model,
the lattice $|\varphi|^4$ model (with one or two components),
the XY model, and lattice trees, all above their upper critical dimension.
The self-avoiding walk and spin models are predicted to have $d_c=4$,
for percolation the prediction is $d_c=6$, and for lattice trees it is $d_c=8$.

A fundamental object is the \emph{two-point function} $G_\beta(x)$, which for parameter
$\beta$ (e.g., inverse temperature) expresses correlation (e.g., of spins) between
the point $x\in\mathbb Z^d$ and the origin.
The two-point function typically decays exponentially for $\beta$ below
a critical value $\beta_c$, and decays algebraically at $\beta_c$.
The \emph{susceptibilty} $\chi(\beta)$ and \emph{correlation length of order two}
$\xi_2(\beta)$
are defined for $\beta< \beta_c$ by
\begin{equation}
\label{eq:xiG}
   \chi(\beta) := \sum_{x\in\mathbb Z^d} G_\beta(x),
     \qquad
    \xi_2(\beta)^2 := \frac{1}{\chi(\beta)}\sum_{x\in\mathbb Z^d} |x|_2^2 G_\beta(x),
\end{equation}
where $|x|_2:=(x_1^2+\ldots+x_d^2)^{1/2}$ is the  Euclidean norm.
For a \emph{second-order} phase transition,
three critical exponents $\eta,\gamma,\nu$ are associated with these quantities
via
\begin{equation}
    G_{\beta_c}(x) \approx \frac{1}{|x|_2^{d-2+\eta}},
    \qquad
    \chi(\beta) \approx \frac{1}{(\beta_c-\beta)^\gamma},
    \qquad
    \xi_2(\beta) \approx \frac{1}{(\beta_c-\beta)^\nu},
\end{equation}
in the limits $|x|_2 \to \infty$ (Euclidean norm) and $\beta \uparrow \beta_c$.
For the moment we leave the approximation symbol ``$\approx$'' undefined,
but we will be more careful later.  The three exponents are predicted to be
related by Fisher's scaling relation: $\gamma=(2-\eta)\nu$.  Fisher's relation arises from
the ansatz that the near-critical behaviour of the two-point function is
given by
\begin{equation}
\label{eq:Gscaling}
    G_\beta(x) \approx \frac{1}{|x|_2^{d-2+\eta}}g(|x|_2/\xi_2(\beta)),
\end{equation}
for some ``nice'' function $g$ of rapid decay.
The relation \eqref{eq:Gscaling} is
meant to capture the joint behaviour of the two-point function for $\beta$ near
$\beta_c$ and large $|x|_2$. For a fixed $\beta<\beta_c$, as $|x|_2\to\infty$,
it is in many cases proved that there is instead
\emph{Ornstein--Zernike decay} \cite{OZ14,Zern16}, i.e., with power $|x|_2^{-(d-1)/2}$; the crossover
between these two forms of decay is the subject of \cite{MS22,LS26OZ}.
The mean-field values of the critical exponents, for the aforementioned
models except lattice trees, are
\begin{equation}
    \eta=0, \qquad \gamma =1, \qquad \nu = \frac 12.
\end{equation}
For lattice trees, instead we have $\eta=0$, $\gamma =\frac 12$, $\nu = \frac 14$.

In this paper, we identify general hypotheses on a class of functions
$G_\beta: \mathbb Z^d \to [0,\infty]$ that
imply an upper bound version of \eqref{eq:Gscaling}. Namely, we obtain that
for any $\varepsilon >0$ there exist constants $c,C>0$ such that for
every $\beta \le \beta_c$ and every $x\in \mathbb Z^d$,
\begin{equation}
\label{eq:Gscaling-precise}
    G_\beta(x)
    \le
    C\delta_{0}(x)
    +
    \frac{C}{(1\vee |x|_2)^{d-2-\varepsilon}}
    \exp\left(-c\frac{|x|_2}{\xi_2(\beta)}\right).
\end{equation}
Here $\delta_0$ is the Kronecker function at $0$, i.e., $\delta_{0}(x)=1$ if $x=0$ and $\delta_{0}(x)=0$ otherwise.
Our hypotheses also imply that the susceptibility and correlation length
of order $2$ obey 
\begin{equation}
\label{eq:critical-exponents}
    \frac{c}{\beta_c-\beta} \le \chi(\beta) \le \frac{C}{\beta_c-\beta},
    \qquad
    \frac{c}{(\beta_c-\beta)^{1/2-\delta}} \le \xi_2 (\beta) \le \frac{C}{(\beta_c-\beta)^{1/2+\delta}},
\end{equation}
where $\delta$ is a small parameter we require to be present in the model.
For example, $\delta$ can be associated with the spread parameter of a spread-out model, or the
small parameter defining the weakly self-avoiding walk model, etc. The above statements can be interpreted as saying that $\eta \ge -\varepsilon$ for every $\varepsilon>0$, that $\gamma=1$, and that $\nu \in [\frac 12 -\delta,
\frac 12 +\delta]$.
Lattice trees, with their different critical exponents, are handled via a change of
variable in order to fit this framework.

These bounds are not new and are not
as strong as the best currently available results. However, the novelty lies in our method, which is radically
different from previous approaches.
Our proof:
\begin{itemize}
\item is relatively extremely simple,
\item
applies generally and essentially simultaneously to many models,
and
\item
is more connected to standard probability theory
than previous approaches as it is propelled by elementary random walk theory.
\end{itemize}

Our hypotheses on the functions $G_\beta$ involve a finite-difference inequality
for the two-point function which is
easily verifiable in practice, in any dimension. For percolation it already appeared in \cite{Hutc19_hyperbolic}, and for the Ising model it appeared in \cite{Pani24_thesis}.
It
implies an upper bound on the derivative $\partial_\beta G_\beta(x)$
which has been a standard part of the theory since the 1980s.
The finite-difference inequality is supplemented by
a lower bound on $\partial_\beta G_\beta$, which has also been a standard
part of the theory since the 1980s.  The lower bound involves a \emph{diagram} (bubble
or triangle or square diagram, depending on the model) which, to be useful, we must prove to be small.  This forces
the dimension to exceed $d_c$ in order for the diagram to be finite,
and also requires a small parameter in the
formulation of the model to make the diagram small.

The weakly self-avoiding walk and the weakly-coupled $|\varphi|^4$ model
have built-in small parameters.  For other models, a small parameter can
be introduced by considering a \emph{spread-out} version of the model.
For percolation, this was initiated in \cite{HS90a}. In the spread-out models, nearest-neighbour edges are replaced by long-range
  edges up to a distance of order $R$, where $R$ is the \emph{spread} parameter. The reciprocal $R^{-1}$
  provides the small parameter we require.
  The spread-out model is still expected to offer a valid description of the critical regime of the nearest-neighbour model thanks to the \emph{universality} hypothesis.

As a brief summary, our method applies to prove \eqref{eq:Gscaling-precise}
and \eqref{eq:critical-exponents} for the following:
\begin{itemize}
\item
nearest-neighbour weakly self-avoiding walk
and spread-out strictly self-avoiding
walk for $d>4$
(continuous-time models are included),
\item
spread-out bond percolation for $d>6$,
\item
spread-out Ising and XY models for $d>4$,
\item
nearest-neighbour weakly-coupled lattice $|\varphi|^4$ model, or spread-out
$|\varphi|^4$ model with arbitrary coupling, for $d>4$
($1$-component and $2$-component),
\item
spread-out lattice trees for $d>8$ (in this case the exponents are different as discussed above).
\end{itemize}

We discuss applications in more detail in Section~\ref{sec:statmech}.
Precise definitions of the models are given in Section~\ref{sec:applications}.

\subsection{Main result: the black box}
\label{sec:blackbox}

We now present the model-independent results.

For $x\in \mathbb R^d$, we write $|x|_2:=(x_1^2+\ldots+x_d^2)^{1/2}$ for the   Euclidean norm, $|x|=\max_{1\le i\le d}|x_1|$ for the infinity norm.  The two norms are
equivalent, but it is convenient to work with both. For $k\geq 1$, we define the box $\Lambda_k:=[-k,k]^d\cap \mathbb Z^d$,
and write $\Lambda_k(x)=\Lambda_k +x$ for the box centred at $x$.
Also, for $G:\mathbb Z^d\rightarrow [0,\infty)$, we write
\begin{equation}\label{eq:def of function norms}
	\Vert G\Vert_1:=\sum_{x\in \mathbb Z^d}G(x), \qquad \Vert |x|_2^2\cdot G\Vert_1:=\sum_{x\in \mathbb Z^d}|x|_2^2 G(x).
\end{equation}
The next definition introduces the kernels $J$ which for spin models
will be the Hamiltonian's spin-spin coupling.

\begin{Def}
\label{Def:J}
(Admissible kernel.)
An \emph{admissible kernel} is a function $J : \mathbb{Z}^d \to [0,\infty)$ with the
following properties:
\begin{enumerate}
\item[$(i)$]
The kernel $J$
satisfies $J_0=0$,
is normalised in the sense that
$\sum_{x\in\mathbb{Z}^d}J_x =1$, and is
\emph{$\mathbb{Z}^d$-symmetric} in the sense that $J_x=J_{x'}$ if $x'$ is obtained by permuting components of $x$ and/or multiplying any component $x$  by $\pm 1$.
\item[$(ii)$]
$J$ is \emph{finite-range} in the sense that
the \emph{range} $R_J$ obeys
\begin{equation}\label{eq:def range}
R_J=\max \{|x|: J_{x} >0\} \in [1,\infty).
\end{equation}
\item[$(iii)$]
The \emph{variance} of $J$ is defined by
\begin{equation}
\label{eq:Jvariance}
    \sigma_J^2 = \sum_{x\in \mathbb Z^d}|x|_2^2J_x.
\end{equation}
By definition, $\sigma_J \le R_J$. There exists a constant $c_0\in (0,1]$
such that, for all $x \in \mathbb{Z}^d$,
\begin{equation}
\label{eq:R_J comparable to xi(0)}
    c_0R_J \le \sigma_J,
    \qquad
    J_{x} \le c_0^{-1}R_J^{-d}.
\end{equation}
\end{enumerate}
\end{Def}

\begin{Ex}
Two important examples of admissible kernels are:
\begin{enumerate}
\item[$(i)$] (Nearest-neighbour model).
For every $x\in \mathbb Z^d$,
\begin{equation}
\label{eq:Jnn}
	J_x=\frac{1}{2d}\mathds{1}_{| x|_2=1}.
\end{equation}
For the nearest-neighbour model, $R_J=\sigma_J=1$ and we may take $c_0=1$.
\item[$(ii)$] (Uniform spread-out model).
For some $R\geq 1$ and for every $x\in \mathbb Z^d$,
\begin{equation}
\label{eq:Jso}
	J_x=\frac{1}{|\Lambda_R|-1}\mathds{1}_{0<|x|\leq R}.
\end{equation}
Here $R_J=R$,
$J_x \le R^{-d}$,
and there is a $c_0>0$ such that
$\sigma_J \ge c_0 R$.
More general spread-out examples are given, e.g., in \cite{HS90a,HHS03,Saka07}.
\end{enumerate}
\end{Ex}

Next, we introduce the class $\mathcal G$ of functions $G_\beta$ to which our analysis applies.
Our motivation stems from statistical mechanics where $G_\beta$ is the two-point function of some underlying model which undergoes a phase transition.
As we will see in Section~\ref{sec:applications}, it is elementary and standard that
the two-point functions of
all the models mentioned at the end of Section~\ref{sec:motivation} do belong to $\mathcal G$.
In Definition~\ref{Def:G}, we use the notation $\beta_c$ normally reserved for
a critical point.  In our applications, $\beta_c$ will indeed be a (finite)
critical point of a statistical mechanical model, but that interpretation is
not relevant until we reach Theorem~\ref{Thm:gamma-nu} below.

\begin{Def}
\label{Def:G}
We define $\mathcal G$ to be the family of functions
$G_\beta:\mathbb Z^d\rightarrow [0,\infty]$, indexed by $\beta \in [0,\beta_c]$
for some $\beta_c\in [0,\infty]$,
which satisfy the following conditions:
\begin{enumerate}
	\item[$(i)$] (Initial condition.) $G_0=\delta_{0}$.
	\item[$(ii)$] (Regularity.) For every $x\in \mathbb Z^d$, the function
    $\beta\mapsto G_\beta(x)$  is monotone
    non-decreasing and differentiable on the interval $[0,\beta_c)$.
	\item[$(iii)$] (Symmetry.) For every $\beta\geq 0$, $G_\beta$ is $\mathbb Z^d$-symmetric.
	\item[$(iv)$] (Exponential decay.)
	 For every
    $\beta\in [0,\beta_c)$, the function $x \mapsto G_\beta(x)$
    decays exponentially.
    \item[$(v)$] (Limit as $\beta\nearrow\beta_c$ when $\beta_c<\infty$.)  For every $x\in\mathbb R^d$, $\lim_{\beta \uparrow \beta_c}G_{\beta}(x) = G_{\beta_c}(x)$, but
    we do \emph{not} assume that $G_{\beta_c}(x)$ is finite.
	\end{enumerate}
\end{Def}

As we discuss in more detail in Remark~\ref{Rem:A}
below, the assumption that $G_0=\delta_0$ can be
relaxed to $G_0=A\delta_0$ with $A>0$, by a simple change of variables.
The more fundamental restrictions appear in the following assumption.
Here and later, the convolution of two
absolutely summable functions $f,g:\mathbb Z^d\rightarrow \mathbb R$ is defined by
$(f*g)(x):=\sum_{y\in \mathbb Z^d}f(y)g(x-y)$.

\begin{Ass}
\label{Ass:G}
Let $J$ be an admissible kernel and let $G\in \mathcal G$. We assume that $G$ satisfies the following:
\begin{enumerate}

	\item[$(i)$] (Finite-difference upper bound.) For every $0\leq\beta'\leq \beta<\beta_c$,
	\begin{equation}\label{eq:SL assumption}\tag{$\mathbf{I.1}$}		G_\beta\leq G_{\beta'}+(\beta-\beta')(G_{\beta'}*J*G_\beta).
	\end{equation}
    An immediate and important consequence of
\eqref{eq:SL assumption} is the differential inequality
\begin{align}
\label{eq:dGub}
    \partial_\beta G_\beta
    = \lim_{\beta' \uparrow \beta}\frac{G_\beta - G_{\beta'}}{\beta - \beta'}
    &
    \le
    G_{\beta}*J*G_\beta.
\end{align}
	\item[$(ii)$] (Differential lower bound.) For every $\beta<\beta_c$, there exists a
     function $H_\beta:\mathbb Z^d\rightarrow [0,\infty)$ (which may depend on
     $J$ and $G$) such that
    \begin{equation}\label{eq:Diff inequ assumption}\tag{$\mathbf{I.2}$}
    \partial_\beta G_\beta
    \ge
    G_\beta*(J-H_\beta)*G_\beta  .
	\end{equation}
    In addition, for all $x\in \mathbb Z^d$, $H_0(x)= 0$, $H_\beta(x)=H_\beta(-x)$, and
    $\beta\in [0,\beta_c)\mapsto H_\beta(x)$ is continuous and decays exponentially
    in $x$ for each fixed $\beta$.
\end{enumerate}
\end{Ass}

The pointwise bound \eqref{eq:Diff inequ assumption} is actually stronger than
what we require: our proofs only use the weaker assumption that \eqref{eq:Diff inequ assumption} holds when multiplied by
$|x|^p$ for $p=0,2$ and then summed over
$x\in \mathbb Z^d$.  In applications, the pointwise bound is verified instead of its
weaker summed counterparts, so we have assumed \eqref{eq:Diff inequ assumption} for simplicity.

A basic example satisfying Assumption~\ref{Ass:G} is the massive lattice Green function,
which is defined as follows.
Let $(Y_n)_{n \ge 0}$ denote the random walk on $\mathbb Z^d$ started from
$Y_0=0$ and with transition probabilities $\mathbb P_J[Y_{n+1}=y\mid Y_n=x]=J_{y-x}$,
where $J$ is an admissible kernel.
The \emph{massive lattice Green function} is defined, for $\beta \in [0,1]$ and
$x \in \mathbb Z^d$, by
\begin{equation}
\label{eq:Green-def}
  \mathbb C_\beta(x) = \sum_{n=0}^\infty \beta^n \mathbb P_J[Y_n=x].
\end{equation}
It is elementary that $\mathbb C=(\mathbb C_\beta)_{\beta\geq 0}$
satisfies the requirements of Definition~\ref{Def:G}, with
$\beta_c=1$, and therefore $\mathbb C \in \mathcal G$.
Also, as we verify in detail in Section~\ref{sec:Green},
$\mathbb C$ satisfies \eqref{eq:SL assumption}--\eqref{eq:Diff inequ assumption}
with \emph{equalities} and with $H=0$:
\begin{align}
\label{eq:Cub-intro}
    \mathbb C_\beta(x) - \mathbb C_{\beta'}(x)
    &=
    (\beta-\beta')(\mathbb C_{\beta'}*J*\mathbb C_\beta)(x),
\\
\label{eq:Clb-intro}
    \partial_\beta \mathbb C_\beta(x)
    &  =
    (\mathbb C_\beta * J * \mathbb C_\beta)(x).
\end{align}
Thus, the inequalities of Assumption~\ref{Ass:G} specify that the functions $G_\beta$
obey inequality variants of the equalities obeyed by the massive lattice Green function.

In applications, the upper bound \eqref{eq:SL assumption}
holds without further assumption in all the models
we consider, in all dimensions.  The lower bound
\eqref{eq:Diff inequ assumption} also holds in all dimensions,
with a model-dependent function $H_\beta$.  As concrete examples,
for self-avoiding walk and percolation we will see in Section~\ref{sec:applications}
that \eqref{eq:Diff inequ assumption} holds with
\begin{equation}
\label{eq:HSAWperc-intro}
    H^{\rm SAW}_\beta(x)=\delta_{0}(x)(G_\beta*J*G_\beta)(0), \qquad
    H^{{\rm{perc}}}_\beta(x) = G_\beta(x)(G_\beta*J*G_\beta)(x).
\end{equation}
The exponential decay of $G_\beta$ ensures that each of
$H^{\rm SAW}_\beta(x)$ and $H^{{\rm{perc}}}_\beta(x)$ is not only
finite but also summable,
for all $\beta < \beta_c$.
When summed over $x\in\mathbb Z^d$, these quantities are respectively the \emph{open} bubble
and \emph{open} triangle diagrams depicted in Figure~\ref{fig:H}. Summability at $\beta_c$ will be a consequence of our results, and for this
we will need to assume that $d$ is above the upper critical dimension.
Furthermore, our method requires $\sum_{x\in\mathbb Z^d}H_\beta(x)$ to be not merely finite,
but also small, and this
forces us to introduce a small parameter into a model's definition.

\begin{figure}[h]
\begin{center}
\includegraphics[scale = 1]{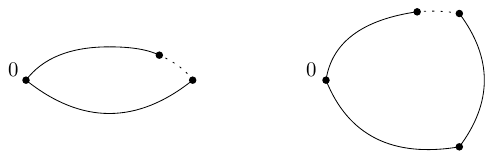}
\caption{Diagrammatic representations of the \emph{open bubble} (left) and the \emph{open triangle} (right). Unlabelled vertices are summed over $\mathbb Z^d$. Solid lines represent a two-point function $G_\beta$, and dotted lines
represent the kernel $J$. The diagrams are called ``open'' due to the dotted line.  The
 $J$ line causes the contribution from all vertices being at the origin to be zero,
as opposed to a contribution $1$ from the closed diagram without a $J$ factor. For spread-out models,
this $J$ is what produces small $\Vert H_\beta\Vert_1$
by taking $R_J$ large.}
\label{fig:H}
\end{center}
\end{figure}

Upper and lower differential inequalities as in \eqref{eq:dGub}
and \eqref{eq:Diff inequ assumption} are a well-established part of the mathematical study of phase
transitions, going back to the 1980s.  A version of the finite-difference upper bound
\eqref{eq:SL assumption} was used for percolation in \cite[Lemma~2.4]{Hutc19_hyperbolic}, and for the Ising model in \cite[Proposition~7.3.3]{Pani24_thesis}.
Here, we use \eqref{eq:SL assumption} as a fundamental and general driving principle for
our analysis.
As noted above, \eqref{eq:SL assumption}
implies the standard differential upper bound \eqref{eq:dGub}.  The verification of
\eqref{eq:SL assumption} in
applications is sometimes in the spirit of the proof of the Simon--Lieb inequality \cite{Simo80,Lieb80}.

Let $F_\beta=J*G_\beta$.
By linearity, Assumption~\ref{Ass:G} implies that $F_\beta$ satisfies
a version of \eqref{eq:SL assumption}, \eqref{eq:dGub}, and \eqref{eq:Diff inequ assumption},
namely, for $0\leq \beta'\leq \beta<\beta_c$,
\begin{align}
\label{eq:F satisfies A2}
	F_\beta
    &\leq F_{\beta'}+(\beta-\beta')(F_{\beta'}*F_{\beta}),
    \\
\label{eq:dFub}
    \partial_\beta F_\beta
    &
    \le
    F_{\beta}* F_\beta,
    \\
    \partial_\beta F_\beta &\geq F_\beta *(J-H_\beta)*G_\beta.\label{eq:dFlb}
\end{align}

For $\beta<\beta_c$, we define the \emph{susceptibility} $\chi$, and
the \emph{correlation length $\xi$ of order two} for $F_\beta$, by
\begin{align}
    \chi(\beta)& =\Vert   G_\beta \Vert_1=\Vert   F_\beta \Vert_1,
    \\
\label{eq:xi2}
    \xi (\beta)^2 &=\frac{\Vert |x|_2^2\cdot F_\beta \Vert_1}{\Vert F_\beta \Vert_1}
    =\frac{\Vert |x|_2^2\cdot F_\beta \Vert_1}{\chi(\beta)}.
\end{align}
By Assumption~\ref{Ass:G}, $G\in \mathcal{G}$ and therefore
$G_\beta$ and $F_\beta$ decay exponentially for $\beta<\beta_c$.  It follows that
both $\chi$ and $\xi$ are finite for $\beta<\beta_c$.  It is possible that
$\chi(\beta)$ diverges to $\infty$ as $\beta \uparrow \beta_c$, but we do not
assume this until Theorem~\ref{Thm:gamma-nu}.
We also define an ``error term'' by
\begin{equation}\label{eq:definition error}
	E(\beta) =
    \sup_{0\leq t\leq \beta}\left(\Vert H_t \Vert_1 + \frac{\Vert |x|_2^2\cdot H_t \Vert_1}{\xi (t)^2}\right).
\end{equation}
The supremum in \eqref{eq:definition error} is present solely
to ensure that $E(\beta)$ is increasing in $\beta$.
By definition, $E(0)=0$.
Since each of the three functions inside the supremum in \eqref{eq:definition error}
is continuous in $t\in [0,\beta_c)$ by the Dominated Convergence Theorem, the supremum
$E(\beta)$ is also continuous in $\beta \in [0,\beta_c)$.

\begin{Rem}
The conventional notation for the correlation length of order two is
$\xi_2$ (as in \eqref{eq:xiG} for $G_\beta$), whereas
the symbol $\xi$ (without subscript)
is typically used for the reciprocal of the exact exponential rate of decay
of $G_\beta(x)$ (the \emph{correlation length})
for subcritical $\beta$.  We do not use this last concept, so in order to lighten the
notation, and also to avoid notational clash with \eqref{eq:xiG},
we write $\xi$ instead of $\xi_2$ in \eqref{eq:xi2}.
\end{Rem}

When $E(\beta)$ is small, the upper and lower bounds of
\eqref{eq:dGub} and \eqref{eq:Diff inequ assumption} become closer to equalities.
It is thus natural to introduce the following quantity. For $\delta>0$, let
\begin{equation}
\label{eq:Edef}
    \beta(\delta)
    :=\sup\big\{\beta\in [0, \beta_c ) :E (\beta)<\delta\big\}.
\end{equation}
Using elementary calculus
to integrate \eqref{eq:dGub} and \eqref{eq:Diff inequ assumption}, we prove (see Proposition~\ref{prop: rough bounds chi xi}) that if $\delta <1$, then
for every $0\leq  \beta < \beta(\delta)$, we have
\begin{equation}
\label{eq:xichiintro}
	\beta(\delta) \le (1-\delta)^{-1}, \qquad \left(\frac{\chi(\beta')}{\chi(\beta)}\right)^{\frac{1+\delta}{1-\delta}}
    \leq \left(\frac{\xi(\beta')}{\xi(\beta)}\right)^2
    \leq \left(\frac{\chi(\beta')}{\chi(\beta)}\right)^{ 1-2\delta}.
\end{equation}
The second inequality in \eqref{eq:xichiintro} forms a key ingredient in our strategy: the fact that $(\xi(\beta')/\xi(\beta))^2$ remains comparable to $\chi(\beta')/\chi(\beta)$ is used to upgrade bounds that hold at $\beta'$ to bounds at a larger value $\beta$.  For the lattice Green function,
which has $E=0$, the two ratios are equal; this is a benefit of defining $\xi$ using
$F_\beta$ rather than $G_\beta$.

Since (as we have just observed) $\beta(\delta)<\infty$, it makes sense to evaluate $G_\beta$ at $\beta=\beta(\delta)$,
even if $\beta(\delta) =\beta_c$.
Our main theorem provides an upper bound for $G_\beta(x)$
for all $\beta\le \beta(\delta)$, for $\delta$ sufficiently small.
Recall that $\sigma_J$ is the variance of
$J$, defined in \eqref{eq:Jvariance}, and recall the definition of $c_0$ from \eqref{eq:R_J comparable to xi(0)}.

\begin{Thm}
\label{thm:maintheorem}
	Let $d>2$ and $\varepsilon\in(0,1)$. There exist $\bfc,\bfC>0$ depending on $(d,c_0)$,
and $\deltamain\in(0,1)$ depending on $(d,c_0,\varepsilon)$, such that, if $J$ and $G$ obey Assumption~\textup{\ref{Ass:G}}, then for every $\beta\in [0, \beta(\deltamain)]$ and every $x\in\mathbb Z^d$,
\begin{equation}
\label{eq: target estimate below beta(delta)}
	G_\beta(x)\leq \delta_0(x)
    + \frac{ \bfC}{\sigma_J^d}
    \left(\frac{\sigma_J}{\sigma_J \vee|x|} \right)^{d-2-\varepsilon}
    \exp\left(-\bfc\frac{|x|}{\xi(\beta)}\right).
\end{equation}
\end{Thm}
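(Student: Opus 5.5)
We will prove the bound by a bootstrap (continuity) argument in $\beta$, driven by the finite-difference inequality \eqref{eq:SL assumption}, iterated as a random-walk-type renewal. Throughout we work with $F_\beta=J*G_\beta$. The corresponding bound for $G_\beta$ then follows from one more application of \eqref{eq:SL assumption} with $\beta'=0$ (so $G_0=\delta_0$), namely $G_\beta\le\delta_0+\beta\, J*G_\beta=\delta_0+\beta F_\beta$, together with $\beta\le\beta(\deltamain)\le(1-\deltamain)^{-1}\le 2$.

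\emph{Bootstrap hypothesis and continuity.} For $\beta<\beta(\deltamain)$ we define the statement $\mathcal P(\beta,K)$ to be
\[
F_\beta(x)\le \frac{K}{\sigma_J^d}\Big(\frac{\sigma_J}{\sigma_J\vee|x|}\Big)^{d-2-\varepsilon}e^{-\bfc|x|/\xi(\beta)}\quad\text{for all }x .
\]
We aim to show that $\mathcal P(\beta,2K)$ implies $\mathcal P(\beta,K)$ for a suitable large $K=K(d,c_0)$. At $\beta=0$ we have $F_0=J$, and $J_x\le c_0^{-1}R_J^{-d}$ with support in $|x|\le R_J\le c_0^{-1}\sigma_J$, so $\mathcal P(0,K)$ holds. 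Continuity of $\beta\mapsto F_\beta(x)$, of $\xi$, and exponential decay for $\beta<\beta_c$ make the set $\{\beta:\mathcal P(\beta,K)\}$ closed and the improved statement open. This gives $\mathcal P$ on $[0,\beta(\deltamain))$, and the endpoint follows from the limit in Definition~\ref{Def:G}$(v)$ together with Fatou's lemma.

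\emph{Improvement step.} Fix $\beta$ and $x$ with $|x|\ge L$ for a large multiple $L$ of $\sigma_J$; small $|x|$ will be handled by the bound $F_\beta(x)\le \|F_\beta\|_\infty$. We choose $\beta'<\beta$ with $\xi(\beta')\approx\lambda^{-1}\xi(\beta)$, with $\lambda$ a large constant, or $\xi(\beta')\approx |x|/M$ when $|x|\lesssim\xi(\beta)$, a multiscale choice. Iterating \eqref{eq:F satisfies A2} $n$ times gives
\[
F_\beta\le \sum_{k<n}(\beta-\beta')^kF_{\beta'}^{*(k+1)}+(\beta-\beta')^nF_{\beta'}^{*n}*F_\beta .
\]
This is exactly the Green function of a random walk with step distribution $\mu=F_{\beta'}/\chi(\beta')$, killed at rate $1-(\beta-\beta')\chi(\beta')$. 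The key inputs come from \eqref{eq:xichiintro}. First, $(\beta-\beta')\chi(\beta')$ is controlled by integrating \eqref{eq:dFub}, namely $\partial_\beta\chi\le\chi^2$ and $\partial_\beta\chi\ge(1-\deltamain)\chi^2$, which give $1-(\beta-\beta')\chi(\beta')\approx\chi(\beta')/\chi(\beta)$. Second, the variance of $\mu$ is $\xi(\beta')^2$. So the walk typically takes about $\chi(\beta)/\chi(\beta')$ steps before dying, and by \eqref{eq:xichiintro} travels distance about $\xi(\beta')\sqrt{\chi(\beta)/\chi(\beta')}\approx\xi(\beta)^{1\pm O(\deltamain)}$. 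Using the bootstrap bound at $\beta'$ to get tails of $\mu$, namely exponential on scale $\xi(\beta')$, we apply standard random walk estimates: a local CLT-type bound $\mu^{*k}(x)\lesssim k^{-d/2}\xi(\beta')^{-d}$, and Chernoff/large-deviation bounds for killed walks. Summing over $k$ yields a Green-function bound of the form $|x|^{2-d}$ times an exponential in $|x|/\xi(\beta)$, with constant independent of $K$ to leading order. The cost of the loose exponents in \eqref{eq:xichiintro} is an $|x|^{O(\deltamain)}$ loss per scale, which is absorbed into the $\varepsilon$; this is why $\deltamain$ depends on $\varepsilon$.

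\emph{Main obstacle.} The hard part is closing the bootstrap with constants independent of $K$. The remainder term $(\beta-\beta')^nF_{\beta'}^{*n}*F_\beta$ and the near-diagonal convolution contributions involve $F_\beta$ itself. We will handle this by taking $n$ of order $\lambda^2\chi(\beta)/\chi(\beta')$, so that the killing factor $(1-\cdot)^n$ is exponentially small. We then bound $F_{\beta'}^{*n}*F_\beta$ using the hypothesis $\mathcal P(\beta,2K)$ only in a region where the walk has spread, so that $K$ enters multiplied by a small factor such as $e^{-\lambda}$. We must also control the exponential rate, since convolution degrades $\bfc$; choosing $\bfc$ small relative to the random walk large-deviation rate, and using the spatial scale $\xi(\beta')\ll\xi(\beta)$, should keep it fixed.
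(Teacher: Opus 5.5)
Your overall architecture matches the paper's: a continuity/bootstrap argument in $\beta$, iterating \eqref{eq:SL assumption} to compare $F_\beta$ with the Green function of the walk with steps $\mu=F_{\beta'}/\chi(\beta')$, with $\xi(\beta')/\xi(\beta)$ a fixed small ratio so that $1-Z_{\beta',\beta}\asymp\chi(\beta')/\chi(\beta)$. However, the step that is supposed to do the work does not hold.

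\textbf{Tails and near-origin mass.} You propose to read off exponential tails of $\mu$ on scale $\xi(\beta')$ from the bootstrap bound and then invoke standard random walk estimates. Summing the bootstrap bound only gives $M_{\beta'}(s)\lesssim K\chi(\beta')^{-1}\sigma_J^{-2-\varepsilon}\xi(\beta')^{2+\varepsilon}$. Since \eqref{eq:bounds chi xi} only gives $\chi(\beta')\ge(\xi(\beta')/\sigma_J)^{2-O(\deltamain)}$, this is of order $K(\xi(\beta')/\sigma_J)^{\varepsilon+O(\deltamain)}$, which is unbounded in $\beta'$. The paper flags exactly this right after \eqref{eq:regularity-heuristic}. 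The same computation shows the bootstrap bound cannot make the mass $\chi_m(\beta')/\chi(\beta')$ of a box of radius $m\ll\xi(\beta')$ small.

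These are the two missing ingredients, and nothing in your outline replaces them:
\begin{itemize}
\item uniform regularity of the effective walk (Proposition~\ref{prop: regularity below beta(delta)}), proved with no pointwise input by propagating an MGF bound through Corollary~\ref{coro:Miteration} along $\chi(\beta_{k+1})=L\chi(\beta_k)$;
\item the stability estimate (Proposition~\ref{Prop:stab}), a separate induction over dyadic scales of $\xi$.
\end{itemize}

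\textbf{Pointwise local CLT.} The bound $\mu^{*k}(x)\lesssim k^{-d/2}\xi(\beta')^{-d}$ is not available. Regularity yields only box-averaged bounds (Example~\ref{ex:rw}). Already for the massive lattice Green function one has $\mu(x)\approx\sigma_J^{-d}(\sigma_J/\xi(\beta'))^{2}\gg\xi(\beta')^{-d}$ at $|x|\approx\sigma_J$. Any pointwise bound must re-inject the bootstrap input, so the main term necessarily carries $K$. Your claim that it is $K$-independent ``to leading order'' is therefore unfounded.

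\textbf{How the paper closes the bootstrap.} The contraction has to come from explicit small factors multiplying $K$, and here the paper's argument is structurally different. It applies \eqref{eq:SL assumption} once (Lemma~\ref{lemma:typ-decomp}) and splits the convolution at $|y|=m=\xi(\beta'')$, with $m/\xi(\beta')$ a small constant. In the far part it replaces $(\beta-\beta')F_\beta$ by $\mathbb G_{Z_{\beta',\beta},\beta'}-\delta_0$ via \eqref{eq:FGreenbd}.
\begin{itemize}
\item The near part is at most $\frac{\chi_m(\beta')}{\chi(\beta')}\sup_{|y|\le m}F_\beta(x-y)$. By stability this is at most $2\Cstab (m/\xi(\beta'))^2$ times the bootstrap bound.
\item The far part convolves the pointwise bootstrap bound on $F_{\beta'}$ with the \emph{averaged} Green function (Lemma~\ref{lemma:fstarg}). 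This yields a factor $(\xi(\beta')/|x|)^{\varepsilon}$, which is small when $|x|\gtrsim\xi(\beta)\gg\xi(\beta')$. That is the actual role of $\varepsilon>0$.
\end{itemize}
You instead use $\varepsilon$ to absorb $|x|^{O(\deltamain)}$ losses. Once the output carries the input bound, which already contains the $\varepsilon$, such multiplicative losses cannot be absorbed. The paper incurs none, because $\chi(\beta')$ cancels exactly in \eqref{eq:FGreenbd}.

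\textbf{Further unsupported points.}
\begin{itemize}
\item Intermediate scales $|x|\ll\xi(\beta)$ need the nested argument of Proposition~\ref{prop:intermscales}: take $|x|=\eta\xi(\beta')$, apply Proposition~\ref{prop: typical scales} at $\beta'$, and use stability to make $(F_{\beta'}*F_\beta)(x)/\chi(\beta')$ small.
\item Small $|x|$ cannot be handled by ``$F_\beta\le\|F_\beta\|_\infty$'', since the only available sup bound is the unimproved $2K\sigma_J^{-d}$. The paper needs Proposition~\ref{Prop:smallx}, which compares with the $J$-walk at $\beta'=0$ and uses stability.
\item Continuity of $\sup_x$ in $\beta$ needs tails that are uniform in $\beta$. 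The paper gets these from regularity via \eqref{eq:cont2}, not from exponential decay at each fixed $\beta$.
\end{itemize}
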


It is an interesting question whether
Theorem~\ref{thm:maintheorem} remains valid with $\varepsilon=0$.
We do not have an answer for the question.

Theorem~\ref{thm:maintheorem} is general and its proof is independent of
the particular choice of $J$ and $G$.  However it fails, on its own, to
cover the entire interval $\beta \in [0,\beta_c]$, since it does not
assert that $\beta(\deltamain)=\beta_c$.  The remedy for this defect
involves the following additional assumption. Its verification exploits the particular model-dependent manner
that $H_\beta$ is expressed in terms of $G_\beta$, to extract
a bound on the former from a bound on the latter.  The explicit forms for
$H_\beta$ given in \eqref{eq:HSAWperc-intro} accurately suggest that
the verification of Assumption~\ref{Ass:E} is in practice a routine task.

\begin{Ass}
\label{Ass:E}
The error term obeys $E(\beta(\deltamain)) < \deltamain$.
\end{Ass}

\begin{Thm}
\label{thm:maintheorem-betac}
Suppose that $J$ and $G$ obey both of Assumptions~\textup{\ref{Ass:G}} and \textup{\ref{Ass:E}}.
Then $\beta(\deltamain)=\beta_c$, so $\beta_c<\infty$ and
\eqref{eq: target estimate below beta(delta)}
holds for every $\beta\leq \beta_c$.
\end{Thm}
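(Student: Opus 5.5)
We argue by contradiction on the value of $\beta(\deltamain)$, combining the continuity of $E$ with the bound $\beta(\delta)\le(1-\delta)^{-1}$ from \eqref{eq:xichiintro}. The first step is to show that $\beta(\deltamain)<\infty$ always holds. By Proposition~\ref{prop: rough bounds chi xi} (the first inequality in \eqref{eq:xichiintro}), $\beta(\deltamain)\le (1-\deltamain)^{-1}$, so in particular $\beta(\deltamain)$ is a finite number in $[0,\beta_c]$. Hence once we show $\beta(\deltamain)=\beta_c$, it follows at once that $\beta_c<\infty$. The bound \eqref{eq: target estimate below beta(delta)} on $[0,\beta_c]$ is then exactly Theorem~\ref{thm:maintheorem} applied on $[0,\beta(\deltamain)]=[0,\beta_c]$.

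Next, suppose for contradiction that $\beta(\deltamain)<\beta_c$. Then $\beta(\deltamain)$ lies in the interval $[0,\beta_c)$ on which $E$ is finite and continuous; continuity was noted after \eqref{eq:definition error}, via dominated convergence and the exponential decay of $G_t,H_t$ for $t<\beta_c$. Assumption~\ref{Ass:E} gives $E(\beta(\deltamain))<\deltamain$. By continuity there is $\eta>0$ with $\beta(\deltamain)+\eta<\beta_c$ and $E(\beta(\deltamain)+\eta)<\deltamain$. This contradicts the definition \eqref{eq:Edef} of $\beta(\deltamain)$ as a supremum, since $E$ is nondecreasing and so the set $\{E<\deltamain\}$ is an interval. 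Therefore $\beta(\deltamain)=\beta_c$.

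Some care is needed in how Assumption~\ref{Ass:E} is read when $\beta(\deltamain)=\beta_c$. There $E(\beta_c)$ is defined through the limits in Definition~\ref{Def:G}$(v)$, but in that case we are already done. So the only genuinely used case is $\beta(\deltamain)<\beta_c$, where everything is finite and continuous. One should also check that $\{E<\deltamain\}$ is nonempty, which holds because $E(0)=0$, so $\beta(\deltamain)$ is well defined.

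The only potentially delicate point is the continuity of $E$ at $\beta(\deltamain)$ from the right. This requires $\xi(t)$ to be bounded away from $0$, which it is, since $\xi(t)\ge\xi(0)\asymp\sigma_J$ by monotonicity from \eqref{eq:xichiintro}. It also requires $\Vert H_t\Vert_1$ and $\Vert |x|_2^2 H_t\Vert_1$ to be continuous, which is the dominated-convergence statement the paper already records. Beyond this, the argument is a short bootstrap with no real obstacle.
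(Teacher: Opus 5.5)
Your proof is correct and is essentially the paper's argument. Assumption~\ref{Ass:E} plus continuity of $E$ on $[0,\beta_c)$ rules out $\beta(\deltamain)<\beta_c$, and $\beta_c<\infty$ then follows from $\beta(\deltamain)\le(1-\deltamain)^{-1}$. One small correction: you justify $\xi(t)$ staying away from $0$ by $\xi(t)\ge\xi(0)$, but that bound is only guaranteed where $E(t)\le\frac12$. What right-continuity of $E$ at $\beta(\deltamain)$ actually needs is that $\xi$ is continuous and positive there, and this holds for every $t<\beta_c$ since $F_t\ge J$.
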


\begin{proof} By Assumption~\ref{Ass:E} we have $E(\beta(\deltamain)) < \deltamain$.
If $\beta(\deltamain)<\beta_c$, then by the continuity of $E$ on $[0,\beta_c)$, there must exist
a $\beta^*>\beta(\deltamain)$ such that $E(\beta^*) < \deltamain$.
This contradicts the definition of $\beta(\deltamain)$, so it must be the case
that $\beta(\deltamain)=\beta_c$.  This completes the proof.
\end{proof}

The following theorem is an easy consequence of our assumptions, as we will show
in Proposition~\ref{prop:gamma-nu} via integration of the assumed differential inequalities
\eqref{eq:dGub} and \eqref{eq:Diff inequ assumption}.
Its assumption that $\chi(\beta_c)=\infty$ identifies $\beta_c$ as the
\emph{critical point} in our applications to statistical mechanics.

\begin{Thm}
\label{Thm:gamma-nu}
Suppose that $J$ and $G$ obey both Assumptions~\textup{\ref{Ass:G}} and~\textup{\ref{Ass:E}}.  Suppose further that $\chi(\beta_c)=\infty$.
Then, for every $\beta < \beta_c$, and with $E=E(\beta_c)$ (which is less
than $\deltamain$), we have
 \begin{align}
 \label{eq:betacbd}
    1 &\le \beta_c \le \frac{1}{1-E},
    \\
 \label{eq:gamma}
    \frac{1}{\beta_c  -\beta}
    &\le \chi(\beta)
    \le
    \frac{1}{1-E}
    \frac{1}{\beta_c -\beta },
\\
\label{eq:nu}
    \left( \frac{1}{\beta_c -\beta} \right)^{1-2E}
    &\le
    \frac{\xi(\beta)^2}{\sigma_J^2}
    \le
    \left( \frac{1}{(1-E)(\beta_c -\beta)} \right)^{\frac{1+E}{1-E}}
    .
 \end{align}
\end{Thm}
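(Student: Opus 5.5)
The plan is to integrate the differential inequalities for $\chi$ and for $\Vert |x|_2^2\cdot F_\beta\Vert_1$. First, Theorem~\ref{thm:maintheorem-betac} gives $\beta(\deltamain)=\beta_c<\infty$ and $E(\beta_c)<\deltamain<1$. Since $E$ is non-decreasing, for every $\beta<\beta_c$ we have $E(\beta)\le E:=E(\beta_c)$.

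\emph{Susceptibility.} Summing \eqref{eq:dGub} over $x$ gives $\chi'(\beta)\le \chi(\beta)^2$, using $\Vert J\Vert_1=1$. Summing \eqref{eq:Diff inequ assumption} gives $\chi'(\beta)\ge (1-\Vert H_\beta\Vert_1)\chi(\beta)^2\ge (1-E)\chi(\beta)^2$. Interchanging the derivative and the sum needs justification. For the upper bound I would avoid differentiating and instead sum \eqref{eq:SL assumption} directly, which gives $\chi(\beta)\le\chi(\beta')+(\beta-\beta')\chi(\beta')\chi(\beta)$, i.e.\ $1/\chi(\beta')-1/\chi(\beta)\le \beta-\beta'$. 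For the lower bound I would integrate \eqref{eq:Diff inequ assumption} in $\beta$ pointwise in $x$ (it holds pointwise and $G_\beta(x)$ is differentiable and monotone) and then sum, using Fatou/monotone convergence. This yields $1/\chi(\beta')-1/\chi(\beta)\ge (1-E)(\beta-\beta')$. Now let $\beta\uparrow\beta_c$. By (v), Fatou and monotonicity, $\chi(\beta)\to\chi(\beta_c)=\infty$, so $1/\chi(\beta)\to 0$. This gives $(1-E)(\beta_c-\beta')\le 1/\chi(\beta')\le \beta_c-\beta'$, which is \eqref{eq:gamma}. Setting $\beta'=0$ with $\chi(0)=1$ gives \eqref{eq:betacbd}.

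\emph{Correlation length.} I would take the two-sided bound \eqref{eq:xichiintro} from Proposition~\ref{prop: rough bounds chi xi}, applied with $\delta$ replaced by $E$ (or any $\delta\in(E,\deltamain)$, then let $\delta\downarrow E$). Take the base point $\beta'=0$ there, with the roles arranged so that the smaller parameter is $0$. At $\beta=0$ we have $F_0=J$, so $\chi(0)=1$ and $\xi(0)^2=\sigma_J^2$. The inequality then reads
\[
\chi(\beta)^{1-2E}\le \xi(\beta)^2/\sigma_J^2\le \chi(\beta)^{\frac{1+E}{1-E}}.
\]
Inserting \eqref{eq:gamma}, namely $\chi(\beta)\ge (\beta_c-\beta)^{-1}$ on the left and $\chi(\beta)\le ((1-E)(\beta_c-\beta))^{-1}$ on the right, gives \eqref{eq:nu}.

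\emph{Main obstacle.} I expect the main difficulty to be checking that Proposition~\ref{prop: rough bounds chi xi} really yields the exponents with $E$ in place of $\delta$, and with the correct orientation. If it is only stated for $\delta$, I would rederive it. Multiply \eqref{eq:dFub} and \eqref{eq:dFlb} by $|x|_2^2$ and sum, using $\Vert |x|_2^2 (f*g)\Vert_1=\Vert|x|_2^2 f\Vert_1\Vert g\Vert_1+\Vert f\Vert_1\Vert|x|_2^2 g\Vert_1$, which holds by symmetry. This gives two-sided differential inequalities for $m(\beta):=\Vert|x|_2^2\cdot F_\beta\Vert_1$ in terms of $m\chi$, with error controlled by $\Vert H\Vert_1$ and $\Vert|x|_2^2 H\Vert_1/\xi^2$, both at most $E$. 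Comparing the logarithmic derivatives of $\xi^2=m/\chi$ and $\chi$ and integrating from $0$ then gives the stated exponents.
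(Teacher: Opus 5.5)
Your proposal is correct and follows essentially the paper's route. The paper proves Theorem~\ref{Thm:gamma-nu} through Proposition~\ref{prop:gamma-nu}: it sends $\beta\uparrow\beta_c$ in \eqref{eq:bounds chi} using $\chi(\beta_c)=\infty$, sets $\beta'=0$ to get \eqref{eq:betacbd}, and inserts \eqref{eq:gamma} into \eqref{eq:bounds chi xi} with $\beta'=0$; since that inequality is already stated with $E(\beta)\le E(\beta_c)$ in the exponents, your ``main obstacle'' does not arise. The one difference is harmless and slightly more careful: you get the upper bound on $1/\chi(\beta')-1/\chi(\beta)$ by summing \eqref{eq:SL assumption} directly and justify the interchange of $\partial_\beta$ and $\sum_x$, whereas the paper just sums the differential inequalities formally in \eqref{eq:dchi-ub}--\eqref{eq:dchi-lb}.
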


The linear divergence of the susceptibility in \eqref{eq:gamma} is a statement
that the critical exponent $\gamma$ takes its mean-field value $\gamma=1$,
and \eqref{eq:nu} states that the critical exponent $\nu$ is within order $E(\beta_c)$
of its mean-field value $\frac 12$. In the verification of Assumption~\ref{Ass:E} for our applications in
Section~\ref{sec:applications}, for spread-out models we will find in every
case that $E(\beta_c)$ is at most of order $\sigma_J^{-d}$, and hence at most of order $R^{-d}$. This means that the powers in the upper and lower bounds on $(\xi/\sigma_J)^2$ in \eqref{eq:nu} can be brought as close to $1$ as desired by taking $R$ large enough.
With this estimate on $E(\beta_c)$, it also follows from
\eqref{eq:betacbd} that the critical value obeys $\beta_c=1+O(R^{-d})$. This $O(R^{-d})$ is optimal,  as previously observed for various models in \cite{HS05y,KS24}.

\begin{Rem}
\label{Rem:A}
It is part of Definition~\ref{Def:G} that $G_0=\delta_0$.
For some models
(e.g., the $|\varphi|^4$ model), it is instead the case that $G_0=A\delta_0$ with $A>0$ unequal to $1$. In this case, the change of variables
\begin{equation}
    \widetilde{G}_{\beta}(x) = \frac{1}{A} G_{\beta/A}(x)
\end{equation}
yields a family of functions $\widetilde G_\beta$ which do satisfy Definition~\ref{Def:G}
and Assumption~\ref{Ass:G} when $G_\beta$ satisfies them excepting $G_0=A\delta_0$.
By applying our theory to $\widetilde G$ and then undoing the change of variables,
we find that \eqref{eq: target estimate below beta(delta)}, \eqref{eq:betacbd}
and \eqref{eq:nu} are modified to:
\begin{equation}
	G_\beta(x)\leq A\delta_0(x)
    + \frac{A \bfC}{\sigma_J^d}
    \left(\frac{\sigma_J}{\sigma_J \vee|x|} \right)^{d-2-\varepsilon}
    \exp\left(-\bfc\frac{|x|}{\xi(\beta)}\right),
\end{equation}
 \begin{align}
 \label{eq:betacbdA}
    \frac{1}{A} &\le \beta_c \le \frac{1}{A(1-E)},
    \\
\label{eq:nuA}
    \left( \frac{1}{A(\beta_c -\beta)} \right)^{1-2E}
    &\le
    \frac{\xi(\beta)^2}{\sigma_J^2}
    \le
    \left( \frac{1}{(1-E)A(\beta_c -\beta)} \right)^{\frac{1+E}{1-E}}
    .
 \end{align}
Equation~\eqref{eq:gamma} remains unchanged even if $A\neq 1$.
\end{Rem}

A further consequence of Theorem~\ref{thm:maintheorem-betac} is that, under
Assumptions~\ref{Ass:G} and \ref{Ass:E}, the $p$-fold convolution of $G_{\beta_c}$
satisfies
\begin{equation}
    G_{\beta_c}^{*p}(0) < \infty \quad \text{ if } \quad
    d>p(2+\varepsilon).
\end{equation}
The condition for finiteness reduces to $d>2p$,
since $d$ is an integer and $\varepsilon$ may be chosen as small as desired.
The convolution $G_{\beta_c}^{*p}(0)$ is called the critical \emph{bubble diagram} when $p=2$,
the critical \emph{triangle diagram} when $p=3$, and the critical \emph{square diagram}
when $p=4$.
Thus, the critical bubble, triangle and square diagrams
are respectively finite when $d$ is strictly greater than $4,6,8$ (for small $\varepsilon$).
The closely related open bubble and triangle diagrams (with their extra factors $J$)
are depicted in Figure~\ref{fig:H}.

In Section~\ref{sec:applications}, we verify that each
of the six models mentioned at the end of Section~\ref{sec:motivation}  satisfies Assumptions~\ref{Ass:G} and \ref{Ass:E}.
Theorem~\ref{thm:maintheorem-betac} and its consequences therefore apply to all
six models.

\subsection{Application to statistical mechanical models}
\label{sec:statmech}

\subsubsection{Results}
\label{sec:statmechresults}

We now discuss the application of our black box to specific models.
For this, it is sufficient to verify that the model's
two-point function obeys
Definition~\ref{Def:G} (possibly with $G_0=A\delta_0$), Assumption~\ref{Ass:G}, and Assumption~\ref{Ass:E}.
For the models we consider, this is a relatively routine procedure that does
not require innovation.
Indeed, in all cases the verification of Definition~\ref{Def:G} and Assumption~\ref{Ass:E}
is based on theory dating back to the 1980s.
Assumption~\ref{Ass:G} is a bit more substantial, but also follows a well-trodden path.

We emphasise that for all but the spread-out XY, $|\varphi|^4$,
or continuous-time weakly self-avoiding walk models,
our results are not as strong as those obtained by previous
methods. We do prove new results for
these three spread-out models. Notwithstanding, our
purpose in this paper is to provide a new, unified,
relatively very easy, random walk approach to the subject.
We view Theorem~\ref{thm:maintheorem-betac} as a first
step for our methods, which upon further development may lead to more refined results for the critical regime.

We summarise our applications here.  Precise definitions of the models are
deferred to  Section~\ref{sec:applications}.
In the following, the adjective ``nearest-neighbour'' indicates that the model
is defined with the nearest-neighbour $J$ defined in \eqref{eq:Jnn}, and
``spread-out'' indicates that $J$ obeys Definition~\ref{Def:G} with a large
range $R$.

\paragraph{Self-avoiding walk.}
For precise definitions, see Sections~\ref{sec:SAW} and \ref{sec: CTWSAW}.  Let $d>4$.
Let $G_\beta$ denote the two-point function for the
nearest-neighbour weakly self-avoiding walk with repulsion strength
$\lambda\in (0,\lambda_0]$ with $\lambda_0$ sufficiently small,
or the two-point function of the spread-out strictly self-avoiding
walk with range $R$ sufficiently large.
In the latter case, $G_\beta(x)$ is a generating function for self-avoiding
walks from $0$ to $x$, whose allowed steps are from $u$ to $v$ whenever $J_{v-u}>0$,
and $\beta J_{v-u}$ is the weight of a step.
Let $\beta_c$ denote
the critical point in either case.  Then, $G_\beta(x)$ obeys the near-critical
estimate \eqref{eq: target estimate below beta(delta)} for all $\beta \in [0,\beta_c]$.
The conclusions of Theorem~\ref{Thm:gamma-nu} all hold, and the critical bubble
diagram $\sum_{x\in \mathbb Z^d}G_{\beta_c}(x)^2$ is finite.
For the weakly self-avoiding walk the error $E$ is $O(\lambda)$, and for the spread-out
strictly self-avoiding walk it is $O(R^{-d})$.
Stronger previous results obtained by other methods are discussed in
Section~\ref{sec:previous-work}.
For a continuous-time model of the weakly self-avoiding walk, our results
include both weak coupling for the nearest-neighbour model and arbitrary coupling
for spread-out models (the latter is a new result).

\paragraph{Bernoulli percolation.}
For precise definitions, see Section~\ref{sec:perc}.  Let $d>6$.
Consider spread-out Bernoulli bond percolation on $\mathbb Z^d$, with
$R$ sufficiently large. The edge set consists
of pairs $\{u,v\}$ such that $J_{v-u}>0$.   An edge $\{u,v\}$ is \emph{open}, independently of all other edges, with probability
$\beta J_{v-u}$ and otherwise is \emph{closed}.  Let
$G_\beta(x) = \mathbb P_\beta[0 \connect{} x]$ denote the probability that $0$ and
$x$ are connected by a path consisting of open bonds.
Let $\beta_c\in (0,\infty)$ denote
the critical point.  Then, $G_\beta(x)$ obeys the near-critical
estimate \eqref{eq: target estimate below beta(delta)} for all $\beta \in [0,\beta_c]$.
The conclusions of Theorem~\ref{Thm:gamma-nu} all hold
with $E=O(R^{-d})$, and the critical triangle
diagram $\sum_{x,y\in \mathbb Z^d}G_{\beta_c}(x)G_{\beta_c}(y-x) G_{\beta_c}(y)$ is finite.
In other words, the \emph{triangle condition} of \cite{AN84} holds.
This has important implications for critical exponents, e.g., it is shown
in \cite{BA91} (see also \cite{Hutc22}) that the triangle condition implies the existence and mean-field
values $\delta=2$ and $\beta=1$ for the critical exponents governing
the critical cluster-size distribution and
the percolation probability (despite the notational clash, these two exponents should not
be confused with our different usage of $\delta,\beta$).
Stronger previous results obtained by other methods are discussed in
Section~\ref{sec:previous-work}.

\paragraph{$|\varphi|^4$ model.}
For precise definitions, see Section~\ref{sec:phi4}.  Let $d>4$.
The nearest-neighbour lattice $|\varphi|^4$ model
has single-spin distribution
$\exp[-\lambda |\varphi_x|^4 - \mu|\varphi_x|^2$] with interaction
given by
$\exp[\beta\sum_{u,v}J_{v-u}(\varphi_u\cdot \varphi_v)]$.  We consider
spins with either one or two components, i.e., $\varphi_x\in \mathbb R$
or $\varphi_x\in \mathbb{R}^2$.
The two-point function is again the spin-spin correlation
$G_\beta(x)=\langle \varphi_0\cdot\varphi_x \rangle_\beta$, and the critical point
is denoted $\beta_c$.
We consider both the nearest-neighbour model with $\lambda>0$ small and any $\mu>0$\footnote{Our analysis of the weakly coupled model perturbs around $\lambda=0$,
and the restriction to positive $\mu$ ensures that the model exists when $\lambda=0$.},
and the spread-out model with any $\lambda>0$ and any $\mu\in \mathbb R$.
In both cases, we prove that
$G_\beta(x)$ obeys the near-critical
estimate \eqref{eq: target estimate below beta(delta)} for all $\beta \in [0,\beta_c]$.
The conclusions of Theorem~\ref{Thm:gamma-nu} all hold, with $E=O(\lambda)$
for the weakly-coupled nearest-neighbour model, and with $E=O(R^{-d})$ for the spread-out
model.
Some stronger previous results obtained by other methods are discussed in
Section~\ref{sec:previous-work}. Our results for the spread-out 2-component
$|\varphi|^4$ model are new.

\paragraph{Ising model.}
For precise definitions, see Section~\ref{sec:Ising}. Let $d>4$.
The spread-out Ising model is a ferromagnetic spin system with spins
taking values $\pm 1$, defined via a Gibbs measure with Hamiltonian
$H(\sigma) = -\sum_{u,v} J_{v-u} \sigma_u \sigma_v$ and inverse
temperature $\beta$.
The two-point function is the spin-spin correlation $G_\beta(x) = \langle \sigma_0
\sigma_x\rangle_\beta$.
Let $\beta_c$ denote
the critical point.  Then, $G_\beta(x)$ obeys the near-critical
estimate \eqref{eq: target estimate below beta(delta)} for all $\beta \in [0,\beta_c]$.
The conclusions of Theorem~\ref{Thm:gamma-nu} all hold with $E=O(R^{-d})$.
Stronger previous results obtained by other methods are discussed in
Section~\ref{sec:previous-work}.

\paragraph{XY model.}
For precise definitions, see Section~\ref{sec:XY}.  Let $d>4$.
The XY model is a spin model on $\mathbb Z^d$ with the single spin distribution
uniformly distributed on the unit circle in $\mathbb R^2$, with
interaction given by $\exp[\beta\sum_{u,v}J_{v-u}(\phi_u \cdot \phi_v)]$.
We take $J$ to be the spread-out kernel.
The critical point
is denoted $\beta_c$.
Then, for $R$ sufficiently large, $G_\beta(x)$ obeys the near-critical
estimate \eqref{eq: target estimate below beta(delta)} for all $\beta \in [0,\beta_c]$.
The conclusions of Theorem~\ref{Thm:gamma-nu} all hold, with $E=O(R^{-d})$.
These are new results for the XY model.

\paragraph{Lattice trees.}
For precise definitions, see Section~\ref{sec:LT}.
A lattice tree is a finite connected acyclic bond cluster.
We consider the spread-out model with $J$ given by the spread-out kernel.
Bonds are weighted by a parameter $p$, and there is a finite critical value $p_c$.
We prove a version of Theorem~\ref{thm:maintheorem} for lattice trees,
for $d>8$ and for $R$ sufficiently large.
This is then used to show that the susceptibility diverges as
$(p_c-p)^{-1/2}$ and the correlation length of order $2$ diverges as
$(p_c-p)^{-\frac 14 \pm O(R^{-d})}$.  The different critical exponents here,
compared with Theorem~\ref{Thm:gamma-nu}, are obtained via a change
of variables argument that is not needed for all the other models mentioned
above.
References to stronger previous results obtained using the lace expansion are
given in Section~\ref{sec:previous-work}.

\subsubsection{Related work}
\label{sec:previous-work}

As mentioned above, our results have been obtained earlier by other methods
in most cases, via less simple and more model-dependent methods.
We discuss this further here.

\paragraph{Reflection positivity.} In the context of the study of spin models such as
the Ising model, \emph{reflection positivity} and the \emph{infrared bound} \cite{FSS76,FILS78} provide important tools to understand the mean-field regime of the models in the regime $d\geq d_c$. These tools can be combined with various geometric representations of the models to study the near-critical or critical regime. Differential inequalities, like those in Assumption~\ref{Ass:G}, play an important role.
The substantial literature on this includes
\cite{Aize82,Froh82,AG83,AF86,AD21,FFS92,Pani26,LPS25-Ising,Pani25,DP25-Ising,EGPS26}.
The infrared bound makes it possible to study the nearest-neighbour model without
the need to introduce a small parameter.
In particular, it was recently proved in \cite{DP25-Ising} that the nearest-neighbour
Ising and $\varphi^4$ models (without any small coupling assumption) obey,
for $d>4$ and $\beta \le \beta_c$,
\begin{equation}
\label{eq:better}
    \langle \sigma_0\sigma_x \rangle_\beta
    \le
    \frac{C}{ (1\vee|x|)^{d-2}}
    \exp\big(-c(\beta_c-\beta)^{1/2}|x| \big).
\end{equation}
This improves our bound \eqref{eq: target estimate below beta(delta)}
by including the nearest-neighbour case, by not requiring a small parameter, by omitting
the $\varepsilon$ from the power law, and by obtaining the sharp power $(\beta_c
-\beta)^{1/2}$ in the exponent.
On the other hand, reliance on reflection
positivity to obtain the infrared bound is a limitation that restricts results to
nearest-neighbour interactions.

\paragraph{Lace expansion.}
The \emph{lace expansion} refers to a collection of expansion
methods which provide extremely detailed
results concerning mean-field critical phenomena above the
upper critical dimension.
The first lace expansion
was introduced by Brydges and Spencer in 1985 \cite{BS85} in the context of weakly self-avoiding walk, and was later
extended to apply to percolation, lattice trees and lattice animals, oriented percolation,
the contact process, the Ising model, and the $\varphi^4$ model; see, e.g.,
\cite{Slad06,HS90a,HS90b,Saka07,Saka14,BHH21}.
An essential distinction between the lace expansion and our method is that
the lace expansion replaces the inequalities \eqref{eq:dGub} and \eqref{eq:Diff inequ assumption}
by exact identities.  We may think of the inequalities as obtained in practice
by a single inclusion-exclusion bound.  The lace expansion obtains an identity
by performing inclusion-exclusion to \emph{all} orders.  This is necessarily a more
complicated business which---while leading to very strong results---is
much more model-dependent than our approach.

Convergence of the lace expansion requires a small parameter,
which can either be proportional to $(d-d_c)^{-1}$ for the nearest-neighbour model, or
related to $R^{-1}$ for the spread-out model.
Remarkably, with a computer-assisted proof,
the nearest-neighbour strictly self-avoiding walk has been proven to have mean-field
behaviour (including convergence to Brownian motion) in all dimensions $d \ge 5$ \cite{HS92a}.
For spread-out lattice trees in dimensions $d>8$, extensive results have
been obtained, e.g., \cite{HS90b,HS92c,DS98,CFHP23,LS25a}.
For spread-out percolation in dimensions $d>6$, and for nearest-neighbour
percolation in dimensions $d \ge 11$, mean-field behaviour has been proved   \cite{HS90a,FH17}.
A typical result is that the critical two-point function is asymptotically
equal to $c|x|^{-(d-2)}(1+o(1))$ with the amplitude $c$ given by a convergent
series.
This involves a deconvolution argument that was pioneered in \cite{HHS03,Hara08},
simplified for weakly self-avoiding walk in \cite{BHK18}, and reached its
simplest and most general form in
\cite{Slad22_lace,LS24a,LS26AIHP}.  An extension of the latter
gives the near-critical mean-field bound \eqref{eq:better} for both weakly \cite{Slad23_wsaw} and
strictly \cite{Liu25EJP} self-avoiding walk above $d_c=4$.
For near-critical percolation (either with $d \ge 11$ for the nearest-neighbour
model or with $d>6$ for the spread-out model), the sharp upper bound
\eqref{eq:better} was proven in \cite{HMS23} (see also \cite{CHS23,Vann22}).

For the continuous-time weakly self-avoiding walk, a new continuous-time lace expansion
was introduced in \cite{BHH21} to prove mean-field behaviour in dimensions $d>4$ for
sufficiently weak interactions.  We obtain complementary results here without
the lace expansion, including for spread-out models without a weak coupling
assumption.

\paragraph{Renormalisation group.}
The mean-field scaling behaviour of statistical mechanical models at the
upper critical dimension $d_c$ is typically conjectured or proved to be
modified by logarithmic factors.
The computation of these logarithmic corrections is beyond the scope of our paper.
The \emph{renormalisation group} \cite{WK74} refers
to a collection of methods which are now part of textbook theoretical
physics and which have been used, in particular,
 to compute these logarithmic corrections.  When it applies, it gives very
 detailed information.
Rigorous renormalisation group methods have been developed
for the $\varphi^4$ model in \cite{GK85,FMRS87,Hara87},
for the multi-component $|\varphi|^4$ model in \cite{BBS-phi4-log,ST-phi4,BBS-brief},
and for the continuous-time weakly self-avoiding walk in \cite{BBS-saw4-log,BBS-saw4}.
The renormalisation group approach involves a multi-scale analysis
which is entirely different from all of the other methods we are
discussing.  However, our approach is philosophically related to the renormalisation
group approach: it is multi-scale in the sense that it does involve an induction
which advances understanding of the two-point function at a value $\beta'$
to a larger value $\beta$.

 Finally, it is worth mentioning that for long-range Bernoulli percolation
 (for which $J$ has power-law decay),
 a non-perturbative analysis based on a multi-scale analysis serves
 to analyse the critical behaviour not only above and at the upper critical dimension,
 but also below the upper critical dimension \cite{Hutc26I,Hutc26II,Hutc26III}.

 \paragraph{The $\varphi_\beta$ method.}
 Recently, a novel approach to the study of the mean-field regime has been implemented to study weakly self-avoiding
walk for $d>4$ and spread-out percolation for $d>6$ \cite{DP25a,DP25b}. This method can be referred to as the ``$\varphi_\beta$ method'' due to its use of a special
quantity---usually denoted by $\varphi_\beta(S)$ for $S\subset \mathbb Z^d$---whose relevance in statistical mechanics was observed in \cite{Simo80,DT16}.
For weakly self-avoiding walk and spread-out percolation, both upper
and lower bounds as in \eqref{eq:better} are obtained.
For both models, bounds for half-space
two-point functions are obtained too.
Unlike in our black-box approach, the extension of the
$\varphi_\beta$ method from one model to another requires substantial work.

\subsection{Guide to the paper}
\label{sec:guide}

At the core of the proof of Theorem~\ref{thm:maintheorem} lies a family of random walks on $\mathbb Z^d$ called the \emph{effective random walk}, whose one-step transition
probability is $F_\beta(x)/\chi(\beta)$, with standard deviation $\xi(\beta)$.
A key feature of our analysis is the derivation of \emph{uniform} estimates for the effective random walk. These estimates are of two types: Green function estimates
derived from a \emph{regularity} property of the effective random walk and
employed to analyse the walk at and above its natural scale $\xi(\beta)$; and a \emph{stability} estimate employed to control the walk at smaller scales.
The effective random walk and its important properties are presented in Section~\ref{sec: strategy of the proof}. Proofs are deferred to Appendix~\ref{appendix: rw} (for the Green function estimates) and Section~\ref{sec:stability} (for the stability estimate).

With these preliminaries at hand,
the proof of our main result is entirely contained in Sections~\ref{sec:proof of main theorem}--\ref{sec:4props}.

In Section~\ref{sec:proof of main theorem}, we present a \emph{bootstrap} argument,
with which we prove Theorem~\ref{thm:maintheorem} conditionally on the results of Section~\ref{sec: strategy of the proof} and three additional propositions, namely
Propositions~\ref{Prop:smallx},
\ref{prop: typical scales}, and \ref{prop:intermscales}.  These three propositions
are used to complete the bootstrap at different scales.
Bootstrap arguments have played a central role in proofs of mean-field behaviour since
one was used
to prove convergence of the lace expansion in \cite{Slad87}.

In Section \ref{sec:prelim}, we first use
elementary calculus to integrate the assumed differential
inequalities \eqref{eq:dGub} and \eqref{eq:Diff inequ assumption}
(Section \ref{sec:integration}).  We then complete the proofs of the
regularity and stability properties satisfied by the effective random walk (Sections~\ref{sec:regularity-pf}--~\ref{sec:stability}). Interestingly, Section~\ref{sec:integration} is the only place where we use the
differential lower bound \eqref{eq:Diff inequ assumption}.

Section~\ref{sec:4props} provides the proof of the
Propositions~\ref{Prop:smallx},
\ref{prop: typical scales}, \ref{prop:intermscales} used in the proof of Theorem~\ref{thm:maintheorem}. This relies on elementary convolution estimates
whose proofs are deferred to Appendix~\ref{appendix:conv estimates}.

Finally, Section \ref{sec:applications} consists of a detailed verification of
Assumptions~\ref{Ass:G} and \ref{Ass:E} for the six statistical mechanical
models discussed in Section~\ref{sec:statmechresults}.

\section{Random walks: effective and regular}
\label{sec: strategy of the proof}

In this section, we first introduce the massive lattice Green function and
its important properties in Section~\ref{sec:Green}.
In Section~\ref{sec:strategy}, we define a crucial ingredient in our
analysis: the \emph{effective random walk}.
In Section~\ref{sec:regularRW}, we introduce a second key
ingredient: the concept of a \emph{regular random walk}. We state the important fact
that the effective random walk is regular, uniformly in $\beta$.
Also in Section~\ref{sec:regularRW}, we explain that regular walks obey
uniform Green function and anti-concentration estimates.
Finally, in Section~\ref{sec:stabilityintro}, we state an essential \emph{stability} estimate.

\subsection{Lattice Green function}
\label{sec:Green}

A simple prototype and point of departure
for our theory is the massive lattice Green function.
Let $(Y_n)_{n \ge 0}$ denote the random walk on $\mathbb Z^d$ started from
$Y_0=0$ and with transition probabilities $\mathbb P_J[Y_{n+1}=y\mid Y_n=x]=J_{y-x}$,
where $J$ is an admissible kernel. Recall the definition of the massive Green function $\mathbb C_\beta$ from \eqref{eq:Green-def}.

It is elementary that $\mathbb C=(\mathbb C_\beta)_{\beta\geq 0}$
satisfies the requirements of Definition~\ref{Def:G}, with $\beta_c=1$, and therefore $\mathbb C \in \mathcal G$.
Also, $\mathbb C$ satisfies Assumption~\ref{Ass:G}, as claimed in
\eqref{eq:Cub-intro}--\eqref{eq:Clb-intro} and as we verify next.
This verification is much simpler than the
model-dependent verification of Assumption~\ref{Ass:G} for statistical-mechanical
models which lack the Markov property, but it is an instructive example.

Let $\beta',\beta \in [0,1]$.
For the finite-difference upper bound \eqref{eq:SL assumption}, we use the identity
$b^n-a^n = (b-a)\sum_{j=0}^{n-1}b^ja^{n-1-j}$ and interchange summations
to see that
\begin{equation}
\label{eq:Green-SL}
    \mathbb C_\beta(x) - \mathbb C_{\beta'}(x)
    =
    (\beta-\beta')\sum_{j=0}^\infty \beta^j \sum_{m=j}^\infty (\beta')^{m-j}
    \mathbb P_J[Y_{m+1}=x].
\end{equation}
We then observe that, for $m \ge j$,
\begin{equation}
    \mathbb P_J[Y_{m+1}=x]
    =
    \sum_{y,z\in\mathbb Z^d} \mathbb P_J[Y_{j}=y] \mathbb P_J[Y_{1}=z-y] \mathbb P_J[Y_{m-j}=x -z] .
\end{equation}
After insertion of this into \eqref{eq:Green-SL}, we find that
\begin{equation}\label{eq:SLassumptionforlatticeGreen}
    \mathbb C_\beta(x) - \mathbb C_{\beta'}(x)
    =
    (\beta-\beta')(\mathbb C_{\beta'}*J*\mathbb C_\beta)(x),
\end{equation}
which is \eqref{eq:SL assumption} with equality.

For the lattice Green function, the differential inequality
\eqref{eq:Diff inequ assumption} (also \eqref{eq:dGub})
holds as an equality,
with $H_\beta =0$. Indeed, dividing \eqref{eq:SLassumptionforlatticeGreen} by $\beta-\beta'$ and taking the limit $\beta'\rightarrow \beta$ gives
\begin{equation}
	\partial_\beta \mathbb C_\beta(x)=(\mathbb C_{\beta}*J*\mathbb C_\beta)(x)
\end{equation}

Since $E(\beta)=0$ for all $\beta$, $\beta(\delta)=\beta_c$ for any $\delta>0$,
and Assumption~\ref{Ass:E} is vacuous. We could therefore be tempted to
conclude from Theorem~\ref{thm:maintheorem} that the bound
\eqref{eq: target estimate below beta(delta)} applies to the lattice Green function.
However, that would be circular, as our proof relies crucially on the
sharper bound (with no positive $\varepsilon$ in
\eqref{eq:GJxbd-intro}) stated in the following proposition.
Recall that $c_0$ is the constant occurring in assumed bounds on $J$ in
\eqref{eq:R_J comparable to xi(0)}.

\begin{Prop}
[Anti-concentration and Green function estimates for the $J$-random walk.]
\label{Prop:Green}
Let $d>2$. There exist $\cGreen,\CGreen>0$, which depend on $d$ and $c_0$ but not on $J$, such that for every $\beta\leq 1$, every $m \ge 1$, and every $x\in \mathbb Z^d$,
     \begin{align}
    \label{eq:antix-intro}
	&\mathbb P_{J}[X_m =x ]
    \leq \frac{\CGreen}{\sigma_J^d}\frac{1}{m^{d/2}}
    \exp\left(-\cGreen \frac{|x|}{\sigma_J\sqrt{m}}\right),
    \\
\label{eq:GJxbd-intro}
	&\mathbb C_\beta (x)
    \leq
    \delta_0(x)+ \frac{\CGreen}{\sigma_J^d}
    \left( \frac{\sigma_J}{\sigma_J \vee|x| } \right)^{d-2}
    \exp\left(-\cGreen\sqrt{1-\beta}\frac{|x|}{\sigma_J} \right).
\end{align}
\end{Prop}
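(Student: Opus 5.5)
The plan is to prove the local bound \eqref{eq:antix-intro} first, uniformly over admissible $J$, and then obtain \eqref{eq:GJxbd-intro} by summing it against $\beta^m$. Here $X_m$ denotes the $J$-random walk $Y_m$.

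\textbf{Step 1: the prefactor.} I would use Fourier analysis. Write $\hat J(k)=\sum_x J_x e^{ik\cdot x}$, so that $\mathbb P_J[X_m=x]\le (2\pi)^{-d}\int_{[-\pi,\pi]^d}|\hat J(k)|^m\,\D k$. The key input is a uniform bound of the form
\begin{equation*}
1-\hat J(k)\ge c\min\{\sigma_J^2|k|^2,1\}, \qquad |\hat J(k)|\le 1-c\min\{\sigma_J^2|k|^2,1\},
\end{equation*}
with $c=c(d,c_0)$. For small $|k|$ this follows from $\mathbb Z^d$-symmetry, which makes the quadratic form equal to $\sigma_J^2|k|_2^2/d$, together with $\sigma_J\ge c_0R_J$, which controls the Taylor remainder because $|x|\le R_J$.

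For $|k|\gtrsim R_J^{-1}$, I would use the density bound $J_x\le c_0^{-1}R_J^{-d}$ and $\sigma_J\ge c_0R_J$. These force $J$ to put mass at least a constant on a set of diameter comparable to $R_J$ with no atom dominating, which gives a uniform gap $1-|\hat J(k)|\ge c$. To exclude near-periodicity (for instance $\hat J$ near $-1$ at $k=(\pi,\dots,\pi)$ in the nearest-neighbour case), it is cleaner to handle $m$ even and odd separately, or to bound $|\hat J|^2$, the transform of $J*J$, which has an atom at $0$ of size $\sum J_x^2$. I expect this uniform-in-$J$ spectral gap to be the main obstacle.

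Integrating then gives $\int|\hat J|^m\le C\min\{(\sigma_J^2m)^{-d/2},1\}+Ce^{-cm}$. Since $\sigma_J^{-d}\gtrsim R_J^{-d}$ and $J_x\le c_0^{-1}R_J^{-d}$, the case of small $m$ relative to $\sigma_J^2$ is handled by $\mathbb P_J[X_m=x]\le\sup J\le C\sigma_J^{-d}$. The leftover term $e^{-cm}$ is absorbed similarly, giving the prefactor $\CGreen\sigma_J^{-d}m^{-d/2}$.

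\textbf{Step 2: the exponential factor.} For the factor $\exp(-\cGreen|x|/(\sigma_J\sqrt m))$, I would combine the bound from Step 1 with an exponential tail. By symmetry it suffices to treat one coordinate. The increments of $X^{(1)}$ are bounded by $R_J\le c_0^{-1}\sigma_J$, so a Hoeffding or Chernoff bound gives $\mathbb P[|X_m|\ge r]\le 2d\exp(-cr^2/(\sigma_J^2m))$.

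Then I would use the Markov property at time $\lfloor m/2\rfloor$: to be at $x$ at time $m$, the walk must either travel a distance $|x|/2$ within the first half, or be at distance at least $|x|/2$ at the midpoint and travel back. This yields
\begin{equation*}
\mathbb P[X_m=x]\le 2\,\mathbb P[|X_{\lfloor m/2\rfloor}|\ge |x|/2]\,\sup_y\mathbb P[X_{\lceil m/2\rceil}=y].
\end{equation*}
Combined with Step 1, this gives the Gaussian bound, which is stronger than the claimed exponential.

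\textbf{Step 3: the Green function.} To prove \eqref{eq:GJxbd-intro}, I would write $\mathbb C_\beta(x)=\delta_0(x)+\sum_{m\ge1}\beta^m\mathbb P_J[X_m=x]$. For $x\ne 0$, $\mathbb P_J[X_m=x]=0$ unless $m\ge |x|/R_J\ge c|x|/\sigma_J$. Using $\beta^m\le e^{-(1-\beta)m}$, split the sum dyadically in $m$ around $m_*=|x|^2/\sigma_J^2$.

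For $m\le m_*$, the Gaussian factor $e^{-c|x|^2/(\sigma_J^2m)}$ together with $\sigma_J^{-d}m^{-d/2}$ sums to $C\sigma_J^{-d}(\sigma_J/|x|)^{d-2}$. For $m\ge m_*$, the sum $\sum m^{-d/2}e^{-(1-\beta)m}$ is at most $C m_*^{1-d/2}e^{-(1-\beta)m_*/2}$, using $d>2$. Finally, the elementary inequality $a/m+bm\ge2\sqrt{ab}$ converts the combination of the two exponentials into $\exp(-c\sqrt{1-\beta}\,|x|/\sigma_J)$. For $|x|\le\sigma_J$, the bound $\sum_{m\ge1}C\sigma_J^{-d}m^{-d/2}\le C\sigma_J^{-d}$ suffices.
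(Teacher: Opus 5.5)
Your Steps 2 and 3 are sound. They parallel the paper's Proposition~\ref{prop:anti} and Theorem~\ref{thm:estimate RW-app}: a midpoint Markov decomposition with a tail bound, then splitting the Green function sum at $m_*=|x|^2/\sigma_J^2$. Since your increments are bounded, you even get a Gaussian factor.

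The gap is in Step 1: the claimed uniform spectral gap $1-|\hat J(k)|\ge c(d,c_0)$ for $|k|\gtrsim R_J^{-1}$ is false for admissible kernels. Take $R$ even and let $J$ be uniform on $\{x\in 2\mathbb Z^d: 0<|x|\le R\}$. It is $\mathbb Z^d$-symmetric with $R_J=R$, $\sigma_J\asymp R$ and $J_x\le R^{-d}$, so it is admissible with $c_0$ depending only on $d$, uniformly in $R$. Yet $\hat J(k)=1$ for every $k\in\{0,\pi\}^d$, so both of your displayed inequalities fail at $k=\pi\mathbf{e}_1$. (The local bound itself remains true for this $J$, with a worse constant; it is your proof that fails.)

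Mixing in an $\epsilon$-fraction of the uniform kernel on $\Lambda_R\setminus\{0\}$ leaves $c_0$ essentially unchanged and gives $|\hat J(\pi\mathbf{e}_1)|\approx 1-\epsilon$. So there is no uniform gap even among aperiodic kernels. The density bound only says that $J$ is spread over $\gtrsim c_0R_J^d$ sites, and these may all lie in a sublattice of index $O(c_0^{-1})$. Your parity device and the passage to $|\hat J|^2$ only address $\hat J\approx -1$. The atom $\sum_x J_x^2\le c_0^{-1}R_J^{-d}$ of $J*J$ is far too small to produce a gap.

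Separately, even granting a gap, the term $Ce^{-cm}$ is not absorbed by $\sup_x J_x\le C\sigma_J^{-d}$ when $1\ll m\lesssim \log\sigma_J$. That would need $\int|\hat J|^2=(2\pi)^d\sum_x J_x^2$.

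The repair is to use the density bound for \emph{unaveraging} rather than for a spectral gap:
$\mathbb P_J[X_m=x]\le c_0^{-1}R_J^{-d}\,\mathbb P_J[X_{m-1}\in\Lambda_{R_J}(x)]$.
Probabilities of boxes of radius $R_J$ are blind to sublattice structure. For instance, dominate $\mathds{1}_{\Lambda_{R_J}}$ by some $\phi\ge 0$ with $\hat\phi\ge 0$ supported in $\{|k|\le \kappa/R_J\}$ and $\|\phi\|_1\le CR_J^d$. A multiple of $|\check\psi|^2$ works, with $\psi$ the indicator of a cube of side $\kappa/R_J$ in Fourier space. Then only $|k|\le\kappa/R_J$ enters, and there your Taylor bound $0\le \hat J(k)\le 1-c\sigma_J^2|k|_2^2$ does hold uniformly. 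This gives $\mathbb P_J[X_{m-1}\in\Lambda_{R_J}(x)]\le C(d,c_0)(m-1)^{-d/2}$.

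This is exactly the role of Esseen's inequality in the paper (Theorem~\ref{thm:esseen} via Corollary~\ref{cor: coro of esseen}). It yields the averaged bounds of Theorem~\ref{thm:estimate RW} over boxes of radius $\sigma_J$ for any regular walk. The paper then unaverages in the same way, covering $\Lambda_{R_J}(x)$ by $O_{d,c_0}(1)$ boxes of radius $\sigma_J$. With Step 1 replaced in this way, the rest of your argument goes through.
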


We prove \eqref{eq:antix-intro} and \eqref{eq:GJxbd-intro}
for admissible $J$ in Section~\ref{sec:Green-pointwise} using basic random walk theory. To the best of our knowledge,
the sharp dependence of these bounds on $\sigma_J$ has not
appeared previously in the literature. It is worth mentioning the well-known fact that, for $d>2$,
at the critical value $\beta_c=1$ the Green function obeys
 $\mathbb C_1 (x) = c\sigma_J^{-2}|x|_2^{-(d-2)} + O(|x|_2^{-d})$ as $|x|_2\to\infty$,
 with $c$ depending only on $d$ \cite{LL10,Uchi98}.

\vspace{3pt}
When $\beta <1$, the bound \eqref{eq:GJxbd-intro} implies a bound on $G_\beta$
of the desired form \eqref{eq: target estimate below beta(delta)}, thanks to the finite-difference inequality in Assumption \ref{Ass:G}. Indeed, since $G_0=\delta_0$ and $F_0=J$, \eqref{eq:SL assumption} applied to $\beta'=0$ and $\beta\leq 1$ gives
\begin{equation}
	G_\beta\leq \delta_0+\beta(J*G_\beta).
\end{equation}
With $J^{*0}=\delta_0$, and $J^{*k}$ the convolution of $J$ with itself $k-1$ times
(for $k\geq 1$),
repeated iterations of the above inequality leads, for every $x\in \mathbb Z^d$, to the inequality
\begin{equation}\label{eq:GAb-bis}
	G_\beta(x)\leq \sum_{k\geq 0}\beta^kJ^{*k}(x)
    =
    \sum_{k\geq 0}\beta^k\mathbb P_J [Y_k=x] =
    \mathbb C_\beta(x).
\end{equation}
For $d>2$, the above infinite series converge when $\beta\leq 1$. We can thus obtain a bound on $G_\beta$ from \eqref{eq:GJxbd-intro}.  However, it is not quite the bound we seek in Theorem~\ref{thm:maintheorem}: the exponential rate in \eqref{eq:GJxbd-intro} involves $\sqrt{1-\beta}\sigma^{-1}=\sqrt{1-\beta}\xi(0)^{-1}$, and we instead want $\xi(\beta)^{-1}$.
The second inequality in \eqref{eq:xichiintro} allows us to make this replacement if $\beta\leq (1-\delta)\wedge \beta(\delta)$ with $\delta\in (0,\tfrac{1}{2}]$.  We prove the following proposition
in Section~\ref{sec:smallbeta}.

\begin{Prop}[Bounds on $G_\beta,F_\beta$ for $\beta<1$]
\label{prop:explicit bound on G for small beta}
Let $d>2$. Let $\cGreen,\CGreen$ be given by Proposition \textup{\ref{Prop:Green}}.
Then, for every $\delta\in (0,\tfrac{1}{2}]$, every $\beta\leq (1-\delta)\wedge \beta(\delta)$, and every $x\in \mathbb Z^d$,
\begin{align}
\label{eq:Gxbd}
	G_\beta(x)&\leq \delta_0(x)+\frac{2\CGreen}{\sigma_J^d}\left(\frac{\sigma_J}{\sigma_J\vee |x|}\right)^{d-2}\exp\left(-\frac{\cGreen}{2}\frac{|x|}{\xi(\beta)}\right),
	\\
\label{eq:Fxbd}
	F_\beta(x)&\leq \frac{2\CGreen}{\sigma_J^d}\left(\frac{\sigma_J}{\sigma_J\vee |x|}\right)^{d-2}\exp\left(-\frac{\cGreen}{2}\frac{|x|}{\xi(\beta)}\right).
\end{align}
\end{Prop}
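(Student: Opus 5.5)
The plan is to combine the domination $G_\beta\le \mathbb C_\beta$ from \eqref{eq:GAb-bis} with the Green function bound \eqref{eq:GJxbd-intro}, and then convert the rate $\sqrt{1-\beta}/\sigma_J$ into $1/\xi(\beta)$ using the second inequality of \eqref{eq:xichiintro} with $\beta'=0$.

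\emph{Step 1.} Since $\beta\le 1-\delta<1$, \eqref{eq:GAb-bis} and \eqref{eq:GJxbd-intro} give
\[
G_\beta(x)\le \delta_0(x)+\frac{\CGreen}{\sigma_J^d}\Bigl(\frac{\sigma_J}{\sigma_J\vee|x|}\Bigr)^{d-2}\exp\Bigl(-\cGreen\sqrt{1-\beta}\,\frac{|x|}{\sigma_J}\Bigr).
\]
For $F_\beta=J*G_\beta$, convolving $G_\beta\le\mathbb C_\beta$ with $J$ gives $F_\beta\le J*\mathbb C_\beta=\sum_{k\ge0}\beta^k\mathbb P_J[Y_{k+1}=\cdot]\le \beta^{-1}(\mathbb C_\beta-\delta_0)$. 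This is awkward for small $\beta$, so I would instead use $J*\mathbb C_\beta=\sum_{m\ge1}\beta^{m-1}\mathbb P_J[Y_m=x]$. Summing \eqref{eq:antix-intro} exactly as in the proof of \eqref{eq:GJxbd-intro} then gives the same bound as for $\mathbb C_\beta$ without the $\delta_0$ term. The factor $\beta^{m-1}$ in place of $\beta^m$ only costs a constant if one is careful. The cleanest way to see this is that $J*\mathbb C_\beta(x)\le \mathbb C_1$-type sums with the exponential factor coming from $\beta^{m-1}\le e^{-(1-\beta)(m-1)}$. The factor $2$ in $2\CGreen$ leaves room for such constants, and if needed I will simply assume that the proof of \eqref{eq:GJxbd-intro} yields the bound for $J*\mathbb C_\beta$ as well.

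\emph{Step 2 (the key step).} Show that $\sqrt{1-\beta}/\sigma_J\ge \tfrac12\,\xi(\beta)^{-1}$. Note that $\chi(0)=1$ and $\xi(0)=\sigma_J$, since $F_0=J$. As $\beta<\beta(\delta)$ (or $\beta=\beta(\delta)$, handled by continuity and monotone limits), \eqref{eq:xichiintro} with $\beta'=0$ gives $(\sigma_J/\xi(\beta))^2\le \chi(\beta)^{-(1-2\delta)}$, i.e.\ $\xi(\beta)^2\le\sigma_J^2\chi(\beta)^{1-2\delta}$. Also $\chi(\beta)\le \|\mathbb C_\beta\|_1=(1-\beta)^{-1}$. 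Hence
\[
\xi(\beta)^2\le \sigma_J^2(1-\beta)^{-(1-2\delta)}\le \sigma_J^2(1-\beta)^{-1},
\]
using $\chi\ge1$ and $1-\beta\le 1$. This gives $\sqrt{1-\beta}/\sigma_J\ge 1/\xi(\beta)$, which is even better than needed, so the constant $\cGreen/2$ and the factor $2$ absorb any slack, for instance from the endpoint case.

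\emph{Main obstacle.} The endpoint $\beta=\beta(\delta)$ needs care. Inequality \eqref{eq:xichiintro} is stated only for $\beta<\beta(\delta)$, and $\xi$ at the endpoint is defined only if $\beta(\delta)<\beta_c$, which holds here since $\beta\le1-\delta$ and, at worst, $\beta_c\ge 1$ in the relevant regime. I would pass to the limit $\beta\uparrow\beta(\delta)$ using the continuity of $\chi,\xi$ on $[0,\beta_c)$. Together with Step 1, this yields \eqref{eq:Gxbd} and \eqref{eq:Fxbd}.
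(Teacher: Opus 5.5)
Your Step~2, which you rightly call the key step, runs the inequalities in the wrong direction. With $\beta'=0$, \eqref{eq:xichiintro} says $(\sigma_J/\xi(\beta))^2\le\chi(\beta)^{-(1-2\delta)}$. This rearranges to the \emph{lower} bound $\xi(\beta)^2\ge\sigma_J^2\chi(\beta)^{1-2\delta}$, not the upper bound you wrote. Likewise, your target $\sqrt{1-\beta}/\sigma_J\ge\frac12\xi(\beta)^{-1}$ is equivalent to the lower bound $\xi(\beta)\ge\sigma_J/(2\sqrt{1-\beta})$. Your chain instead ends at $\xi(\beta)\le\sigma_J/\sqrt{1-\beta}$, which gives $\xi(\beta)^{-1}\ge\sqrt{1-\beta}/\sigma_J$, the reverse of what you conclude. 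The two slips cancel on paper, but nothing is proved, and no upper bound on $\chi$ (such as $\chi(\beta)\le(1-\beta)^{-1}$) can ever yield the needed inequality.

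Your argument also proves too much. It only uses $\beta<1$, so it would give $\xi(\beta)\ge\sigma_J/\sqrt{1-\beta}$ for all $\beta<1$ with $\beta\le\beta(\delta)$. That is false: for weakly self-avoiding walk with small $\lambda$, one has $\beta_c>1$ and $\xi(\beta)$ stays bounded as $\beta\uparrow1$. The hypothesis $\beta\le1-\delta$ is essential. Its absence from your Step~2, together with a conclusion that is \emph{better than needed}, is the symptom of the error.

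What you need is a \emph{lower} bound on $\chi(\beta)$. This is where $\beta\le\beta(\delta)$ genuinely enters, i.e.\ the differential lower bound \eqref{eq:Diff inequ assumption} via $E<\delta$. By \eqref{eq:chi0} and $\beta\le1-\delta$, $\chi(\beta)\ge(1-(1-\delta)\beta)^{-1}\ge(1-\beta^2)^{-1}$. Inserting this into $\xi(\beta)/\sigma_J\ge\chi(\beta)^{\frac12-\delta}$ gives
\[
\sqrt{1-\beta}\,\frac{\xi(\beta)}{\sigma_J}\ge(1-\beta)^{\delta}\Bigl(\frac{1}{1+\beta}\Bigr)^{\frac12-\delta}\ge\delta^{\delta}\Bigl(\frac{1}{2-\delta}\Bigr)^{\frac12-\delta}\ge\frac12 .
\]
Here $\beta\le1-\delta$ is used a second time, to get $(1-\beta)^\delta\ge\delta^\delta$, which compensates for the power $\delta$ lost in the exponent. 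This is the paper's argument.

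A lesser issue concerns $F_\beta$. Rerunning the random walk proof for $J*\mathbb C_\beta$ would change the constants, while the statement asks for $2\CGreen$ with $\CGreen$ from Proposition~\ref{Prop:Green}. Instead:
\begin{itemize}
\item for $\beta\ge\frac12$, use your own bound $F_\beta\le\beta^{-1}(\mathbb C_\beta-\delta_0)\le2(\mathbb C_\beta-\delta_0)$;
\item for $\beta<\frac12$, use monotonicity, $F_\beta\le F_{1/2}\le2(\mathbb C_{1/2}-\delta_0)$. Its decay rate $\cGreen/(\sqrt2\,\sigma_J)$ dominates $\cGreen/(2\xi(\beta))$ because $\xi(\beta)\ge\xi(0)=\sigma_J$.
\end{itemize}
The endpoint $\beta=\beta(\delta)$ that you flag is not where the difficulty lies.
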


We  think of Proposition \ref{prop:explicit bound on G for small beta} as an \emph{initialisation} step.
It explains an origin for the constants $\bfc,\bfC$ of Theorem~\ref{thm:maintheorem}: they must be ``worse'' than the constants
$\cGreen,\CGreen$ appearing in the bound \eqref{eq:GJxbd-intro} on $\mathbb C_\beta$.

The bound \eqref{eq:GAb-bis} is
useless
when $\beta>1$. To
maintain control of
$G_\beta$ in terms of random walk quantities, we replace the random walk of law $\mathbb P_J$ with a new random walk which we call the \emph{effective random walk}.

\subsection{The effective random walk}
\label{sec:strategy}

We now define the effective random walk of law $\mathbb P_\beta$,
which is a main actor in our analysis.
Throughout the section, we fix an admissible kernel $J$ together with a family of functions $G=(G_\beta)_{\beta\geq 0}\in \mathcal G$ and assume they obey Assumption \ref{Ass:G}.
Recall that $F_\beta=J*G_\beta$.

\begin{Def}[Effective random walk]\label{def: effective rw}
Let $0\leq\beta'<\beta_c$. We define a random walk $(X_k)_{k\geq 0}$ started at $X_0=0$ by the transition probability
\begin{equation}
    \mathbb P_{\beta'}[X_{1}=y]:=\frac{F_{\beta'}(y)}{\chi(\beta')}  \qquad (y\in \mathbb Z^d).
\end{equation}
We denote by $\mathbb E_{\beta'}$ the expectation with respect to $\mathbb P_{\beta'}$,
and we denote the Green function of the effective
random walk, for $Z\in [0,1]$ and $x\in \mathbb Z^d$, by
\begin{equation}
\label{eq:Green-effRW}
    \mathbb G_{Z,\beta'}(x)=\sum_{k\geq 0}Z^k \mathbb P_{\beta'}[X_k=x].
\end{equation}
\end{Def}

\begin{Rem} By definition, $\xi(\beta')^2=\mathbb E_{\beta'}[|X_1|_2^2]$.
Also, when $\beta'=0$, the measure $\mathbb P_{\beta'}$ is equal to the random walk measure $\mathbb P_J$ with step distribution $J$.
\end{Rem}

In terms of the effective random walk, the finite-difference inequality
\eqref{eq:SL assumption} can be rewritten, for every $x\in \mathbb Z^d$, as
\begin{equation}\label{eq:overview proof 0}
	G_\beta(x)\leq G_{\beta'}(x)+
    Z_{\beta',\beta}\mathbb E_{\beta'}[G_{\beta}(x-X_1)],
\end{equation}
with
\begin{equation}
	Z_{\beta',\beta}:=(\beta-\beta')\chi(\beta').
\end{equation}
Bounds on $Z_{\beta',\beta}$ can be obtained
from the differential inequalities for $G_\beta$ using elementary
calculus, as we show in Proposition~\ref{prop:Zbd}.

The following lemma presents useful iterated versions of
the finite-difference inequality \eqref{eq:SL assumption},
expressed in terms of the effective random walk with distribution $\mathbb P_{\beta'}$. It
will serve as a replacement of \eqref{eq:GAb-bis} in the regime $\beta>1$.

\begin{Lem}
\label{lem: iterated SL}
Let $0\leq \beta'\leq \beta<\beta_c$. For every $T\geq 1$ and every $x\in \mathbb Z^d$,
\begin{equation}\label{eq:SLTtimes}
	G_{\beta}(x)\leq \sum_{k=0}^{T-1}Z_{\beta',\beta}^k \mathbb E_{\beta'}[G_{\beta'}(x-X_k)]+Z_{\beta',\beta}^T\mathbb E_{\beta'}[G_{\beta}(x-X_T)].
\end{equation}
In addition, if $Z_{\beta',\beta}< 1$ then
\begin{equation}
\label{eq:SLinfty}
	G_\beta(x)\leq \sum_{k\geq 0}Z_{\beta',\beta}^k \mathbb E_{\beta'}[G_{\beta'}(x-X_k)].
\end{equation}
The same statements hold when $G$ is replaced by $F=J*G$.
\end{Lem}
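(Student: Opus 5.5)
The plan is to prove \eqref{eq:SLTtimes} by induction on $T$, starting from the one-step rewriting \eqref{eq:overview proof 0} of \eqref{eq:SL assumption}.

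\textbf{Base case.} For $T=1$, \eqref{eq:SLTtimes} is exactly \eqref{eq:overview proof 0}, since $X_0=0$.

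\textbf{Induction step.} Suppose \eqref{eq:SLTtimes} holds for some $T\ge 1$. Apply \eqref{eq:overview proof 0} at the point $x-X_T$, with $X_T$ fixed:
\begin{equation*}
G_\beta(x-X_T)\le G_{\beta'}(x-X_T)+Z_{\beta',\beta}\,\mathbb E_{\beta'}\bigl[G_\beta(x-X_T-X_1')\bigr],
\end{equation*}
where $X_1'$ is an independent step with law $F_{\beta'}/\chi(\beta')$. Every quantity here is nonnegative, and $Z_{\beta',\beta}\ge 0$. We may therefore take $\mathbb E_{\beta'}$ and multiply by $Z_{\beta',\beta}^T$. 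By the Markov property, $X_T+X_1'$ has the law of $X_{T+1}$, so
\begin{equation*}
Z_{\beta',\beta}^T\mathbb E_{\beta'}[G_\beta(x-X_T)]\le Z_{\beta',\beta}^T\mathbb E_{\beta'}[G_{\beta'}(x-X_T)]+Z_{\beta',\beta}^{T+1}\mathbb E_{\beta'}[G_\beta(x-X_{T+1})].
\end{equation*}
Substituting this into the induction hypothesis gives \eqref{eq:SLTtimes} at $T+1$.

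One subtlety is that $G_\beta$ might take the value $+\infty$. For $\beta<\beta_c$, however, $G_\beta$ decays exponentially and is finite, so all expectations are well defined, possibly infinite. Manipulations in $[0,\infty]$ are legitimate anyway.

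\textbf{The infinite series \eqref{eq:SLinfty}.} This is the main point needing care: we must show that the remainder $Z_{\beta',\beta}^T\mathbb E_{\beta'}[G_\beta(x-X_T)]$ tends to $0$. The plan is to bound
\begin{equation*}
\mathbb E_{\beta'}[G_\beta(x-X_T)]\le \sup_y G_\beta(y)\le \chi(\beta).
\end{equation*}
Here $\chi(\beta)<\infty$ because $\beta<\beta_c$ and $G_\beta$ decays exponentially. So the remainder is at most $Z_{\beta',\beta}^T\chi(\beta)$, which tends to $0$ when $Z_{\beta',\beta}<1$. Meanwhile the partial sums are nondecreasing in $T$, since all terms are nonnegative. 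Letting $T\to\infty$ then yields \eqref{eq:SLinfty}, with the right-hand side possibly infinite.

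\textbf{The statement for $F$.} The finite-difference inequality \eqref{eq:F satisfies A2} for $F$ reads
\begin{equation*}
F_\beta(x)\le F_{\beta'}(x)+Z_{\beta',\beta}\,\mathbb E_{\beta'}[F_\beta(x-X_1)],
\end{equation*}
since $F_{\beta'}*F_\beta(x)=\chi(\beta')\,\mathbb E_{\beta'}[F_\beta(x-X_1)]$. The same induction and limiting argument then apply verbatim, now using $\sup F_\beta\le\|F_\beta\|_1=\chi(\beta)$. Alternatively, one can convolve the $G$-inequalities with $J\ge 0$, using linearity of expectation.
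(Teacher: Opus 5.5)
Your proof is correct and follows the paper's argument. You iterate the one-step inequality \eqref{eq:overview proof 0} (using the Markov property) to get \eqref{eq:SLTtimes}, then let $T\to\infty$ and use $Z_{\beta',\beta}<1$ together with the uniform bound on $G_\beta$ (which you make explicit as $\sup_y G_\beta(y)\le\chi(\beta)<\infty$) to kill the remainder; the $F$ case via \eqref{eq:F satisfies A2} is handled the same way.
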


\begin{proof} Let $T\geq 1$ and $x\in \mathbb Z^d$. By \eqref{eq:overview proof 0},
\begin{equation}
	G_\beta(x)\leq G_{\beta'}(x)+Z_{\beta',\beta}\mathbb E_{\beta'}[G_\beta(x-X_1)].
\end{equation}
By iteration, we obtain
\begin{equation}
	G_\beta(x)\leq \sum_{k=0}^1 Z_{\beta',\beta} \mathbb E_{\beta'}[G_{\beta'}(x-X_k)]+Z_{\beta',\beta}^2\mathbb E_{\beta'}[G_{\beta}(x-X_2)].
\end{equation}

For any $T\ge 1$,
by iterating an appropriate number of times, we obtain \eqref{eq:SLTtimes}.
Then \eqref{eq:SLinfty} follows from \eqref{eq:SLTtimes} after taking the limit $T\rightarrow \infty$. The hypothesis that $Z_{\beta',\beta}< 1$, together with the uniform boundedness of $G_\beta(\cdot)$, implies that the infinite iteration
converges.
\end{proof}

The inequality \eqref{eq:SLinfty} can be rewritten as
    \begin{equation}\label{eq:easy bound G_beta GF}
        G_\beta(x)\leq \frac{\chi(\beta')}{Z_{\beta',\beta}}\sum_{k\geq 0}Z_{\beta',\beta}^{k+1} \mathbb P_{\beta'}[X_{k+1} = x]= \frac{\chi(\beta')}{Z_{\beta',\beta}}
        \left(\mathbb{G}_{Z_{\beta',\beta},\beta'}(x)-\delta_0(x)\right).
    \end{equation}
    If an analogue of Proposition \ref{Prop:Green} could be established for the Green function of the effective random walk, we would immediately obtain a bound on $G_\beta(x)$. However, we do not  
    obtain such a result, and our bounds rather involve sums of $\mathbb{G}_{Z_{\beta',\beta},\beta'}$ over sufficiently large boxes (see Theorem \ref{thm:estimate RW} below).

Lemma~\ref{lem: iterated SL} provides a mechanism with the potential to transfer an estimate
valid at $\beta'$ to an estimate at a larger parameter $\beta\leq \beta(\delta)$.
This enables an inductive procedure that lies at the heart of the proof.
To exploit this mechanism, we will require a \emph{uniform} control over the effective random walk $\mathbb P_\beta$ for $\beta\leq \beta(\delta)$. We formulate this through two fundamental ingredients: $(i)$ Green function estimates for $\mathbb P_\beta$ which hold at scale $\xi(\beta)$; $(ii)$ a stability estimate, which controls $\mathbb P_\beta$ below the scale $\xi(\beta)$. We begin with the Green function estimates, which use  
the  concept of a \emph{regular random walk}.

\subsection{Regular random walks}
\label{sec:regularRW}

A central concept in our analysis is given in the following definition.

\begin{Def}[Regular random walks]
\label{def:regularrw}
Consider a random walk $X$ on $\mathbb R^d$ starting from 0 with law $\mathbb P$
and with variance $\sigma^2:=\mathbb E[|X_1|_2^2]<\infty$.
We assume that the distribution of
the random walk is \emph{symmetric} in the sense that
$\mathbb P[X_1 =x] = \mathbb P[X_1 =y]$ if $y$ can be obtained from $x$ by
permutation of coordinates and/or replacement of a coordinate by its negative.
Let $\cM,\CM>0$.
We say that the
random walk is $(\cM,\CM)$-{\em regular} if
it is symmetric and if the moment generating function
\begin{equation}
    M(s) = \mathbb E \left[\exp\Big(\frac{s}{\sigma}(\mathbf{e}_1 \cdot X_1)\Big)\right]
\end{equation}
satisfies
\begin{align}
    M(\cM) \le \CM.
\end{align}
We say that the random walk is \emph{regular} if it is $(\cM,\CM)$-regular for some $\cM,\CM>0$. Additionally, a family of random walks is \emph{uniformly regular} if each of the random walks in the family is regular with the same pair $(\cM,\CM)$.
\end{Def}

\begin{Rem}
The constant $\Creg$ is somewhat artificial:
what regularity requires is that the moment generating function
$M$ be analytic for $s$ in some
neighbourhood of $0$.  Once we know the analyticity,
since $M(0)=1$ we can have any $\Creg >1$ by taking $\creg$ small enough.
Nevertheless, we place stress on $\Creg$ because the constants of
Theorem~\ref{thm:estimate RW} below depend only on the dimension and on
$(\creg,\Creg)$ and on no other property of the effective random walk.
\end{Rem}

We will prove the following proposition in Section~\ref{sec:regularity-pf}.

\begin{Prop}[Regularity of the effective random walk below $\beta(\delta)$]
\label{prop: regularity below beta(delta)}
Let $\Creg =3$. There exist $\deltareg>0$ (which can be chosen equal to $2^{-9}$) and $\creg>0$ (which only depends on $d$) such that for every $\beta< \beta(\deltareg)$, the effective random walk at parameter $\beta$ is $(\creg,\Creg)$-regular.
\end{Prop}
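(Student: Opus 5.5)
The plan is to control the symmetrised Laplace transform of $F_\beta$ by integrating the differential inequality \eqref{eq:dFub} in $\beta$, and to compare it with the susceptibility and the second moment, whose growth is bounded below through \eqref{eq:dFlb}. For $t\ge 0$ set
\[
L_\beta(t):=\sum_{x\in\mathbb Z^d}F_\beta(x)\cosh(tx_1),\qquad N_\beta(t):=L_\beta(t)-\chi(\beta),\qquad S_\beta:=\Vert |x|_2^2\cdot F_\beta\Vert_1=\chi(\beta)\xi(\beta)^2 .
\]
By the $\mathbb Z^d$-symmetry of $F_\beta$, $M(s)=L_\beta(s/\xi(\beta))/\chi(\beta)$. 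So the goal is $L_\beta(\creg/\xi(\beta))\le 3\chi(\beta)$ for all $\beta<\beta(\deltareg)$.

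The first step is the basic inequalities. Multiply \eqref{eq:dFub} by $e^{tx_1}$ and sum. Since the Laplace transform is multiplicative under convolution, and by symmetry, this gives $\partial_\beta L_\beta(t)\le L_\beta(t)^2$, hence
\[
\frac{1}{L_\beta(t)}\ge\frac{1}{L_{\beta'}(t)}-(\beta-\beta').
\]
Equivalently, $\partial_\beta N\le 2\chi N+N^2$. Summing \eqref{eq:dFlb}, and summing it after multiplying by $|x|_2^2$, and using the definition \eqref{eq:definition error} of $E$, gives $\partial_\beta\chi\ge(1-\delta)\chi^2$. It also gives $\partial_\beta S\ge 2(1-O(\delta))\chi S$ for $\beta<\beta(\delta)$: the cross terms of $|x|_2^2$ vanish by symmetry, and the $|x|_2^2H$ term is at most $E\,\xi^2\chi^2\le\delta S\chi$. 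This is the same input as Proposition~\ref{prop: rough bounds chi xi} and \eqref{eq:xichiintro}, which I take as given.

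At $\beta=0$ we have $F_0=J$, which is finite range with $R_J\le c_0^{-1}\sigma_J$. Hence $N_0(t)\le C(c_0)\,t^2S_0$ for $t\sigma_J\le c_0$. Since $(\cosh u-1)/u^2$ is increasing, we also have the monotone comparison $N_\beta(t)\le (t/t')^2N_\beta(t')$ for $t\le t'$.

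The second step is a Gronwall argument for the ratio $N/S$ at a fixed $t$. From the inequalities above,
\[
\partial_\beta\log\frac{N_\beta(t)}{S_\beta}\le O(\delta)\chi(\beta)+N_\beta(t).
\]
I would run this as a multi-scale induction rather than from $0$ directly. Choose $0=\beta_0<\beta_1<\dots$ with $\chi(\beta_{k+1})=2\chi(\beta_k)$. Then $\partial_\beta(1/\chi)\le-(1-\delta)$ gives $\int_{\beta_k}^{\beta_{k+1}}\chi\le(1+O(\delta))\log 2$, so on each interval the $O(\delta)\chi$ term costs only a factor $1+O(\delta)$.

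The induction hypothesis is a quantitative statement at every scale $k$: $N_{\beta}(s/\xi(\beta))\le D s^2\chi(\beta)$ for all $s\le s_0$ and all $\beta\le\beta_k$. To pass to $\beta\in[\beta_k,\beta_{k+1}]$ with $t=s/\xi(\beta)$, I start from $\beta_k$. There, by the comparison $\xi(\beta_k)\le\xi(\beta)$ from \eqref{eq:xichiintro}, the hypothesis gives $N_{\beta_k}(t)\le Ds^2(\xi(\beta_k)/\xi(\beta))^2\chi(\beta_k)$. The term $\int N$ is then $O(s^2)$ small, and integrating yields $N_\beta(t)\le(1+O(\delta)+O(s^2))\,t^2\,\xi(\beta_k)^2\chi(\beta)\cdot D'$. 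The closing requirement is $(\xi(\beta_k)/\xi(\beta))^2\le 1$ against the loss factor $1+O(\delta)$. By \eqref{eq:xichiintro}, $(\xi(\beta_k)/\xi(\beta))^2\le (\chi(\beta_k)/\chi(\beta))^{1-2\delta}$, which on most of the interval is a genuine gain of a factor $\approx 1/2$. Near $\beta_k$ itself I would instead use the one-step inequality $1/L_\beta\ge 1/L_{\beta_k}-(\beta-\beta_k)$, with $\beta-\beta_k\le O(1)/\chi(\beta_k)$, to lose only a bounded factor. Finally, taking $s=\creg$ small (depending on $d$ only) and $\delta=\deltareg=2^{-9}$ gives $M(\creg)=1+N/\chi\le 3$.

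The main obstacle is exactly this closing of the induction. The second-moment coefficient is rigid: $N\approx t^2S/(2d)$ to leading order. So any multiplicative loss of the form $\chi^{O(\delta)}$ accumulated from $0$ to $\beta$ must not be allowed to compound across scales. A naive integration from $\beta=0$ produces $N_\beta/(t^2S_\beta)\lesssim\chi(\beta)^{O(\delta)}$, which is unbounded. If the per-scale bookkeeping above turns out to still leak a factor $1+O(\delta)$ per doubling, my fallback is to run the induction on the normalised quantity $N_\beta(t)/(t^2S_\beta)$ only for $t\le s_0/\xi(\beta)$ with $s_0$ fixed. At each new scale I would re-anchor at $t'=s_0/\xi(\beta_k)$ via the monotone comparison $N(t)\le(t/t')^2N(t')$, so that the hypothesis used at $\beta_k$ is always at the same normalised scale $s_0$ and the constant is reset rather than multiplied. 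The explicit choice $\deltareg=2^{-9}$ suggests the authors absorb such per-scale $O(\delta)$ losses into numerical slack of this kind.
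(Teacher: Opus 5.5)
There is a genuine gap. The scale-by-scale Gronwall induction on $N/S$ does not close, and the ``gain'' meant to close it comes from an algebra slip.

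First, a smaller error. Only $\partial_\beta\chi\ge(1-E)\chi^2$ is available, so the correct inequality is $\partial_\beta N\le 2\chi N+N^2+\delta\chi^2$, not $2\chi N+N^2$. In $\partial_\beta\log(N/S)$ this adds the term $\delta\chi^2/N\le 2d\,\delta\chi/(t\xi)^2$. That costs a factor $e^{O(d\delta/s^2)}$ per doubling, so $\delta$ would have to be small compared with $s^2$.

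The decisive problem is in the Gronwall step itself. There one has $S_\beta/S_{\beta_k}=\frac{\chi(\beta)}{\chi(\beta_k)}\bigl(\frac{\xi(\beta)}{\xi(\beta_k)}\bigr)^2$. Combining this with $N_{\beta_k}(t)\le Ds^2(\xi(\beta_k)/\xi(\beta))^2\chi(\beta_k)$ gives, for $t=s/\xi(\beta)$, only $N_\beta(t)\le(1+O(\delta)+O(s^2))\,Ds^2\chi(\beta)$. The factor $r^2=(\xi(\beta_k)/\xi(\beta))^2$ you gain from $N(t)\le(t/t')^2N(t')$ is exactly consumed by the growth of $\xi^2$ inside $S$. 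Your displayed bound with $t^2\xi(\beta_k)^2$ has dropped this factor.

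Consequently $D$ is multiplied by a factor larger than $1$ at every scale, and you end with $D\,\chi(\beta)^{O(\delta+s^2)}$, which is precisely the compounding you feared. This is not caused by $E$. For the lattice Green function, $\partial_\beta\log(N/S)=N$ holds with equality, so $N/S$ at fixed $t$ strictly increases. No propagation of a bound on $N$ as a whole via the $r^2$-comparison can contract. Your fallback re-anchoring is that same comparison, so it resets nothing.

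The missing idea is to let the induction hypothesis act only on the part of the transform beyond the second moment, which scales like $r^4$, while using the second moment exactly. This is what the paper does.

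It applies the one-step inequality you already have, $1/L_\beta\ge 1/L_{\beta_k}-(\beta-\beta_k)$ (this is Corollary~\ref{coro:Miteration}), across a whole scale with large ratio $\chi(\beta_{k+1})/\chi(\beta_k)=L=2^7$. This gives $Z_k\le 1-\frac1{2L}$, which requires $\delta\le\frac1{4L}=2^{-9}$, together with $r_k^2\le 2/L$.

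It then bounds $M_{\beta_k}(c_1r_k)$ using $\cosh u\le 1+\frac{u^2}{2}+(u/v)^4\cosh v$. Here $\mathbb E_{\beta_k}[U^2]=\frac1d$ is exact, and the crude hypothesis $M_{\beta_k}(c_1)\le 3$ enters only through $3r_k^4\le 12/L^2\ll 1/L$. The result is $M_{\beta_{k+1}}(c_1)\le(2+30/L^2)/(1-30/L)<3$, a genuine contraction.

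The paper derives the Laplace bound from the pointwise iterated inequality. This also removes a further issue in your argument: $\partial_\beta L\le L^2$ silently presupposes that $L_\beta(s/\xi(\beta))$ is finite, which is not known a priori.
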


The moment generating function
of the effective random walk
at parameter $\beta$ is denoted, for $s\in \mathbb R$, by
\begin{equation}
\label{eq:mgfX1}
	M_{\beta} (s)
    :=
    \mathbb E_\beta \left[\exp\Big(\frac{s}{\xi(\beta)}(\mathbf{e}_1 \cdot X_1)\Big)\right]
    =
    \sum_{x\in\mathbb Z^d} e^{sx_1/\xi(\beta)} \mathbb P_\beta[X_1=x].
\end{equation}
A consistency check for why
Proposition~\ref{prop: regularity below beta(delta)} could be true is the following.
If we knew that
$F_\beta(x) \le C |x|^{-(d-2)}\exp[-c |x|/\xi(\beta)]$
with $c>0$ independent of $\beta$, then we would have, for any $s<c$,
\begin{align}
\label{eq:regularity-heuristic}
    M_\beta(s) \le \frac{C}{\chi(\beta)} \sum_x e^{sx_1/\xi(\beta)}
    \frac{1}{|x|^{d-2}}e^{-c |x|/\xi(\beta)}
    \lesssim \frac{1}{\chi(\beta)} \int_0^\infty r e^{-(c-s) r/\xi(\beta)} \mathrm{d} r
    \lesssim \frac{\xi(\beta)^2}{\chi(\beta)}.
\end{align}
The upper bound on $\xi(\beta)^2/\chi(\beta)$ should in applications
 be independent of $\beta$.
However, we cannot implement this simple approach because: $(i)$
we can only prove (later) an upper bound on $G$ (or $F$)
with power $|x|^{-(d-2-\varepsilon)}$, and $(ii)$ we do not prove that
$\xi^2 \asymp \chi$ (we can only prove this up to a power $\delta$).
So the proof of Proposition~\ref{prop: regularity below beta(delta)} will be less direct.

An ingredient in the proof is the following corollary of Lemma~\ref{lem: iterated SL}.
It will allow us to transfer an estimate on $M_{\beta'}(s)$ to $M_\beta(s)$ when $\beta'<\beta$ are close together. We can then proceed inductively to show that the effective random walk is regular all the way to $\beta(\delta)$.  This induction
proof is given in Section~\ref{section: regularity}.

\begin{Coro}
\label{coro:Miteration}
Let $0\leq \beta'\leq \beta<\beta_c$ and let $s \in \mathbb R$.
If $Z_{\beta',\beta}M_{\beta'}(s\xi(\beta')/\xi(\beta))) < 1$
then
\begin{equation}
\label{eq:Mfirstbd}
    M_\beta(s)
    \le
    \frac{\chi(\beta')}{\chi(\beta)}
    \frac{M_{\beta'}(s\xi(\beta')/\xi(\beta))}{1-Z_{\beta',\beta}M_{\beta'}(s\xi(\beta')/\xi(\beta))}
    .
\end{equation}
\end{Coro}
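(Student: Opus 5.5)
We apply the iterated inequality \eqref{eq:SLinfty} of Lemma~\ref{lem: iterated SL}, in its version for $F=J*G$, and then compute exponential moments directly. Write $Z=Z_{\beta',\beta}$ and $t=s\xi(\beta')/\xi(\beta)$, so that $e^{sx_1/\xi(\beta)}=e^{t x_1/\xi(\beta')}$. Summing the inequality for $F$ against $e^{sx_1/\xi(\beta)}$ should produce a geometric series in $Z M_{\beta'}(t)$, which converges under the hypothesis.

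First we need $Z<1$ in order to use \eqref{eq:SLinfty}. By symmetry of the step distribution and Jensen's inequality, $M_{\beta'}(t)\ge \exp(t\,\mathbb E_{\beta'}[\mathbf e_1\cdot X_1]/\xi(\beta'))=1$. Hence the hypothesis $ZM_{\beta'}(t)<1$ forces $Z<1$, and \eqref{eq:SLinfty} for $F$ gives, for every $x$,
\[
F_\beta(x)\le \sum_{k\ge0}Z^k\,\mathbb E_{\beta'}[F_{\beta'}(x-X_k)].
\]

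Next, multiply by $e^{sx_1/\xi(\beta)}$, sum over $x\in\mathbb Z^d$, and interchange the sums by Tonelli, since all terms are nonnegative. For each $k$, the substitution $x=X_k+y$ gives
\[
\sum_x e^{tx_1/\xi(\beta')}\mathbb E_{\beta'}[F_{\beta'}(x-X_k)]
=\mathbb E_{\beta'}\big[e^{t\,\mathbf e_1\cdot X_k/\xi(\beta')}\big]\sum_y e^{ty_1/\xi(\beta')}F_{\beta'}(y).
\]
The second factor is $\chi(\beta')M_{\beta'}(t)$. Since $X_k$ is a sum of $k$ i.i.d.\ steps, the first factor is $M_{\beta'}(t)^k$. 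Summing over $k$ gives
\[
\chi(\beta)M_\beta(s)=\sum_x e^{sx_1/\xi(\beta)}F_\beta(x)\le \chi(\beta')M_{\beta'}(t)\sum_{k\ge0}\big(ZM_{\beta'}(t)\big)^k .
\]
The geometric series converges by hypothesis. Dividing by $\chi(\beta)$ yields \eqref{eq:Mfirstbd}.

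The only delicate point is justifying the use of \eqref{eq:SLinfty}, which needs $Z<1$; the Jensen/symmetry observation above handles this. Everything else is exchanging nonnegative sums, and possibly infinite quantities cause no trouble because the final bound is finite under the hypothesis.
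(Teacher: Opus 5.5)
Your proposal is correct and follows the paper's own proof: apply \eqref{eq:SLinfty} to $F$, weight by $e^{sx_1/\xi(\beta)}$, factorise the exponential moment of the walk into $M_{\beta'}(s\xi(\beta')/\xi(\beta))^{k+1}$, and sum the geometric series. Your observation that symmetry and Jensen give $M_{\beta'}(s\xi(\beta')/\xi(\beta))\ge 1$ is a small addition the paper leaves implicit. It shows that the hypothesis forces $Z_{\beta',\beta}<1$, which is the condition needed to invoke \eqref{eq:SLinfty}.
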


The inequality \eqref{eq:Mfirstbd} makes apparent a need to control $\xi(\beta')/\xi(\beta)$ relative to $\chi(\beta')/\chi(\beta)$. This is achieved via the differential lower bound \eqref{eq:Diff inequ assumption}, which provides such a comparison by means of \eqref{eq:xichiintro}.

\begin{proof}[Proof of Corollary~\textup{\ref{coro:Miteration}}] By definition, by \eqref{eq:SLinfty}, and by the factorisation property
of the moment generating function of a sum of independent random variables,
\begin{align}
    M_{\beta}(s)
	&
    = \frac{1}{\chi(\beta)}
    \sum_{y\in \mathbb Z^d}F_{\beta}(y)\exp(s(\mathbf{e}_1\cdot y)/\xi(\beta) )
	\nonumber \\&
\leq \frac{\chi(\beta')}{\chi(\beta)}
\sum_{k\ge0}Z_{\beta',\beta}^k\, \mathbb E_{\beta'}[\exp(s({\bf e}_1\cdot X_{k+1})/\xi(\beta))]
    \nonumber \\
    & =\frac{\chi(\beta')}{\chi(\beta)}
    \sum_{k\ge0}Z_{\beta',\beta}^k\,
    [M_{\beta'}(s\xi(\beta')/\xi(\beta))]^{k+1}
	.
\label{eq:Miteration}
\end{align}
By assumption, the geometric series converges and the proof is complete.
\end{proof}

We use the regularity of the effective random walk
in Section~\ref{sec:proof of main theorem} to prove
our main result Theorem~\ref{thm:maintheorem}. The proof of Theorem~\ref{thm:maintheorem} relies on
the following general theorem about regular random walks, which
is of independent interest.
Its proof, which uses only classical random walk techniques,
is deferred to Appendix~\ref{appendix: rw}.
To the best of our knowledge,
Theorem~\ref{thm:estimate RW} has not appeared previously in the
literature.

We write the Green function of a generic random walk $(X_n)_{n \ge 0}$ on $\mathbb R^d$ with $X_0=0$,
for $\mu\in [0,1]$ and $A \subset \mathbb R^d$, as
\begin{equation}
    \mathbb G_\mu(A):=\sum_{m\geq 0}\mu^m \mathbb P[X_m\in A].
\end{equation}
For $x\in \mathbb R^d$, recall that
$|x|=\max_{1\leq i \leq d}|x_i|$.
For $k\geq 1$, we define the boxes
\begin{equation}\label{eq:B-box-def}
	B_k:=[-k,k]^d, \qquad B_k(x):=B_k+x,
\end{equation}
as well as their discrete counterparts,
\begin{equation}
\label{eq:Lambda-box-def}
    \Lambda_k:=[-k,k]^d\cap \mathbb Z^d , \qquad
    \Lambda_k(x)=\Lambda_k +x
\end{equation}
of radius $k$, centred at $0$ and at $x$ respectively.

\begin{Thm}[Anti-concentration and Green function estimates for regular random walks]
\label{thm:estimate RW}
Let $d>2$.
For every $\cM,\CM>0$, there exist $\mathsf{C}_{\textup{RW}}=\Crw(\cM,\CM,d)>0$ and $\crw=\crw(\cM,\CM,d)>0$ such that, for every $(\cM,\CM)$-regular random walk $X$ (started at $0$) on $\mathbb R^d$ of law $\mathbb P$,
Green function $\mathbb G$, and variance $\sigma^2$,
for every $\mu \in [0,1]$, every $m\ge 1$
and for every $x\in \mathbb Z^d$,
\begin{equation}
\label{eq:ac-estimate}
    \mathbb P[X_m\in B_\sigma(x)]
    \leq
    \frac{\Crw}{m^{d/2}}
    \exp\left(- \crw  \frac{|x|}{\sigma \sqrt{m}}\right)
    ,
\end{equation}
	\begin{equation}
\label{eq:bound massive Green function regular walk}
		\mathbb G_\mu ( B_\sigma(x))
    \leq \Crw\left(\frac{\sigma}{\sigma\vee |x|}\right)^{d-2} \exp\left(-\crw \sqrt{1-\mu} \frac{|x|}{\sigma}\right) .
	\end{equation}
\end{Thm}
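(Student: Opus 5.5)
By scaling $X\mapsto X/\sigma$ we may assume $\sigma=1$. The walk then lives on $\mathbb R^d$, is symmetric, has $\mathbb E[|X_1|_2^2]=1$, and satisfies $M(\cM)\le\CM$. By symmetry in the coordinates, the same bound holds in every direction $\pm\mathbf e_i$. Hence every coordinate of $X_1$ has an exponential moment, and $\mathbb E[e^{\theta\cdot X_1}]\le C$ for all $|\theta|\le c$, with constants depending only on $(\cM,\CM,d)$. The plan is to prove the anti-concentration bound \eqref{eq:ac-estimate} first. The Green function bound \eqref{eq:bound massive Green function regular walk} will then follow by summation over $m$.

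For \eqref{eq:ac-estimate} we split it into two factors: a local bound $\mathbb P[X_m\in B_1(x)]\le Cm^{-d/2}$ and an exponential tail factor.

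\emph{Tail factor.} Fix a coordinate $i$ with $|x_i|=|x|$. By Chernoff with a tilt $\theta=\pm t\mathbf e_1$, $|t|\le c$, we get $\mathbb E[e^{tX_1\cdot\mathbf e_1}]\le 1+Ct^2\le e^{Ct^2}$. This uses symmetry (so the mean is zero) and a Taylor bound controlled by $M(\cM)$ via $e^{u}\le 1+u+u^2e^{|u|}$. It gives $\mathbb P[|X_m\cdot\mathbf e_i|\ge |x|-1]\le 2\exp(-c'|x|/\sqrt m)$ once we choose $t\asymp m^{-1/2}$.

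\emph{Local bound.} This is the main obstacle, because the walk is only assumed regular and may be lattice-like, degenerate in fine structure, or even supported on finitely many points. We cannot appeal to a characteristic-function local CLT with uniform constants. Instead we use a concentration-function argument. First we show there exist $\rho,p>0$ depending only on $(\cM,\CM,d)$ such that $\mathbb P[X_1\cdot\mathbf e_i\ge\rho]\ge p$ for each $i$. This follows from unit variance together with uniformly bounded fourth moments: Paley--Zygmund on $(X_1\cdot\mathbf e_1)^2$, then symmetry, then the identity $\mathbb E[(X_1\cdot\mathbf e_1)^2]=1/d$ from the permutation symmetry. Then we apply the Kolmogorov--Rogozin inequality coordinatewise, or more robustly its multidimensional version (Esseen's inequality). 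This gives
\[
Q(X_m;1):=\sup_y\mathbb P[X_m\in B_1(y)]\le C m^{-d/2}.
\]
If that route is awkward, an alternative is to bound the smoothed characteristic function. We would use Esseen's bound $Q(X_m;1)\le C\int_{|\theta|\le 1}|\phi(\theta)|^m\,d\theta$, valid since $\phi$ is real by symmetry, together with $|\phi(\theta)|\le 1-c|\theta|^2$ for $|\theta|\le 1$. The latter follows from $1-\phi(\theta)=\mathbb E[1-\cos(\theta\cdot X_1)]\ge c\,\mathbb E[(\theta\cdot X_1)^2\mathbf 1_{|X_1|\le K}]$ and truncation using the exponential moments. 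Symmetry lets us use $\mathbb E[(\theta\cdot X_1)^2]=|\theta|_2^2/d$.

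To combine the two factors, apply the Markov property at time $\lceil m/2\rceil$. On the event $|X_{m/2}|\ge|x|/2$ we use the tail bound for the first half and the local bound for the second half. On the complementary event the second half must travel distance $|x|/2$, and we use the local bound for the first half and the tail bound for the second. The product gives $Cm^{-d/2}e^{-c|x|/\sqrt m}$.

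For \eqref{eq:bound massive Green function regular walk}, the $m=0$ term contributes at most $1$. For $m\ge1$ we sum
\[
\sum_{m\ge1}\mu^m m^{-d/2}e^{-c|x|/\sqrt m}.
\]
We split at $m=|x|^2$. For $m\le |x|^2$ the exponential makes the sum bounded by $C(1\vee|x|)^{-(d-2)}e^{-c''|x|}\cdot$ stuff, in particular of the required form. For $m\ge|x|^2$ we use $\mu^m\le e^{-(1-\mu)m}$ and compare with $\int_{|x|^2}^\infty m^{-d/2}e^{-(1-\mu)m}\,dm\le C|x|^{-(d-2)}e^{-(1-\mu)|x|^2}$. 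Finally we use $(1-\mu)|x|^2+|x|/\sqrt m\ge c\sqrt{1-\mu}|x|$ to produce the factor $\exp(-\crw\sqrt{1-\mu}|x|)$; this uses $d>2$ for convergence. Undoing the scaling gives the stated bound with the factor $(\sigma/(\sigma\vee|x|))^{d-2}$.
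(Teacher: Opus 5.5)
The anti-concentration part of your proposal is sound in outline, modulo the points in the second paragraph. The real gap is that \eqref{eq:bound massive Green function regular walk} does \emph{not} follow from \eqref{eq:ac-estimate} by summing over $m$.

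Work in units $\sigma=1$ and set $s=\sqrt{1-\mu}$. When you specialise the Chernoff tilt to $t\asymp m^{-1/2}$, you discard exactly the information needed to produce $e^{-\crw s|x|}$.

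\begin{itemize}
\item Your claim that the part $m\le|x|^2$ of $\sum_m\mu^m m^{-d/2}e^{-c|x|/\sqrt m}$ is $O(|x|^{-(d-2)}e^{-c''|x|})$ is false. For $m\asymp|x|^2$ the factor $e^{-c|x|/\sqrt m}$ is of order one, and these terms alone sum to order $|x|^{-(d-2)}$.
\item The inequality $(1-\mu)|x|^2+|x|/\sqrt m\ge c\,s|x|$ cannot be used in that range. For $m<|x|^2$, the bound $\mu^m\le e^{-s^2m}$ does not give $e^{-s^2|x|^2}$.
\item No other splitting can rescue this input. Minimising $s^2m+c|x|/\sqrt m$ over $m$ gives only order $(s|x|)^{2/3}$. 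So the single term with $m\asymp(|x|/s^2)^{2/3}$ is at least a power of $|x|$ times $e^{-C(s|x|)^{2/3}}$. For fixed $s$ and $|x|\to\infty$, this dominates $|x|^{-(d-2)}e^{-\crw s|x|}$ for any choice of constants.
\end{itemize}

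The fix is to keep the tilt free, which is what the paper does in Proposition~\ref{prop:anti}. Carry your Chernoff step through the Markov splitting at time $\lceil m/2\rceil$ without fixing $\tau$. This gives $\mathbb P[X_m\in B_1(x)]\le Cm^{-d/2}\exp(Cm\tau^2-\tau|x|/2)$ for every $\tau\in[0,t_0]$.

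Then split the Green function sum at $m=d/s^2$ instead of $m=|x|^2$:
\begin{itemize}
\item For $m\ge d/s^2$, take $\tau$ a small multiple of $s$ so that $\mu^m e^{Cm\tau^2}\le 1$. This leaves $e^{-cs|x|}\sum_{m\ge d/s^2}m^{-d/2}\lesssim s^{d-2}e^{-cs|x|}\lesssim |x|^{-(d-2)}e^{-cs|x|/2}$.
\item For $m\le d/s^2$, the weak form is enough, since $|x|/\sqrt m\ge s|x|/\sqrt d$ there. The factor $|x|^{-(d-2)}$ comes from the unimodality of $m\mapsto m^{-d/2}e^{-a/\sqrt m}$.
\end{itemize}

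There are also two smaller problems in your local bound $\sup_y\mathbb P[X_m\in B_1(y)]\le Cm^{-d/2}$.

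First, the Kolmogorov--Rogozin and Esseen route:
\begin{itemize}
\item Kolmogorov--Rogozin applied coordinatewise gives only $m^{-1/2}$ per coordinate. The coordinates of $X_m$ are dependent, so you cannot multiply these bounds.
\item Esseen's multidimensional inequality needs $\sup_z\mathbb P[Y\in D(z;M)]\le 1-c$, where $D(z;M)=\{y:\min_i|y_i-z_i|\le M\}$. This requires all coordinates to be simultaneously away from $z$.
\item Your per-coordinate bound $\mathbb P[X_1\cdot\mathbf e_i\ge\rho]\ge p$ does not give this, and the condition genuinely fails for a single step (for example simple random walk with $z=0$).
\item The paper's repair is to apply Esseen to blocks of $d$ consecutive steps. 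By symmetry, the $i$-th step in a block pushes coordinate $i$ beyond $M$ while keeping the other coordinates nonnegative, with probability at least a fixed constant to the power $d$.
\end{itemize}

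Second, in your Fourier alternative, the bound $|\phi(\theta)|\le 1-c|\theta|^2$ on all of $|\theta|\le 1$ is false in general.
\begin{itemize}
\item Take $X_1\in\{0,\pm a\mathbf e_i\}$ with $\sigma=1$ and $a$ slightly larger than $2\pi$. This walk is regular for suitable $(\cM,\CM)$, and $\phi(2\pi\mathbf e_1/a)=1$ even though $|2\pi\mathbf e_1/a|<1$.
\item The repair is to restrict to $|\theta|\le\epsilon(\cM,\CM,d)$ and apply Esseen's concentration inequality at radius $1/\epsilon\ge1$. That radius dominates the radius-$1$ concentration function, and your truncation argument then goes through.
\end{itemize}
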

\begin{Rem}

Theorem~\ref{thm:estimate RW} provides estimates for the Green function averaged
over a box whose size is the standard deviation.
Pointwise estimates do not generally hold
for regular random walks, as is illustrated in Example~\ref{ex:rw}. In particular, the regularity condition is not strong enough to obtain pointwise bounds on $G_{Z_{\beta',\beta},\beta'}$ in \eqref{eq:easy bound G_beta GF}.
\end{Rem}

By Proposition~\ref{prop: regularity below beta(delta)},
the effective random walk with $\beta<\beta(\deltareg)$ is regular.
It therefore satisfies the anti-concentration estimate \eqref{eq:ac-estimate}, and its Green function
$\mathbb G_{Z,\beta}(x)$ (defined in \eqref{eq:Green-effRW}) obeys
the averaged estimate \eqref{eq:bound massive Green function regular walk}.

\subsection{Stability below the correlation length}
\label{sec:stabilityintro}

A uniform regularity statement for the effective random walk below $\beta(\delta)$ (for $\delta$ small enough) is given in
Section~\ref{sec:regularRW}.
Proposition~\ref{prop: regularity below beta(delta)} formulates in a convenient way the fact that---as we increase $\beta$ to $\beta(\delta)$---we have uniform control over the effective random walk $\mathbb P_\beta$ for all scales larger than or equal to $\xi(\beta)$. However, we also require control of the effective random walk at
scales  below $\xi(\beta)$.
The next proposition provides this control at small scales.
Proposition~\ref{Prop:Stability section 2} is a fundamental tool in our analysis.

\begin{Prop}\label{Prop:Stability section 2} Let $d>2$. Suppose that $J$ and $G$ both obey Assumption \textup{\ref{Ass:G}}. Then there exist $\deltastab\in (0,\tfrac{1}{10}]$ and $\Cstab>0$, depending only on $d$, such that, for every $\beta<\beta(\deltastab)$ and every $\beta'\leq \beta$,
\begin{equation}\label{eq:Stability Section 2}
	\mathbb P_\beta[|X_1|\leq \xi(\beta')]\leq \Cstab \frac{\chi(\beta')}{\chi(\beta)}.
\end{equation}
\end{Prop}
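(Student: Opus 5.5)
The plan is to prove a slightly stronger, translation-uniform statement by a continuity (bootstrap) argument in $\beta$, moving up a chain of intermediate parameters. For $L>0$ and $t<\beta_c$ write
\[
m_t(L):=\sup_{w\in\mathbb Z^d}\sum_{y\in\Lambda_L(w)}F_t(y).
\]
Up to adjusting constants by covering boxes, the target inequality \eqref{eq:Stability Section 2} reads $\sum_{|y|\le \xi(\beta')}F_\beta(y)\le \Cstab\,\chi(\beta')$. By the second inequality of \eqref{eq:xichiintro}, $\chi(\beta')$ is comparable, up to a power $\delta$-loss, to $\chi(\beta)\,(\xi(\beta')/\xi(\beta))^2$. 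So the natural hypothesis to propagate is
\[
\mathbf{(H_t)}:\qquad m_t(L)\le K\,\chi(t)\Big(\frac{L}{\xi(t)}\Big)^{2-\kappa}\quad \text{for all } \sigma_J\le L\le \xi(t),
\]
with $\kappa=\kappa(\delta)\to 0$ and $K=K(d)$. Once we know $\mathbf{(H_\beta)}$, we take $L=\xi(\beta')$ and choose the $\delta$-dependent exponents in \eqref{eq:xichiintro} to absorb the $\kappa$-loss; this gives \eqref{eq:Stability Section 2}. Below $\beta\le 1/2$ (say), the hypothesis $\mathbf{(H_t)}$ follows from Proposition~\ref{prop:explicit bound on G for small beta}: summing the pointwise bound \eqref{eq:Fxbd} over a box gives an $L^2$ law, while $\chi,\xi$ are bounded there.

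For the inductive step, fix $\beta_0<\beta$ for which $\mathbf{(H_{\beta_0})}$ is known, and build a chain $\beta_0<\beta_1<\dots<\beta_n=\beta$ along which $\chi$ grows by a fixed factor, say $\chi(\beta_{i+1})=2\chi(\beta_i)$. By integrating \eqref{eq:dFub} and \eqref{eq:dFlb} as in Proposition~\ref{prop:Zbd}, each $Z_i:=Z_{\beta_i,\beta_{i+1}}$ is then bounded by a constant strictly less than $1$ (close to $\tfrac12$). By \eqref{eq:xichiintro}, $\xi(\beta_i)^2$ also grows geometrically along the chain. Sum the one-step finite-difference inequality \eqref{eq:F satisfies A2} over a box $\Lambda_L(w)$:
\[
F_{\beta_{i+1}}(\Lambda_L(w))\le F_{\beta_i}(\Lambda_L(w))+Z_i\,\mathbb E_{\beta_i}\big[F_{\beta_{i+1}}(\Lambda_L(w)-X_1)\big].
\]
Iterating this in $i$ down to the level where $L\approx\xi(\beta_i)$ telescopes. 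The first terms reduce to the induction hypothesis at a level where $\xi(\beta_i)$ is comparable to $L$. Each error term I would bound by
\[
Z_i\,\chi(\beta_{i+1})\,\sup_w\mathbb P_{\beta_i}\big[X_1\in\Lambda_L(w)\big]\;\le\; Z_i\,\chi(\beta_{i+1})\,\frac{m_{\beta_i}(L)}{\chi(\beta_i)} .
\]
This is valid since $\sum_y F_{\beta_{i+1}}(y)\,\mathbb P[X_1\in\Lambda_L(w)-y]$ is at most the total mass times the translate-uniform box probability. Applying $\mathbf{(H_{\beta_i})}$ then gives a term $\lesssim K\chi(\beta_{i+1})(L/\xi(\beta_i))^{2-\kappa}$, and these decay geometrically in $i$ relative to the target $K\chi(\beta)(L/\xi(\beta))^{2-\kappa}$, because $\chi(\beta_{i+1})/\chi(\beta)=2^{i+1-n}$ while $(\xi(\beta)/\xi(\beta_i))^{2-\kappa}\approx 2^{(n-i)(1-\kappa/2)(1\pm O(\delta))}$. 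The resulting sum is of the form $\sum_j 2^{-j\theta}$ with $\theta>0$, provided $\kappa$ dominates the $\delta$-losses. Regularity is also available: by Proposition~\ref{prop: regularity below beta(delta)} and the anti-concentration estimate \eqref{eq:ac-estimate} at $m=1$, we have $\mathbb P_{\beta_i}[X_1\in B_{\xi(\beta_i)}(w)]\le\Crw$ uniformly. I would use this to control the range $L\approx\xi(\beta_i)$, where $\mathbf{(H)}$ says nothing better, so that $m_t(\xi(t))\le \Crw\,\chi(t)$ closes the top scale.

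The main obstacle is the exponent bookkeeping needed to close the bootstrap with the same constant $K$. The telescoping produces a constant times $K$ in the error sum, so the per-step gain from the geometric decay must beat the multiplicative constants ($Z_i\le$ const, the factor $2$). This is why I would take the exponent $2-\kappa$ strictly less than $2$, rather than exactly $2$. It makes the series converge with a ratio bounded away from $1$, and the constant from \eqref{eq:ac-estimate} then enters only additively, at the top scale. If this direct closure fails, the fallback is to require $\mathbf{(H_t)}$ only at the dyadic scales $L=\xi(\beta_j)$, $j\le i$, and to induct on $i$ with an explicit constant $K_i$ that is shown to stay bounded. In either version the restriction $\delta<\deltastab$ is used exactly to make the $\delta$-dependent exponents in \eqref{eq:xichiintro} close enough to $1$. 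Since the bootstrap never needs the pointwise bounds from Theorem~\ref{thm:maintheorem}, the argument is not circular.
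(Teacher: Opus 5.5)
There is a genuine gap: the scheme cannot close, and its two halves need incompatible exponents. Write $E=E(\beta)$ and $\theta:=1-(1-2E)(1-\kappa/2)$.

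\emph{First, $\mathbf{(H_\beta)}$ does not imply \eqref{eq:Stability Section 2}.} Take $L=\xi(\beta')$ and use the only comparison available, $(\xi(\beta')/\xi(\beta))^2\le(\chi(\beta')/\chi(\beta))^{1-2E}$ from \eqref{eq:bounds chi xi}. The best you obtain is $\chi_{\xi(\beta')}(\beta)\le K\chi(\beta')\,(\chi(\beta)/\chi(\beta'))^{\theta}$ with $\theta>0$. This loss is unbounded; take for instance $\beta'=0$ and $\chi(\beta)$ large. The exponents in \eqref{eq:xichiintro} bracket $1$, so no choice of $\delta$ absorbs a fixed $\kappa>0$. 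Deducing the target from $m_\beta(L)\le K\chi(\beta)(L/\xi(\beta))^{a}$ requires $a\ge 2/(1-2E)>2$, i.e.\ $\theta\le 0$. This is not a technicality. For $d=2$ the lattice Green function obeys Assumption~\ref{Ass:G} with $H=0$ and $E=0$, and it has $\sum_{|x|\le L}F_\beta(x)\asymp (L/\sigma_J)^2\big(1+\log(\xi(\beta)/L)\big)$. So it satisfies $\mathbf{(H)}$ for every $\kappa>0$, yet it violates \eqref{eq:Stability Section 2} by a factor $\log(\xi(\beta)/\xi(\beta'))$. Any proof must therefore use transience ($d>2$), and yours does not: the one-step bound $\mathbb P_{\beta_i}[X_1\in B_{\xi(\beta_i)}(w)]\le\Crw$ is vacuous.

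\emph{Second, the bootstrap does not close even for exponent $2-\kappa$.} Your one-step estimate amounts to $m_{\beta_{i+1}}(L)\le(1+2Z_i)\,m_{\beta_i}(L)$ with $1+2Z_i\ge 2=\chi(\beta_{i+1})/\chi(\beta_i)$. So the fraction $m_t(L)/\chi(t)$ of mass in a box of fixed size can never decrease along the chain: there is no contraction. Quantitatively, use $Z_i\ge\frac12$ and $(\xi(\beta)/\xi(\beta_i))^2\ge 2^{(n-i)(1-2E)}$. Then your bound on the $i$-th error term is at least $2^{-(n-i)\theta}$ times the target. The coefficient of $K$ is therefore at least $\sum_{k=1}^{n-j}2^{-k\theta}$. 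For many levels this approaches $(2^\theta-1)^{-1}>1$, which is of order $\theta^{-1}$ for small $\kappa$. Once $\theta\le0$, as the first point demands, it is at least the number of levels. No constant $K$ is reproduced, and the fallback with level-dependent $K_i$ fails for the same reason.

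\emph{What the paper does instead.} The missing idea is to gain from \emph{many} steps of the effective walk at the \emph{lower} parameter, together with transience. The paper fixes $\beta$, takes $\beta_\ell$ with $\xi(\beta_\ell)=2^{-\ell}\xi(\beta)$, and inducts downward in scale. For the single pair $(\beta',\beta)=(\beta_\ell,\beta)$, it iterates the finite-difference inequality $T$ times (Lemma~\ref{lem: iterated SL-F}).
\begin{itemize}
\item The main term is $\chi(\beta')\,\mathbb G_{1,\beta'}(\Lambda_{\xi(\beta')})\le\Crw\chi(\beta')$, by the Green function bound of Theorem~\ref{thm:estimate RW}; this is where $d>2$ enters.
\item The remainder $\sum_z F_\beta(z)\,\mathbb P_{\beta'}[X_T\in\Lambda_{\xi(\beta')}(z)]$ is $O\big(T^{-(d-5/2)/2}\big)\chi(\beta')$. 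The reason is that the $T^{-d/2}$ anti-concentration beats the growth $\chi_{k\xi(\beta')}(\beta)\lesssim k^{5/2}\chi(\beta')$. That growth bound is supplied by the induction hypothesis at larger scales via Lemma~\ref{Lem:am2}, where the exponent $5/2>2$ sits in the harmless direction.
\end{itemize}
Choosing $T$ large, and then $\delta$ small so that $(Z\vee 1)^T\le(1-\delta)^{-T}\le 2$, closes the induction with a fixed constant.
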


To better appreciate Proposition~\ref{Prop:Stability section 2}, we rewrite its
conclusion as follows. Let $\chi_k(\beta):=\sum_{x\in \Lambda_k}F_\beta(x)$.  Then, \eqref{eq:Stability Section 2} is equivalent to
\begin{equation}
\label{eq:stability-chi}
	\chi_{\xi(\beta')}(\beta)\leq \Cstab \chi(\beta').
\end{equation}
Roughly, this inequality says that if
$\beta'\leq \beta$, then the value of $F_\beta(x)$ for $|x|\leq \xi(\beta')$ is comparable---in an averaged sense---to $F_{\beta'}(x)$.
To have some intuition for why this might be true,
suppose that
we knew that $F_\beta(x) \lesssim \sigma_J^{-2} (1\vee|x|)^{-(d-2)}$, and also
that $\chi \asymp (\xi/\sigma_J)^2$.  So armed, we could conclude that
\begin{equation}
    \chi_{\xi(\beta')}(\beta)
    \lesssim
    \sum_{|x| \le \xi(\beta')}\frac{1}{\sigma_J^2 (1\vee|x|)^{d-2}}
    \lesssim
    \left( \frac{\xi(\beta')}{\sigma_J}\right)^2
    \lesssim \chi(\beta').
\end{equation}
However, we know neither of the assumptions for the above computation,
so we will need a more circuitous inductive argument to conclude.
The proof of Proposition~\ref{Prop:Stability section 2} is given in Section~\ref{sec:stability}.

\section{Proof of Theorem~\ref{thm:maintheorem}}
\label{sec:proof of main theorem}

Our main result is Theorem~\ref{thm:maintheorem}. Theorem~\ref{Thm:gamma-nu} is then an elementary consequence of the assumptions, as
we show in Proposition \ref{prop:gamma-nu}.
The proof of Theorem~\ref{thm:maintheorem} relies on a \emph{bootstrap} argument that leverages the Green function and stability estimates for the effective random walk of Definition~\ref{def: effective rw}.

We now prove Theorem~\ref{thm:maintheorem} conditionally on the results of Section~\ref{sec: strategy of the proof} and three additional propositions, namely
Propositions~\ref{Prop:smallx}, \ref{prop: typical scales},  \ref{prop:intermscales}.
The results of Section~\ref{sec: strategy of the proof} are proved
in Section~\ref{sec:prelim} and the three propositions are
proved in Section~\ref{sec:4props}.

\subsection{The bootstrap}
\label{sec:pfmainthm}
Let $d>2$, and fix a kernel $J$ and family of functions
$G\in \mathcal G$ which satisfy Assumption \ref{Ass:G}. Our goal is to find $\bfc,\bfC,\deltamain>0$ such that the estimate \eqref{eq: target estimate below beta(delta)} holds for $\beta\leq\beta(\deltamain)$.
To prove Theorem~\textup{\ref{thm:maintheorem}}, it suffices to prove the following bound on $F_\beta=J*G_\beta$. Recall the definition of $c_0$ from \eqref{eq:R_J comparable to xi(0)}.

\begin{Thm}
\label{thm:bound on F}
Let $d>2$ and $\varepsilon\in (0,1)$.
There exist $\bfc,\bfC>0$ depending on $(d,c_0)$,
and $\deltamain\in(0,\tfrac{1}{2}]$ depending on $(d,c_0,\varepsilon)$, such that,
if $J$ and $G\in \mathcal G$ obey Assumption~\textup{\ref{Ass:G}},
then for every $\beta\in [0,\beta(\deltamain)]$ and for every $x\in \mathbb Z^d$,
\begin{equation}
\label{eq:Fmainbd}
F_\beta(x)
\leq \frac 12
\frac{\bfC}{\sigma_J^d}\left(\frac{\sigma_J}{\sigma_J\vee|x|}\right)^{d-2-\varepsilon}
\exp\left(- \bfc\frac{|x|}{\xi(\beta)}\right).
\end{equation}
\end{Thm}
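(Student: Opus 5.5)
The plan is a bootstrap (continuity) argument in $\beta$. For $\beta\in[0,\beta(\deltamain)]$ consider the bootstrap hypothesis $\mathcal H(\beta)$: \eqref{eq:Fmainbd} holds with the constant $\bfC$ in place of $\frac12\bfC$. Proposition~\ref{prop:explicit bound on G for small beta} gives the improved bound (with $\frac12\bfC$) for all $\beta\le (1-\deltamain)\wedge\beta(\deltamain)$, provided $\bfC\ge 4\CGreen$ and $\bfc\le \cGreen/2$. So the bootstrap starts, and the main step is an improvement statement: if $\mathcal H(\beta')$ holds for all $\beta'\le\beta$, then the $\frac12\bfC$ bound holds at $\beta$. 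Continuity of $\beta\mapsto F_\beta(x)$ and of $\xi(\beta)$ on $[0,\beta_c)$ then shows that the set where the bound holds is closed and open, hence it is the whole interval $[0,\beta(\deltamain))$. The endpoint $\beta(\deltamain)$ follows by letting $\beta\uparrow\beta(\deltamain)$, using Definition~\ref{Def:G}(v) and monotonicity of $\xi$ from \eqref{eq:xichiintro}.

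For the improvement step, fix $\beta$ and choose $\beta'<\beta$ so that $\chi(\beta')/\chi(\beta)$ is a fixed small ratio, say $1/K$ with $K$ large. By Proposition~\ref{prop:Zbd}, this makes $Z_{\beta',\beta}\le 1-c/K$, and by \eqref{eq:xichiintro} it gives $\xi(\beta')^2/\xi(\beta)^2\approx K^{-1\pm O(\deltamain)}$. Apply Lemma~\ref{lem: iterated SL} in its form for $F$: $F_\beta(x)\le\sum_k Z^k\mathbb E_{\beta'}[F_{\beta'}(x-X_k)]$, with $F_{\beta'}$ controlled by $\mathcal H(\beta')$. Split by the scale of $|x|$. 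This is where Propositions~\ref{Prop:smallx}, \ref{prop: typical scales} and \ref{prop:intermscales} enter, and I would prove each as follows.
\begin{itemize}
\item[(a)] Small $|x|\lesssim\xi(\beta')$. Use the stability Proposition~\ref{Prop:Stability section 2} to compare $F_\beta$ with $F_{\beta'}$ locally in an averaged sense. Then use the bootstrap bound at $\beta'$ to upgrade to pointwise control, by writing $F_\beta\le F_{\beta'}+(\beta-\beta')F_{\beta'}*F_\beta$ and splitting the convolution into near and far pieces.
\item[(b)] Typical scales $|x|\asymp\xi(\beta)$ and beyond. Bound $\mathbb E_{\beta'}[F_{\beta'}(x-X_k)]$ by averaging $F_{\beta'}$ over boxes of side $\xi(\beta')$, using the bootstrap bound. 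Then sum the Green function of the effective walk over those boxes via Theorem~\ref{thm:estimate RW}, whose hypotheses hold by the uniform regularity of Proposition~\ref{prop: regularity below beta(delta)}. The factor $\exp(-\crw\sqrt{1-Z}|x|/\xi(\beta'))$, together with $\sqrt{1-Z}/\xi(\beta')\gtrsim 1/\xi(\beta)$, produces the exponential decay at rate $\bfc/\xi(\beta)$ with $\bfc$ fixed.
\item[(c)] Intermediate scales $\xi(\beta')\le|x|\le\xi(\beta)$. Here the power law must be recovered. The Green function estimate gives the factor $(\xi(\beta')/|x|)^{d-2}$ times $\chi(\beta')$-normalised mass, and the ratio $\chi/\xi^2$ between $\beta'$ and $\beta$ introduces a loss of $K^{O(\deltamain)}$.
\end{itemize}

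The main obstacle is (c), and more broadly keeping the constants from deteriorating under iteration. Each step from $\beta'$ to $\beta$ loses a factor $(\xi(\beta)/\xi(\beta'))^{O(\deltamain)}$ in the prefactor. This must be absorbed into the weakened power $|x|^{-(d-2-\varepsilon)}$: since $|x|\ge\xi(\beta')$ in that regime, $(|x|/\sigma_J)^{\varepsilon}$ dominates accumulated losses of order $(\xi/\sigma_J)^{C\deltamain}$ as long as $C\deltamain<\varepsilon$. This is why $\deltamain$ depends on $\varepsilon$. I would first try to make the improvement non-iterative: always compare $\beta$ directly with a single $\beta'$ at a fixed ratio $K$, chosen large depending on $(d,c_0)$, so that $\frac12\bfC$ is recovered from $\bfC$ by the gain $K^{-\varepsilon/2}$ in the intermediate regime, after fixing $\bfc$ small, then $K$, then $\bfC$ large, then $\deltamain$ small.
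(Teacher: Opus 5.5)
Your overall scheme (initialisation from Proposition~\ref{prop:explicit bound on G for small beta}, a contraction from $\bfC$ to $\frac12\bfC$ assuming the hypothesis at all $\beta'\le\beta$, then continuity in $\beta$) is the paper's. However, the contraction step as you set it up has a genuine gap in your regime (a).

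With $\beta'$ fixed by $\chi(\beta')/\chi(\beta)=1/K$, regime (a) contains every scale from $\sigma_J$ up to roughly $\xi(\beta)/\sqrt K$. There, every inequality you propose returns $F_{\beta'}(x)$ plus a nonnegative correction. This holds for $F_\beta\le F_{\beta'}+(\beta-\beta')F_{\beta'}*F_\beta$, and for the series of Lemma~\ref{lem: iterated SL}, whose $k=0$ term is $F_{\beta'}(x)$. The only pointwise information you have on $F_{\beta'}(x)$ is $\mathcal H_{\beta'}(\bfc,\bfC)$, with constant $\bfC$. So however small you make the correction, you cannot get below $\bfC$.

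Chaining such comparisons along $\beta_0<\beta_1<\cdots<\beta$ does not cure this unless the chain is anchored at a parameter where $F(x)$ is already known with an improved constant. Moreover, accumulated losses of order $(\xi(\beta)/\sigma_J)^{C\deltamain}$ could not be absorbed by $(|x|/\sigma_J)^{\varepsilon}$ when $|x|$ is close to $\sigma_J$.

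The missing idea is to let the comparison parameter depend on $x$:
\begin{itemize}
\item For $2\sigma_J\le|x|\le\eta\xi(\beta)$, the paper picks $\beta'$ with $|x|=\eta\xi(\beta')$. Then $x$ is at typical scale for $\beta'$, and Proposition~\ref{prop: typical scales} applied at $\beta'$ gives $F_{\beta'}(x)\le\frac18\bfC(\cdots)$. This only needs the bootstrap hypothesis up to $\beta'$.
\item A single step from $\beta'$ to $\beta$, however far apart they are, then costs only $\frac{2}{\chi(\beta')}(F_{\beta'}*F_\beta)(x)$. This is a small multiple of $\bfC(\cdots)$ by Lemma~\ref{lemma:fstarf} with $k=\eta^{-1/2}$, the stability estimate $\chi_{\xi(\beta'')}(\beta)\le\Cstab\chi(\beta'')$ at $\xi(\beta'')=\eta^{1/2}\xi(\beta')$, and $\chi(\beta'')/\chi(\beta')\le\eta^{1/3}$.
\item The scales $|x|\le2\sigma_J$ are handled without the bootstrap at all, by comparing with $\beta'=0$ over a bounded number $T$ of steps (Proposition~\ref{Prop:smallx}).
\end{itemize}
There is therefore no accumulation. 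The parameter $\varepsilon$ is used only at typical scales, through the gain $(\xi(\beta')/|x|)^{\varepsilon}\lesssim\nu^{\varepsilon}$.

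Your regime (b) also needs an idea you have not supplied. Averaging $F_{\beta'}$ over boxes of side $\xi(\beta')$ against the box-averaged Green function fails on the box where $x-X_k$ is near $0$. There $\sup F_{\beta'}\approx\bfC\sigma_J^{-d}$, and $\mathbb G_{Z,\beta'}(\Lambda_{\xi(\beta')}(x))\lesssim(\xi(\beta')/|x|)^{d-2}$. This leaves you above the target by a factor of order $(\xi(\beta')/\sigma_J)^{d-2-\varepsilon}$, which is unbounded, and pointwise bounds on $\mathbb G_{Z,\beta'}$ are not available (Example~\ref{ex:rw}).

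Lemma~\ref{lemma:typ-decomp} deals with this by returning to the one-step inequality for $y\in\Lambda_m$, with $m=\mu\xi(\beta')$. That part is at most $\frac{\chi_m(\beta')}{\chi(\beta')}\sup_{y\in\Lambda_m}F_\beta(x-y)$. Here the bootstrap hypothesis at $\beta$ itself is used, and stability gives $\chi_m(\beta')/\chi(\beta')\le2\Cstab\mu^2$. Only $y\notin\Lambda_m$ is paired with the Green function, via Lemma~\ref{lemma:fstarg}.

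Finally, your open-and-closed step needs continuity in $\beta$ of the supremum over $x$ of the normalised ratio, not just pointwise continuity. The paper gets this from the uniform tail bound \eqref{eq:cont2} supplied by regularity.
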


\begin{proof}[Proof of Theorem~\textup{\ref{thm:maintheorem}} using Theorem~\textup{\ref{thm:bound on F}}]
We apply \eqref{eq:SL assumption} with $\beta'=0$.
This gives, for every $x\in \mathbb Z^d$,
\begin{equation}
	G_\beta(x)\leq \delta_0(x)+\beta F_\beta(x).
\end{equation}
Since $\deltamain\leq \tfrac{1}{2}$, it follows from
\eqref{eq:xichiintro}
that $\beta(\deltamain)\leq (1-\deltamain)^{-1}\leq 2$.
This factor $2$ cancels the $\frac 12$ in \eqref{eq:Fmainbd},
and results in the desired bound \eqref{eq: target estimate below beta(delta)}
on $G_\beta(x)$.
\end{proof}

We now focus on the proof of Theorem~\textup{\ref{thm:bound on F}}. Several important constants which appear throughout the analysis
are summarised in the following glossary.

\paragraph{Glossary of important constants.}
There are four pairs of important constants:
\begin{itemize}
\item
$\creg$, $\Creg$ are the constants for the moment generating function of
a regular random walk; they arise in Definition~\ref{def:regularrw}
for a generic regular random walk, and are also used to denote
the regularity constants for the effective random walk
in Proposition~\ref{prop: regularity below beta(delta)}.
The constant $\creg$ depends only on $d$, and
we set $\Creg=3$.
\item
$\crw$, $\Crw$ are the decay constants for a general regular random walk;
they arise in Theorem~\ref{thm:estimate RW}.
These constants depend only on $d$, $\creg$, and $\Creg$.
\item
$\cGreen$, $\CGreen$ are the constants in
the anti-concentration and Green function estimates for the
$J$ random walk; they arise in Proposition~\ref{Prop:Green}.
These constants depend only on $d$ and on the constant $c_0$ which controls the ratio of
$\sigma_J$ and $R_J$ as in \eqref{eq:R_J comparable to xi(0)}.
\item
$\bfc$, $\bfC$ are the decay constants for $G_\beta$ in our main result
Theorem~\ref{thm:maintheorem}, and also in Theorem~\ref{thm:bound on F}.
These constants depend only on
$d$ and $c_0$.
\end{itemize}

To prove Theorem~\ref{thm:bound on F}, we rely on a \emph{bootstrap} argument: we will show that an
{\it a priori} estimate on $F_\beta$ can be improved by using the
regularity of the effective random walk
established in Proposition~\ref{prop: regularity below beta(delta)}.

Given $x\in \mathbb Z^d$, $\beta \ge 0$, and $\bfc,\bfC>0$, we define the statements
$\mathcal H_\beta(\bfc,\bfC;x)$ and $\mathcal H_\beta(\bfc,\bfC)$
(which at this stage are unverified) by
\begin{align}
\label{eq:Hxdef}
	\mathcal H_\beta(\bfc,\bfC;x): &\quad F_{\beta}(x)\leq \frac{\bfC}{\sigma_J^d}\left(\frac{\sigma_J}{\sigma_J\vee|x|}\right)^{d-2-\varepsilon}
\exp\left(-\bfc\frac{|x|}{\xi(\beta)}\right) ,
\\
\label{eq:Hdef}
	\mathcal H_\beta(\bfc,\bfC): &\quad
    \text{$\mathcal H_\beta(\bfc,\bfC;x)$ holds for all $x\in \mathbb Z^d$}.
\end{align}

For small $\beta$, we can restate the bound on $F_\beta$ of
Proposition~\ref{prop:explicit bound on G for small beta} as follows: for $d>2$, for every $\bfC \ge 4\CGreen$, every $\bfc \le \frac 12 \cGreen$, and every $\delta \in(0, \frac 12]$,
\begin{equation}
\label{eq: bound F small values of beta in terms of C,c}
	\text{$\mathcal H_\beta(\bfc,\tfrac{1}{2}\bfC)$ holds (with $\varepsilon =0$)
    for $\beta\leq (1-\delta)\wedge\beta(\delta)$.}
\end{equation}
The fact that $\mathcal H_\beta(\bfc,\tfrac{1}{2}\bfC)$ holds for small $\beta$ serves as the \emph{initialisation step} of the bootstrap argument.  The next proposition plays the role of the \emph{contraction step}. To state it, we fix a choice of $(\bfc,\bfC)$. This choice is explained in detail in Section~\ref{sec:overview main props}. Let
\begin{equation}\label{eq:fixing bfc bfC}
    \bfc =
    \frac 12 \log 2 \wedge \frac 12 \creg \wedge \frac 12 \cGreen \wedge \frac 14 \crw ,
    \qquad
    \bfC = 16 \CGreen.
\end{equation}

\begin{Prop}[Contraction step]
\label{prop: bootstrap}
Let $d>2$ and $\varepsilon\in (0,1)$. Let $(\bfc,\bfC)$ be as in \eqref{eq:fixing bfc bfC}. There exists
$\delta_1\in (0, \delta_s\wedge \delta_{\textup{reg}}\wedge \tfrac12]$ depending on $(d,c_0,\varepsilon)$, such that the following holds. Suppose that $J$ and $G\in \mathcal G$ obey Assumption \textup{\ref{Ass:G}}.
For every $\beta<\beta(\delta_1)$,
\begin{equation}
\label{eq:bootstrap}
	\text{if $\mathcal H_{\beta'}(\bfc,\bfC)$ holds for all $\beta'\le \beta$
    then $\mathcal H_{\beta}(\bfc,\tfrac{1}{2}\bfC)$ holds.}
\end{equation}
\end{Prop}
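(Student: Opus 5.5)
The contraction step follows the strategy the paper announces: it is conditional on Propositions~\ref{Prop:smallx}, \ref{prop: typical scales} and \ref{prop:intermscales}, which I take to be the three scale-dependent estimates. The plan is to fix $\beta<\beta(\delta_1)$ with $\beta>(1-\delta_1)\wedge\beta(\delta_1)$; smaller $\beta$ are already handled by \eqref{eq: bound F small values of beta in terms of C,c}. Assuming $\mathcal H_{\beta'}(\bfc,\bfC)$ for all $\beta'\le\beta$, we then verify $\mathcal H_\beta(\bfc,\tfrac12\bfC;x)$ separately in three regimes of $|x|$: small $|x|$ (comparable to $\sigma_J$), typical and large $|x|\gtrsim\xi(\beta)$, and intermediate $\sigma_J\ll|x|\ll\xi(\beta)$.

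The basic tool in every regime is Lemma~\ref{lem: iterated SL}, applied to $F$ with a well-chosen $\beta'\le\beta$. The choice is made so that $Z_{\beta',\beta}=(\beta-\beta')\chi(\beta')$ is bounded away from $1$, say $Z\le\frac12$. By integrating \eqref{eq:dFub} and \eqref{eq:dFlb} (Proposition~\ref{prop:Zbd} and \eqref{eq:xichiintro}), such a $\beta'$ then has $\chi(\beta')\asymp\chi(\beta)$ and $\xi(\beta')\asymp\xi(\beta)$ up to powers $1\pm O(\delta)$. We then write
\[
F_\beta(x)\le\sum_{k\ge0}Z^k\,\mathbb E_{\beta'}[F_{\beta'}(x-X_k)],
\]
and insert the bootstrap hypothesis $\mathcal H_{\beta'}(\bfc,\bfC)$ for $F_{\beta'}$.

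In the regime $|x|\ge\xi(\beta)$, the sum becomes a convolution of the hypothesised profile with the Green function $\mathbb G_{Z,\beta'}$ of the effective walk. By Proposition~\ref{prop: regularity below beta(delta)} that walk is uniformly regular, so Theorem~\ref{thm:estimate RW} bounds the box-averaged Green function. Splitting $\mathbb Z^d$ into boxes of side $\xi(\beta')$ and using that $F_{\beta'}$ has essentially constant envelope on each box, the contribution of $\mathbb G_{Z,\beta'}$ on boxes near $x$ costs a factor $\Crw$, and the exponential factor $e^{-\crw\sqrt{1-Z}|x|/\xi}$ dominates far away. With $\bfc\le\frac14\crw$, the gain $\chi(\beta')/\chi(\beta)\cdot(\cdots)$ combined with the exponential decay produces the halved constant, as long as $\delta_1$ is small relative to $\varepsilon$.

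For $|x|\lesssim\sigma_J$, I would compare directly to $\beta=0$ through \eqref{eq:F satisfies A2} together with Proposition~\ref{Prop:Green}, or with the stability bound.

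The intermediate regime is the main obstacle and is where the $\varepsilon$ is used. There I would iterate through a geometric sequence of parameters $\beta_0<\beta_1<\dots<\beta$ chosen so that $\xi(\beta_j)\approx 2^j\xi(\beta_0)$. At scale $|x|\approx\xi(\beta_j)$, the step of the effective walk at $\beta$ is controlled by the stability estimate Proposition~\ref{Prop:Stability section 2}: $\mathbb P_\beta[|X_1|\le\xi(\beta_j)]\le\Cstab\chi(\beta_j)/\chi(\beta)$. This says the mass of $F_\beta$ inside $\Lambda_{\xi(\beta_j)}$ is at most $\Cstab\chi(\beta_j)$. Feeding this into the iterated inequality gives a bound of the form $|x|^{-(d-2)}$ times a multiplicative loss of $(\chi/\xi^2)$ ratios. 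Those ratios are $(\xi/\sigma_J)^{O(\delta)}$ by \eqref{eq:xichiintro}, and they are absorbed into $|x|^{\varepsilon}$ once $\delta_1\ll\varepsilon$. The delicate point is keeping the constant factor from $\Cstab$ and $\Crw$ from accumulating over the scales. I would try to avoid this by arguing one scale at a time directly from the hypothesis at $\beta_j$, rather than chaining, so that only the $\delta$-powers accumulate. Finally, the constants are chosen so that every regime yields at most $\tfrac12\bfC$.
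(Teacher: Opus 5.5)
Your opening sentence reduces the statement to Propositions~\ref{Prop:smallx}, \ref{prop: typical scales} and \ref{prop:intermscales}. If you simply invoke them, the proof is a short assembly: small scales use $\bfC=16\CGreen$ and $e^{-\bfc|x|/\xi(\beta)}\ge e^{-2\bfc}\ge\frac12$; typical scales give $\frac18\bfC$; intermediate scales give $\frac16\bfC$, with the exponential restored at cost $e^{\bfc\eta}\le\sqrt2$; and one sets $\delta_1:=\delta_2$, with $\eta$ fixed by Proposition~\ref{prop:intermscales}. You do not carry this out. Instead you sketch how to obtain the regime estimates yourself, and there the source of the factor $\frac12$ is missing.

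In the typical regime, take $|x|\approx\xi(\beta)$. Your choice $Z_{\beta',\beta}\le\frac12$ forces $\chi(\beta')\ge\frac12\chi(\beta)$, hence $\xi(\beta')\ge\frac12\xi(\beta)$ by Corollary~\ref{cor: comparison chi xi when E small} (for $\delta_1\le\frac18$). The best bound the hypothesis gives on the $k=0$ term $F_{\beta'}(x)$ is then already $e^{-\bfc}\bfC\ge 2^{-1/2}\bfC$ times the target profile, and the $k\ge1$ terms only add to it. So no contraction is possible with $\beta'$ comparable to $\beta$.

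Fully iterating into $(F_{\beta'}*\mathbb G_{Z,\beta'})(x)$ also breaks down near the singularity. On $\Lambda_{\xi(\beta')}(x)$ the envelope of $F_{\beta'}(x-\cdot)$ is far from constant (it reaches $\bfC\sigma_J^{-d}$), whereas Theorem~\ref{thm:estimate RW} controls $\mathbb G_{Z,\beta'}$ only on boxes of side $\xi(\beta')$. That box therefore costs $\bfC\Crw\sigma_J^{-d}(\xi(\beta')/|x|)^{d-2}$, which exceeds the target by a factor of order $(\xi(\beta')/\sigma_J)^{d-2-\varepsilon}$.

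The paper (Lemma~\ref{lemma:typ-decomp}) does the opposite on both counts.
(i) It takes $\xi(\beta')=\nu\xi(\beta)$ with $\nu\ll\eta$, so $Z$ is close to $1$.
(ii) It applies \eqref{eq:SL assumption} only once and splits $F_{\beta'}*F_\beta$ at $\Lambda_m$, with $m=\mu\xi(\beta')$.
(iii) For $y\in\Lambda_m$ it keeps $F_\beta(x-y)$, bounded by the hypothesis at $\beta$ itself, and gains the stability mass fraction $\chi_m(\beta')/\chi(\beta')\le 2\Cstab\mu^2$.
(iv) For $y\notin\Lambda_m$ it iterates. The troublesome $F_{\beta'}(x)$ now sits deep in the exponential tail of $F_{\beta'}$, since $|x|\ge(\eta/\nu)\xi(\beta')$.
(v) Lemma~\ref{lemma:fstarg} produces the factor $(\xi(\beta')/|x|)^{\varepsilon}\le(\nu/\eta)^\varepsilon$. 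It comes from the mismatch between the $(d-2-\varepsilon)$-profile of $F_{\beta'}$, cut off at scale $\xi(\beta')$, and the $(d-2)$-decay of the averaged Green function.
(vi) The large mass $(1-Z)^{-1}\lesssim\nu^{-2}$ is beaten by $e^{-\bfc|x|/2\xi(\beta')}$, and $\sqrt{1-Z}\ge\frac12\nu$ together with $\bfc\le\frac14\crw$ restores the rate $\bfc/\xi(\beta)$.
This separation of scales is the only place $\varepsilon>0$ enters the paper; it is not used to absorb $\delta$-powers, as you suggest.

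In the intermediate regime, your dyadic chaining does not explain how a pointwise bound with constant at most $\frac12\bfC$ emerges. Stability controls only box masses, and the only pointwise inputs are the hypothesis (constant $\bfC$) and the $J$-Green function. Moreover $\bfC=16\CGreen$ is fixed and cannot be made to dominate $\Cstab$ or $\Crw$. Absorbing $\delta$-powers into $|x|^\varepsilon$ does not by itself control constant factors.

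The paper needs a single step. Choose $\beta'\le\beta$ with $|x|=\eta\xi(\beta')$, so that $x$ is at a typical scale for $\beta'$. Proposition~\ref{prop: typical scales} at $\beta'$ then gives $F_{\beta'}(x)\le\frac18\bfC(\cdots)$; this is legitimate because the hypothesis holds at every parameter up to $\beta$. The remainder $(\beta-\beta')(F_{\beta'}*F_\beta)(x)\le\frac{2}{\chi(\beta')}(F_{\beta'}*F_\beta)(x)$ is bounded by Lemma~\ref{lemma:fstarf} with $k=\eta^{-1/2}$, together with the stability bound $\chi_{k|x|}(\beta)/\chi(\beta')\le\Cstab\eta^{1/3}$. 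It is small once $\eta$ is small. Nothing accumulates across scales, and $\varepsilon$ plays no role there.
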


\begin{Rem}\label{rem:bootstrap}
It follows immediately from Proposition \ref{prop: bootstrap} that, for every $\beta<\beta(\delta_1)$, if $\mathcal H_{\beta'}(\bfc,\bfC)$ holds for every $\beta'\leq \beta$, then $\mathcal H_{\beta'}(\bfc,\tfrac{1}{2}\bfC)$ holds for every $\beta'\leq \beta$.
\end{Rem}

We decompose the proof of Proposition~\ref{prop: bootstrap} into \emph{multiscale bounds}. Before introducing the multiscale bounds, we show how
\eqref{eq: bound F small values of beta in terms of C,c}
and Proposition~\ref{prop: bootstrap} can be used to prove Theorem \ref{thm:bound on F}.

\subsection{Proof of Theorem~\textup{\ref{thm:bound on F}}}

We use a \emph{forbidden region} analysis which relies on
an {\it a priori} continuity property.  We
first establish the relevant
continuity property.

As a simple initial observation, let $X$ be any
$(\creg,\Creg)$-regular random walk  with variance $\sigma^2$.
Then, by symmetry and by Markov's inequality,
\begin{align}
\label{eq:cont1}
    \mathbb P[X_1=x]
    \le
    2d\cdot
    \mathbb P[((\mathbf{e}_1 \cdot X_1)/\sigma \ge |x|/\sigma] \le
    2d \Creg \exp\left(-\creg \frac{|x|}{\sigma}\right).
\end{align}
In particular, an application of \eqref{eq:cont1} to the effective random walk shows that, for every $\beta < \beta(\delta_{\textup{reg}})$ (given by Proposition~\ref{prop: regularity below beta(delta)}) and every
$\bfc \le \frac 12 \creg$,
\begin{equation}
\label{eq:cont2}
    F_\beta(x) \le 2d \Creg \chi(\beta) \exp\left(-2\bfc \frac{|x|}{\xi(\beta)}\right).
\end{equation}
Let $\delta_1$ be given by Proposition \ref{prop: bootstrap} and $0 \le \beta<\beta(\delta_1)$.
Given $\bfc\le \frac 12 \creg$, we define
\begin{align}
        f( \beta;x) &=
        F_{\beta} (x)
        \sigma_J^d \left(\frac{\sigma_J\vee |x|}{\sigma_J}\right)^{d-2-\varepsilon}
        \exp\left(\bfc\frac{|x|}{\xi(\beta)}\right),
        \\
        f( \beta) & =\sup_{x\in \mathbb Z^d} f ( \beta;x)
        .
\label{eq:fsup}
\end{align}
By definition, $f( \beta;x)$ is continuous in $\beta\in [0,\beta(\delta_1))$.
We claim that the supremum $f(\beta)$ is
also continuous in $\beta\in [0,\beta(\delta_1))$ (in particular it is finite).
To prove the claim, it suffices to prove continuity in $\beta\in [0,\beta_1)$ for
every $\beta_1 < \beta(\delta_1)$. For $\beta\in [0,\beta_1)$, \eqref{eq:cont2} gives
\begin{align}
    f(\beta;x)
    &\le 2d \Creg \chi(\beta)\exp\left(-\bfc \frac{|x|}{\xi(\beta)}\right)\sigma_J^d \left(\frac{\sigma_J\vee |x|}{\sigma_J}\right)^{d-2-\varepsilon}
    \nonumber \\&\le
     2d \Creg \chi(\beta_1) \exp\left(-\bfc \frac{|x|}{\xi(\beta_1)}\right)
     \sigma_J^d \left(\frac{\sigma_J\vee |x|}{\sigma_J}\right)^{d-2-\varepsilon},
\label{eq:fcont}
\end{align}
where we used the facts that $\chi(\beta)\leq \chi(\beta_1)$ (by assumption on $G$) and $\xi(\beta)\leq \xi(\beta_1)$ (by \eqref{eq:xichiintro}). Note that the monotonicity of
$\xi$ is not obvious from the definition of $\xi$.
Since the right-hand side of \eqref{eq:fcont}
goes to zero as $|x|\to\infty$, uniformly in
$\beta \le \beta_1$, the supremum over $x$ in \eqref{eq:fsup}
is attained on a
$\beta$-independent (but $\beta_1$-dependent) finite set of $x$ values.
The function $f$ is therefore the supremum of
finitely many continuous functions, so it is continuous too.

\begin{proof}[Proof of Theorem~\textup{\ref{thm:bound on F}}] Let $(\bfc,\bfC)$ be as in \eqref{eq:fixing bfc bfC},
and let $\delta_1\in (0, \delta_s\wedge\delta_{\textup{reg}}\wedge\tfrac{1}{2}]$ be given by Proposition \ref{prop: bootstrap}.
We prove that $\mathcal H_\beta(\bfc,\frac 12 \bfC)$ holds, i.e., that
$f(\beta)  \le \frac 12 \bfC$,
for every $\beta\leq \beta(\delta_1)$.
We can and do assume that $\frac 12  \le \beta(\delta_1)$, since otherwise
\eqref{eq: bound F small values of beta in terms of C,c} immediately gives
the desired result.

Let $\beta_0=\frac 12 $.  By
\eqref{eq: bound F small values of beta in terms of C,c}
with $\delta=\frac 12$,
\begin{equation}
\label{eq:boot2}
    f(\beta') \le \frac 12 \bfC \quad \text{for all $\beta'\le \beta_0$.}
\end{equation}
We will prove that the combination of Proposition~\ref{prop: bootstrap}
and \eqref{eq:boot2} shows that values in $(\frac 12 \bfC, \bfC]$ are forbidden
for $f(\beta)$ for all $\beta <\beta(\delta_1)$.

The proof is by contradiction.
Suppose that there exists $\beta_* < \beta(\delta_1)$ such
that $f(\beta_*) > \frac 12 \bfC$.  By continuity of $f$
and \eqref{eq:boot2}, there must be
a $\beta_{**} \in (\beta_0,\beta_*]$ such that $f(\beta_{**})
\in (\frac 12 \bfC,\bfC]$ and also $f(\beta')\le \bfC$ for all
$\beta' \le \beta_{**}$.
But this last condition is the hypothesis in \eqref{eq:bootstrap}, and
the conclusion of \eqref{eq:bootstrap} therefore implies (see Remark \ref{rem:bootstrap}) that
$f(\beta')\le \frac 12 \bfC$ for all $\beta' \le \beta_{**}$.
This contradicts $f(\beta_{**})> \frac 12 \bfC$, so
$\beta_*$ cannot exist, and we have therefore proved that $f(\beta)\le \frac 12 \bfC$
for all $\beta < \beta(\delta_1)$.

To extend the bound $f(\beta) \le \frac 12 \bfC$ to $\beta=\beta(\delta_1)$, we argue as follows.
We know for each $x$ and each $\beta<\beta(\delta_1)$ that
\begin{equation}
\label{eq:Fgoal}
    F_\beta(x) \le \frac 12 \frac{\bfC}{\sigma_J^d}\left(\frac{\sigma_J}{\sigma_J\vee|x|}\right)^{d-2-\varepsilon} \exp\left(-\bfc \frac{|x|}{\xi(\beta)}\right).
\end{equation}
If $\beta(\delta_1)<\beta_c$, then $\xi(\beta)$ and $F_\beta(x)$ are continuous at
$\beta(\delta_1)$ by our assumptions on $G_\beta$.  Thus, \eqref{eq:Fgoal} extends by continuity to $\beta=\beta(\delta_1)$.
Suppose finally that $\beta(\delta_1)=\beta_c$ (the case of primary interest, verified under additional
hypothesis in Theorem~\ref{thm:maintheorem-betac}).
By Definition~\ref{Def:G}$(iii)$, $G_{\beta_c}(x)$
is the supremum over $\beta<\beta_c$ of $G_{\beta}(x)$, and hence
the same is true for $F_{\beta_c}(x)$. Consequently,
by the monotone convergence theorem, $\lim_{\beta\nearrow \beta_c}\chi(\beta)=\chi(\beta_c)$ and $\lim_{\beta\nearrow \beta_c}\Vert |x|_2^2F_\beta\Vert_1=\Vert |x|_2^2F_{\beta_c}\Vert_1$. If $\chi(\beta_c)<\infty$, we additionally obtain that $\lim_{\beta\nearrow \beta_c}\xi(\beta)=\xi(\beta_c)$.
On the other hand, if $\chi(\beta_c)=\infty$,
then \eqref{eq:xichiintro} implies that $1/\xi(\beta) \to
0$ as $\beta\nearrow \beta_c$. In either case, \eqref{eq:Fgoal}
extends to $\beta(\delta_1)=\beta_c$ by continuity. Thus, we conclude that $f(\beta) \le \frac 12 \bfC$ for all $\beta \le \beta(\delta_1)$.
This completes the proof, by setting $\deltamain:=\delta_1$.
\end{proof}

\begin{Rem} In the above proof, we did not really need \eqref{eq: bound F small values of beta in terms of C,c}. Indeed,  it is sufficient to know that $f$ is continuous and that $f(0)=0$.
We have highlighted \eqref{eq: bound F small values of beta in terms of C,c} because it plays a role in the proof of Proposition~\ref{prop: bootstrap}.
\end{Rem}

\subsection{Reduction of Proposition \ref{prop: bootstrap} to multiscale bounds}
\label{sec:overview main props}

For $x$ with $|x|\leq 2\xi(0)=2\sigma_J$, the exponential
factor in \eqref{eq:Fmainbd} is unimportant.
The following proposition gives a version of \eqref{eq:Fmainbd}
for these small $x$.
Proposition~\ref{Prop:smallx} is proved in Section~\ref{sec:smallscales},
as an elementary consequence of our assumptions and the Green function estimate of Proposition \ref{Prop:Green}.

\begin{Prop}[Bounds at small scales]
\label{Prop:smallx} There exists $\delta_{\textup{s}} = \delta_{\textup{s}}(d)\in (0,\tfrac12]$ such that, for every $\beta < \beta(\delta_{\textup{s}})$, and every $|x|\le 2 \xi(0)$,
    \begin{equation}
        F_{\beta}(x)
        \leq
        \frac{ 4 \CGreen}{\sigma_J^d}  \left(\frac{\sigma_J}{\sigma_J \vee |x|}\right)^{d-2}
        .
    \label{eq:smallscalesbound}
    \end{equation}
\end{Prop}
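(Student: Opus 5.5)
We plan to compare $F_\beta$ with the Green function $\mathbb C_1$ of the $J$-walk, using the finite-difference inequality at the base point $\beta'=0$ together with the control on $\beta(\delta)$ from \eqref{eq:xichiintro}. Starting from \eqref{eq:SL assumption} with $\beta'=0$ and $G_0=\delta_0$, we have $G_\beta\le \delta_0+\beta\,J*G_\beta$. Convolving with $J$ gives
\begin{equation}
F_\beta\le J+\beta\,J*F_\beta .
\end{equation}
If $\beta\le 1$, iterating this inequality gives $F_\beta\le J*\mathbb C_\beta\le \mathbb C_1-\delta_0$, and \eqref{eq:GJxbd-intro} then yields the bound with constant $\CGreen$. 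The difficulty is the range $1<\beta<\beta(\delta_{\rm s})\le (1-\delta_{\rm s})^{-1}$, where $\beta$ is only slightly larger than $1$ and the Neumann series in $\beta J$ no longer converges.

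To handle $\beta\in(1,\beta(\delta_{\rm s}))$, we will iterate the inequality only finitely many times. After $T$ steps,
\begin{equation}
F_\beta(x)\le \sum_{k=1}^{T}\beta^{k-1}J^{*k}(x)+\beta^{T}(J^{*T}*F_\beta)(x).
\end{equation}
We choose $T=T(d)$, and $\delta_{\rm s}$ small enough that $\beta^{T}\le (1-\delta_{\rm s})^{-T}\le 2$. The first sum is then at most $2\,\mathbb C_1(x)$, which by \eqref{eq:GJxbd-intro} is at most $2\CGreen\sigma_J^{-d}(\sigma_J/(\sigma_J\vee|x|))^{d-2}$ for $x\neq0$. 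For $x=0$ the $\delta_0$ term disappears because $J_0=0$ and $J^{*k}(0)\le \CGreen\sigma_J^{-d}k^{-d/2}$ by \eqref{eq:antix-intro}.

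The remainder term $\beta^T(J^{*T}*F_\beta)(x)$ is the main obstacle. The plan is to use \eqref{eq:antix-intro} to bound $J^{*T}(y)\le \CGreen\sigma_J^{-d}T^{-d/2}$ uniformly in $y$, so the remainder is at most $2\CGreen\sigma_J^{-d}T^{-d/2}\chi(\beta)$. Since $|x|\le 2\sigma_J$, the target bound is of order $\sigma_J^{-d}$, so it suffices to have $\chi(\beta)\lesssim T^{d/2}$. This is where the choice of $T$ must be balanced against the size of $\chi(\beta)$, and a finite iteration count alone does not obviously suffice.

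The fix we intend is to let $T$ scale with $\chi(\beta)$. We take $T\asymp\chi(\beta)^{2/d}$ if $\chi$ is large, and we need $\beta^T\le2$, i.e.\ $(\beta-1)T\lesssim1$. The estimate that should make this work is the bound from integrating \eqref{eq:dGub} and \eqref{eq:Diff inequ assumption} (Proposition~\ref{prop: rough bounds chi xi} / Theorem~\ref{Thm:gamma-nu}-type calculus): $\chi(\beta)\le (1-E)^{-1}/(\beta_c'-\beta)$, where $\beta_c'$ is the point at which $\chi$ would blow up. Since $\chi(\beta)\ge 1/(1-\beta+\ldots)$, this gives $(\beta-1)\lesssim E\lesssim\delta_{\rm s}$ whenever $\chi$ is large. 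For $d>2$ we have $2/d<1$, so $(\beta-1)\chi^{2/d}$ is small, provided $\beta-1\lesssim \delta_{\rm s}\chi^{-1}$ or a comparable relation follows from the integrated inequalities. We expect this relation to come from $1/\chi(\beta)\ge 1-\beta(1+O(\delta))$-type bounds, namely $\chi(\beta)^{-1}\le \beta_{\max}-\beta$ together with $\beta_{\max}\le 1+O(\delta)$. Pinning this relation down is the delicate step. Once it holds, the remainder is at most $2\CGreen\sigma_J^{-d}$, which gives the total constant $4\CGreen$.
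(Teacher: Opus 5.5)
Your finite iteration and your bound on the first sum are fine; this is Lemma~\ref{lem: iterated SL-F} with $\beta'=0$ and $Z_{0,\beta}=\beta$. The gap is in the remainder $\beta^T(J^{*T}*F_\beta)(x)$, and the relation you hope for is false. Bounding $J^{*T}$ uniformly and using $\|F_\beta\|_1=\chi(\beta)$ requires $\chi(\beta)\lesssim T^{d/2}$. Keeping $\beta^T\le 2$ requires $T\lesssim(\beta-1)^{-1}$. So you need $(\beta-1)\chi(\beta)^{2/d}\lesssim 1$ uniformly in $\beta<\beta(\delta_{\rm s})$.

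Integrating the differential inequalities, i.e.\ \eqref{eq:bounds chi} with $\beta'=0$, gives only $\beta-1+\chi(\beta)^{-1}\le \beta E(\beta)$. That is, $\beta-1=O(\delta_{\rm s})$, and nothing better holds in general. In the applications $\beta_c>1$ strictly: for self-avoiding walk with $\lambda>0$, $c_n\le 1$ and $c_2<1$ give $\beta_c\ge c_2^{-1/2}>1$, and the paper notes after Theorem~\ref{Thm:gamma-nu} that $\beta_c-1=O(R^{-d})$ is optimal. So in the main case $\beta(\delta_{\rm s})=\beta_c$, $\chi(\beta)\to\infty$ while $\beta-1\to\beta_c-1>0$. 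Moreover, $T$ and $\delta_{\rm s}$ must depend only on $d$ (and $c_0$), not on the model, so no choice of $T$ tied to $\chi(\beta)$ is admissible.

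The missing idea is locality. $J^{*T}$ lives on scale $\sigma_J\sqrt{T}$, so what you need is a bound on the mass of $F_\beta$ in boxes $\Lambda_{k\sigma_J}$, uniform in $\beta$, not its total mass.

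The paper proceeds as follows.
\begin{itemize}
\item It keeps the exponential factor in \eqref{eq:antix-intro}. For $|x|\le 2\sigma_J$ and $T\ge 4$ this gives $\mathbb P_J[X_T=x-z]\le \CGreen e^{\cGreen}\sigma_J^{-d}T^{-d/2}\exp(-\cGreen|z|/(\sigma_J\sqrt{T}))$.
\item It combines this with the stability estimate, Proposition~\ref{Prop:stab}. Via Lemma~\ref{Lem:am2} with base point $\beta'=0$ (where $\chi(0)=1$ and $\xi(0)=\sigma_J$), this gives $\chi_{k\sigma_J}(\beta)\le \Cstab k^{5/2}$ for all $k\ge 2$ and all $\beta<\beta(\deltastab)$.
\item Lemma~\ref{lem:conv stab} then bounds the remainder by $C\sigma_J^{-d}T^{-\frac12(d-\frac52)}$. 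Since $d\ge 3>\frac52$, this is at most $\CGreen(2\sigma_J)^{-d}$ for a fixed $T=T(d)$.
\item Only after fixing $T$ is $\delta_{\rm s}$ chosen so that $(1-\delta_{\rm s})^{-T}\le 2$.
\end{itemize}
The stability estimate is a genuinely nontrivial input: it is proved by an induction over scales using the uniform regularity of the effective random walk and Theorem~\ref{thm:estimate RW}. Nothing in your proposal plays its role.
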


By Proposition~\ref{Prop:smallx}, if $\bfc\leq\tfrac{1}{2}\log 2$, $\bfC\geq 16\mathsf{C}$, and $\beta<\beta(\delta_s)$, then $\mathcal H_{\beta}(\bfc,\tfrac{1}{2}\bfC;x)$ holds for every $|x|\leq 2\xi(0)$. In particular, we do not need to assume the bootstrap hypothesis
that $\mathcal H_{\beta'}(\bfc,\bfC)$ holds for every $\beta'\leq \beta$ to obtain this result for small $x$. However, the bootstrap assumption becomes essential in the treatment of the scales $|x|>2\xi(0)$.

We use two different arguments depending on whether $|x|\gtrsim \xi(\beta)$ or not. This threshold corresponds to the values of $|x|$ for which the exponential term becomes relevant in \eqref{eq:Fmainbd}.

\begin{Prop}[Improvement at typical length scales]
\label{prop: typical scales} Let $d>2$ and $\varepsilon>0$.
Let $\eta >0$.
Assume that $\bfC\ge 16\CGreen$ and
$\bfc \le \frac 12 \cGreen \wedge \frac{1}{4}\crw \wedge 1$.
There exists $\delta_2=\delta_2(\eta,\varepsilon,d)\leq \delta_s\wedge \delta_{\textup{reg}}\wedge \tfrac{1}{2}$ such that the following
is true for every $\beta<\beta(\delta_2)$:
if $\mathcal H_{\beta'}(\bfc,\bfC)$ holds for every $\beta' \le \beta$,
then, for every $|x|\geq \eta \xi(\beta)\vee 2\xi(0)$,
\begin{equation}
	F_\beta(x)
    \leq
    \frac{1}{8} \frac{\bfC}{\sigma^d_J}
    \left(\frac{\sigma_J}{\sigma_J\vee |x|}\right)^{d-2-\varepsilon}
    \exp\left(-\bfc \frac{|x|}{\xi(\beta)}\right).
\end{equation}
\end{Prop}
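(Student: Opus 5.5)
We plan to use Lemma~\ref{lem: iterated SL} with a suitably chosen smaller parameter $\beta'\le\beta$, and then control the resulting effective-random-walk sum by Theorem~\ref{thm:estimate RW} and the bootstrap hypothesis at $\beta'$. Given $\beta<\beta(\delta_2)$ and $|x|\ge\eta\xi(\beta)\vee 2\xi(0)$, we choose $\beta'\le\beta$ so that $Z_{\beta',\beta}=(\beta-\beta')\chi(\beta')$ is bounded away from $1$, say $Z_{\beta',\beta}\le 1-\kappa$ with $\kappa=\kappa(\eta)$ small. We also want $\chi(\beta')/\chi(\beta)$ to be a fixed fraction. Both requirements follow from Proposition~\ref{prop:Zbd} and the comparison \eqref{eq:xichiintro}, and they make $\xi(\beta')\asymp\xi(\beta)$ up to factors $(\chi(\beta)/\chi(\beta'))^{O(\delta)}$. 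If $\beta$ is small, say $\beta\le\frac12$, we instead invoke \eqref{eq: bound F small values of beta in terms of C,c} directly, since $\bfC/8\ge 2\CGreen$ and the power $d-2$ beats $d-2-\varepsilon$.

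The key steps are as follows. First, apply \eqref{eq:SLinfty} for $F$:
\[
F_\beta(x)\le\sum_{k\ge0}Z^k\,\mathbb E_{\beta'}[F_{\beta'}(x-X_k)].
\]
Second, split this sum according to whether $|x-X_k|\ge |x|/2$ or not. On the event $|x-X_k|\ge|x|/2$, use $\mathcal H_{\beta'}(\bfc,\bfC)$ to bound $F_{\beta'}(x-X_k)$ by $\bfC\sigma_J^{-d}(\sigma_J/|x|)^{d-2-\varepsilon}2^{d}\exp(-\bfc|x|/(2\xi(\beta')))$. On the complementary event, $X_k$ lies near $x$. There we tile $\mathbb Z^d$ into boxes $\Lambda_{\xi(\beta')}(y)$ and bound $F_{\beta'}\le\bfC\sigma_J^{-d}(\dots)$ summed against $\mathbb G_{Z,\beta'}(B_{\xi(\beta')}(y))$. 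Regularity (Proposition~\ref{prop: regularity below beta(delta)}) lets us apply \eqref{eq:bound massive Green function regular walk} with $\mu=Z$, giving decay $\exp(-\crw\sqrt{\kappa}|y|/\xi(\beta'))$. Since $\bfc\le\crw/4$, this should beat the needed $\exp(-\bfc|x|/\xi(\beta))$ once we account for $\xi(\beta')\le\xi(\beta)$.

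The contribution from $y$ near $x$ requires controlling the mass of $F_{\beta'}$ inside a box of radius $\xi(\beta')$. For this we use the stability estimate, Proposition~\ref{Prop:Stability section 2}, in its form \eqref{eq:stability-chi}: $\chi_{\xi(\beta')}(\beta')\le \Cstab\chi(\beta')$. Combined with the Green function factor $(\xi(\beta')/|x|)^{d-2}$ and the normalisation $\chi(\beta')$, this yields a term of order $\chi(\beta')\,\xi(\beta')^{-d}(\xi(\beta')/|x|)^{d-2}$.

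The main obstacle is turning this last term, which lives at scale $\xi$, into the claimed prefactor $\frac18\bfC\sigma_J^{-d}(\sigma_J/|x|)^{d-2-\varepsilon}$ with a gain of $1/8$. This requires $\chi(\beta')\lesssim(\xi(\beta')/\sigma_J)^{2+\varepsilon}$, which is where the $\varepsilon$ loss and the small $\delta_2(\eta,\varepsilon)$ enter. We obtain it from \eqref{eq:xichiintro} integrated from $0$, namely $\chi\le(\xi/\sigma_J)^{2/(1-2\delta)}$, together with $|x|\ge\eta\xi(\beta)$. The $\varepsilon$ slack in the power then absorbs the constants $\Crw$, $\Cstab$, $2^d$ and the $\eta$-dependence, because the surplus exponential factor $\exp(-(\crw\sqrt\kappa-\bfc)|x|/\xi)$ with $|x|\ge\eta\xi$ supplies extra smallness.

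If the constants fail to close, the fallback is to iterate with a finite $T$ in \eqref{eq:SLTtimes}. We would choose $T$ large (depending on $\eta,\varepsilon$) so that the remainder term $Z^T\mathbb E[F_\beta(x-X_T)]$ is bounded via \eqref{eq:cont2} and the anti-concentration bound \eqref{eq:ac-estimate}, which gives the factor $1/8$.
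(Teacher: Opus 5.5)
There is a genuine gap at the near-origin part of the convolution. Your small-$\beta$ case and your bound on the region $|x-X_k|\ge|x|/2$ are fine.

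\textbf{The gap.} Iterating \eqref{eq:SLinfty} to the end gives $F_\beta(x)\le\sum_{z}F_{\beta'}(x-z)\,\mathbb G_{Z,\beta'}(z)$, with $Z=Z_{\beta',\beta}$. On $\{|x-z|<|x|/2\}$, the factor $F_{\beta'}$ is evaluated at $w=x-z$ near the origin, where the only pointwise information is $F_{\beta'}(w)\le\bfC\sigma_J^{-d}$. Box-mass bounds on both factors do not control their convolution; one factor must be bounded pointwise.
\begin{itemize}
\item Pairing $\sup F_{\beta'}$ with the box bound of Theorem~\ref{thm:estimate RW} gives about $\bfC\Crw\sigma_J^{-d}(\xi(\beta')/|x|)^{d-2}$. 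For $|x|$ of order $\xi(\beta)$ this exceeds the target by roughly $(\xi(\beta')/\sigma_J)^{d-2-\varepsilon}$.
\item Your term $\chi(\beta')\xi(\beta')^{-d}(\xi(\beta')/|x|)^{d-2}$ silently uses the other pairing. It presupposes a \emph{pointwise} bound $\mathbb G_{Z,\beta'}(z)\lesssim\xi(\beta')^{-d}(\xi(\beta')/|x|)^{d-2}$ for $z$ near $x$. That is exactly what is unavailable: regularity gives only box-averaged estimates (Example~\ref{ex:rw}). As the paper notes after \eqref{eq:easy bound G_beta GF}, such a bound would give the result directly, with no bootstrap at all.
\item The inequality you quote, $\chi_{\xi(\beta')}(\beta')\le\Cstab\chi(\beta')$, is trivial (it holds with $\Cstab=1$), so stability does no work in your argument.
\item The finite-$T$ fallback does not help. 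The first $T$ terms contain the same near-origin convolution, and \eqref{eq:cont2} carries a factor $\chi(\beta)$ that is far too large pointwise.
\end{itemize}

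\textbf{The missing idea.} Apply \eqref{eq:SL assumption} only once, and split according to the summation variable $y$ in $(\beta-\beta')\sum_yF_{\beta'}(y)F_\beta(x-y)$ (Lemma~\ref{lemma:typ-decomp}).
\begin{itemize}
\item For $y\in\Lambda_m$ with $m=\xi(\beta'')=\mu\,\xi(\beta')$, keep $F_\beta(x-y)$. Bound it pointwise by the bootstrap hypothesis $\mathcal H_\beta(\bfc,\bfC)$ at $\beta$ itself, which applies since $|x-y|\ge|x|/2$. Multiply by the mass fraction $\chi_m(\beta')/\chi(\beta')\le 2\Cstab\mu^2$. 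This is the nontrivial content of Proposition~\ref{Prop:Stability section 2}, applied to the pair $(\beta'',\beta')$, and it gives a genuine contraction once $\mu$ is small.
\item Only for $y\notin\Lambda_m$ is $(\beta-\beta')F_\beta(x-y)$ replaced by $\mathbb G_{Z,\beta'}(x-y)$. There $F_{\beta'}$ has pointwise bounds on dyadic annuli of radius at least $\mu\,\xi(\beta')$, and these can legitimately be paired with the box-averaged Green function bound (Lemma~\ref{lemma:fstarg}).
\end{itemize}

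\textbf{Where the factor $\frac18$ and $\varepsilon$ enter.} Choose $\nu=\xi(\beta')/\xi(\beta)$ small in terms of $\eta$ and $\varepsilon$. Since $1-Z\ge\frac14\nu^2$ and $|x|/\xi(\beta')\ge\eta/\nu$, the far term $(1-Z)^{-1}e^{-\bfc|x|/2\xi(\beta')}$ is small relative to $e^{-\bfc|x|/\xi(\beta)}$. The annulus term carries a factor $(\xi(\beta')/|x|)^{\varepsilon}\le(\nu/\eta)^{\varepsilon}$. That $\nu^\varepsilon$ is where $\varepsilon>0$ is used, not the comparison $\chi\le(\xi/\sigma_J)^{2/(1-2\delta)}$. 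Since you aim for $\xi(\beta')\asymp\xi(\beta)$, you must also make explicit that this ratio is small in terms of $\eta$; otherwise your far term gains nothing.
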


\begin{Prop}[Improvement at intermediate scales]
\label{prop:intermscales} Let $d>2$ and $\varepsilon>0$. Assume that $\bfC\geq 16\CGreen$ and $\bfc\leq \tfrac{1}{2}\bfc\wedge \tfrac{1}{4}\crw\wedge 1$.
There exists $\eta\in (0,1)$
such that the following holds.
Let $\delta_2=\delta_2(\eta,\varepsilon,d)$ be given by
Proposition~\textup{\ref{prop: typical scales}}. The following is true for every $\beta<\beta(\delta_2)$: if $\mathcal H_{\beta'}(\bfc,\bfC)$ holds for every $\beta' \le \beta$
then for every $2\xi(0)\leq |x|\leq \eta \xi(\beta)$,
\begin{equation}
	F_\beta(x)
    \leq
    \frac{1}{6} \frac{\bfC}{\sigma_J^d}
    \left(\frac{\sigma_J}{\sigma_J\vee |x|}\right)^{d-2-\varepsilon}
    .
\end{equation}
\end{Prop}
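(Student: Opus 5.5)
The plan is to transfer a bound from a smaller parameter $\beta'\le\beta$, whose natural scale matches $|x|$, up to $\beta$. The transfer uses the infinite iteration \eqref{eq:SLinfty} of Lemma~\ref{lem: iterated SL} for $F$, and the small factor $\eta$ is converted into a gain through the $\varepsilon$ of room in the power.

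Fix $x$ with $2\xi(0)\le|x|\le\eta\xi(\beta)$. By \eqref{eq:xichiintro} and continuity, $\xi$ is continuous and non-decreasing on $[0,\beta]$ with $\xi(0)=\sigma_J$. Hence there is $\beta'\le\beta$ with $\xi(\beta')=|x|/\eta$; if $|x|/\eta<\sigma_J$ we take $\beta'=0$, which is harmless since $|x|\ge 2\sigma_J$ and $\eta<1$. Assuming, as I expect Proposition~\ref{prop:Zbd} to give, that $Z:=Z_{\beta',\beta}<1$, the bound \eqref{eq:SLinfty} for $F$ gives
\begin{equation*}
F_\beta(x)\le F_{\beta'}(x)+\sum_{k\ge1}Z^k\,\mathbb E_{\beta'}[F_{\beta'}(x-X_k)].
\end{equation*}

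First term. Since $|x|=\eta\xi(\beta')\ge\eta\xi(\beta')\vee2\xi(0)$, Proposition~\ref{prop: typical scales} applies at $\beta'$; this is legitimate because the bootstrap hypothesis holds for all parameters below $\beta'$. It yields $F_{\beta'}(x)\le\frac18\bfC\sigma_J^{-d}(\sigma_J/|x|)^{d-2-\varepsilon}$ after dropping the exponential. It remains to show that the sum over $k\ge1$ is at most $\frac1{24}\bfC\sigma_J^{-d}(\sigma_J/|x|)^{d-2-\varepsilon}$ once $\eta$ is small.

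Sum over $k\ge1$. Here I tile $\mathbb Z^d$ into boxes $B_{\xi(\beta')}(z)$ and use that $\mathbb P_{\beta'}$ is regular (Proposition~\ref{prop: regularity below beta(delta)}). The anti-concentration bound \eqref{eq:ac-estimate}, summed against $Z^k$, gives the averaged Green bound \eqref{eq:bound massive Green function regular walk}:
\begin{equation*}
\sum_{k}Z^k\,\mathbb P_{\beta'}\bigl[X_k\in B_{\xi(\beta')}(z)\bigr]\le\Crw\Bigl(\frac{\xi(\beta')}{\xi(\beta')\vee|z|}\Bigr)^{d-2}.
\end{equation*}
On each box I bound $F_{\beta'}(x-\cdot)$ by its supremum, using $\mathcal H_{\beta'}(\bfc,\bfC)$.
\begin{itemize}
\item Boxes at distance $\gtrsim\xi(\beta')$ from $x$: the sup is at most $\bfC\sigma_J^{-d}(\sigma_J/\xi(\beta'))^{d-2-\varepsilon}$, times exponential decay in $|x-z|/\xi(\beta')$. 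Summing over $z$ with the $(\xi/|z|)^{d-2}$ weights (this is where $d>2$ is used) gives a total of order $\bfC\sigma_J^{-d}(\sigma_J/|x|)^{d-2-\varepsilon}\eta^{d-2-\varepsilon}$. This is small relative to the target once $\eta$ is small.
\item Near-diagonal box, $y$ near $x$: the pointwise bound on $F_{\beta'}(x-y)$ blows up, so I reverse the roles. I write $X_k=X_{k-1}+(X_k-X_{k-1})$ and note that the last step has law $F_{\beta'}/\chi(\beta')$. Its mass in $\Lambda_{\xi(\beta')}$ is then controlled by the stability estimate (Proposition~\ref{Prop:Stability section 2}, equivalently \eqref{eq:stability-chi}). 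Pointwise bounds on the step law near $0$ come from $\mathcal H_{\beta'}$, while regularity controls where $X_{k-1}$ lands, at distance $\approx|x|$ from the origin. The expected outcome is a contribution of order $\bfC\sigma_J^{-d}(\sigma_J/|x|)^{d-2}\cdot(\text{const})$. This is smaller than the target by the factor $(\sigma_J/|x|)^{\varepsilon}\le 2^{-\varepsilon}$, but that is a fixed factor, not an $\eta$-dependent one.
\end{itemize}

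The near-diagonal term is therefore the main obstacle. What I would try first is to avoid bounding it absolutely: I would compare it to the $k=0$ term rather than to a fixed multiple of the target. The aim is to show that near-diagonal arrivals come with total weight $\sum_k Z^k\mathbb P_{\beta'}[X_{k-1}\in\Lambda_{\xi(\beta')}(x)]\lesssim\eta^{d-2}$, using $|x|=\eta\xi(\beta')$ and \eqref{eq:bound massive Green function regular walk} with $z=x/\xi(\beta')$ scaled. A convolution estimate (Appendix-type) of the last step against $F_{\beta'}$ would then give a factor $C\eta^{c}$ times the target. Choosing $\eta$ small, and then $\delta_2$ via Proposition~\ref{prop: typical scales}, would make all $k\ge1$ contributions at most $\frac1{24}$ of the target and close the bound with constant $\frac16$.
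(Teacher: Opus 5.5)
There are two genuine gaps.

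\textbf{First gap: you cannot assume $Z:=Z_{\beta',\beta}<1$.} Proposition~\ref{prop:Zbd} gives only $Z\le(1-\chi(\beta')/\chi(\beta))/(1-E(\beta))$. Upgrading this to $Z<1$ requires $E(\beta)\le\frac14\chi(\beta')/\chi(\beta)$. In the intermediate regime the ratio $\xi(\beta')/\xi(\beta)=|x|/(\eta\xi(\beta))$ is not bounded below (take $|x|=2\xi(0)$ and $\beta$ close to $\beta(\delta_2)$), so no choice of $\delta_2$ enforces this. In Proposition~\ref{prop: typical scales} it could be enforced only because $\nu=\xi(\beta')/\xi(\beta)$ was a fixed constant.

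In fact $Z\ge 1$ is what happens near criticality. Under the hypotheses of Theorem~\ref{Thm:gamma-nu}, \eqref{eq:chibetac} gives $Z_{\beta',\beta}\to(\beta_c-\beta')\chi(\beta')\ge 1$ as $\beta\uparrow\beta_c$ with $\beta'$ fixed. Equality holds only if $\chi(t)^{-1}=\beta_c-t$ on $[\beta',\beta_c)$. Consequently \eqref{eq:SLinfty} is unavailable, $\sum_k Z^k\,\mathbb P_{\beta'}[X_k\in\cdot\,]$ need not converge, and Theorem~\ref{thm:estimate RW} (stated for $\mu\in[0,1]$) does not apply. This is why the paper applies \eqref{eq:SL assumption} only once, where the crude bound $Z\le(1-\delta_2)^{-1}\le 2$ suffices.

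\textbf{Second gap: the near-diagonal term is left unresolved.} You rightly identify it as the crux, but your proposed fix rests on a false estimate. Since $|x|=\eta\xi(\beta')<\xi(\beta')$, the box $\Lambda_{\xi(\beta')}(x)$ contains $X_0=0$. So the $k=1$ term of $\sum_k Z^k\,\mathbb P_{\beta'}[X_{k-1}\in\Lambda_{\xi(\beta')}(x)]$ already equals $Z$. Moreover, \eqref{eq:bound massive Green function regular walk} gives no decay for boxes within distance $\xi(\beta')$ of the origin. Regularity carries no information below the scale $\xi(\beta')$. Summing the pointwise hypothesis $\mathcal H$ over small boxes instead loses an unbounded factor of order $(\xi(\beta')/\sigma_J)^{\varepsilon}$.

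The missing idea is not to expand $F_\beta$ at all. Since the bootstrap hypothesis holds at $\beta$ itself, one bounds $F_\beta(x)\le F_{\beta'}(x)+\frac{2}{\chi(\beta')}(F_{\beta'}*F_\beta)(x)$ and applies Lemma~\ref{lemma:fstarf} with $p=d-2-\varepsilon$ and $k=\eta^{-1/2}$.
\begin{itemize}
\item The region $|y|>k|x|$ costs a factor $k^{-p}\|F_{\beta'}\|_1/\chi(\beta')=\eta^{p/2}$ relative to the bootstrap bound.
\item The region $\Lambda_{k|x|}(0)$ costs a factor of order $\chi_{k|x|}(\beta)/\chi(\beta')$. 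This is controlled by the stability estimate: choosing $\beta''$ with $\xi(\beta'')=k|x|=\eta^{1/2}\xi(\beta')$, Proposition~\ref{Prop:stab} and \eqref{eq:bounds chi xi} give $\chi_{\xi(\beta'')}(\beta)/\chi(\beta')\le\Cstab\,\chi(\beta'')/\chi(\beta')\le\Cstab\,\eta^{1/3}$.
\end{itemize}
The $\eta$-gain thus comes from stability of the finite-volume susceptibility, with constants independent of $\bfC$. Contrary to your opening sentence, the room $\varepsilon$ in the exponent plays no role in this proposition.
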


It is not hard to deduce Proposition \ref{prop: bootstrap} from Propositions \ref{Prop:smallx}, \ref{prop: typical scales}, and \ref{prop:intermscales}.

\begin{proof}[Proof of Proposition \textup{\ref{prop: bootstrap}}]
First,
observe that the choice of $(\bfc,\bfC)$ in \eqref{eq:fixing bfc bfC} meets all the requirements of Propositions~\ref{Prop:smallx}, \ref{prop: typical scales}, and \ref{prop:intermscales}. Let $\eta,\delta_2$ be given by Proposition~\ref{prop:intermscales}. Assume that $\beta<\beta(\delta_2)$ and that $\mathcal H_{\beta'}(\bfc,\bfC)$ holds for every $\beta'\leq \beta$.
To prove that $\mathcal H_{\beta'}(\bfc,\frac 12 \bfC)$ holds,
we proceed scale by scale using
Propositions~\ref{Prop:smallx}--\ref{prop:intermscales}.

We have already observed  that $\mathcal H_{\beta}(\bfc,\tfrac{1}{2}\bfC;x)$ holds for every $|x|\leq 2\xi(0)$,
thanks to Proposition~\ref{Prop:smallx}.
For typical scales, Proposition~\ref{prop: typical scales} gives more than we need, as it says that $\mathcal H_{\beta}(\bfc,\tfrac{1}{8}\bfC;x)$ holds for every
$|x|\geq \eta \xi(\beta)\vee 2\xi(0)$.
Finally, by Proposition~\ref{prop:intermscales}, for the remaining intermediate scale $2\xi(0)\leq |x|\leq \eta\xi(\beta)$, we have
\begin{equation}
	F_\beta(x)
    \leq \frac{\exp(\bfc \eta)}{3} \cdot \frac{1}{2}\frac{\bfC}{\sigma^d_J}
    \left(\frac{\sigma_J}{\sigma_J\vee |x|}\right)^{d-2-\varepsilon}
    \exp\left(-\bfc \frac{|x|}{\xi(\beta)}\right).
\end{equation}
Since $\eta\leq 1$ and $\bfc\leq \tfrac{1}{2}\log 2$ by assumption,
this proves that $\mathcal H_{\beta}(\bfc,\tfrac{1}{3}\bfC;x)$ holds
for the intermediate scale.
This concludes the proof, by setting $\delta_1:=\delta_2$.
\end{proof}

\section{Preliminaries: integration, regularity, stability}\label{sec:prelim}

In this section, we develop three preliminary ingredients
for the proof of Theorem~\ref{thm:bound on F}.
In Section~\ref{sec:integration}, we use elementary calculus to derive consequences of the upper and lower differential inequalities \eqref{eq:dGub} and \eqref{eq:Diff inequ assumption} provided by Assumption~\ref{Ass:G}. This is the \emph{only} place in the paper where \eqref{eq:Diff inequ assumption} is used. In Section~\ref{sec:smallbeta}, we give a first application of the results of Section~\ref{sec:integration} and prove Proposition \ref{prop:explicit bound on G for small beta}. In
Section~\ref{sec:regularity-pf}, we combine
Section~\ref{sec:integration} and Corollary~\ref{coro:Miteration} to prove the regularity of the effective random walk
stated in Proposition~\ref{prop: regularity below beta(delta)}. Finally, in Section~\ref{sec:stability}, we prove the stability result
stated in Proposition~\ref{Prop:Stability section 2}. It plays an important role in the proofs of Propositions~\ref{prop: typical scales} and \ref{prop:intermscales}.

\subsection{Integration of differential inequalities}
\label{sec:integration}

Throughout Section~\ref{sec:integration}, we fix $d>2$ and
 we assume that $J$ and $G$ obey Assumption~\ref{Ass:G}.
Recall that $F_\beta = J*G_\beta$ and

\begin{equation}
	\chi(\beta)=\sum_{x\in \mathbb Z^d}G_\beta(x)=\sum_{x\in \mathbb Z^d}F_\beta(x) = \Vert F_\beta\Vert_1,
    \quad
    E(\beta)=\max_{0\leq t \leq \beta}
    \left(\Vert H_t\Vert_1 + \frac{\Vert |x|_2^2\cdot H_t\Vert_1}{\xi(t)^2}\right).
\end{equation}

It is only in this section that we use the definition of $E(\beta)$. Subsequently
we will only use the fact that $E(\beta) <\delta$ for every $\beta <\beta(\delta)$, by the definition of $\beta(\delta)$ in \eqref{eq:Edef}.

\begin{Prop}
\label{prop: rough bounds chi xi}
For every $0\leq \beta'\leq \beta < \beta_c$,
\begin{equation}\label{eq:bounds chi}
	(\beta-\beta')(1-E(\beta))\leq \frac{1}{\chi(\beta')}-\frac{1}{\chi(\beta)}\leq \beta-\beta',
\end{equation}
and, if $E(\beta)<1$,
\begin{equation}\label{eq:bounds chi xi} \left(\frac{\chi(\beta')}{\chi(\beta)}\right)^{\frac{1+E(\beta)}{1-E(\beta)}}
    \leq \left(\frac{\xi(\beta')}{\xi(\beta)}\right)^2
    \leq \left(\frac{\chi(\beta')}{\chi(\beta)}\right)^{ 1-2E(\beta)}.
\end{equation}
In particular, if $E(\beta)\le \frac 12$ then
$\xi(\beta') \le \xi(\beta)$.
\end{Prop}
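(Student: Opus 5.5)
The plan is to sum the differential inequalities \eqref{eq:dFub} and \eqref{eq:dFlb} against the weights $1$ and $|x|_2^2$. This produces scalar differential inequalities for $\chi(\beta)$ and for $m(\beta):=\Vert |x|_2^2\cdot F_\beta\Vert_1$, which can then be integrated.

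Two elementary facts are needed. First, $\Vert f*g\Vert_1=\Vert f\Vert_1\Vert g\Vert_1$. Second, for $\mathbb Z^d$-symmetric summable $f,g,h$,
\begin{equation*}
\Vert |x|_2^2\,(f*g*h)\Vert_1=M_f|g||h|+|f|M_g|h|+|f||g|M_h,
\end{equation*}
where $|f|=\Vert f\Vert_1$ and $M_f=\Vert |x|_2^2 f\Vert_1$. The cross terms vanish because each factor has zero mean by symmetry. We also use $\Vert J\Vert_1=1$, $\Vert G_\beta\Vert_1=\chi$, and the relation $M_{G_\beta}=m-\sigma_J^2\chi$, which follows from $F_\beta=J*G_\beta$.

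\emph{Bounds on $\chi$.} Summing \eqref{eq:dGub} gives $\chi'\le\chi^2$. Summing \eqref{eq:Diff inequ assumption} gives $\chi'\ge\chi^2(1-\Vert H_\beta\Vert_1)\ge\chi^2(1-E(\beta))$. Hence $-(1/\chi)'\in[1-E(\beta),1]$. Integrating over $[\beta',\beta]$, and using that $E$ is non-decreasing, yields \eqref{eq:bounds chi}.

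\emph{Bounds on $m$.} From \eqref{eq:dFub}, $m'\le 2\chi m$. From \eqref{eq:dFlb}, the second moment of $F*J*G$ is
\begin{equation*}
m\chi+\chi^2\sigma_J^2+\chi(m-\sigma_J^2\chi)=2m\chi.
\end{equation*}
The second moment of $F*H*G$ is at most
\begin{equation*}
2m\chi\Vert H\Vert_1+\chi^2 M_H\le 2m\chi\Big(\Vert H\Vert_1+\tfrac12 M_H/\xi^2\Big),
\end{equation*}
using $\chi=m/\xi^2$. Since $\Vert H\Vert_1+M_H/\xi^2\le E$, this gives $m'\ge 2\chi m(1-E)$.

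\emph{Bounds on $\xi$.} With $\xi^2=m/\chi$ we have $(\log\xi^2)'=m'/m-\chi'/\chi$. Combining the bounds above gives
\begin{equation*}
\chi(1-2E)\le(\log\xi^2)'\le\chi(1+E).
\end{equation*}
Since $(\log\chi)'=\chi'/\chi\in[(1-E)\chi,\chi]$, we can substitute for $\chi$ to get
\begin{equation*}
(1-2E)(\log\chi)'\le(\log\xi^2)'\le\tfrac{1+E}{1-E}(\log\chi)'.
\end{equation*}
Integrating over $[\beta',\beta]$, with $E(t)\le E(\beta)$ throughout, gives \eqref{eq:bounds chi xi} after inverting the ratios. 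The final claim follows because $1-2E\ge0$ when $E\le\tfrac12$, and $\chi(\beta')\le\chi(\beta)$ by monotonicity of $G$.

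The main technical obstacle is justifying that $\chi$ and $m$ are differentiable, and that the summed inequalities hold, since the assumptions are pointwise in $x$. I would avoid this by working with integrated forms. Assumption~\ref{Ass:G}$(ii)$ gives differentiability of each $G_\beta(x)$. Exponential decay, locally uniform in $\beta$ (via \eqref{eq:F satisfies A2} at nearby parameters), gives dominated convergence for the summed difference quotients. Alternatively, one can use Dini-derivative versions of the inequalities, which suffice for the integration via a comparison argument.
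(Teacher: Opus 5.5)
Your argument follows the paper's proof: you sum \eqref{eq:dGub}, \eqref{eq:Diff inequ assumption}, \eqref{eq:dFub} and \eqref{eq:dFlb} against $1$ and $|x|_2^2$, absorb the $G$-moment via $\Vert |x|_2^2 G_\beta\Vert_1\le \Vert |x|_2^2 F_\beta\Vert_1$, and integrate using that $E$ is non-decreasing. Your dominated-convergence remark also fills in a point the paper passes over silently. It works because the difference quotients are nonnegative and bounded by $G_{\beta_1}*J*G_{\beta_1}$ for $\beta',\beta\le\beta_1<\beta_c$.

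However, one step fails in part of the claimed range. The statement asserts \eqref{eq:bounds chi xi} for all $E(\beta)<1$. You first derive $(\log\xi^2)'\ge(1-2E)\chi$ and then replace $\chi$ by the smaller quantity $(\log\chi)'$.

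\begin{itemize}
\item Multiplying $\chi\ge(\log\chi)'$ by $1-2E$ preserves the direction only when $E\le\frac12$.
\item For $\frac12<E<1$ it reverses. The desired bound cannot be recovered from $(\log\xi^2)'\ge(1-2E)\chi$ at all, since that would require $\chi'\ge\chi^2$.
\item As written, you therefore prove the right-hand inequality of \eqref{eq:bounds chi xi} only for $E(\beta)\le\frac12$. For larger $E$ your route yields only the weaker exponent $\frac{1-2E}{1-E}$.
\end{itemize}

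The repair is to substitute before subtracting. With your $m=\chi\xi^2$, the bound $(\log m)'\ge 2(1-E)\chi\ge 2(1-E)(\log\chi)'$ needs only $1-E>0$. Then $(\log\xi^2)'=(\log m)'-(\log\chi)'\ge(1-2E)(\log\chi)'$ follows with no further estimate. This is why the paper works with $\partial_\beta\log(\chi\xi^2)$ rather than $\partial_\beta\log\xi^2$.

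The rest is fine as you have it: your upper bound on $(\log\xi^2)'$, the bounds \eqref{eq:bounds chi}, and the final monotonicity claim for $E(\beta)\le\frac12$.
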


By taking $\delta\in (0,1)$ and $\beta <\beta(\delta)$, the
second inequality of \eqref{eq:xichiintro} is seen to follow
immediately from \eqref{eq:bounds chi xi}.  By choosing
$\beta'=0$ and $\beta<\beta(\delta)$ in \eqref{eq:bounds chi}, we find that
$\beta (1-\delta)\leq 1/\chi(0)=1$,
which implies that
\begin{equation}\label{eq:bound on beta(delta)}
	\beta(\delta)\leq (1-\delta)^{-1},
\end{equation}
as stated in the first inequality of \eqref{eq:xichiintro}.
Also, with $\delta\in (0,1)$, $\beta'=0$, and $\beta<\beta(\delta)=\beta(\delta)\wedge (1-\delta)^{-1}$,
\eqref{eq:bounds chi} rearranges to
\begin{equation}\label{eq:chi0}
	\frac{1}{1-\beta(1-\delta)}\leq \chi(\beta)\leq \frac{1}{1-\beta}.
\end{equation}

\begin{proof}[Proof of Proposition~\textup{\ref{prop: rough bounds chi xi}}]
We begin with \eqref{eq:bounds chi}.
It follows by summation of the differential upper bound \eqref{eq:dGub} that
\begin{equation}
\label{eq:dchi-ub}
	\partial_\beta \chi(\beta)\leq \sum_{x\in \mathbb Z^d}(F_\beta*G_\beta)(x)=\chi(\beta)^2.
\end{equation}
Similarly, it follows from summation of the differential lower bound \eqref{eq:Diff inequ assumption} that
\begin{equation}
\label{eq:dchi-lb}
	\partial_\beta \chi(\beta)
    \geq \sum_{x\in \mathbb Z^d}(F_\beta*[J-H_\beta]*G_\beta)(x)
    =\left(1- \Vert H_\beta \Vert_1 \right)\chi(\beta)^2
    \geq (1-E(\beta))\chi(\beta)^2.
\end{equation}
The combination of \eqref{eq:dchi-ub}--\eqref{eq:dchi-lb} gives
\begin{equation}\label{eq:proof basic prop0}
	(1-E(\beta))\leq -\partial_\beta\chi(\beta)^{-1} \leq 1,
\end{equation}
and then integration over the interval $[\beta',\beta]$ gives \eqref{eq:bounds chi}.
For later use, we also observe that \eqref{eq:dchi-ub}--\eqref{eq:dchi-lb} yield
\begin{equation}
\label{eq:bounds log der}
	\partial_\beta\log\chi(\beta)\leq \chi(\beta)\leq \frac{1}{1-E(\beta)}\partial_\beta\log\chi(\beta).
\end{equation}

To begin the proof of \eqref{eq:bounds chi xi}, we note that
by definition and by \eqref{eq:dFub},
\begin{equation}
	\partial_\beta(\chi(\beta)\xi(\beta)^2)
    =\sum_{x\in \mathbb Z^d}|x|_2^2 \partial_\beta F_\beta(x)
    \le
    \sum_{x,y\in \mathbb Z^d}|x|_2^2 F_\beta(y)F_\beta(x-y).
\end{equation}
We insert $|x|_2^2 = |y|_2^2 + 2x\cdot(x-y) + |x-y|_2^2$ in the right-hand side,
and observe that the cross term vanishes by symmetry of $F_\beta$.
Therefore,
\begin{equation}\label{eq:proof basic prop2}
	\partial_\beta(\chi(\beta)\xi(\beta)^2)\leq 2\chi(\beta)^2\xi(\beta)^2.
\end{equation}
We apply the lower bound of
\eqref{eq:Diff inequ assumption} in a similar manner.
With the inequality
\begin{equation}\label{eq:ineq l2 G F}
    \| |x|^2 G_\beta \|_1 = \| |x|^2 F_\beta \|_1 - \sigma_J^2\chi(\beta)
    \le \| |x|^2 F_\beta \|_1,
\end{equation}
and with our assumption that $H_\beta(x)=H_\beta(-x)$, this leads to
\begin{align}
	&\partial_\beta(\chi(\beta)\xi(\beta)^2) \nonumber
    \\&\quad \geq \sum_{x\in \mathbb Z^d}|x|_2^2
    \left[(F_\beta * F_\beta)(x) - (F_\beta * H_\beta *G_\beta)(x)\right]
        \nonumber
    \\&\quad =2\chi(\beta)^2\xi(\beta)^2-\chi(\beta)^2\Vert |x|_2^2\cdot H_\beta\Vert_1-\chi(\beta)^2\Vert H_\beta\Vert_1 \xi(\beta)^2-\chi(\beta)\Vert H_\beta\Vert_1\cdot\Vert |x|_2^2\cdot G_\beta\Vert_1
    \nonumber
    \\
    &\quad \ge 2\chi(\beta)^2\xi(\beta)^2-\chi(\beta)^2\Vert |x|_2^2 \cdot H_\beta\Vert_1
    - 2 \Vert H_\beta\Vert_1 \chi(\beta)^2\xi(\beta)^2
    \nonumber
    \\ &\quad
    \ge
    2(1-E(\beta))\chi(\beta)^2\xi(\beta)^2.
\label{eq:proof basic prop3}
\end{align}
Together, \eqref{eq:proof basic prop2} and \eqref{eq:proof basic prop3} give
\begin{equation}\label{eq:proof basic prop4}
	2(1-E(\beta))\chi(\beta)\leq \partial_\beta \log(\chi(\beta)\xi(\beta)^2)\leq 2\chi(\beta).
\end{equation}
With \eqref{eq:bounds log der}, \eqref{eq:proof basic prop4} gives
\begin{equation}
	2(1-E(\beta))\partial_\beta\log \chi(\beta)\leq \partial_\beta\log(\chi(\beta)\xi(\beta)^2)\leq \frac{2}{1-E(\beta)}\partial_\beta \log \chi(\beta).
\end{equation}
Then, \eqref{eq:bounds chi xi} follows after integration over $[\beta',\beta$].
This completes the proof.
\end{proof}

The following corollary of Proposition~\ref{prop: rough bounds chi xi}
shows that the ratios $\frac{\chi(\beta')}{\chi(\beta)}$ and $(\frac{\xi(\beta')}{\xi(\beta)})^2$ are comparable up to constants,
as long as $\beta-\beta'$ is sufficiently small that the
ratios $\frac 14 \frac{\chi(\beta')}{\chi(\beta)}$ and
$\frac 14 (\frac{\xi(\beta')}{\xi(\beta)})^2$ are not smaller than $E(\beta)$.
\begin{Coro}
\label{cor: comparison chi xi when E small}	
Let $0\leq \beta'\leq \beta< \beta_c$.
If $E(\beta)\leq \frac{1}{4}\frac{\chi(\beta')}{\chi(\beta)} \vee \frac{1}{4}(\frac{\xi(\beta')}{\xi(\beta)})^2$ and also $E(\beta)\leq \tfrac{1}{2}$, then
\begin{equation}
\label{eq: comparison chi xi when E small}
	\frac{1}{2}\frac{\chi(\beta')}{\chi(\beta)}
    \leq
    \left(\frac{\xi(\beta')}{\xi(\beta)}\right)^2\leq 2\frac{\chi(\beta')}{\chi(\beta)}.
\end{equation}
\end{Coro}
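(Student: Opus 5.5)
The plan is to derive both inequalities directly from \eqref{eq:bounds chi xi} in Proposition~\ref{prop: rough bounds chi xi}, by elementary manipulation of exponents. Write $E=E(\beta)$, $r=\chi(\beta')/\chi(\beta)$ and $s=(\xi(\beta')/\xi(\beta))^2$. Since $\chi$ is non-decreasing we have $r\in(0,1]$. Since $E\le\frac12$, the last assertion of Proposition~\ref{prop: rough bounds chi xi} gives $s\in(0,1]$. Also $E\le\frac12<1$, so \eqref{eq:bounds chi xi} applies and reads
\begin{equation*}
r\cdot r^{\frac{2E}{1-E}}\le s\le r\cdot r^{-2E}.
\end{equation*}
The claim \eqref{eq: comparison chi xi when E small} therefore follows once we know that
\begin{equation*}
\max\Big(2E,\ \tfrac{2E}{1-E}\Big)\log(1/r)\le\log 2 .
\end{equation*}
Because $E\le\frac12$, it suffices to show $4E\log(1/r)\le \log 2$. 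We split according to which term of the maximum in the hypothesis controls $E$.

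\emph{Case $E\le \frac14 r$.} Here $4E\log(1/r)\le r\log(1/r)\le e^{-1}<\log 2$, using $\sup_{t\in(0,1]}t\log(1/t)=e^{-1}$. This settles both inequalities at once.

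\emph{Case $E\le \frac14 s$.} Since $s\le1$, we get $E\le\frac14$, which gives the sharper bound $\frac{2E}{1-E}\le\frac{8}{3}E$. We then convert the control on $s$ into control on $r$ via the upper bound $s\le r^{1-2E}$. This gives $\log(1/r)\le \log(1/s)/(1-2E)\le 2\log(1/s)$, using $1-2E\ge\frac12$. Consequently
\begin{equation*}
\max\Big(2E,\ \tfrac{2E}{1-E}\Big)\log(1/r)\le \tfrac{16}{3}E\log(1/s)\le \tfrac43\, s\log(1/s)\le \tfrac{4}{3e}<\log 2 .
\end{equation*}
This again yields both inequalities.

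The only step needing care is the second case. There the hypothesis controls $E$ through $s$ rather than $r$, so the $\chi$-ratio must first be bounded in terms of the $\xi$-ratio, which forces us to use \eqref{eq:bounds chi xi} "backwards". The numerical constants also have to be tracked tightly: a crude estimate such as $8E\log(1/s)\le 2/e$ exceeds $\log 2$. The fix is to handle the two exponents separately, using $E\le\frac14$ to improve $\frac{2E}{1-E}$ to $\frac83E$. Everything else is the elementary bound $t\log(1/t)\le e^{-1}$ for $t\in(0,1]$.
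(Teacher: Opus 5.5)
Your proof is correct and follows essentially the paper's route: both derive the two bounds from \eqref{eq:bounds chi xi}, split according to which term of the maximum controls $E(\beta)$, and close with an elementary bound on $t\log(1/t)$ (equivalently $t^t\ge\tfrac12$) on $(0,1]$. The only difference is in the second case. The paper inverts \eqref{eq:bounds chi xi} so that the $\xi$-ratio is the base, which avoids the factor $2$ you lose in $\log(1/r)\le 2\log(1/s)$; you keep the $\chi$-ratio as the base and compensate for that loss with the sharper exponent bound $\tfrac83 E$.
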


\begin{proof}
The claim follows from \eqref{eq:bounds chi xi} and basic algebra.
Let $a=(\frac{\xi(\beta')}{\xi(\beta)})^2$, $b=\frac{\chi(\beta')}{\chi(\beta)}$,
and $E=E(\beta)$.  Then $0 < b \leq 1$, and since $E\leq \tfrac{1}{2}$
it follows from Proposition~\ref{prop: rough bounds chi xi} that also $0<a\leq 1$.  We will use the facts that
\begin{equation}
\label{eq:bounds-for-ab}
    t^t \ge \frac 12 \;\;\text{and}\;\; t^{t/2} \ge \frac 12 \;\;\text{for $t >0$,}
    \qquad
    \frac{1}{1-t} \le 1 +2t \;\;\text{for $t \in [0, \tfrac 12]$.}
\end{equation}
The inequalities in \eqref{eq:bounds chi xi} can be restated as
\begin{equation}\label{eq:ab0}
    b\cdot b^{2(\frac{1}{1-E}-1)} \le a \le b\cdot \frac{1}{b^{2E}}.
\end{equation}
Suppose first that $E \le \frac 14 b$.  Then, a combination of \eqref{eq:bounds-for-ab} and \eqref{eq:ab0} gives
the required bounds
\begin{equation}
 a \ge b \cdot b^{2(\tfrac{1}{1-E}-1)} \ge
b\cdot b^{4E}\ge b \cdot b^b \ge
    b\cdot \frac 12
\end{equation}
 and
   \begin{equation}
   	a\leq b\cdot \frac{1}{b^{2E}}\leq b\cdot \frac{1}{b^{b/2}}\leq 2b.
   \end{equation}
Now suppose that $E \le \frac 14 a$.  In this case we restate \eqref{eq:bounds chi xi} as
\begin{equation}
\label{eq:ab1}
    a\cdot a^{\frac{1}{1-2E}-1} \le b \le a\cdot \frac{1}{a^{\frac{2E}{1+E}}}.
\end{equation}
In the lower bound on $b$ in
\eqref{eq:ab1},
the exponent of the second factor is at most $4E \le a$, so the left-hand side
is bounded below by $\frac 12 a$.  The right-hand side of \eqref{eq:ab1} is
increased if we increase the power to $2E \le \frac 12 a$, so the right-hand side
is bounded above by $2a$.  Together, this gives the desired bounds $\frac 12 b \le a \le 2b$ in
this case, and completes the proof.
\end{proof}

Recall that
\begin{equation}
\label{eq:Zdef1}
Z_{\beta',\beta}=
(\beta-\beta')\chi(\beta').
\end{equation}
In particular,
\begin{equation}
\label{eq:Z0}
        Z_{0,\beta} = \beta .
\end{equation}
The following proposition shows that $Z_{\beta',\beta}$ is close to and less than $1$
as long as $\beta-\beta'$ is sufficiently small that the
ratios $\frac 14 \frac{\chi(\beta')}{\chi(\beta)}$ and
$\frac 18 (\frac{\xi(\beta')}{\xi(\beta)})^2$ are not smaller than $E(\beta)$. The upper bound on $Z_{\beta',\beta}$ is the useful one in our analysis.

\begin{Prop}[Bounds on $Z_{\beta',\beta}$]
\label{prop:Zbd}
Let $0\leq \beta'\leq \beta<\beta_c$. Then,
\begin{equation}
\label{eq: bounds on Z1}
	1-\frac{\chi(\beta')}{\chi(\beta)}\leq Z_{\beta',\beta}\leq \Big(1-\frac{\chi(\beta')}{\chi(\beta)}\Big)\frac{1}{1-1\wedge E(\beta)}.
\end{equation}
In particular, if $E(\beta)\leq \tfrac{1}{4}\tfrac{\chi(\beta')}{\chi(\beta)}$, then
\begin{equation}\label{eq: explicit bounds on Z1}
	1-\frac{\chi(\beta')}{\chi(\beta)}\leq Z_{\beta',\beta}\leq 1-\frac{1}{2}\frac{\chi(\beta')}{\chi(\beta)}.
\end{equation}
If we assume instead
that $E(\beta)\leq \tfrac{1}{8}(\tfrac{\xi(\beta')}{\xi(\beta)})^2 \wedge \tfrac{1}{2}$, then
\begin{equation}
\label{eq: bounds on Z in terms of n/N}
	1-2\left(\frac{\xi(\beta')}{\xi(\beta)}\right)^2\leq Z_{\beta',\beta}\leq 1-\frac{1}{4}\left(\frac{\xi(\beta')}{\xi(\beta)}\right)^2.
\end{equation}
\end{Prop}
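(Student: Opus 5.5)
The bound \eqref{eq: bounds on Z1} follows from the integrated form \eqref{eq:bounds chi} of Proposition~\ref{prop: rough bounds chi xi}, applied on the interval $[\beta',\beta]$. Multiplying \eqref{eq:bounds chi} by $\chi(\beta')$ gives
\begin{equation*}
(\beta-\beta')(1-E(\beta))\chi(\beta')\leq 1-\frac{\chi(\beta')}{\chi(\beta)}\leq (\beta-\beta')\chi(\beta')=Z_{\beta',\beta}.
\end{equation*}
The right inequality is the lower bound on $Z_{\beta',\beta}$. For the upper bound we distinguish two cases. If $E(\beta)<1$, we divide the left inequality by $1-E(\beta)$. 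If $E(\beta)\ge 1$, then $1-1\wedge E(\beta)=0$ and the right-hand side is $+\infty$, with the convention $1/0=\infty$. This holds unless $\chi(\beta')=\chi(\beta)$, which forces $\beta=\beta'$; then $Z=0$ and the bound is trivial.

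For \eqref{eq: explicit bounds on Z1}, write $b=\chi(\beta')/\chi(\beta)\in(0,1]$ and $E=E(\beta)\le b/4\le 1/4$. The claim is that $(1-b)/(1-E)\le 1-b/2$. This is equivalent to $1-b\le (1-b/2)(1-E)$, which is implied by $1-b\le (1-b/2)(1-b/4)$. Expanding, that last inequality reads $-b\le -\tfrac34 b+b^2/8$, i.e.\ $0\le b/4+b^2/8$, which is true.

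For \eqref{eq: bounds on Z in terms of n/N}, set $a=(\xi(\beta')/\xi(\beta))^2$. The hypothesis $E\le a/8\wedge\tfrac12$ in particular gives $E\le \tfrac14 a$ and $E\le \tfrac12$. Corollary~\ref{cor: comparison chi xi when E small} then applies and gives $\tfrac12 b\le a\le 2b$, i.e.\ $a/2\le b\le 2a$.
\begin{itemize}
\item \emph{Lower bound.} From the first part, $Z\ge 1-b\ge 1-2a$.
\item \emph{Upper bound.} Since $b\ge a/2$, we have $E\le a/8\le b/4$. So \eqref{eq: explicit bounds on Z1} applies and gives $Z\le 1-b/2\le 1-a/4$.
\end{itemize}

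The only step needing care is the bookkeeping of the hypotheses. One must check that the condition $E\le a/8$ is exactly what makes both Corollary~\ref{cor: comparison chi xi when E small} and \eqref{eq: explicit bounds on Z1} applicable. Everything else is elementary algebra.
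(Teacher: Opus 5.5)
Your proof is correct and is essentially the paper's argument. The first display comes from \eqref{eq:bounds chi} multiplied by $\chi(\beta')$. The last display comes from Corollary~\ref{cor: comparison chi xi when E small}, which converts $E\le a/8$ into $E\le b/4$ and then translates back. The only difference is cosmetic: you verify $(1-b)/(1-E)\le 1-b/2$ by expanding, whereas the paper uses $(1-t)^{-1}\le 1+2t$.

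Drop the remark that $\chi(\beta')=\chi(\beta)$ forces $\beta=\beta'$ when $E(\beta)\ge 1$, because it is unjustified. Strict growth of $\chi$ comes only from \eqref{eq:dchi-lb}, which gives nothing once $\|H_t\|_1\ge 1$, so $\chi$ can be constant on an interval in that regime. In the case $E(\beta)\ge1$, simply regard the upper bound as vacuous, as the paper does.
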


 \begin{proof} The upper bound in \eqref{eq:bounds chi} implies that
 \begin{equation}
 	1-\frac{\chi(\beta')}{\chi(\beta)}\leq (\beta-\beta')\chi(\beta')=Z_{\beta',\beta},
 \end{equation}
 which is the lower bound in \eqref{eq: bounds on Z1}.
 The upper bound in \eqref{eq: bounds on Z1} is vacuous if $E(\beta)\geq 1$,
  while for $E(\beta)<1$ the lower bound in \eqref{eq:bounds chi} yields
 \begin{equation}
 	(1-E(\beta))Z_{\beta',\beta}
    \leq 1-\frac{\chi(\beta')}{\chi(\beta)},
 \end{equation}
which is the upper bound in \eqref{eq: bounds on Z1}.

For the upper bound of \eqref{eq: explicit bounds on Z1},
we again use $(1-t)^{-1}\leq 1+2t$ for $t\leq \tfrac{1}{2}$,
so that \eqref{eq: bounds on Z1} and the assumption on $E(\beta)$ give
 \begin{equation}
 	Z_{\beta',\beta}\leq \left(1-\frac{\chi(\beta')}{\chi(\beta)}\right)\left(1+\frac{1}{2}\frac{\chi(\beta')}{\chi(\beta)}\right)\leq 1-\frac{1}{2}\frac{\chi(\beta')}{\chi(\beta)}.
 \end{equation}
Finally, for \eqref{eq: bounds on Z in terms of n/N}, we
apply Corollary~\ref{cor: comparison chi xi when E small} to see that
$E(\beta) \le \frac 18 (\frac{\xi(\beta')}{\xi(\beta)})^2 \leq \frac 14 \frac{\chi(\beta')}{\chi(\beta)}$, and then apply
\eqref{eq: explicit bounds on Z1} and again use the bounds
of Corollary~\ref{cor: comparison chi xi when E small}. This completes the proof.
\end{proof}

Theorem~\ref{Thm:gamma-nu} is contained in the following proposition.

\begin{Prop}
\label{prop:gamma-nu}
For $\delta \in(0,1)$, $\beta \le \beta(\delta)$, and   $E=E(\beta(\delta))$, we have
\begin{equation}
\label{eq:chibetadelta}
    \frac{1}{\chi(\beta(\delta))^{-1} + (\beta(\delta) -\beta)}
    \le \chi(\beta)
    \le
    \frac{1}{\chi(\beta(\delta))^{-1} + (\beta(\delta) -\beta)(1-E)},
\end{equation}
\begin{equation}
\label{eq:xibetadelta}
    \chi(\beta)^{1-2E}
    \le
    \frac{\xi(\beta)^2}{\sigma_J^2}
    \le
     \chi(\beta)^{\frac{1+E}{1-E}}.
\end{equation}
Assume that $\beta(\delta)=\beta_c$ and $\chi(\beta_c)=\infty$. Then,
 \begin{equation}
 \label{eq:betac-asy}
    1 \le \beta_c \le \frac{1}{1-E},
 \end{equation}
\begin{equation}
\label{eq:chibetac}
    \frac{1}{\beta_c  -\beta}
    \le \chi(\beta)
    \le
    \frac{1}{1-E}
    \frac{1}{\beta_c -\beta },
\end{equation}
\begin{equation}
\label{eq:xibetac}
    \left( \frac{1}{\beta_c -\beta} \right)^{1-2E}
    \le
    \frac{\xi(\beta)^2}{\sigma_J^2}
    \le
    \left( \frac{1}{(1-E)(\beta_c -\beta)} \right)^{\frac{1+E}{1-E}}.
\end{equation}
\end{Prop}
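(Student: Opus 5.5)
The plan is to integrate the differential inequalities already packaged in Proposition~\ref{prop: rough bounds chi xi}, now comparing $\beta$ either with the endpoint $\beta(\delta)$ or with $0$.

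For \eqref{eq:chibetadelta}, apply \eqref{eq:bounds chi} with the pair $(\beta,\beta(\delta))$ in place of $(\beta',\beta)$. This gives
\[
(\beta(\delta)-\beta)(1-E)\le \chi(\beta)^{-1}-\chi(\beta(\delta))^{-1}\le \beta(\delta)-\beta ,
\]
and rearranging yields the claim. Two technical points need care. First, \eqref{eq:bounds chi} was proved for $\beta<\beta_c$, whereas $\beta(\delta)$ may equal $\beta_c$. I would therefore apply it with $\beta(\delta)$ replaced by $t<\beta(\delta)$, note $E(t)\le E$, and let $t\uparrow\beta(\delta)$. Here $\chi(t)\to\chi(\beta(\delta))$: by continuity if $\beta(\delta)<\beta_c$, and by monotone convergence (Definition~\ref{Def:G}$(v)$) otherwise, with $\chi(\beta_c)^{-1}=0$ allowed. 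Second, $E(\beta(\delta))$ must be defined at the endpoint; I read it as $\lim_{t\uparrow\beta(\delta)}E(t)\le\delta<1$, which exists because $E$ is monotone.

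For \eqref{eq:xibetadelta}, use \eqref{eq:bounds chi xi} with the pair $(0,\beta)$. Since $\chi(0)=1$ and $\xi(0)^2=\|\,|x|_2^2 J\|_1=\sigma_J^2$ (because $F_0=J$), this reads
\[
\chi(\beta)^{-\frac{1+E(\beta)}{1-E(\beta)}}\le \sigma_J^2/\xi(\beta)^2\le \chi(\beta)^{-(1-2E(\beta))} .
\]
Because $\chi(\beta)\ge1$, the exponents may be replaced using $E(\beta)\le E$, which weakens both bounds in the correct direction. For $\beta=\beta(\delta)$ itself, pass to the limit as before.

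When $\beta(\delta)=\beta_c$ and $\chi(\beta_c)=\infty$, the term $\chi(\beta(\delta))^{-1}$ vanishes and \eqref{eq:chibetadelta} becomes \eqref{eq:chibetac}. Taking $\beta=0$ there, with $\chi(0)=1$, gives $1/\beta_c\le1\le 1/((1-E)\beta_c)$, which is \eqref{eq:betac-asy}. Finally, inserting \eqref{eq:chibetac} into \eqref{eq:xibetadelta} gives \eqref{eq:xibetac}. The only step requiring real attention is the limiting procedure at $\beta(\delta)=\beta_c$ and the meaning of $E$ there; everything else is algebra.
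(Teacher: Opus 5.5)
Your argument is the paper's own proof. The paper obtains \eqref{eq:chibetadelta} from \eqref{eq:bounds chi} with the pair $(\beta,\beta(\delta))$, obtains \eqref{eq:xibetadelta} from \eqref{eq:bounds chi xi} with $\beta'=0$, and gets the critical statements by setting $\chi(\beta_c)^{-1}=0$. Your limit $t\uparrow\beta(\delta)$ and your reading of $E(\beta(\delta))$ as $\lim_{t\uparrow\beta(\delta)}E(t)\le\delta$ make explicit what the paper leaves implicit. Your $\beta=0$ route to \eqref{eq:betac-asy} is equivalent to the paper's limit in \eqref{eq:bounds chi} with $\beta'=0$.

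There is one step that fails as written, in your proposal and in the paper alike. Inserting $\chi(\beta)\ge(\beta_c-\beta)^{-1}$ into $\xi(\beta)^2/\sigma_J^2\ge\chi(\beta)^{1-2E}$ yields the lower bound in \eqref{eq:xibetac} only if $1-2E\ge 0$, since otherwise $t\mapsto t^{1-2E}$ is decreasing.

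For $E\in(\frac12,1)$ this cannot be repaired, because the bound is then false. At $\beta=0$ it reads $\beta_c^{2E-1}\le 1$, which fails whenever $\beta_c>1$. For a concrete case, take $G_\beta=\mathbb C_{f(\beta)}$ where $f(0)=0$, $f'(0)=1$, $f'$ is continuous with $f'\le 1$, and $f'=0.7$ on $[\eta,\infty)$ for some small $\eta>0$.
\begin{itemize}
\item By \eqref{eq:SLassumptionforlatticeGreen}, this family satisfies Assumption~\ref{Ass:G} with $H_\beta=(1-f'(\beta))J$.
\item It has $\xi(\beta)^2=\sigma_J^2\chi(\beta)$ and $\chi(\beta_c)=\infty$.
\item Its error is $E(\beta_c)=\sup_t(1-f'(t))(2-f(t))\approx 0.6$, and $\beta_c\approx 1/0.7$.
\end{itemize}
With $\delta=0.9$, this gives $\beta(\delta)=\beta_c$ while the claimed lower bound fails near $\beta=0$.

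So the statement needs $E\le\frac12$ (for instance $\delta\le\frac12$) for that particular lower bound. This holds in the only place it is used, Theorem~\ref{Thm:gamma-nu}, where $E<\deltamain\le\frac12$. With that restriction, your proof is complete. Your treatment of \eqref{eq:xibetadelta} and of the upper bound in \eqref{eq:xibetac} is correct for all $E<1$.
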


\begin{proof}
The inequality \eqref{eq:chibetadelta}
follows by replacing $(\beta',\beta)$ by $(\beta,\beta(\delta))$
in \eqref{eq:bounds chi},
and \eqref{eq:xibetadelta} follows from \eqref{eq:bounds chi xi} with $\beta'=0$.
To prove \eqref{eq:betac-asy}, we set $\beta'=0$ and
take the limit $\beta \uparrow \beta(\delta)=\beta_c$ in \eqref{eq:bounds chi}.
The inequalities \eqref{eq:chibetac}--\eqref{eq:xibetac} follow
by setting $\chi(\beta_c)=\infty$ in \eqref{eq:chibetadelta}--\eqref{eq:xibetadelta}.
\end{proof}

\subsection{Proof of Proposition~\ref{prop:explicit bound on G for small beta}: bounds on $G_\beta$ and $F_\beta$ for $\beta<1$}
\label{sec:smallbeta}

We now prove the bounds on $G_\beta$ and $F_\beta$ for $\beta<1$
stated in Proposition~\ref{prop:explicit bound on G for small beta}.

\begin{proof}[Proof of Proposition~\textup{\ref{prop:explicit bound on G for small beta}}] Let $\delta \in (0,\tfrac{1}{2}]$
and $\beta\leq (1-\delta)\wedge \beta(\delta)$. We start with the bound on $G_\beta(x)$. It follows from \eqref{eq:GAb-bis} and Proposition~\ref{Prop:Green} that, for every $x\in \mathbb Z^d$,
\begin{equation}
\label{eq:CAbeta}
    G_\beta(x) \le  \mathbb C_{\beta}(x)
    \leq \delta_0(x)+ \frac{\CGreen}{\sigma^d}
    \left( \frac{\sigma}{\sigma \vee|x| } \right)^{d-2}
    \exp\left(-\cGreen\sqrt{1-\beta}\frac{|x|}{\xi(0)} \right).\end{equation}
The second inequality in
\eqref{eq:bounds chi xi} gives that
\begin{equation}
\label{eq:xiratio}
	\frac{\xi(\beta)}{\xi(0)}\geq \left(\frac{\chi(\beta)}{\chi(0)}\right)^{\tfrac{1}{2}-\delta}.
\end{equation}
Combining \eqref{eq:chi0} with the inequality $1-\delta \ge \beta$ yields
\begin{equation}
\label{eq:chiratio}
	\frac{\chi(\beta)}{\chi(0)}\geq \frac{1}{1-(1-\delta)\beta}
    \geq \frac{1}{1-\beta^2}.
\end{equation}
Insertion of \eqref{eq:chiratio} into \eqref{eq:xiratio} gives
\begin{align}
	\sqrt{1-\beta}\cdot\frac{\xi(\beta)}{\xi(0)}
    &\geq (1-\beta)^{\delta}\left(\frac{1-\beta}{1-\beta^2}\right)^{\tfrac{1}{2}-\delta}
	\nonumber\\&\geq\delta^{\delta}\left(\frac{1}{1+\beta}\right)^{\tfrac{1}{2}-\delta}
	 \geq \delta^{\delta}\left(\frac{1}{2-\delta}\right)^{\tfrac{1}{2}-\delta}
	 \geq
   \frac 12
    ,
\label{eq:proof explicit bound G 2}
\end{align}
where last inequality follows from elementary calculus to
minimise the penultimate expression. Combining \eqref{eq:CAbeta}, and \eqref{eq:proof explicit bound G 2},
we obtain, for every $x\in \mathbb Z^d$,
\begin{equation}
\label{eq:proof explicit bound G 3}
	G_\beta(x)
    \le \mathbb C_\beta(x)\leq
    \delta_0(x)+\frac{\CGreen}{\sigma^d}
    \left( \frac{\sigma}{\sigma \vee|x| } \right)^{d-2}
    \exp\left(-\frac{\cGreen}{2} \frac{|x|}{\xi(\beta)} \right).
\end{equation}
This completes the proof of \eqref{eq:Gxbd}.

Next we consider $F_\beta = J*G_\beta$.  By \eqref{eq:GAb-bis},
\begin{equation}
\label{eq:proof explicit bound G 3.5}
	F_\beta(x)
    \leq
    (J* \mathbb C_{\beta})(x)
    =
    \frac{1}{\beta}(\mathbb C_{\beta}(x)-\delta_0(x)).
\end{equation}
For the case $\beta \ge \frac 12$, we immediately obtain \eqref{eq:Fxbd}
from the bound on $\mathbb C_{\beta}(x)$ in \eqref{eq:proof explicit bound G 3},
with $\bfCo=2\CGreen$.
For the remaining case $\beta < \frac 12$, it follows from
the monotonicity of $F_\beta$, \eqref{eq:proof explicit bound G 3.5}, and
the bound on $\mathbb C_{1/2}(x)$ from
\eqref{eq:GJxbd-intro} that
\begin{align}
\notag
	F_{\beta}(x)\leq F_{1/2}(x)&\leq 2 (\mathbb C_{1/2}(x)-\delta_0(x))
	\\&\leq 2\frac{\CGreen}{\sigma^d}\left( \frac{\sigma}{\sigma \vee|x| } \right)^{d-2}
\exp\Big(-\cGreen\frac{1}{\sqrt{2}}\frac{|x|}{\sigma} \Big).
\label{eq:proof explicit bound G 4}
\end{align}
Since $\xi(\beta) \ge \xi(0)=\sigma$ by \eqref{eq:xiratio},
we then obtain \eqref{eq:Fxbd} with $\bfCo=2\CGreen$ and
$\bfco=\frac 12 \cGreen$, from the observation that
\begin{equation}
    \frac{1}{\sqrt{2}}\frac{1}{\sigma}
    =
    \frac{1}{\sqrt{2}}\frac{1}{\xi(0)} \ge \frac 12 \frac{1}{\xi(\beta)}.
\end{equation}
This concludes the proof.
\end{proof}

\subsection{Regularity of the effective random walk}\label{section: regularity}
\label{sec:regularity-pf}

We now prove Proposition~\ref{prop: regularity below beta(delta)}, which states
that the effective random walk introduced in
Definition~\ref{def: effective rw} is uniformly regular for $\beta < \beta(\deltareg)$ with $\deltareg$ sufficiently small. Consequently, for every $ \beta< \beta(\deltareg)$,
the effective random walk
obeys the anti-concentration and Green function estimates of Theorem~\ref{thm:estimate RW}, with uniform constants $(\crw,\Crw)$ that depend only on $d$.
The fact that $(\creg,\Creg)$ and
$(\crw,\Crw)$ do not depend on  $J$ or $\beta$ is essential for our proof.

\begin{proof}[Proof of Proposition~\textup{\ref{prop: regularity below beta(delta)}}]
The effective random walk $X$ with law $\mathbb P_\beta$ is symmetric by definition.
Let
\begin{equation}
    L:= 2^7 = 128.
    \end{equation}
Let $\delta>0$, to be chosen small enough. In particular, we assume that
$\delta\leq \frac{1}{4L}$.
Fix $\beta< \beta(\delta)$.  Then $\chi(\beta)<\infty$.
Recall that
\begin{equation}
    M_\beta(s)
    =
    \mathbb E_\beta[\exp(s( \mathbf{e}_1\cdot X_1)/\xi(\beta))].
\end{equation}
It suffices to prove that we can choose $\delta,c_1>0$ such
that $M_\beta(c_1)\leq 3$ for every $\beta< \beta(\delta)$.

We define a finite integer $K$ by
\begin{equation}
    K := \min\Big\{k \ge 0 : \text{there exists $\beta_0 < (1-\textstyle\frac{1}{2L})$
    such that $\chi(\beta_0)=L^{-k}\chi(\beta)$}\Big\}.
\end{equation}
With $\beta_0$ so defined, for $0 \le k \le K$ we introduce $\beta_k$ by setting
\begin{equation}
    \chi(\beta_k) = L^k \chi(\beta_0).
\end{equation}
In particular, $\beta_K=\beta$.
We will prove by induction on $k$ that $M_{\beta_k}(c_1) \le 3$,
for appropriate $\delta,c_1>0$.

To start the induction, we prove that $M_{\beta_0}(c_1) \le 3$,
with $c_1>0$ to be chosen.
By Proposition~\ref{prop: rough bounds chi xi} and the fact that
(by definition of $\beta(\delta)$)
\begin{equation}
\label{eq:Edelta}
    E(\beta)\leq \delta\leq \frac{1}{4L}\leq \frac{1}{2},
\end{equation}
we have $\xi(\beta_0)\geq \xi(0)=\sigma_J$.
Let $\lambda_0 =\exp[c_1/c_0]$ where $c_0$ is given by \eqref{eq:R_J comparable to xi(0)}.
Then $M_0(c_1\sigma_J/\xi(\beta_0))\leq \exp(c_1\tfrac{\sigma_J}{\xi(\beta_0)}\tfrac{R_J}{\sigma_J})\leq \lambda_0$.
We require that $c_1=c_1(c_0,L,d)>0$ be sufficiently small that
$\lambda_0 \leq 1+\tfrac{1}{4L}$.
Then $\beta_0\lambda_0< (1-\tfrac{1}{2L})(1+\tfrac{1}{4L}) < 1$
and hence, by Corollary~\ref{coro:Miteration},
\begin{equation}
	M_{\beta_0}(c_1)\leq \frac{1}{\chi(\beta_0)}\frac{M_0(c_1\sigma_J/\xi(\beta_0))}{1-\beta_0M_0(c_1\sigma_J/\xi(\beta_0))}
\leq \frac{\lambda_0}{\chi(\beta_0)}\frac{1}{1-\beta_0 \lambda_0}.
\end{equation}
Also, since $\delta\leq \frac{1}{4L}$, \eqref{eq:chi0} implies that
\begin{equation}
	\frac{1}{\chi(\beta_0)}
    =
    \frac{\chi(0)}{\chi(\beta_0)}\leq 1-\beta_0(1-\delta)
    \leq 1-\beta_0\left(1-\frac{1}{4L}\right).
\end{equation}
As a result, we have
\begin{align}
\label{eq:proof reg1}
    M_{\beta_0}(c_1) & \le
    \left(1+\frac{1}{4L}\right)\max_{t\in [0,1-\tfrac{1}{2L}]}
    \frac{1-(1-\tfrac{1}{4L})t}{1-(1+\tfrac{1}{4L})t}
    \nonumber \\
    & =
    \left(1+\frac{1}{4L}\right)
    \frac{1-(1-\tfrac{1}{4L})(1-\tfrac{1}{2L})}{1-(1+\tfrac{1}{4L})(1-\tfrac{1}{2L})}\leq 3.
\end{align}

To advance the induction, we assume that $M_{\beta_k}(c_1) \le C_1$
and prove that the same bound holds when $k$ is replaced by $k+1$.
To abbreviate the notation, we write
\begin{equation}
    Z_k = Z_{\beta_k,\beta_{k+1}}.
\end{equation}
Let
\begin{equation}
    r_k := \frac{\xi(\beta_k)}{\xi(\beta_{k+1})}.
\end{equation}
By Corollary~\ref{coro:Miteration}, if $Z_k M_{\beta_k}(c_1r_k) < 1$ then
\begin{equation}
\label{eq:Msum}
    M_{\beta_{k+1}}(c_1 )
    \le
    \frac{\chi(\beta_k)}{\chi(\beta_{k+1})}
    \frac{M_{\beta_k}(c_1r_k)}{1-Z_k M_{\beta_k}(c_1r_k)}
    .
\end{equation}
To apply this, we must show that $Z_{k} M_{\beta_k}(c_1r_k)<1$.
For this, we will make use of the fact that \eqref{eq:Edelta},
Corollary~\ref{cor: comparison chi xi when E small} and Proposition~\ref{prop:Zbd}
imply the upper bounds
 \begin{align}\label{eq:relation susceptibility-bis}
    r_k^2 \le 2\frac{\chi(\beta_k)}{\chi(\beta_{k+1})}= \frac{2}{L},
\qquad
   Z_{k} &\le  1-\frac{1}{2}\frac{\chi(\beta_k)}{\chi(\beta_{k+1})}=1-\frac{1}{2L}.
    \end{align}
It is elementary that for any $0 \le |u|\le |v|$, we have
\begin{equation}
    \cosh u \le 1 + \frac{u^2}{2} + \frac{u^4}{v^4}\cosh v,
\end{equation}
and hence for any real and symmetric random variable $U$ and
any $0 \le s\le t$,
\begin{equation}\label{eq: general bound laplace transform-bis}
\mathbb E[\exp(sU)] = \mathbb E[\cosh sU]
\le 1+\frac{s^2}{2}\mathbb E[U^2]+\left(\frac st\right)^4\mathbb E[\exp(tU)].
\end{equation}
We apply \eqref{eq: general bound laplace transform-bis}
to $\mathbb E_{\beta_k}$ with $s=c_1r_k$,
$t=c_1$, and $U= (\mathbf{e}_1\cdot X_1)/\xi(\beta_{k})$.
By symmetry, $\mathbb E_{\beta_k}[U^2] =\frac{1}{d}$.
Therefore,  by \eqref{eq:relation susceptibility-bis}
and the induction hypothesis,
\begin{align}
M_{\beta_k}(c_1r_k)
&\le
1+\frac{c_1^2}{2d} r_k^2
+3 r_k^4
\le
1+\frac{c_1^2}{2d}\frac{2}{L}+ 3\left(\frac{2}{L}\right)^{2}
\le 1+\left(c_1^2 + \frac{12}{L}\right)\frac{1}{L} .
\label{eq: proof regularity 2-bis}
\end{align}
We choose $c_1$ small depending on $L$, so that the above gives
\begin{align}\label{eq:regM}
M_{\beta_k}(c_1r_k)
\le 1+  \frac{15}{L} \frac{1}{L}.
\end{align}
Then, by \eqref{eq:relation susceptibility-bis}, and since $\tfrac{15}{L}=\tfrac{15}{128}<1/2$,
\begin{equation}
\label{eq:ZMbd}
    Z_{k}\, M_{\beta_k}(c_1r_k)
    \le
    \left(1-\frac{1}{2L} \right) \left(1 + \frac{15}{L} \frac{1}{L} \right)
    \le
    1 - \left( \frac 12 - \frac{15}{L}\right) \frac 1L
    <1
    .
\end{equation}
This allows us to apply \eqref{eq:Msum}, to obtain
\begin{align}
    M_{\beta_{k+1}}(c_1)
	&
    \le
    \frac 1L \frac{1 + \frac{15}{L} \frac{1}{L}}{( \frac 12 - \frac{15}{L}) \frac 1L}
    =
    \frac{2 + \frac{30}{L^2}}{1 - \frac{30}{L}}
	.
\end{align}
By choice of $L$, the right-hand side is less than
$3$.
This concludes the induction and the proof.

Something important just happened:
we gain by $Z \le 1 -\frac{1}{2L}$ and we lose by $1+\frac{15}{L^2}$ in \eqref{eq: proof regularity 2-bis}
when $c_1$ is small
enough.
When we take the product of these two effects, we still gain.
The potentially bad effect of the constant $15$ is overcome by a choice of large $L$ .

To summarise the two cases, we have obtained that for $L=128$ and $\delta\leq \tfrac{1}{4L}$,  for every $\beta< \beta(\delta)$ we have
\begin{equation}
    M_\beta(c_1)
    \leq 3.
\end{equation}
We now set $\deltareg:=\frac{1}{4L} = 2^{-9}$ and $(\creg,\Creg)=(c_1,3)$.
This completes the proof.
\end{proof}

\subsection{Stability of the finite-volume susceptibility}\label{sec:stability}

We now prove the stability estimate stated in Proposition \ref{Prop:Stability section 2} and reformulated in \eqref{eq:stability-chi}. For $k\geq 0$, the finite-volume susceptibility is defined by
\begin{equation}
        \chi_k(\beta)=\sum_{x\in \Lambda_k}F_\beta(x).
\end{equation}

\begin{Prop}[Stability estimate]
\label{Prop:stab}
Let $d>2$. Suppose that $J$ and $G$ both obey Assumption \textup{\ref{Ass:G}}.
There exist $\delta_{\textup{stab}}\in (0,\tfrac{1}{10}]$ and $\Cstab>0$, depending only on $d$, such that, for every $\beta<\beta(\delta_{\textup{stab}})$,
\begin{equation}\label{eq:propstab}
	\chi_{\xi(\beta')}(\beta)\leq \Cstab\chi(\beta'), \qquad \forall \beta'\leq \beta.
\end{equation}
\end{Prop}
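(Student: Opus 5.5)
We propose an inductive argument along a geometric sequence of parameters, in the spirit of the proof of Proposition~\ref{prop: regularity below beta(delta)}, combined with the iterated finite-difference inequality of Lemma~\ref{lem: iterated SL}, the Green function bound of Theorem~\ref{thm:estimate RW}, and the anti-concentration bound \eqref{eq:ac-estimate}.

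\textbf{Reduction.} Fix $\beta<\beta(\deltastab)$ and $\beta'\le\beta$. If $\chi(\beta)\le L\chi(\beta')$ for a fixed large $L$, then \eqref{eq:propstab} is trivial, since $\chi_{\xi(\beta')}(\beta)\le\chi(\beta)$. Otherwise, let $\beta''\in[\beta',\beta]$ be defined by $\chi(\beta'')=\chi(\beta)/L$. Applying \eqref{eq:SLinfty} for $F$ with the pair $(\beta'',\beta)$ and summing over $x\in\Lambda_{\xi(\beta')}$ gives
\begin{equation*}
\chi_{\xi(\beta')}(\beta)\le \chi(\beta'')\sum_{k\ge0}Z^k\,\mathbb P_{\beta''}\bigl[X_{k+1}\in\Lambda_{\xi(\beta')}\bigr],\qquad Z=Z_{\beta'',\beta}\le 1-\tfrac1{2L}.
\end{equation*}
The bound on $Z$ follows from Proposition~\ref{prop:Zbd}, given $\deltastab\le 1/(4L)$. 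The walk $\mathbb P_{\beta''}$ is regular by Proposition~\ref{prop: regularity below beta(delta)} and has standard deviation $\xi(\beta'')\ge\xi(\beta')$.

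\textbf{Main estimate.} Cover $\Lambda_{\xi(\beta')}$ by a single box $B_{\xi(\beta'')}$. The anti-concentration bound \eqref{eq:ac-estimate} gives $\mathbb P_{\beta''}[X_{k+1}\in B_{\xi(\beta'')}]\le \Crw (k+1)^{-d/2}$. Since $d>2$, this is summable uniformly in $Z$, so
\[
\chi_{\xi(\beta')}(\beta)\le C\chi(\beta'')=C\chi(\beta)/L.
\]
This is too weak, because the target is $\chi(\beta')$ rather than $\chi(\beta'')$. We therefore iterate over the chain $\beta'=\beta_0<\beta_1<\dots<\beta_K=\beta$ with $\chi(\beta_{j+1})=L\chi(\beta_j)$, and prove by induction on $j$ the stronger multiscale statement
\[
\chi_{\xi(\beta_i)}(\beta_j)\le \Cstab\chi(\beta_i)\quad\text{for all } i\le j.
\]
For the inductive step, apply Lemma~\ref{lem: iterated SL} for $F$ with the pair $(\beta_j,\beta_{j+1})$, truncated at $T$ steps:
\[
\chi_{\xi(\beta_i)}(\beta_{j+1})\le\sum_{k<T}Z^k\,\mathbb E_{\beta_j}\bigl[\textstyle\sum_{x\in\Lambda_{\xi(\beta_i)}}F_{\beta_j}(x-X_k)\bigr]+\text{tail}.
\]

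\begin{itemize}
\item \emph{Translated boxes.} Each term involves the $F_{\beta_j}$-mass of a translated box $\Lambda_{\xi(\beta_i)}(X_k)$. Bounding this requires a translation-uniform version of the hypothesis, $\sup_y\sum_{x\in\Lambda_r(y)}F_{\beta_j}(x)\le C\chi_{2r}(\beta_j)$. We would prove this by symmetry and a covering argument, or include it directly in the induction hypothesis.
\item \emph{Gaining a contraction.} Bounding each term by $\Cstab\chi(\beta_i)$ naively gives a factor $\sum_k Z^k\approx 2L$, which is a loss. To gain, we split according to whether $|X_k|\le 2\xi(\beta_j)$. For $k\ge1$ and $|X_k|$ large, the shifted box lies far out, and we control it using the regularity decay \eqref{eq:cont1}. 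The expected number of visits of $X$ near the origin at scale $\xi(\beta_j)$ is $O(1)$ by \eqref{eq:ac-estimate}. The term $k=0$ contributes $\chi_{\xi(\beta_i)}(\beta_j)$ exactly.
\end{itemize}

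This should yield a recursion of the form
\[
a_{j+1}\le a_j+C\,\chi(\beta_i)\,L^{-(j-i)\theta}
\]
for some $\theta>0$, coming from $d>2$ and the smallness of $\xi(\beta_i)/\xi(\beta_j)\approx L^{-(j-i)/2}$ via Corollary~\ref{cor: comparison chi xi when E small}. Summing this geometric series closes the induction with a uniform constant $\Cstab$.

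\textbf{Main obstacle.} The main difficulty is obtaining genuine geometric decay in $j-i$, rather than an additive $O(\chi(\beta_i))$ error at every step of the chain. We would extract this decay from anti-concentration: the probability that a walk at scale $\xi(\beta_j)$ lands in a box of size $\xi(\beta_i)$ is about $(\xi(\beta_i)/\xi(\beta_j))^d$, which, summed over the $O(1)$ relevant times, leaves a gain of $(\xi(\beta_i)/\xi(\beta_j))^{d-2}$ against the $\chi$ ratio. The ratio of $\xi^2$ to $\chi$ across scales must stay controlled within factors of $L^{O(\delta)}$ (Proposition~\ref{prop: rough bounds chi xi}), which forces $\deltastab$ to be small depending on $d$.
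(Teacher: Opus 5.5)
Your induction does not close, because the one estimate that would produce decay in $j-i$ is not available.

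\textbf{The gap.} Your recursion $a_{j+1}\le a_j+C\chi(\beta_i)L^{-(j-i)\theta}$ rests on the claim that the walk $\mathbb P_{\beta_j}$, at scale $\xi(\beta_j)$, lands in a box of side $\xi(\beta_i)\ll\xi(\beta_j)$ with probability $\approx(\xi(\beta_i)/\xi(\beta_j))^d$. Regularity gives nothing of this kind:
\begin{itemize}
\item Theorem~\ref{thm:estimate RW} only controls $\mathbb P[X_m\in B_\sigma(x)]$ for boxes at the walk's own scale $\sigma=\xi(\beta_j)$.
\item Example~\ref{ex:rw} shows that a uniformly regular walk can put order-one mass in boxes much smaller than $\sigma$.
\item The bound \eqref{eq:cont1} gives no gain at all for $|x|\lesssim\xi(\beta_j)$.
\end{itemize}
Controlling the effective walk below its natural scale is exactly what Proposition~\ref{Prop:stab} is for. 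A multi-step bound with the power $d$ would be a local-limit-type statement stronger than the proposition itself.

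\textbf{Falling back on the induction hypothesis does not rescue it.} If you use your translation-uniform hypothesis instead, you only get $\sup_y\mathbb P_{\beta_j}[X_{k+1}\in\Lambda_{\xi(\beta_i)}(y)]\lesssim\Cstab\chi(\beta_i)/\chi(\beta_j)$. That is the power $2$, not $d$. Each $k\ge 1$ term is then of the same order $\Cstab\chi(\beta_i)$ as the target, and there are of order $L$ of them since $\sum_k Z^k\ge L$. Even if you could also extract a factor $(k+1)^{-d/2}$, each link of the chain would still add an $O(\Cstab\chi(\beta_i))$ error. These errors accumulate over the $\log_L(\chi(\beta)/\chi(\beta_i))$ links. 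Separately, the translation-uniform box bound does not follow from symmetry, since $F_\beta$ could concentrate on a shell.

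\textbf{The missing idea.} Avoid sub-scale information about the walk altogether. Compare each scale directly with the top parameter $\beta$, using the effective walk at the lower parameter, whose standard deviation equals the box size. This is the paper's route:
\begin{itemize}
\item Fix $\beta$, define $\xi(\beta_{\ell+1})=\tfrac12\xi(\beta_\ell)$, and prove $\chi_{\xi(\beta_\ell)}(\beta)\le 4\Crw\chi(\beta_\ell)$ by downward induction in scale.
\item For $\beta'=\beta_\ell$ and $n=\xi(\beta')$, apply Lemma~\ref{lem: iterated SL-F} to the pair $(\beta',\beta)$ with a fixed $T=T(d)$. No $Z<1$ is needed, since $(Z\vee 1)^T\le(1-\delta)^{-T}\le 2$.
\item After summing over $\Lambda_n$, the Green function term is at most $\Crw\chi(\beta')$ by \eqref{eq:bound massive Green function regular walk} with $\mu=1$, because the box is at the walk's own scale.
\item The remainder $\sum_z F_\beta(z)\mathbb P_{\beta'}[X_T\in\Lambda_n(z)]$ combines at-scale anti-concentration, $T^{-d/2}e^{-\crw|z|/(n\sqrt T)}$, with the bound $\chi_{kn}(\beta)\lesssim k^{5/2}\chi(\beta')$ for $k\ge 2$. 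That bound comes from the induction hypothesis at larger scales via Lemmas~\ref{Lem:am2} and~\ref{lem:conv stab}.
\item The result is $O\bigl(T^{-(d-5/2)/2}\bigr)\chi(\beta')$, which is at most $\Crw\chi(\beta')$ once $T$ is large.
\end{itemize}
Because every scale is compared with $\beta$ directly, and the induction hypothesis enters only through this small tail term, the constant never accumulates.
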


\begin{Rem}
Let $(\creg,\Creg=3)$ be the regularity constants of the effective random walk from Proposition~\ref{prop: regularity below beta(delta)}, and let $\Crw=\Crw(\creg,\Creg)>0$ be the constants
of Theorem~\ref{thm:estimate RW}.  The proof of
Proposition~\ref{Prop:stab} shows that the constant $\Cstab$ can
be chosen as $\Cstab=32\Crw$.
\end{Rem}

The proof of Proposition~\ref{Prop:stab} builds on the following
three
elementary lemmas. The first lemma is a straightforward extension of Lemma~\ref{lem: iterated SL}. Its bound on $F_\beta(x)$ serves as the starting point for
bounding $\chi_{\xi(\beta')}(\beta)$ in the proof of Proposition~\ref{Prop:stab}.

\begin{Lem}
\label{lem: iterated SL-F}
Let $0\leq \beta'\leq \beta<\beta_c$. For every $T\geq 1$ and every $x\in \mathbb Z^d$,
\begin{equation}\label{eq:FGreenbdfinite}
	F_\beta(x)
    \leq
    (Z_{\beta',\beta}\vee 1)^T
    \Big(
    \chi(\beta')\big(\mathbb G_{1,\beta'}(x)-\delta_{0}(x)\big)
    +
    \sum_{z\in \mathbb Z^d}F_{\beta}(z)\mathbb P_{\beta'}[X_T=x-z]
    \Big).
\end{equation}
\end{Lem}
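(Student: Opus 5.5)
The plan is to redo the iteration of Lemma~\ref{lem: iterated SL}, this time for $F$, and then rewrite each term using the effective random walk. Abbreviate $Z=Z_{\beta',\beta}$. By linearity, \eqref{eq:F satisfies A2} gives the one-step bound
\begin{equation*}
F_\beta(x)\le F_{\beta'}(x)+Z\,\mathbb E_{\beta'}[F_\beta(x-X_1)].
\end{equation*}
This is the $F$-analogue of \eqref{eq:overview proof 0}: indeed $(\beta-\beta')(F_{\beta'}*F_\beta)(x)=(\beta-\beta')\chi(\beta')\sum_y \mathbb P_{\beta'}[X_1=y]F_\beta(x-y)$.

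Iterating $T$ times, exactly as in the proof of \eqref{eq:SLTtimes}, yields
\begin{equation*}
F_\beta(x)\le \sum_{k=0}^{T-1}Z^k\,\mathbb E_{\beta'}[F_{\beta'}(x-X_k)]+Z^T\,\mathbb E_{\beta'}[F_\beta(x-X_T)].
\end{equation*}
The iteration uses the Markov property, with independent increments distributed as $X_1$.

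Next we identify each term. Since $F_{\beta'}(y)=\chi(\beta')\mathbb P_{\beta'}[X_1=y]$, we have $\mathbb E_{\beta'}[F_{\beta'}(x-X_k)]=\chi(\beta')\mathbb P_{\beta'}[X_{k+1}=x]$. For the last term, symmetry of the step distribution gives $\mathbb P_{\beta'}[X_T=x-z]=\mathbb P_{\beta'}[X_T=z-x]$, so
\begin{equation*}
\mathbb E_{\beta'}[F_\beta(x-X_T)]=\sum_z F_\beta(z)\mathbb P_{\beta'}[X_T=x-z].
\end{equation*}
Equivalently, one can substitute $w=x-z$ directly.

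Now bound each factor $Z^k$ with $k\le T$ by $(Z\vee1)^T$. The remaining sum is then handled by
\begin{equation*}
\sum_{k=0}^{T-1}\mathbb P_{\beta'}[X_{k+1}=x]\le\sum_{k\ge1}\mathbb P_{\beta'}[X_k=x]=\mathbb G_{1,\beta'}(x)-\delta_0(x),
\end{equation*}
and all terms here are nonnegative. Combining the pieces gives \eqref{eq:FGreenbdfinite}.

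No step is a genuine obstacle. The only points needing care are the bookkeeping of the index shift, which produces the $-\delta_0(x)$ term, and the finiteness of $\mathbb G_{1,\beta'}(x)$. If $\mathbb G_{1,\beta'}(x)$ is infinite the bound holds trivially, and for $d>2$ with $\beta'<\beta_c$ it is in fact finite.
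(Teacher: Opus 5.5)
Your proposal is correct and follows the paper's proof essentially step for step. You iterate the finite-difference inequality for $F$ (the $F$-version of \eqref{eq:SLTtimes}), rewrite $\mathbb E_{\beta'}[F_{\beta'}(x-X_k)]$ as $\chi(\beta')\mathbb P_{\beta'}[X_{k+1}=x]$, bound each $Z^k$ by $(Z\vee 1)^T$, and dominate the partial sum by $\mathbb G_{1,\beta'}(x)-\delta_0(x)$. Your final prefactor $(Z_{\beta',\beta}\vee 1)^T$ is the correct one; the last display of the paper's proof drops the exponent $T$, which is a typo.
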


\begin{proof} Let $T\geq 1$ and $x\in \mathbb Z^d$.
Recall from \eqref{eq:SLTtimes} (with $G$ replaced by $F$) that
\begin{equation}\label{eq:SLTtimes-F}
	F_{\beta}(x)\leq \sum_{k=0}^{T-1}Z_{\beta',\beta}^k \mathbb E_{\beta'}[F_{\beta'}(x-X_k)]+Z_{\beta',\beta}^T\mathbb E_{\beta'}[F_{\beta}(x-X_T)].
\end{equation}
By \eqref{eq:SLTtimes-F}, and since $Z_{\beta',\beta}\leq (Z_{\beta',\beta}\vee 1)$, we obtain
\begin{align}
	F_{\beta}(x)&\leq (Z_{\beta',\beta}\vee 1)^{T-1}\chi(\beta')\sum_{k=0}^{T-1}\mathbb P_{\beta'}[X_{k+1}=x]+(Z_{\beta',\beta}\vee 1)^T\sum_{z\in \mathbb Z^d}\mathbb P_{\beta'}[X_T=z]F_{\beta}(x-z)\nonumber
	\\&\leq (Z_{\beta',\beta}\vee 1)\Big(\chi(\beta')
\big( \mathbb G_{1,\beta'}(x) - \delta_{0}(x) \big)
+\sum_{z\in \mathbb Z^d}F_\beta(z)\mathbb P_{\beta'}[X_T=x-z]\Big).
\end{align}
This completes the proof.
\end{proof}

The second lemma provides a means, given a bound on
a finite-volume susceptibility for one volume, to extract a bound for a larger volume. As is clear from
its proof,
the exponent $\frac 52$ in \eqref{eq:52} could be reduced to any exponent
$a>2$ by choosing $\delta$ appropriately small.
We use $\frac 52$ as a concrete choice; later we only need that $d>a$,
which is satisfied for every $d > 2$.

\begin{Lem}
\label{Lem:am2} Let $\delta\leq \frac{1}{10}$ and $0<\beta'<\beta<\beta(\delta)$. Assume that there exists $C_1\geq 1$ such that, for every $\beta''<\beta$ with $\frac{\xi(\beta'')}{\xi(\beta')}\geq2$, the inequality
    \begin{equation}\label{eq: assumption intermediate lemma for stab}
        \chi_{\xi(\beta'')}(\beta)\leq C_1 \chi(\beta'')
    \end{equation}
    holds. Then, for every $k\geq 2$,
    \begin{equation}
    \label{eq:52}
        \chi_{k \xi(\beta')}(\beta) \leq C_1 k^{5/2} \chi(\beta').
    \end{equation}
\end{Lem}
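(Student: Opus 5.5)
Given $k\ge 2$, the plan is to find an intermediate parameter $\beta''$ whose correlation length is about $k\xi(\beta')$. At that scale the hypothesis \eqref{eq: assumption intermediate lemma for stab} applies, and \eqref{eq:bounds chi xi} converts $\chi(\beta'')$ back to $\chi(\beta')$ at the cost of a power of $k$ slightly above $2$.

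\textbf{Step 1: choice of $\beta''$.} Since $\delta\le\frac1{10}\le\frac12$, Proposition~\ref{prop: rough bounds chi xi} gives that $\xi$ is non-decreasing on $[0,\beta(\delta))$, and $\xi$ is continuous there. We distinguish two cases.

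\emph{Case (a): $k\xi(\beta')\ge \xi(\beta)$.} Then trivially $\chi_{k\xi(\beta')}(\beta)\le\chi(\beta)$. We apply \eqref{eq:bounds chi xi} to the pair $(\beta',\beta)$ in the form $\chi(\beta)/\chi(\beta')\le (\xi(\beta)/\xi(\beta'))^{2/(1-2E)}$, where $E=E(\beta)<\delta\le\frac1{10}$. This gives
\begin{equation*}
\chi(\beta)\le \chi(\beta')\, k^{2/(1-2E)}\le \chi(\beta')\, k^{5/2},
\end{equation*}
since $2/(1-2E)\le 2/(0.8)=5/2$. As $C_1\ge1$, the claim follows.

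\emph{Case (b): $k\xi(\beta')<\xi(\beta)$.} By continuity and the intermediate value theorem there is $\beta''\in(\beta',\beta)$ with $\xi(\beta'')=k\xi(\beta')$. Then $\xi(\beta'')/\xi(\beta')=k\ge2$, so the hypothesis gives
\begin{equation*}
\chi_{k\xi(\beta')}(\beta)=\chi_{\xi(\beta'')}(\beta)\le C_1\chi(\beta'').
\end{equation*}
Next we apply \eqref{eq:bounds chi xi} to the pair $(\beta',\beta'')$, using $E(\beta'')\le E(\beta)<\frac1{10}$. This yields $\chi(\beta'')\le \chi(\beta')\,k^{2/(1-2E(\beta''))}\le \chi(\beta')k^{5/2}$, which is the claim.

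\textbf{Main obstacle.} The only delicate point is the exponent bookkeeping. The left inequality of \eqref{eq:bounds chi xi} reads $(\chi(\beta')/\chi(\beta''))^{(1+E)/(1-E)}\le(\xi(\beta')/\xi(\beta''))^2$, while the right one reads $(\xi(\beta')/\xi(\beta''))^2\le(\chi(\beta')/\chi(\beta''))^{1-2E}$. The right inequality is the one that bounds $\chi(\beta'')/\chi(\beta')$ from above by $(\xi(\beta'')/\xi(\beta'))^{2/(1-2E)}$. With $E\le\frac1{10}$ this exponent is at most $5/2$; smaller $\delta$ brings it arbitrarily close to $2$, which matches the remark before the lemma.

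A secondary point is that the boxes $\Lambda_t$ are defined for real $t$ as $[-t,t]^d\cap\mathbb Z^d$, so the identity $\chi_{k\xi(\beta')}=\chi_{\xi(\beta'')}$ is exact. One must also check that the hypothesis requires $\beta''<\beta$, which holds in case (b); case (a) needs no hypothesis at all.
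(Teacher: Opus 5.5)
Your proof is correct and follows the paper's argument essentially verbatim. It uses the same two cases according to whether $k\xi(\beta')\ge\xi(\beta)$, the same intermediate $\beta''$ with $\xi(\beta'')=k\xi(\beta')$ in the second case, and the same use of the right-hand inequality of \eqref{eq:bounds chi xi} with exponent $2/(1-2\delta)\le 5/2$.
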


\begin{proof}
    Fix $\delta\leq\frac{1}{10}$ and $0<\beta'<\beta<\beta(\delta)$. Let $k\geq 2$. We consider two cases. First, suppose that $k\xi(\beta')\geq\xi(\beta)$. By \eqref{eq:bounds chi xi} and the fact that $\tfrac{2}{1-2\delta}\leq \tfrac{5}{2}$,
    \begin{equation}
        \chi_{k \xi(\beta')}(\beta) \leq \chi(\beta) = \frac{\chi(\beta)}{\chi(\beta')} \chi(\beta') \leq \left(\frac{\xi(\beta)}{\xi(\beta')}\right)^{5/2} \chi(\beta') \leq k^{5/2}\chi(\beta').
    \end{equation}
If instead $k\xi(\beta')<\xi(\beta)$, then we define $\beta''<\beta$ by the
requirement that $\xi(\beta'') = k \xi(\beta')$.
Since $k\geq 2$, we have $\tfrac{\xi(\beta'')}{\xi(\beta')}\geq 2$.
It then follows from \eqref{eq: assumption intermediate lemma for stab} that
\begin{equation}
        \chi_{k \xi(\beta')}(\beta) = \chi_{\xi(\beta'')}(\beta) \leq C_1 \chi(\beta'') \leq C_1\left(\frac{\xi(\beta'')}{\xi(\beta')}\right)^{5/2} \chi(\beta') = C_1 k^{5/2}\chi(\beta').
\end{equation}
    This concludes the proof.
\end{proof}

The third and final lemma is an elementary estimate.
It will be used in conjunction with Lemma~\ref{Lem:am2}
to handle the convolution appearing in the
last term of \eqref{eq:FGreenbdfinite}.

\begin{Lem}
\label{lem:conv stab}
Let $\alpha,K> 0$ and $\xi\geq 1$.  Suppose
that $f:\mathbb Z^d \to [0,\infty)$ satisfies, for every $m \ge 2$,
\begin{equation}
         \sum_{x \in \Lambda_{m \xi } } f(y) \leq  K m^{\alpha}  .
\end{equation}
Then, for every $\kappa >0$,
there exists $C_0=C_0(\kappa,\alpha)>0$ such that, for every
$T \ge 4$,
\begin{equation}
    \sum_{x\in\mathbb Z^d}f (x)
    \exp \left( -\kappa \frac{|x|}{\xi\sqrt{T}} \right)
    \le C_0 K T^{\alpha/2}.
\end{equation}
\end{Lem}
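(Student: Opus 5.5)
We decompose $\mathbb Z^d$ into dyadic annuli at the scale $\xi\sqrt T$ and sum the hypothesis over them. Set $L:=\xi\sqrt T$; since $T\ge 4$ we have $L\ge 2\xi$. The inner region is $A_0:=\Lambda_{L}$, and for $j\ge 1$ the shells are $A_j:=\Lambda_{2^jL}\setminus\Lambda_{2^{j-1}L}$. These sets partition $\mathbb Z^d$. On $A_j$ with $j\ge1$ we have $|x|> 2^{j-1}L$, so
\begin{equation*}
\exp\!\left(-\kappa\frac{|x|}{\xi\sqrt T}\right)\le \exp(-\kappa 2^{j-1}).
\end{equation*}
On $A_0$ we simply bound the exponential by $1$.

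Next we apply the hypothesis to each region, with $m=m_j:=2^j\sqrt T$ for $j\ge0$; note $m_j\ge 2$ because $\sqrt T\ge 2$. The boxes in the hypothesis have radius $m\xi$, which need not be an integer, but $\Lambda_{m\xi}$ is just $[-m\xi,m\xi]^d\cap\mathbb Z^d$, so the statement applies for all real $m\ge2$. (We read the hypothesis as a bound on $\sum_{x\in\Lambda_{m\xi}}f(x)$; the $f(y)$ in the statement is a typo.) This gives
\begin{equation*}
\sum_{x\in A_j} f(x)\le \sum_{x\in\Lambda_{2^j\xi\sqrt T}}f(x)\le K\,2^{j\alpha}T^{\alpha/2}.
\end{equation*}

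Summing over all regions,
\begin{equation*}
\sum_x f(x)e^{-\kappa|x|/(\xi\sqrt T)}\le K T^{\alpha/2}\Big(1+\sum_{j\ge1}2^{j\alpha}e^{-\kappa2^{j-1}}\Big).
\end{equation*}
The series converges because its terms decay super-exponentially in $j$. We therefore take $C_0(\kappa,\alpha):=1+\sum_{j\ge1}2^{j\alpha}e^{-\kappa 2^{j-1}}$, which depends only on $\kappa$ and $\alpha$.

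The only points requiring any care are that every $m$ used is at least $2$ (this is exactly where $T\ge4$ enters) and that non-integer box radii are permitted. Neither is a real obstacle, and the hypothesis $\xi\ge1$ is not needed beyond ensuring the boxes are meaningful.
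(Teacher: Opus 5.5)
Your proof is correct and is essentially the paper's own argument. Both decompose into dyadic annuli at scale $\xi\sqrt T$, bound the exponential by $1$ on the inner box and by $e^{-\kappa 2^{j-1}}$ on the $j$th shell, and apply the hypothesis with $m=2^j\sqrt T\ge 2$ (using $T\ge 4$). This yields the same constant $C_0=1+\sum_{j\ge 1}2^{j\alpha}e^{-\kappa 2^{j-1}}$.
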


\begin{proof}
We decompose the sum into annuli centred at the origin.
For this, we define radii $u_{0}:=0$, $u_1:=\xi\sqrt{T}$, and $u_{k+1}:=2u_k$
for $k\geq 1$. We also let $\varphi_T(x)=\exp(-\kappa \frac{|x|}{\xi\sqrt{T}})$. Then, as $\sqrt{T}\geq 2$, we can apply our hypothesis on $f$
to obtain
\begin{align}
	\sum_{x\in \mathbb Z^d}f(x)\varphi_T(x)
    &=
    \sum_{k\geq 0}\sum_{u_k\leq |x|<u_{k+1}}
    f(x)\varphi_T(x)
	\notag\\
    &\leq
    \sum_{x\in \Lambda_{\xi\sqrt{T}}} f(x)
    + \sum_{m\geq 1} e^{-\kappa 2^{m-1}}\sum_{x\in \Lambda_{2^m\xi\sqrt{T}}} f(x)
    \nonumber \\
    & \le KT^{\alpha/2}
    + \sum_{m\geq 1} e^{-\kappa 2^{m-1}}K(2^m \sqrt{T})^{\alpha}
    .
\label{eq:newlem1a}
\end{align}
The right-hand side is bounded by a constant multiple of $KT^{\alpha/2}$,
with a constant depending only on $\kappa$ and $\alpha$.
\end{proof}

With Lemmas~\ref{lem: iterated SL-F}--\ref{lem:conv stab}, we are now in a position to prove Proposition \ref{Prop:stab}.

\begin{proof}[Proof of Proposition~\textup{\ref{Prop:stab}}]
Let $\delta\in (0,\tfrac{1}{10}]$ to be chosen small enough, and $\Cstab$ to be chosen large enough. Fix $\beta<\beta(\delta)$. We recursively define a decreasing sequence $(\beta_\ell)_{\ell \geq 0}$ as follows:
\begin{enumerate}
	\item[$\bullet$] Set $\beta_0:=\beta$.
	\item[$\bullet$] Assume that $\beta_0,\ldots,\beta_\ell$ have been constructed. If $\xi(\beta_\ell)\in [\xi(0),2\xi(0))$, we stop the construction. Otherwise, we define $\beta_{\ell+1}\in [0,\beta_\ell)$ by $\xi(\beta_{\ell+1})=\tfrac{1}{2}\xi(\beta_\ell)$.
	\item[$\bullet$] Let $M\geq 0$ be the largest $\ell$ such that $\beta_\ell$ has been constructed, and let $\beta_{M+1}=0$.
\end{enumerate}
We now assume that $\delta\le \deltareg$, with $\deltareg$   given by Proposition~\ref{prop: regularity below beta(delta)}. For such $\delta$, the effective random walk is uniformly $(\creg,3)$-regular for some $\creg=\creg(d)>0$. The bounds for regular random walks in Theorem~\ref{thm:estimate RW} therefore apply.
Let $\Crw\geq 1$ be given by Theorem~\ref{thm:estimate RW} for the pair $(\creg,3)$.

We will prove that if $\delta$ is small enough, then for every $0\leq \ell\leq M$, we have
\begin{equation}\label{eq:pstab2}
	\chi_{\xi(\beta_\ell)}(\beta)\leq 4\Crw\chi(\beta_\ell).
\end{equation}
We claim that the above is sufficient to conclude the proof, with $\Cstab:=32\Crw$. Indeed, let $\beta'<\beta$ (there is nothing
to prove if $\beta'=\beta$).
There exists a unique $0\leq \ell \leq M$ such that $\beta'\in [\beta_{\ell+1},\beta_{\ell})$.
Therefore,
\begin{equation}
	\chi_{\xi(\beta')}(\beta)
\le
\chi_{\xi(\beta_{\ell})}(\beta) \le
4\Crw\chi(\beta_{\ell})=
4\Crw\frac{\chi(\beta_{\ell})}{\chi(\beta')}\chi(\beta')
\leq 32\Crw \chi(\beta'),\label{eq:pstab3}
\end{equation}
where in the first inequality we used $\delta\leq \tfrac{1}{2}$ and
Proposition~\ref{prop: rough bounds chi xi}
to see that $\xi(\beta')\leq \xi(\beta_\ell)$,
in the second we used \eqref{eq:pstab2},
and in the last inequality we used \eqref{eq:bounds chi xi} and the assumption that $\delta\leq \tfrac{1}{10}$ to obtain
\begin{equation}
	\frac{\chi(\beta_{\ell})}{\chi(\beta')}\leq \left(\frac{\xi(\beta_\ell)}{\xi(\beta')}\right)^{\tfrac{2}{1-2\delta}}\leq 2^3=8.
\end{equation}

We prove \eqref{eq:pstab2} by induction on $\ell$. First, observe that \eqref{eq:pstab2} for $\ell=0$ follows from the
fact that $4\Crw \ge 4 \ge 1$.

To advance the induction, we fix $1\leq \ell\leq M$ and assume that \eqref{eq:pstab2} holds for all values $m\leq \ell-1$. Our goal is to prove \eqref{eq:pstab2} at $\ell$. Let $\beta'=\beta_\ell$, $n:=\xi(\beta')$, $Z:=Z_{\beta',\beta}$, and $\mathbb P=\mathbb P_{\beta'}$. By \eqref{eq:FGreenbdfinite}, for every $T\geq 1$, and every $x\in \mathbb Z^d$,
\begin{equation}\label{eq:pstab4}
	F_\beta(x)\leq (Z\vee 1)^T\Big(\chi(\beta')\mathbb G(x)+\sum_{z\in \mathbb Z^d}F_\beta(z)\mathbb P[X_T=x-z]\Big),
\end{equation}
where $\mathbb G$ is the Green function associated with the effective random walk at $\beta'$ at $Z=1$.
Summation of \eqref{eq:pstab4} over $x\in \Lambda_n$ gives
\begin{equation}\label{eq:pstab7}
	\chi_n(\beta)\leq (Z\vee 1)^T\bigg(\mathbb G(\Lambda_n)\chi(\beta')+\sum_{z\in \mathbb Z^d}F_\beta(z)\mathbb P[X_T\in \Lambda_n(z)]\bigg).
\end{equation}
Since it follows from Theorem~\ref{thm:estimate RW} that
$\mathbb G(\Lambda_n)\leq \Crw$, the first term on the right-hand side
of \eqref{eq:pstab7} obeys
\begin{equation}\label{eq:pstab7.5}
	(Z\vee 1)^T \mathbb G(\Lambda_n)\chi(\beta')
    \le
   (Z\vee 1)^T  \Crw \chi(\beta').
\end{equation}
For the last term in \eqref{eq:pstab7}, the anti-concentration estimate \eqref{eq:ac-estimate} gives, for every $z\in \mathbb Z^d$
\begin{equation}\label{eq:pstab8}
	\mathbb P[X_T\in \Lambda_n(z)]\leq \frac{\Crw}{T^{d/2}}\exp\left(-\crw\frac{|z|}{n\sqrt{T}}\right).
\end{equation}
In order to apply Lemma~\ref{lem:conv stab},
we need to show that there exists $C_2=C_2(d)>0$ such that, for every $k\geq 2$,
\begin{equation}\label{eq:pstab9 new}
	\sum_{x\in \Lambda_{kn}}F_\beta(x)=\chi_{kn}(\beta)\leq C_2 \chi(\beta') k^{5/2}.
\end{equation}
By Lemma~\ref{Lem:am2}, \eqref{eq:pstab9 new} will follow if we show the existence of $C_3=C_3(d)>0$ such that for any $\beta''<\beta$ with $\tfrac{\xi(\beta'')}{\xi(\beta')}\geq 2$,
\begin{equation}\label{eq:pstab10 new}
	\chi_{\xi(\beta'')}(\beta)\leq C_3\chi(\beta'').
\end{equation}
However, if $\beta''$ is as above, then $\beta''\geq \beta_{\ell-1}$. Therefore, by the induction hypothesis and the same computation as in \eqref{eq:pstab3}, \eqref{eq:pstab10 new} is satisfied with $C_3=32\Crw$. Hence, the conclusion of Lemma \ref{Lem:am2} holds and so does \eqref{eq:pstab9 new} for some $C_2=C_2(d)>0$. We can now take $T \geq 4$ and apply Lemma \ref{lem:conv stab} with $\alpha=\frac 52$ and $K=C_2\chi(\beta')$ to find $C_4=C_4(d)>0$ such that
\begin{equation}\label{eq:pstab11 new}
	\sum_{x\in \mathbb Z^d}F_\beta(x)\varphi_T(x)\leq C_4\chi(\beta')T^{5/4}.
\end{equation}
The combination of \eqref{eq:pstab8}, \eqref{eq:pstab10 new} and \eqref{eq:pstab11 new} gives the existence of $C_5=C_5(d)>0$ such that
\begin{equation}
\label{eq:pstab11.1}
	\sum_{z\in \mathbb Z^d}F_\beta(z)\mathbb P[X_T\in \Lambda_n(z)]
    \leq \frac{C_5}{T^{\frac 12 (d-\frac 52)}}\chi(\beta').
\end{equation}
Since $d \ge 3$,
we may choose $T$ large enough (depending only on $d$) that
\begin{equation}
	\frac{C_5}{T^{\frac 12 (d-\frac 52)}} \leq \Crw.
\end{equation}
This shows that
\begin{equation}
\label{eq:pstab11.5}
	(Z\vee 1)^T \sum_{z\in \mathbb Z^d}F_\beta(z)\mathbb P[X_T\in \Lambda_n(z)]
    \leq
    (Z\vee 1)^T \Crw\chi(\beta').
\end{equation}

We now insert the bounds \eqref{eq:pstab7.5} and \eqref{eq:pstab11.5}
into \eqref{eq:pstab7}, and obtain
\begin{equation}\label{eq:pstab12}
	\chi_n(\beta)\leq 2(Z\vee 1)^T\Crw\chi(\beta').
\end{equation}
Finally, it follows from \eqref{eq: bounds on Z1} that $(Z\vee 1)\leq (1-\delta)^{-1}$. We choose $\delta$
small enough (depending on $d$) such that $(1-\delta)^{-T}\leq 2$. For this choice, \eqref{eq:pstab12} gives $\chi_n(\beta)\leq 4\Crw\chi(\beta')$, and the induction step is complete.  This concludes the proof.
\end{proof}

\section{Proof of the multiscale bounds}\label{sec:4props}

We now turn to the proofs of the three remaining propositions,
Propositions~\ref{Prop:smallx}--\ref{prop:intermscales}. Throughout the section, we fix $d>2$ and $\varepsilon>0$. We also suppose that $J,G$ satisfy Assumption \ref{Ass:G}. For convenience, we drop the subscript $J$ in the notations and write
simply $\sigma$ instead of $\sigma_J$. We always assume that $\delta\leq \deltareg\wedge \deltastab$ so that Propositions \ref{prop: regularity below beta(delta)} and \ref{Prop:stab} hold. As a result, by Proposition \ref{prop: regularity below beta(delta)}, we know that the effective random walk is uniformly $(\creg,3)$-regular for every $\beta<\beta(\delta)$. This allows us to
apply
Theorem~\ref{thm:estimate RW} with constants $(\crw,\Crw)$ which depend on $(\creg,\Creg=3,d)$.

\subsection{Proof of Proposition~\ref{Prop:smallx}: bound on $F_\beta(x)$ for $|x|\leq 2\sigma$}
\label{sec:smallscales}

Recall that the constant $\CGreen$, which appears in \eqref{eq:smallscalesbound},
is the constant arising in the estimate for the Green function $\mathbb C$ of the random walk $\mathbb P_J$,
in Proposition~\ref{Prop:Green}.

\begin{proof}[Proof of Proposition~\textup{\ref{Prop:smallx}}] We will prove that there exists $\delta_s\in (0,\tfrac{1}{2}]$ such that, for every $\beta<\beta(\delta_s)$ and every $|x| \leq 2\xi(0)$,
    \begin{equation}
        F_{\beta}(x)
        \leq
        2  \big(\mathbb{C}_1(x) -\delta_{0}(x)\big) + 2 \CGreen \left(\frac{1}{2\sigma}\right)^{d}.
        \label{eq: smallscalesbetterbound}
    \end{equation}
    We can bound the right-hand side using
    Proposition~\ref{Prop:Green}, which together with $|x|\le 2\sigma$ gives
    \begin{equation}
    \big(\mathbb{C}_1(x) -\delta_{0}(x)\big)  +   \CGreen \left(\frac{1}{2\sigma}\right)^{d}
    \le
    \frac{\CGreen}{\sigma^d}  \left(\frac{\sigma}{\sigma \vee |x|}\right)^{d-2}
    +
    \frac{\CGreen}{\sigma^d}  \left(\frac{\sigma}{\sigma \vee |x|}\right)^{d-2}.
    \end{equation}
    With \eqref{eq: smallscalesbetterbound}, this proves our goal
    \eqref{eq:smallscalesbound}.

    It remains to prove \eqref{eq: smallscalesbetterbound}. Let $\delta\in(0,\tfrac12]$ to be taken small enough, let $\beta<\beta(\delta)$, and set $\beta' = 0$.
    Then $\chi(\beta') = 1$, $\mathbb P_{\beta' } = \mathbb P_J $, and $\mathbb G_{1,\beta'} = \mathbb{C}_{1}$.
    We write $Z:=Z_{\beta',\beta}$. By \eqref{eq:FGreenbdfinite}, for every $T\geq 1$ and every $x\in \mathbb Z^d$, we have
    \begin{equation}
    \label{eq:smallx-1}
	F_\beta(x)
    \leq
    (Z\vee 1)^T\Big(
    \big(\mathbb{C}_1(x) -\delta_{0}(x)\big)
    +\sum_{z\in \mathbb Z^d}F_\beta(z)\mathbb P_{J}[X_T=x-z]
    \Big).
    \end{equation}
    Our main effort is to bound the second term on the right-hand side of \eqref{eq:smallx-1}. We argue as in the proof of Proposition~\ref{Prop:stab}.
    The anti-concentration estimate \eqref{eq:antix-intro} asserts that there exists $(\cGreen,\CGreen)$ (depending on $d$ and $c_0$ from \eqref{eq:R_J comparable to xi(0)})
    such that, for every $z\in \mathbb Z^d$,
    \begin{equation}
    \label{eq:antix}
	\mathbb P_{J}[X_T = z ]\leq \frac{\CGreen}{\sigma^d T^{d/2}}\exp\left(-\cGreen\frac{|z|}{\sigma\sqrt{T}}\right).
    \end{equation}
Since $|x|\le 2\xi(0)$, the choice $T\ge 4$ in \eqref{eq:antix} gives, for every $z\in \mathbb Z^d$,
\begin{align}
    \mathbb P_{J}[X_T = x-z ]&\leq \frac{\CGreen}{\sigma^d T^{d/2}}\exp\left(-\cGreen\frac{|z|}{\sigma\sqrt{T}}\right) \exp\left(\cGreen\frac{2}{\sqrt{T}}\right)
    \nonumber \\
    &\leq \frac{\CGreen e^{\cGreen}}{\sigma^d T^{d/2}}
    \exp\left(-\cGreen\frac{|z|}{\sigma\sqrt{T}}\right).
\label{eq: smalleq1}
\end{align}
Proposition~\ref{Prop:stab} and Lemma~\ref{Lem:am2} then
imply that, if $\delta \leq \delta_{\textup{stab}}$, then for every $k\geq 2$,
\begin{equation}
    \sum_{z \in \Lambda_{k\sigma}}F_{\beta}(z) \leq C_{\textup{stab}} k^{5/2} \chi(\beta') = C_{\textup{stab}} k^{5/2}.
\end{equation}
We apply Lemma~\ref{lem:conv stab}
with $\alpha = \frac 52$, $f=F_\beta$, and $K = C_{\textup{stab}} $, to obtain $C_1 = C_1(\cGreen,d)>0$ such that
\begin{equation}
    \sum_{z \in \mathbb{Z}^d}F_{\beta}(z)
    \exp\left(-\cGreen\frac{|z|}{\sigma\sqrt{T}}\right)
     \leq C_1  T^{5/4}\label{eq: smalleq2}.
\end{equation}
The combination of \eqref{eq: smalleq1} and \eqref{eq: smalleq2} gives $C_2 = C_2(\cGreen,d)>0$ such that, for every $T\geq 4$ and every $|x|\leq 2\xi(0)$,
\begin{equation}
\label{eq:T2}
    \sum_{z\in \mathbb Z^d}F_\beta(z)\mathbb P_{J}[X_T=x-z] \leq \frac{C_2}{\sigma^d T^{\frac 12 (d-\frac 52)}}.
\end{equation}
Since $d>2$, we may now choose $T$ large enough so that
\begin{equation}
	\frac{C_2}{T^{\frac 12 (d-\frac 52)}}\leq \frac{\CGreen}{2^d}.
\end{equation}

Finally, we pick $\delta$ sufficiently small so that the prefactor on the right-hand side of \eqref{eq:smallx-1} satisfies $(Z \vee 1)^T \leq (1-\delta)^{-T} \leq 2$. With \eqref{eq: smalleq2}--\eqref{eq:T2}, we see that the second term on the right-hand side of \eqref{eq:smallx-1}
is at most $2\CGreen (2\sigma)^{-d}$.  This proves \eqref{eq: smallscalesbetterbound} and completes the proof.
\end{proof}

\subsection{Proof of Proposition~\ref{prop: typical scales}: contraction step for
typical scales}
\label{sec:contraction-typical}

Sections~\ref{sec:contraction-typical} and \ref{sec:contraction-intermediate} are dedicated to the proof of the
contraction step of the bootstrap argument, namely
Propositions~\ref{prop: typical scales} and \ref{prop:intermscales}.
We now prove Proposition~\ref{prop: typical scales}.

Recall that, for every $x\in \mathbb Z^d$, for every $\beta \ge 0$, and for $\bfc,\bfC>0$,
$\mathcal H_\beta(\bfc,\bfC;x)$ and $\mathcal H_\beta(\bfc,\bfC)$
are the statements:
\begin{align}
	\mathcal H_\beta(\bfc,\bfC;x): &\quad F_{\beta}(x)\leq \frac{\bfC}{\sigma^d}\left(\frac{\sigma}{\sigma \vee|x|}\right)^{d-2-\varepsilon}
\exp\left(-\bfc\frac{|x|}{\xi(\beta)}\right) ,
\\
	\mathcal H_\beta(\bfc,\bfC): &\quad
    \text{$\mathcal H_\beta(\bfc,\bfC;x)$ holds for all $x\in \mathbb Z^d$}.
\end{align}
Proposition~\ref{prop: typical scales} can be reformulated as follows: if $\eta>0$, $\bfC\geq 16\CGreen$, $\bfc\leq \tfrac{1}{2}\cGreen\wedge \tfrac{1}{4}\crw\wedge 1$, then there exists $\delta_2$ small enough such that for every $\beta<\beta(\delta_2)$, if $\mathcal H_{\beta'}(\bfc,\bfC)$ holds for $\beta'\leq \beta$, then $\mathcal H_{\beta}(\bfc,\tfrac{\bfC}{8};x)$ holds for every $|x|\geq \eta\xi(\beta)\vee 2\xi(0)$.
The improved bound will be obtained by relying on the finite-difference inequality \eqref{eq:SL assumption}.

We stress that the proof of Proposition \ref{prop: typical scales} is the only place in the entire paper where the assumption that $\varepsilon>0$ is used.
We begin by stating two simple lemmas. The first one, an elementary consequence of \eqref{eq:SL assumption}, is the starting point of the proof.  Recall from \eqref{eq:Green-effRW} that the Green function of the effective random
walk is defined, for $Z\in [0,1]$ and $x\in \mathbb Z^d$, by
\begin{equation}
    \mathbb G_{Z,\beta'}(x)=\sum_{k\geq 0}Z^k \mathbb P_{\beta'}[X_k=x].
\end{equation}

\begin{Lem}
\label{lemma:typ-decomp}
Let $0 \le\beta' \le \beta<\beta_c$ be such that $Z_{\beta',\beta}< 1$. For every  $m >0$ and every $x\in \mathbb Z^d$ with $|x| > m$, we have
\begin{align}
\label{eq:typ1}
    F_\beta(x)
    \le
    \frac{\chi_m(\beta')}{\chi(\beta')} \sup_{y \in \Lambda_m(0)} F_\beta(x-y)
    +
    \sum_{y \notin \Lambda_{m}(0)} F_{\beta'}(y)\mathbb G_{Z_{\beta',\beta},\beta'}(x-y)
    .
\end{align}
\end{Lem}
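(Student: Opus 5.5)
The plan is to start from the one-step finite-difference inequality for $F$ and split the first step of the effective random walk according to whether it lands inside or outside $\Lambda_m(0)$. By \eqref{eq:F satisfies A2}, rewritten in the form \eqref{eq:overview proof 0} with $G$ replaced by $F$, we have
\begin{equation*}
F_\beta(x) \le F_{\beta'}(x) + Z_{\beta',\beta}\,\mathbb E_{\beta'}[F_\beta(x-X_1)].
\end{equation*}
This is not quite the right starting point, since the first term $F_{\beta'}(x)$ is not of the form in \eqref{eq:typ1}. Instead I will use the symmetric version of \eqref{eq:F satisfies A2}, namely $F_\beta\le F_{\beta'}+(\beta-\beta')(F_\beta*F_{\beta'})$. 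This holds because $F_{\beta'}*F_\beta=F_\beta*F_{\beta'}$, and convolution is commutative.

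Viewed this way, the inequality $F_\beta\le F_{\beta'}+(\beta-\beta')F_{\beta'}*F_\beta$ can be iterated on the $F_\beta$ factor only after first splitting the sum $\sum_y F_{\beta'}(y)F_\beta(x-y)$ over $y$. For $y\in\Lambda_m(0)$, I bound $F_\beta(x-y)\le \sup_{y\in\Lambda_m}F_\beta(x-y)$. That gives a contribution
\begin{equation*}
(\beta-\beta')\chi_m(\beta')\sup_{y\in\Lambda_m}F_\beta(x-y)=Z_{\beta',\beta}\frac{\chi_m(\beta')}{\chi(\beta')}\sup_{y\in\Lambda_m}F_\beta(x-y),
\end{equation*}
and since $Z_{\beta',\beta}<1$ this is at most the first term of \eqref{eq:typ1}. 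For $y\notin\Lambda_m(0)$, I apply \eqref{eq:SLinfty} (valid for $F$ since $Z_{\beta',\beta}<1$) to $F_\beta(x-y)$, giving
\begin{equation*}
F_\beta(x-y)\le\sum_{k\ge0}Z^k\,\mathbb E_{\beta'}[F_{\beta'}(x-y-X_k)].
\end{equation*}
Multiplying by $(\beta-\beta')F_{\beta'}(y)=Z\chi(\beta')\mathbb P_{\beta'}[X_1=y]$ then produces terms of the form $F_{\beta'}(y)\sum_k Z^{k+1}\chi(\beta')\mathbb P_{\beta'}[\,\cdot\,]$.

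The issue is that the $F_{\beta'}(x)$ term remains. However, $|x|>m$ means $x\notin\Lambda_m(0)$, and $F_{\beta'}(x)=F_{\beta'}(x)\mathbb G_{Z,\beta'}(0)$ restricted to $k=0$. So $F_{\beta'}(x)$ is exactly the $y=x$, $k=0$ contribution to $\sum_{y\notin\Lambda_m}F_{\beta'}(y)\mathbb G_{Z,\beta'}(x-y)$. The cleanest organisation is therefore to first iterate fully: by \eqref{eq:SLinfty} for $F$,
\begin{equation*}
F_\beta(x)\le\sum_{k\ge0}Z^k\,\mathbb E_{\beta'}[F_{\beta'}(x-X_k)]=\sum_{y}F_{\beta'}(y)\,\mathbb G_{Z,\beta'}(x-y),
\end{equation*}
using the symmetry of $F_{\beta'}$ and of the walk to rewrite the sum. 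I then split over $y\in\Lambda_m(0)$ versus $y\notin\Lambda_m(0)$. The outside part is exactly the second term of \eqref{eq:typ1}. For the inside part, I use
\begin{equation*}
\sum_k Z^k\mathbb P_{\beta'}[X_k=x-y]\,\chi(\beta')\le F_\beta(x-y)\chi(\beta')/\ldots
\end{equation*}
That is, I need $\chi(\beta')\,(\mathbb G_{Z,\beta'}(x-y)-\delta_0(x-y))\cdot Z^{-1}$ to be at least comparable to $F_\beta(x-y)$. This fails, because \eqref{eq:easy bound G_beta GF} gives only an upper bound on $F_\beta$.

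So the inside part must come from the non-iterated step, not the iterated one. The final plan is as follows. Write $F_\beta(x)\le F_{\beta'}(x)+(\beta-\beta')\sum_y F_{\beta'}(y)F_\beta(x-y)$. Bound the $y\in\Lambda_m$ part by the supremum, as in the first step above. For each $y\notin\Lambda_m$, bound $F_\beta(x-y)$ by its full expansion $\chi(\beta')Z^{-1}(\mathbb G_{Z,\beta'}(x-y)-\delta_0(x-y))$ via \eqref{eq:easy bound G_beta GF} for $F$. Then $(\beta-\beta')\chi(\beta')Z^{-1}=1$, and the outside part becomes $\sum_{y\notin\Lambda_m}F_{\beta'}(y)(\mathbb G_Z(x-y)-\delta_0(x-y))$. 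Finally, the $-\delta_0$ term removed at $y=x$ (allowed since $x\notin\Lambda_m$) is exactly $F_{\beta'}(x)$, which absorbs the leftover first term.

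The main check is this bookkeeping of the $\delta_0$ term and of the $Z<1$ factor in the inside part. Both are elementary.
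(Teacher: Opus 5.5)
Your final plan is correct and is the same as the paper's proof. You start from $F_\beta(x)\le F_{\beta'}(x)+(\beta-\beta')\sum_y F_{\beta'}(y)F_\beta(x-y)$, bound the $y\in\Lambda_m(0)$ part by the supremum using $Z_{\beta',\beta}<1$, bound the $y\notin\Lambda_m(0)$ part via the fully iterated inequality $(\beta-\beta')F_\beta\le \mathbb G_{Z_{\beta',\beta},\beta'}-\delta_0$, and let the $-\delta_0$ term at $y=x\notin\Lambda_m(0)$ cancel $F_{\beta'}(x)$. The only difference is cosmetic: the paper keeps $(\beta-\beta')$ as a multiplier rather than dividing by $Z_{\beta',\beta}$, which avoids the trivial case $\beta=\beta'$ where your $Z^{-1}$ is undefined.
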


\begin{proof}
By the finite-difference upper bound in \eqref{eq:SL assumption},
\begin{align}
\label{eq:fdbd-F}
    F_{\beta}(x) &\leq F_{\beta'}(x) + (\beta-\beta')
    \sum_{y\in \mathbb Z^d}F_{\beta'}(y) F_{\beta}(x-y).
\end{align}
The contribution to the convolution sum due to $y \in \Lambda_m(0)$
can be rewritten and bounded (since $Z_{\beta',\beta}< 1$) by\begin{equation}
    \frac{Z_{\beta',\beta}}{\chi(\beta')}
    \sum_{y \in \Lambda_{m}(0) }F_{\beta'}(y)F_{\beta}(x-y)
    \le
    \frac{\chi_m(\beta')}{\chi(\beta')} \sup_{y \in \Lambda_m(0)} F_\beta(x-y).
\end{equation}
This gives the first term on the right-hand side
of \eqref{eq:typ1}.

From \eqref{eq:SLinfty} with $G$ replaced by $F$, we see that
\begin{align}
\label{eq:FGreenbd}
    (\beta-\beta') F_\beta(x) &
    \le
    Z_{\beta',\beta}
    \sum_{k\geq 0}Z_{\beta',\beta}^k \mathbb P_{\beta'}[X_{k+1}=x]
    =
    \mathbb G_{Z_{\beta',\beta},\beta'}(x) - \delta_{0}(x).
\end{align}
By \eqref{eq:FGreenbd}, the remaining part of the right-hand side
of \eqref{eq:fdbd-F} is therefore at most
\begin{align}
    F_{\beta'}(x) +
    \sum_{y\notin \Lambda_{m}(0)}F_{\beta'}(y)
    \big[ \mathbb G_{Z_{\beta',\beta},\beta'}(x-y) - \delta_{0}(x-y) \big].
\end{align}
By our assumption that $|x|>m$, the first term is cancelled by the Kronecker delta.
This completes the proof.
\end{proof}

To bound the last term on the right-hand side of
\eqref{eq:typ1}, we will use the following convolution lemma. We defer the elementary proof to Appendix~\ref{appendix:conv estimates}.

\begin{Lem}
\label{lemma:fstarg}
Let $a,b,c_1,c_2,\sigma,\xi >0$,
$\mu>0$ and $\varepsilon \in [ 0,1]$.
Suppose that the functions $f,g: \mathbb Z^d \to [0,\infty)$ satisfy
\begin{align}
    f(x)
    &\le
    c_1\frac{1}{\sigma^d} \left( \frac{\sigma}{\sigma\vee |x|}\right)^{d-2-\varepsilon}
    e^{-a|x|/\xi},
    \\
    g(\Lambda_\xi(x))
    &\le
    c_2\left( \frac{\xi}{\xi\vee |x|}\right)^{d-2}
    e^{-b|x|/\xi}.
\end{align}
Then, there exists $C_{a,\mu}>0$ (depending on $a,\mu,d$) such that  for every $|x| \ge 2 (\sigma \vee \xi)$,
\begin{equation}
\label{eq:fstarg}
    \sum_{y \notin \Lambda_{ \mu \xi}(0)} f(y)g(x-y)
    \le \frac{c_1}{\sigma^2|x|^{d-2}} \left(\frac{|x|}{\sigma}\right)^\varepsilon
    \left(
    2^{d} \|g\|_1  e^{-a|x|/2\xi}
    +
    c_2C_{a,\mu}\left(\frac{\xi}{|x|}\right)^\varepsilon e^{-b|x|/2\xi}
    \right).
\end{equation}
\end{Lem}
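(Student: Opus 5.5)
The plan is to split the sum over $y\notin\Lambda_{\mu\xi}(0)$ according to whether $y$ is close to $x$ or not. The natural split is into the region $A=\{y: |x-y|\le |x|/2\}$, where $y$ is near $x$, and the complementary region $B=\{y\notin\Lambda_{\mu\xi}(0): |x-y|>|x|/2\}$.

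\emph{Region $A$.} Here $|y|\ge |x|/2\ge \sigma$. The hypothesis on $f$ then gives, uniformly in $y\in A$,
\[
f(y)\le c_1\sigma^{-d}(2\sigma/|x|)^{d-2-\varepsilon}e^{-a|x|/2\xi}.
\]
Summing $g(x-y)$ over $A$ is bounded by $\|g\|_1$. Since $2^{d-2-\varepsilon}\le 2^d$, this yields exactly the first term of \eqref{eq:fstarg}:
\[
\frac{c_1}{\sigma^2|x|^{d-2}}\Big(\frac{|x|}{\sigma}\Big)^\varepsilon 2^d\|g\|_1e^{-a|x|/2\xi}.
\]
Nothing beyond the pointwise bound on $f$ is needed here.

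\emph{Region $B$.} Here $g$ is only controlled on boxes of side $\xi$, so I would partition $\mathbb Z^d$ into disjoint boxes $Q_w=\Lambda_{\xi/2}(w)$ (or cubes of side about $\xi$) indexed by $w$ in a lattice of spacing about $\xi$. On each box I bound $f$ by its supremum and $g$ by its box mass:
\[
\sum_{y\in B} f(y)g(x-y)\le \sum_w \sup_{Q_w\cap B} f\cdot g(\Lambda_\xi(x-w)).
\]
For the $g$ factor, $|x-y|>|x|/2\ge \xi$ implies that, up to a boundary shift of order $\xi$ (absorbed into constants by adjusting $b$ slightly, or by using $|x-w|\ge |x|/2-\xi$ and $|x|\ge 2\xi$), the box mass is at most
\[
c_2(2\xi/|x|)^{d-2}e^{-b|x|/2\xi}.
\]
This is uniform over the relevant $w$, so it pulls out of the sum. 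For the $f$ factor, since $|y|\ge\mu\xi$, the supremum over a box is comparable to $c_1\sigma^{-d}(\sigma/|w|)^{d-2-\varepsilon}e^{-a|w|/\xi}$, with constants depending on $\mu$ because $|w|\gtrsim \mu\xi$ away from the origin box. The sum over $w$ at spacing $\xi$ then behaves like
\[
\xi^{-d}\int_{|w|\ge \mu\xi/2}c_1\sigma^{-2-\varepsilon}|w|^{-(d-2-\varepsilon)}e^{-a|w|/\xi}\,\mathrm{d} w\lesssim C_{a,\mu}\,c_1\sigma^{-2-\varepsilon}\xi^{2+\varepsilon-d},
\]
after rescaling $w=\xi u$. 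Multiplying the two factors gives
\[
c_1c_2C_{a,\mu}\,\sigma^{-2-\varepsilon}\xi^{\varepsilon}|x|^{-(d-2)}e^{-b|x|/2\xi},
\]
which equals the second term $\frac{c_1}{\sigma^2|x|^{d-2}}(|x|/\sigma)^\varepsilon c_2C_{a,\mu}(\xi/|x|)^\varepsilon e^{-b|x|/2\xi}$. The powers of $\xi$ combine because $\xi^{2+\varepsilon}\cdot\xi^{d-2}\cdot\xi^{-d}$, together with the $(2\xi)^{d-2}$ from $g$, produces exactly $\xi^\varepsilon$.

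The main obstacle is the bookkeeping in region $B$. Replacing $f$ by its box supremum needs care near the inner boundary $|y|\approx\mu\xi$, which is why the constant depends on $\mu$; if $\mu\xi<\sigma$ one also uses $\sigma\vee|y|\ge|y|$. The $g$-box mass must also hold with $|x-w|$ reduced by $O(\xi)$. I would handle the latter by using $|x-y|\ge|x|/2$ and $|x|\ge2\xi$, so that every point of $\Lambda_\xi(x-w)$ stays at distance $\ge |x|/4$ (say) from the origin. This changes only the numerical constants, and those can be absorbed into $C_{a,\mu}$, with $e^{-b|x|/2\xi}$ kept by covering boxes more finely if needed.
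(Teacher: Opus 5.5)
Your route is essentially the paper's, with different bookkeeping. The paper splits according to $|y|>|x|/2$ versus $\mu\xi<|y|\le|x|/2$, so all far-away $y$ go into the $\|g\|_1$ term. It treats the inner region by dyadic annuli $A_k$ in $|y|$, each covered by $O(2^{kd})$ boxes of radius $\mu\xi$. There the series $\sum_k 2^{k(2+\varepsilon)}e^{-a2^k\mu}$ plays the role of your integral $\int_{|u|\ge\mu/2}|u|^{-(d-2-\varepsilon)}e^{-a|u|}\,\mathrm{d}u$. Your split by $|x-y|$ puts the far-away $y$ into region $B$ instead. That is harmless because the integral converges at infinity, and your region-$A$ estimate is exactly the paper's first term.

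The one step that does not work as you describe it is the $g$-factor in region $B$. Suppose you bound $\sum_{y\in Q_w}g(x-y)$ by $g(\Lambda_\xi(x-w))$ with $w$ the tile centre. Then you only know $|x-w|>|x|/2-\xi/2$, so the hypothesis gives $e^{-b|x-w|/\xi}\le e^{b/2}e^{-b|x|/2\xi}$. Neither remedy you mention removes this loss:
\begin{itemize}
\item Absorbing it degrades the exponent, e.g.\ to $e^{-b|x|/4\xi}$ using $|x|\ge 2\xi$.
\item Tiles of radius $\theta\xi$ still cost $e^{b\theta}$, while multiplying the tile sum by $\theta^{-d}$.
\end{itemize}
So either the exponent is no longer $b/2$ or the constant depends on $b$, whereas the lemma requires $C_{a,\mu}$ to depend only on $(a,\mu,d)$. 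In the paper's application $b$ is bounded, so the loss would be harmless downstream, but it does not prove the statement as written.

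The fix is to recentre rather than shift. For each tile $Q_w$ meeting $B$, pick $y_w\in Q_w\cap B$. Since $Q_w$ has $\ell^\infty$-diameter at most $\xi$, we have $x-Q_w\subset\Lambda_\xi(x-y_w)$. Because $|x-y_w|>|x|/2\ge\xi$, this gives $\sum_{y\in Q_w}g(x-y)\le c_2(2\xi/|x|)^{d-2}e^{-b|x|/2\xi}$ with no loss. This is what the paper does by centring its covering boxes at points $y\in A_k$, for which $|x-y|\ge|x|/2$ holds automatically. With this change your argument proves the lemma as stated.
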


We are now in a position to prove Proposition \ref{prop: typical scales}. Let $\crw$ and $\Crw$ be given by Theorem~\ref{thm:estimate RW}, for the pair $(\creg,\Creg)=(\creg,3)$ provided by Proposition \ref{prop: regularity below beta(delta)}.

\begin{proof}[Proof of Proposition~\textup{\ref{prop: typical scales}}]
Let $d>2$, $\varepsilon>0$ and $\eta >0$.
Let $\bfC\ge 16\CGreen$ and
$\bfc \le \frac 12 \cGreen \wedge \frac{1}{4}\crw \wedge 1$.
Our goal is to prove that there exists $\delta_2=\delta_2(\eta,\varepsilon,d)\leq \delta_s\wedge \delta_{\textup{reg}}\wedge \tfrac{1}{2}$ such that, for every $\beta<\beta(\delta_2)$,
if $\mathcal H_{\beta'}(\bfc,\bfC)$ holds for every $\beta' \le \beta$,
then, for every $|x|\geq \eta \xi(\beta)\vee 2\xi(0)$,
\begin{equation}
\label{eq:largex}
	F_\beta(x)
    \leq
    \frac{\bfC}{8}
    \left(\frac{|x|}{\sigma}\right)^\varepsilon
    \frac{1}{\sigma^2 |x|^{d-2}}
    \exp\left(-\bfc \frac{|x|}{\xi(\beta)}\right).
\end{equation}
On the right-hand side, we used our assumption that
$|x|\ge2\xi(0)=2\sigma$.

Let $\delta \leq \tfrac14\wedge \deltareg\wedge\deltastab$, where $\deltareg$ and $\deltastab$ are respectively given by
Propositions~\ref{prop: regularity below beta(delta)} and \ref{Prop:stab}.
If $\beta\leq \beta_0:=(1-\delta)\wedge \beta(\delta)$, the
desired bound \eqref{eq:largex} is already known from \eqref{eq:Fxbd}.
We can therefore assume that $\beta\in(\beta_0,\beta(\delta))$.
Fix such a $\beta$.
We wish to apply Lemma~\ref{lemma:typ-decomp}, which states that
if $0 \le\beta' \le \beta$ are such that $Z_{\beta',\beta}<1$,
and if $|x| > m > 0$, then
\begin{align}
\label{eq:typ1-bis}
    F_\beta(x)
    \le
    \frac{\chi_m(\beta')}{\chi(\beta')} \sup_{y \in \Lambda_m(0)} F_\beta(x-y)
    +
    \sum_{y \notin \Lambda_{m}(0)} F_{\beta'}(y)\mathbb G_{Z_{\beta',\beta},\beta'}(x-y)
    .
\end{align}
Later in the proof, we write the two terms on the right-hand side of \eqref{eq:typ1-bis}
as $({\rm I})$ and $({\rm II})$.

To apply \eqref{eq:typ1-bis}, we need to pick $\beta'$ and $m$.
These choices are made in terms of $\xi(\beta)$, as follows.
First, given $0 < \beta'' < \beta' < \beta$, we set
\begin{equation}
    m = \xi(\beta''), \qquad n = \xi(\beta'), \qquad N = \xi(\beta).
\end{equation}
Given $\beta''$ and $\beta'$, we use $\mu,\nu\in (0,1]$ to denote the ratios
\begin{equation}
    \mu = \frac mn =\frac{\xi(\beta'')}{\xi(\beta')}, \qquad
    \nu = \frac nN = \frac{\xi(\beta')}{\xi(\beta)}.
\end{equation}
We will want to take $\mu,\nu$ to be small, but we must have
$m = \mu\nu N \ge \xi(0)$ in order to have the existence of $\beta''$
with $m=\xi(\beta'')$.
This can be accomplished as follows.
First, if $\delta\leq \tfrac{1}{4}$ and $\beta< \beta(\delta)$, \eqref{eq:bounds chi xi} and \eqref{eq:chi0} yield
\begin{equation}
\label{eq:typ8}
    \frac{\xi(\beta)}{\xi(0)} \ge \left(\frac{\chi(\beta)}{\chi(0)}\right)^{\tfrac{1}{2}-\delta}  \ge\left( \frac{\chi(\beta)}{\chi(0)} \right)^{1/4}
    \ge \frac{1}{(1-\beta(1-\delta) )^{1/4}},
\end{equation}
so,
\begin{equation}
    \frac{\xi(\beta_0)}{\xi(0)}
    \ge \frac{1}{(1-(1-\delta)^2 )^{1/4}}
        \ge
    \frac{1}{(2\delta)^{1/4}}
    .
\end{equation}
Therefore, given any $\mu,\nu \le 1$,
we can choose $\delta(\mu,\nu)$ to be small enough that
$m=\mu\nu \xi(\beta) \ge \mu\nu \xi(\beta_0)  \ge (2\delta)^{1/4}\xi(\beta_0)\geq \xi(0)$.

For \eqref{eq:typ1-bis}, we also need $|x| >m = \mu\nu \xi(\beta)$.
With later needs in mind, we require that $\nu \le \frac 12 \eta$.
Then, since we assume that $|x|\ge \eta \xi(\beta)$,
we (more than) ensure that
$|x|>m$ since $m=\mu\nu N \le \nu N \le \frac 12 \eta N$.  In the following,
we will first fix $\mu$, and then choose a $\nu$ satisfying $\nu \le \frac 12 \eta$
(with further restriction below).
This adds $\eta$-dependence to $\delta$. We also require that $\delta\leq \tfrac{1}{8}(\mu\wedge \nu)^{2}$ so that the conclusions of Corollary~\ref{cor: comparison chi xi when E small} and Proposition~\ref{prop:Zbd}
hold with the pairs $(\beta'',\beta')$ and $(\beta',\beta)$.

By Proposition~\ref{Prop:stab} and \eqref{eq: comparison chi xi when E small}, the ratio in the first term on
the right-hand side of \eqref{eq:typ1-bis} obeys
\begin{equation}
    \frac{\chi_m(\beta')}{\chi(\beta')}
    =
    \frac{\chi_{\xi(\beta'')}(\beta')}{\chi(\beta'')}
    \frac{\chi(\beta'')}{\chi(\beta')}
    \le
    \Cstab\cdot 2\left(\frac{\xi(\beta'')}{\xi(\beta')}\right)^{2}
    =
    \Cstab \cdot 2\mu^2.
\end{equation}
By the assumption that $\mathcal H_\beta(\bfc,\bfC)$ holds,
and by using $\mu  n/N =\mu\nu \le 1$ and our assumption
 $\bfc \le 1$ in the exponent, we see that
\begin{equation}
    \sup_{y \in \Lambda_m(0)} F_\beta(x-y)
    \le
    e \cdot \frac{\bfC}{\sigma^d}
    \left(\frac{\sigma}{\sigma\vee(|x|-\mu n)}\right)^{d-2-\varepsilon}
    \exp\left(-\bfc\frac{|x| }{\xi(\beta)}\right).
\end{equation}
Since $|x|\ge\eta N$, and since $\mu\nu \le \nu \le \frac 12 \eta$, we have
\begin{equation}
    |x| -\mu n \ge |x|\left( 1- \frac{\mu n}{\eta N} \right)
    =
    |x|\left( 1- \frac{\mu\nu }{\eta} \right) \ge \frac 12 |x|.
\end{equation}
Therefore, with the choice
\begin{equation}
\label{eq:lambda-choice}
    \mu^2 = \frac{1}{16} \frac{1}{e 2^{d-2-\varepsilon}\Cstab 2},
\end{equation}
the first term of \eqref{eq:typ1-bis} obeys
\begin{equation}
\label{eq:typIbd}
    ({\rm I})
    \le
    \frac{1}{16}
    \frac{\bfC}{\sigma^d}
    \left(\frac{\sigma}{\sigma\vee|x| }\right)^{d-2-\varepsilon}
    \exp\left(-\bfc\frac{|x| }{\xi(\beta)}\right).
\end{equation}

To handle the second term on the right-hand side of \eqref{eq:typ1-bis}, we apply
Lemma~\ref{lemma:fstarg} with $f=F_{\beta'}$, $g=\mathbb G_{Z_{\beta',\beta},\beta'}$, $\xi=\xi(\beta')$, and with the $\mu$ we chose in \eqref{eq:lambda-choice}.
The hypothesis on $f$ is verified by
$\mathcal H_{\beta'}(\bfc,\bfC)$ with $c_1=\bfC$, $a=\bfc$, $\xi=n$, and the hypothesis
on $g$ is verified by Theorem~\ref{thm:estimate RW} with
$c_2=\Crw$, $b=\crw \sqrt{1-Z_{\beta',\beta}}$, $\xi=n$.
It therefore follows from Lemma~\ref{lemma:fstarg}, since $|x| \ge \eta N \ge 2n$
(as $\nu \le \frac 12 \eta$)
and $\|\mathbb G_{Z_{\beta',\beta},\beta'}\|_1 = (1-Z_{\beta',\beta})^{-1}$, that
\begin{align}
    ({\rm II}) \le
    \frac{\bfC}{\sigma^2|x|^{d-2}} \left(\frac{|x|}{\sigma}\right)^\varepsilon
    \left(
    \frac{2^{d}}{1-Z}   e^{-\bfc|x|/2n}
    +
    \Crw C_{\bfc,\mu}\left(\frac{n}{|x|}\right)^\varepsilon
    e^{-\crw\sqrt{1-Z}|x|/2n}
    \right),
\end{align}
where $Z=Z_{\beta',\beta}$. By  \eqref{eq: bounds on Z in terms of n/N} applied to $(\beta',\beta)$, we
have
 \begin{align}
 \label{eq: bound Z final proof}
   Z \le  1-\frac{1}{4}\Big( \frac nN \Big)^2.
 \end{align}
Since $\bfc\leq\frac{1}{4}\crw$, we also find
\begin{equation}
\label{eq:exponent-rescale}
    \crw \frac{\sqrt{1-Z}}{2n} \ge  4 \bfc \frac{1}{4N} = \frac{\bfc}{N}.
\end{equation}
We use the above, together with $|x| \ge \eta N$, to conclude that
there is a constant $C_1=C_1(d,\eta,\bfc,\mu)>0$ such that
\begin{align}
\label{eq:new2}
    ({\rm II}) \le
    C_1 \Big( \frac nN \Big)^\varepsilon
    \frac{\bfC}{\sigma^2|x|^{d-2}} \left(\frac{|x|}{\sigma}\right)^\varepsilon
    \left(
    \left( \frac Nn \right)^{2+\varepsilon}  e^{-\bfc|x|/2n}
    +
    e^{-\bfc|x|/N}
    \right).
\end{align}
Again using $|x| \ge \eta N$, we see that
\begin{equation}
    \frac{|x|}{2n} = \frac{|x|}{N} + \Big( \frac{N}{2n} - 1\Big) \frac{|x|}{N}
    \ge   \frac{|x|}{N} + \Big( \frac{N}{2n} - 1 \Big)\eta,
\end{equation}
and hence
\begin{equation}
\label{eq:new3}
    \left(\frac{N}{n}\right)^{2+\varepsilon}  e^{-  \bfc |x|/2n}
    \le e^{- \bfc |x|/N}
    e^{\bfc  \eta}  \left(\frac{N}{n}\right)^{2+\varepsilon} e^{-\frac 12 \bfc \eta(N/n)}
    \le
    C_2(\bfc,\eta,\varepsilon)e^{- \bfc |x|/N}.
\end{equation}
Therefore, by \eqref{eq:new2}
and \eqref{eq:new3},
there is a constant $C_3=C_3(d,\eta,\bfc,\mu,\varepsilon)>0$ such that
\begin{equation}
\label{eq:typ-IIbd}
    ({\rm II}) \le C_3 \nu^\varepsilon
    \frac{\bfC}{\sigma^2|x|^{d-2}}
    \left(\frac{|x|}{\sigma}\right)^\varepsilon
    \exp\left(-\bfc \frac{|x|}{N}\right)
    .
\end{equation}
The inequality \eqref{eq:typ-IIbd} holds for any $\nu$ such that  $\nu \le \frac 12 \eta$.
In terms of our fixed choice of $\mu$ in \eqref{eq:lambda-choice},
we can now choose $\nu$ small enough (which entails taking $\delta$ small) so that $C_3\nu^\varepsilon \le \frac{1}{16}$.

From the bound on $({\rm I})$ in \eqref{eq:typIbd}, together with the bound on $({\rm II})$
in \eqref{eq:typ-IIbd} with our choice of $\nu$, we obtain the desired result
that
\begin{equation}
    F_\beta(x) \le \left( \frac{1}{16} + \frac{1}{16} \right)
    \frac{\bfC}{\sigma^2|x|^{d-2}}
    \left(\frac{|x|}{\sigma}\right)^\varepsilon
    \exp\left(-\bfc \frac{|x|}{N}\right).
\end{equation}
This completes the proof.
\end{proof}

\begin{Rem}
The need for $\varepsilon >0$ occurs only near the end of
the previous proof, to achieve $C_3\nu^\varepsilon \le \frac 1{16}$.
This is the essential element of the contraction proof for typical scales.
There is no additional need for $\varepsilon$ elsewhere in the entire paper.
\end{Rem}

\subsection{Proof of Proposition~\ref{prop:intermscales}: contraction step for intermediate scales}
\label{sec:contraction-intermediate}

The proof of Proposition~\ref{prop:intermscales} uses the following elementary convolution lemma,
whose proof is deferred to Appendix~\ref{appendix:conv estimates}.

\begin{Lem}
\label{lemma:fstarf}
Let $p,a>0$.
For $i=1,2$, suppose that $f_i \in \ell^1(\mathbb Z^d)$
satisfy $0 \le f_i(x) \le a(1\vee |x|)^{-p}$ for every $x\in\mathbb Z^d$.
Let $k\ge 1$.
Then, for every $x\in \mathbb Z^d$,
\begin{equation}
    (f_1*f_2)(x)
    \le
    \frac{a}{(1\vee |x|)^p}
    \left(  \frac{1}{k^{p}}  \|f_1\|_1
    + 2^{p} \sum_{y \in \Lambda_{k|x|}(0)}(f_1(y)+f_2(y)) \right).
\end{equation}
\end{Lem}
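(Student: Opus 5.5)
The plan is to split the convolution sum according to where the summation variable sits, so that in every piece one of the two factors is evaluated at a point of norm comparable to $|x|$. We write
\[
(f_1*f_2)(x)=\sum_{y}f_1(y)f_2(x-y),
\]
and distinguish the cases $|x|<1$, i.e.\ $x=0$, and $|x|\ge 1$. For $x=0$ there is nothing to prove beyond a trivial bound. Since $k\ge 1$, the box $\Lambda_{k|x|}(0)$ with $|x|=0$ contains the origin only, and $f_2(-y)\le a$ while $\sum_y f_1(y)=\|f_1\|_1$. This gives $(f_1*f_2)(0)\le a\|f_1\|_1$, which is at most the claimed right-hand side because $k^{-p}\le 1$ would give the wrong direction. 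So in this case I would instead bound directly: $f_1(y)f_2(-y)\le a f_1(y)$ when $|y|\ge k$, and for $|y|<k$ use $f_2(-y)\le a$ together with the symmetric split described below. In fact the general argument below also covers $x=0$ once $|x|$ is replaced by $1\vee|x|$ throughout, so I will run a single argument with $r:=1\vee|x|$ and the box $\Lambda_{kr}$ (enlarging the box only increases the right-hand side when $|x|\ge1$; for $x=0$ the discrepancy is handled as just noted).

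\textbf{Main case $|x|\ge 1$.} We split the sum over $y$ into three regions.
\begin{enumerate}
\item[(a)] $|x-y|\ge |x|/2$ and $y\in\Lambda_{k|x|}(0)$. Here $f_2(x-y)\le a2^p|x|^{-p}$, so this part is at most $a2^p|x|^{-p}\sum_{y\in\Lambda_{k|x|}}f_1(y)$.
\item[(b)] $|x-y|<|x|/2$. Then $|y|>|x|/2$, so $f_1(y)\le a2^p|x|^{-p}$. Substituting $z=x-y$ with $|z|<|x|/2\le k|x|$, this part is at most $a2^p|x|^{-p}\sum_{z\in\Lambda_{k|x|}}f_2(z)$.
\item[(c)] $y\notin\Lambda_{k|x|}(0)$ and $|x-y|\ge|x|/2$. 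Rather than bounding $f_2(x-y)$ by $a2^p|x|^{-p}$, I use the sharper fact $|x-y|\ge|y|-|x|\ge(k-1)|x|$. That only helps for $k\ge 2$, so I will use instead $f_2(x-y)\le a$ combined with $f_1(y)\le a(k|x|)^{-p}$. This gives $a k^{-p}|x|^{-p}\sum_y f_2(x-y)$, which is $\|f_2\|_1$ rather than $\|f_1\|_1$. To obtain $\|f_1\|_1$, as stated, I will swap roles: bound $f_2(x-y)\le a(|x|/2)^{-p}$, which is $2^p$ too large. So the cleanest route for region (c) is to exchange the roles of the two functions in the case split. Concretely, define region (c) as $\{|x-y|\ge k|x|\}$, where $f_2(x-y)\le ak^{-p}|x|^{-p}$; summing $f_1(y)$ over all such $y$ gives the term $ak^{-p}|x|^{-p}\|f_1\|_1$ exactly.
\end{enumerate}

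\textbf{Arranging the split.} The split must be arranged so that the three regions cover $\mathbb Z^d$. With region (c) taken as $\{|x-y|\ge k|x|\}$, the remaining region is $\{|x-y|<k|x|\}$, i.e.\ $x-y\in\Lambda_{k|x|}$. On this remaining region:
\begin{itemize}
\item if $|y|\ge|x|/2$, we bound $f_1(y)\le a2^p|x|^{-p}$ and sum $f_2(x-y)$ over $x-y\in\Lambda_{k|x|}$;
\item if $|y|<|x|/2$, then $|x-y|>|x|/2$, so $f_2(x-y)\le a2^p|x|^{-p}$, and we sum $f_1(y)$ over $y\in\Lambda_{|x|/2}\subset\Lambda_{k|x|}$.
\end{itemize}
This gives exactly the stated bound. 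The only delicate point is this bookkeeping, namely choosing region (c) via $|x-y|$ so that the $k^{-p}$ term carries $\|f_1\|_1$. The case $x=0$ is analogous with $|x|$ replaced by $1$: region (c) is $\{|y|\ge k\}$, where $f_2(-y)\le ak^{-p}$, and the rest is bounded by $a\sum_{\Lambda_k}f_1$, with the box $\Lambda_k$ interpreted through $1\vee|x|$.
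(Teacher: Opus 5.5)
Your final arrangement is correct and is essentially the paper's proof. The region where the argument of $f_2$ has norm at least $k|x|$ yields the $k^{-p}\|f_1\|_1$ term. On the rest, whichever of the two arguments has norm at least $|x|/2$ is bounded pointwise by $2^p a|x|^{-p}$, and the other factor is summed over $\Lambda_{k|x|}$. You remove the outer region first so that the pieces partition $\mathbb Z^d$, whereas the paper's three regions may overlap, which is harmless for nonnegative summands.

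Your unease about $x=0$ is justified. The paper's argument also tacitly needs $|x|\ge 1$ for the $k^{-p}$ bound, and if $\Lambda_0$ is read as $\{0\}$ the inequality can fail there for large $k$ (take $f_1=f_2=a\1_{\Lambda_1}$). This is immaterial, since the lemma is only used with $|x|\ge 2\xi(0)$.
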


Proposition~\ref{prop:intermscales} establishes an improvement on the bound on $F_\beta$ for the \emph{intermediate} scales
$2\xi(0)\leq |x|\leq \eta \xi(\beta)$.
We do not use $\varepsilon>0$ in the proof of Proposition~\ref{prop:intermscales}---the parameter
$\varepsilon$ is merely a spectator.

\begin{proof}[Proof of Proposition~\textup{\ref{prop:intermscales}}]
 Let $d>2$, $\varepsilon>0$, $\bfC\geq 16\CGreen$,  $\bfc\leq \tfrac{1}{2}\bfc\wedge \tfrac{1}{4}\crw\wedge 1$ and let
$\delta_2=\delta_2(\eta,\varepsilon,d)$ be given by
Proposition~\textup{\ref{prop: typical scales}}.
Our goal is to prove that there exists $\eta\in (0,1)$
such that the following is true for every $\beta<\beta(\delta_2)$: If $\mathcal H_{\beta'}(\bfc,\bfC)$ holds for every $\beta' \le \beta$
then, for every $2\xi(0)\leq |x|\leq \eta \xi(\beta)$,
\begin{equation}
	F_\beta(x)
    \leq
    \frac{1}{6} \frac{\bfC}{\sigma^d}
    \left(\frac{\sigma}{\sigma\vee |x|}\right)^{d-2-\varepsilon}
    .
\end{equation}
Since $|x| \ge 2\xi(0)=2\sigma$, the above is equivalent to
\begin{equation}
\label{eq:intx-want}
	F_\beta(x)
    \leq
    \frac{1}{6} \bfC \left(\frac{|x|}{\sigma}\right)^\varepsilon
    \frac{1}{\sigma^2 |x|^{d-2}}.
\end{equation}

Let $\eta\in(0,1)$ to be chosen small enough.
Let $\beta<\beta(\delta_2)$.
Fix $x$ with $2\xi(0)\leq |x|\leq \eta \xi(\beta)$.
We may assume that $2\xi(0) \le \eta \xi(\beta)$, since otherwise there is nothing to prove. Then, since $2/\eta \ge 1$,
\begin{equation}
    \xi(0)  \le \frac{|x|}\eta \le \xi(\beta).
\end{equation}
Since $\xi(\beta)$ is monotone increasing in
$\beta <\beta(\delta_2)$ (by Proposition~\ref{prop: rough bounds chi xi}), we can choose
$0< \beta' \le \beta$ such that
$|x|=\eta\xi(\beta')$.  With this choice, we apply
\eqref{eq:SL assumption} to obtain
\begin{equation}
\label{eq:FF}
	F_\beta(x)\leq F_{\beta'}(x)+(\beta-\beta')(F_{\beta'}*F_\beta)(x).
\end{equation}
For the first term on the right-hand side, we
apply Proposition~\ref{prop: typical scales} at the parameter $\beta'$ to get
\begin{equation}
	F_{\beta'}(x)\leq
    \frac{1}{8} \bfC \left(\frac{|x|}{\sigma}\right)^\varepsilon
    \frac{1}{\sigma^2 |x|^{d-2}}.
\end{equation}
The second term on the right-hand side of \eqref{eq:FF} is
\begin{equation}
    \frac{Z_{\beta',\beta}}{\chi(\beta')}
    (F_{\beta'}*F_\beta)(x)
    \le
    \frac{2}{\chi(\beta')}
    (F_{\beta'}*F_\beta)(x)
\end{equation}
because, by \eqref{eq: bounds on Z1}, $Z_{\beta',\beta}\leq \tfrac{1}{1-\delta_2}\leq 2$
since $\delta_2 \le \frac 12$.
As a result, to prove \eqref{eq:intx-want} it suffices to show that, if $\eta$ is small enough, then
\begin{equation}\label{eq: interm improv 1}
    \frac{1}{\chi(\beta')}
    (F_{\beta'}*F_\beta)(x)
    \leq
    \frac{1}{48} \bfC \left(\frac{|x|}{\sigma}\right)^\varepsilon
    \frac{1}{\sigma^2 |x|^{d-2}}.
\end{equation}

Given that $\mathcal H_{\beta'}(\bfc,\bfC)$ and $\mathcal H_{\beta}(\bfc,\bfC)$ hold, the functions
$f_1=F_{\beta'}$ and $f_2=F_{\beta}$ obey the hypothesis of
Lemma~\ref{lemma:fstarf} with $p=d-2-\varepsilon$
and $a= \bfC\sigma^{-2-\varepsilon}$.
Since
$f_1 \le f_2$, this gives, for every $k \ge 1$,
\begin{equation}
\label{eq:interm5}
    \frac{1}{\chi(\beta')}
    (F_{\beta'}*F_\beta)(x)
    \leq
    \bfC \left(\frac{|x|}{\sigma}\right)^\varepsilon
    \frac{1}{\sigma^2 |x|^{d-2}}
    \left( \frac{1}{k^{d-2-\varepsilon}}  + 2^{d-2-\varepsilon} \frac{\chi_{k|x|}(\beta)}{\chi(\beta')} \right)
    .
\end{equation}
We choose $k=\eta^{-1/2}\geq 1$,
so that $k|x|=k\eta\xi(\beta')=\eta^{1/2}\xi(\beta')$.  Since
\begin{equation}
\xi(0)  \le \eta^{-1/2}|x| =
\eta^{1/2}\xi(\beta')    \le \xi(\beta'),
\end{equation}
there is a
$\beta''\le \beta'$ for which $\eta^{1/2}\xi(\beta') = \xi(\beta'')$.
Then, the ratio in the last term on the right-hand side of \eqref{eq:interm5} satisfies
\begin{equation}
    \frac{\chi_{k|x|}(\beta)}{\chi(\beta')}
    =
    \frac{\chi_{\xi(\beta'')}(\beta)}{\chi(\beta'')}
     \frac{\chi(\beta'')}{\chi(\beta')}
     \le
     \Cstab
     \left(\frac{\xi(\beta'')}{\xi(\beta')} \right)^{2\frac{1-\delta_2}{1+\delta_2}}
     \le
     \Cstab  \eta^{1/3},
\end{equation}
where we successively used Proposition~\ref{Prop:stab},
\eqref{eq:bounds chi xi}, and the fact that the exponent in the third member is at least $\frac 23$ because $\delta_2 \le \frac 12$.
We now choose $\eta\in(0,1)$ small enough that
\begin{equation}
    2^{d-2-\varepsilon}
    (\eta^{\frac 12(d-2-\varepsilon)} + \eta^{1/3})
    \le \frac{1}{48}.
\end{equation}
With \eqref{eq:interm5}, this completes the proof of \eqref{eq: interm improv 1}, and concludes the proof of the proposition.
\end{proof}

\begin{Rem} In the proof of Proposition \ref{prop:intermscales}, we showed a weak form of pointwise stability: the second term in the right-hand side of \eqref{eq:FF} is much smaller than \emph{any} input bound on $F_\beta(x)$. More precisely, we proved that, if $\kappa>0$, for
$2\xi(0) \le |x|\le \eta \xi(\beta')$, if $\eta$ (and hence $\delta_2$) is sufficiently small, we can ensure that
\begin{equation}
	F_\beta(x)\leq F_{\beta'}(x)
    +\frac{\kappa}{\sigma^2|x|^{d-2}}
    \left(\frac{|x|}{\sigma}\right)^{\varepsilon}.
\end{equation}
This is the essential element in the contraction proof for intermediate scales.
\end{Rem}

\section{Applications}
\label{sec:applications}

In this section,
we show that our results apply to several statistical mechanical
models above their upper critical dimensions.
The applications are to: self-avoiding walk
for $d>4$, percolation for $d>6$, spin models (Ising, XY, $1$- and $2$-component $|\varphi|^4$)
for $d>4$, and lattice trees for $d>8$.
For each of these we: $(1)$~define the model,
$(2)$~verify that its two-point function satisfies Definition~\ref{Def:G},
$(3)$~verify Assumption~\ref{Ass:G}, and $(4)$~verify Assumption~\ref{Ass:E}.
Throughout Section~\ref{sec:applications}, the kernel $J$ is any admissible kernel as in Definition~\ref{Def:J}$(i)$--$(ii)$.
Definition~\ref{Def:J}$(iii)$ does not play a role in Section~\ref{sec:applications}, except for lattice trees in Section~\ref{sec:LT} (it is also used indirectly in Section~\ref{sec:AssE}
via application of Theorem~\ref{thm:maintheorem}).

Most of the analysis in Section~\ref{sec:applications} is well-known from literature going back to the
1980s.  We include it for clarity and completeness, and to illustrate
the parallels between the various models.
Although we work on $\mathbb Z^d$ because that is the setting of our
results, the verification of the bounds of Assumptions~\ref{Ass:G} and \ref{Ass:E} presented here apply to general transitive graphs. If the model has a small parameter as part of its definition, then
the nearest-neighbour $J$ of \eqref{eq:Jnn} can be used.
If there is no small parameter in the definition of the model, then the verification
of Assumption~\ref{Ass:E} requires us to use spread-out models with sufficiently large $\sigma_J$.

The verification of Assumption~\ref{Ass:E} is deferred to Section~\ref{sec:AssE}
for the sake of efficiency, since it is similar for all models under
consideration.

\subsection{Self-avoiding walk}
\label{sec:SAW}

\subsubsection{The model}
\label{sec:SAWmodel}

Let $d \ge 1$.
Detailed introductions to the self-avoiding walk on $\mathbb Z^d$ can be found in
\cite{MS93,BDGS12}.
To define the model, we introduce the  \emph{repulsion parameter}
$\lambda\in [0,1]$. Since $\lambda$ is fixed, we omit it from the notation.
For $x,y\in\mathbb{Z}^d$, we write $x \sim y$ if $J_{y-x} >0$. An $n$-\emph{step}
\emph{walk} from $x$ to $y$ is a sequence $(\gamma(i))_{0\leq i \leq n}$ with $\gamma(0) = x$, $\gamma(n) =y$, and $\gamma(i) \sim \gamma(i+1)$ for all $0\leq i \leq n-1$. Let $\mathcal{W}_0$ be the set of all walks in $\mathbb Z^d$ with $x_0=0$. We denote the length of an $n$-step walk $\gamma$ by $|\gamma|=n$.

For $\gamma\in \mathcal{W}_0$, $0\leq s<t\leq |\gamma|$, and $\beta \ge 0$, we define
\begin{align}
    U_{s,t}(\gamma)&:=-\lambda\mathds{1}_{\gamma(s)=\gamma(t)},
\\
\label{eq: defrho}
    \rho(\gamma) &:=\prod_{0\leq s<t\leq |\gamma|}\big(1+U_{s,t}(\gamma)\big),
    \\
    (\beta J)^{\gamma} &:=\prod_{0\leq i\leq |\gamma|-1}\beta J_{\gamma(i+1)-\gamma(i)}.
\end{align}
The \emph{two-point function} is defined, for every $\beta\geq0$ and every
$x\in \mathbb Z^d$ by
\begin{equation}\label{eq:def 2pt wsaw}
G_\beta(x):=\sum_{\gamma: 0\rightarrow x}(\beta J)^{\gamma}\rho(\gamma).
\end{equation}
When $\lambda=0$, we recover the Green function of the random walk with step distribution $J$.
The case $\lambda=1$ is the \emph{strictly} self-avoiding walk model.
When $\lambda \in (0,1)$, we have the \emph{weakly} self-avoiding walk
(also known as the \emph{Domb--Joyce model}).

We define
$c_n := \sum_{\gamma \in \mathcal W_0: |\gamma|=n} J^{\gamma}\rho(\gamma)$.
This sequence satisfies the inequality $c_{m+n} \le c_mc_n$, from which we conclude
by Fekete's Lemma (see \cite[Lemma~1.2.2]{MS93}) that $c_n^{1/n}$ approaches a
limit
\begin{equation}
\label{eq:Fekete}
    \lim_{n\to\infty}c_n^{1/n} = \inf_{n \ge 1} c_n^{1/n}.
\end{equation}
This limit can be shown to lie in $(0,\infty)$.
Its reciprocal is defined to
be the \emph{critical point} $\beta_c$.
The \emph{susceptibility} is defined by
\begin{equation}
    \chi(\beta) = \sum_{x\in\mathbb Z^d}G_\beta(x) = \sum_{n=0}^\infty c_n \beta^n.
\end{equation}
By \eqref{eq:Fekete}, $\beta_c$ is the radius of convergence of $\chi$. In particular, $G_\beta(x)$ is finite for $\beta\in [0,\beta_c)$.
Also, since $c_n \ge \beta_c^{-n}$, we have $\chi(\beta) \ge (1-\beta/\beta_c)^{-1}$ for $\beta \le \beta_c$.

\subsubsection{Verification of Definition~\ref{Def:G}}
\label{sec:SAW-DefG}

We verify the conditions imposed by Definition~\ref{Def:G}:

\begin{enumerate}
	\item[$(i)$] (Initial condition.) It is immediate from the definition that $G_0=\delta_{0}$.
	\item[$(ii)$] (Regularity.) For every $x\in \mathbb Z^d$, the function
    $\beta\in [0,\beta_c)\mapsto G_\beta(x)$ is a power series in $\beta$ with positive
    coefficients, so it is monotone and differentiable.
	\item[$(iii)$] (Symmetry.) Since $J$ is $\mathbb Z^d$-symmetric, so is  $G_\beta$.
	\item[$(iv)$] (Exponential decay.)
Fix $\beta\in [0,\beta_c)$ and let $\beta_1=\frac 12 (\beta+\beta_c)$.
A walk from $0$ to $x$ must take at least $|x|/R_j$ steps, so
\begin{equation}
    G_\beta(x) \le \sum_{n=|x|/R_j}c_n\beta^n
    \le
    \Big(\frac{\beta}{\beta_1}\Big)^{|x|/R_J} \chi(\beta_1).
\end{equation}
Since $\beta<\beta_1$, the right-hand side decays exponentially in $|x|$.
    \item[$(v)$] (Limit as $\beta\nearrow\beta_c$ when $\beta_c<\infty$.)
    By the Monotone Convergence Theorem,
     $\lim_{\beta \uparrow \beta_c}G_{\beta}(x) = G_{\beta_c}(x)$.
	\end{enumerate}

\subsubsection{Verification of Assumption~\ref{Ass:G}}
\label{sec:SAW-AssG}

\begin{proof}[Proof of \eqref{eq:SL assumption}.]
For $a,b\in \mathbb R$, we have $b^{n}-a^{n} =(b-a) \sum_{k = 0}^{n-1}b^{k}a^{n-1-k}$.
As a result,
		\begin{align}
		G_{\beta}(x)-G_{\beta'}(x) &= \sum_{\gamma:0 \rightarrow x}\big((\beta J)^{\gamma}-(\beta' J)^{\gamma}\big)\rho(\gamma)
\nonumber\\
		&= \sum_{\gamma:0 \rightarrow x} \left( (\b-\b') \sum_{k = 0}^{|\gamma|-1}(\b' J )^{\gamma[0:k]}J_{\gamma(k+1)-\gamma(k)}(\b J )^{\gamma[k+1:|\gamma|]} \right)\rho(\gamma)
\nonumber\\
		&=(\b-\b')\sum_{u,v \in \mathbb{Z}^d} \sum_{\substack{\gamma_1 : 0 \rightarrow u \\\gamma_2: v \rightarrow x}}(\beta' J)^{\gamma_1}
J_{v-u}(\beta J)^{\gamma_2} \rho(\gamma_1 \circ (uv)\circ \gamma_2).
\label{eq: SL step 1}
		\end{align}
By definition, $\rho(\gamma_1 \circ (uv) \circ \gamma_2) \leq \rho(\gamma_1)
\rho(\gamma_2)$, so
\begin{equation}
\label{eq:SAW-rho-ub}
			\sum_{\gamma_2 : v\rightarrow x}(\beta J)^{\gamma_2} \rho(\gamma_1 \circ (uv)\circ \gamma_2) \leq  \rho(\gamma_1) G_{\beta}(x-v),
\end{equation}
and therefore
		\begin{align}
		G_{\beta}(x)-G_{\b'}(x)& \leq
    (\beta-\beta')
    \sum_{\substack{u,v \in \mathbb{Z}^d \\ \gamma:0 \rightarrow u } } (\beta' J)^{\gamma} J_{v-u} \rho(\gamma)G_{\beta}(x-v)
\nonumber\\
		&= (\beta-\beta') (G_{\beta'}*J*G_\beta)(x).
\label{eq:I1SAW}
		\end{align}
This proves \eqref{eq:SL assumption}.
Figure~\ref{fig:wsaw} illustrates the upper bound in \eqref{eq:I1SAW}.
\end{proof}

\begin{figure}
	\begin{center}
		\includegraphics{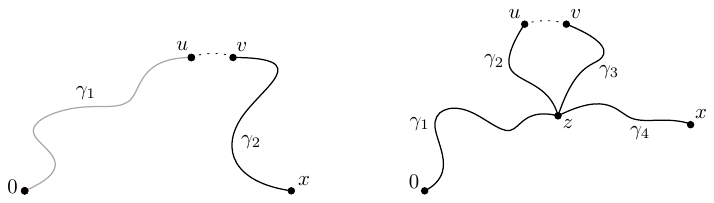}
		\caption{Diagrammatic representations for \eqref{eq:I1SAW} (left)
and \eqref{eq: proof diff ineq wsaw 1} (right). On the left,
the grey path has parameter $\beta'$ and the black has $\beta$. The dotted lines represent a single-step walk.
}
		\label{fig:wsaw}
	\end{center}
\end{figure}

We define the \emph{open bubble diagram}
 \begin{equation}
 B^{\rm o}(\beta):=
 (G_\beta*J*G_\beta)(0).
 \end{equation}
and verify the lower differential inequality \eqref{eq:Diff inequ assumption} with
\begin{equation}
\label{eq:HSAW}
	H_\beta(x) =\lambda  B^{\rm o}(\beta)  \delta_{0}(x) .
\end{equation}
By definition, $H_0=0$, $H_\beta(x)=H_\beta(-x)$, and $H_\beta$ is increasing
and continuous on $[0,\beta_c)$, as required by Assumption~\ref{Ass:G}.
Also, since $H_\beta$ is supported on $\{0\}$, it certainly decays exponentially.

Since we have verified \eqref{eq:SL assumption},
it follows from \eqref{eq:dGub} that
 \begin{align}
 	 \partial_\beta G_{\beta}&\leq G_\beta*J*G_\beta .
 \label{eq:diff ineq 1 WSAW intro}
  \end{align}
We will use this in the proof of the complementary lower bound
\begin{equation}
\label{eq:SAWlb}
    \partial_\beta G_{\beta} \ge G_\beta*J*G_\beta - G_\beta*H_\beta*G_\beta,
\end{equation}
which is \eqref{eq:Diff inequ assumption}.

\begin{proof}[Proof of \eqref{eq:Diff inequ assumption}.]
By writing $|\gamma|=
\sum_{z\in \mathbb Z^d}\sum_{i=1}^{|\gamma|}\mathds{1}_{\gamma(i)=z}$,
we see that
\begin{align}
\label{eq:dGlbSAW}
    \partial_\beta G_{\beta}(x)
    &=
    \sum_{v\in \mathbb Z^d}
    \sum_{\substack{\gamma:0\to x \\|\gamma|\ge 1}}\beta^{|\gamma|-1}J^\gamma
    \rho(\gamma)
    \sum_{i=1}^{|\gamma|}\mathds{1}_{\gamma(i)=v}
    \nonumber \\
    & =
    \sum_{u,v\in \mathbb Z^d}
    \sum_{\substack{\gamma_1 : 0 \rightarrow u \\\gamma_2 : v \rightarrow x}}
    (\beta J)^{\gamma_1} J_{v-u} (\beta J)^{\gamma_2}
    \rho(\gamma_1 \circ  (uv) \circ \gamma_2).
\end{align}
We apply the inequality
\begin{equation}
\label{eq : lower bound path decomp wsaw}
\rho(\gamma_1\circ (uv)\circ \gamma_2)
\geq \rho(\gamma_1)\rho(\gamma_2)
-\lambda \rho(\gamma_1)\rho(\gamma_2)\sum_{\substack{0\leq i \leq |\gamma_1|\\0\leq j\leq |\gamma_2|}}\mathds{1}_{\gamma_1(i)=\gamma_2(j)}.
\end{equation}
Insertion of the first term on the right-hand side
into \eqref{eq:dGlbSAW} gives the first term
$(G_\beta*J*G_\beta)(x)$ in our desired bound
\eqref{eq:SAWlb}.

To arrive at the remaining term in \eqref{eq:SAWlb},
we decompose according to the common value $z$ of $\gamma_1(i)$ and $\gamma_2(j)$. The
subtracted term in \eqref{eq : lower bound path decomp wsaw} becomes
		\begin{align}
			&\lambda \sum_{u,v\in \mathbb{Z}^d}\sum_{\substack{\gamma_1 : 0 \rightarrow u\\\gamma_2 : v \rightarrow x}}(\beta J)^{\gamma_1}J_{v-u}(\beta J)^{\gamma_2}\rho(\gamma_1)\rho(\gamma_2)\sum_{\substack{0\leq i \leq |\gamma_1|\\0\leq j\leq |\gamma_2|}}\mathds{1}_{\gamma_1(i)=\gamma_2(j)}
\nonumber\\
			&\leq  \lambda\sum_{u,v,z \in \mathbb{Z}^d}\sum_{\substack{\gamma_1 : 0 \rightarrow z\\\gamma_2 : z \rightarrow u\\\gamma_3 : v \rightarrow z\\\gamma_4 : z \rightarrow x}}(\beta J)^{\gamma_1  \circ \gamma_2}J_{v-u}(\beta J)^{\gamma_3 \circ\gamma_4}\rho(\gamma_1  \circ \gamma_2)\rho(\gamma_3  \circ\gamma_4)
\nonumber\\
			&\leq  \lambda\sum_{u,v,z \in \mathbb{Z}^d} G_{\b}(z)G_{\b}(u-z)J_{v-u} G_{\b}(z-v)G_{\b}(x-z)
\nonumber\\
						&= \lambda B^{\rm o}(\beta)  \sum_{z \in \mathbb{Z}^d}G_{\b}(z)  G_{\b}(x-z)
    = \lambda B^{\rm o}(\beta) (G_\beta*G_\beta)(x).
\label{eq: proof diff ineq wsaw 1}
		\end{align}
The right-hand side is exactly $(G_\beta*H_\beta*G_\beta)(x)$ with $H_\beta$
given by \eqref{eq:HSAW}.
\end{proof}

\subsection{Continuous-time weakly self-avoiding walk}\label{sec: CTWSAW}

We now verify Assumption~\ref{Ass:G} for the continuous-time weakly
self-avoiding walk. Since the argument closely follows that of the
(discrete-time) weakly self-avoiding walk, we only sketch the details.

Let $d \ge 1$.
The study of the continuous-time weakly self-avoiding walk on $\mathbb Z^d$
(also known as the \emph{discrete Edwards model})
goes back to \cite{BFF84}.  The model is interesting even for $d=1$, e.g., \cite{Liu24}.

To define the model, we first consider the continuous-time random walk $X$ on $\mathbb Z^d$ which takes steps
according to an admissible kernel $J$, with ${\rm Exp}(1)$ holding times.
In other words, the random time spent at a vertex before making a next step
has density $e^{-t} \D t$, and the holding times at each step are independent
of each other and of the choice of next vertex.
For a walk with trace $\gamma$ taking $|\gamma|$
steps, let $T_0,\ldots,T_{|\gamma|}$ denote the independent ${\rm Exp}(1)$ holding
times, and let $T=\sum_{i=0}^{|\gamma|}T_i$ denote the total time of $\gamma$.

The \emph{local time} at a vertex $y$ visited by $\gamma$ is the random
variable
\begin{equation}
    L_{y,\gamma} = \sum_{i=0}^{|\gamma|} T_i \1_{\gamma(i)=y}
    =
    \int_0^T \1_{X(t)=y} \D t.
\end{equation}
The \emph{intersection local time}
\begin{equation}
    I(\gamma) :=
    \sum_{y\in \mathbb Z^d} L_{y,\gamma}^2 = \int_0^T \int_0^T \1_{X(s)=X(t)} \D s \D t
\end{equation}
gives an indication of the total time that
the walk $\gamma$ spends intersecting itself.
Given $\lambda >0$, we define a weight $\rho(\gamma)$ by
\begin{equation}
    \rho(\gamma) := \mathbb E \, (e^{-\lambda I(\gamma)}),
\end{equation}
where $\mathbb E$ denotes the expectation over the holding times.

When $y$ is visited $n_y$ times by the walk $\gamma$, the local time $L_{y,\gamma}$ has a ${\rm Gamma}(n_y,1)$ distribution.
Therefore, in terms of the
measure on $(\mathbb R^+)^{\mathbb Z^d}$ defined by
\begin{align}
\label{eq:dnu-CTWSAW}
    \mathrm{d}\nu_{\gamma}(\mathbf{t}) &= \prod_{v \in \mathbb Z^d } \left(\mathrm{1}_{n_v(\gamma)=0}\delta_0(t_v)\mathrm{d}t_v+ \mathrm{1}_{n_v(\gamma)\geq 1} \frac{t_v^{n_v(\gamma)-1}}{(n_v(\gamma)-1)!}\mathrm{d}t_v\right),
\end{align}
the weight of $\gamma$ is equal to
\begin{equation}
    \rho(\gamma)
    = \mathbb E \, (e^{-\lambda I(\gamma)})
    =
    \int
    e^{-\sum_y  t_y} e^{-\lambda\sum_y  t_y^2}
    \D \nu_{\gamma}(\mathbf{t})
.
\end{equation}
The \emph{two-point function} is defined by
\begin{equation}\label{eq:2pt CTWSAW}
    G_\beta(x) := \sum_{\gamma: 0 \to x} (\beta J)^\gamma \rho (\gamma).
\end{equation}

\smallskip\noindent
\emph{Verification of Definition~\textup{\ref{Def:G}}.}
This is identical to the verification for the self-avoiding walk model in Section~\ref{sec:SAW-DefG}. Indeed, in Section~\ref{sec:SAW-DefG}, we did not use the precise form of $\rho (\gamma)$, but only the fact that it does not depend on $\beta$. This remains true in
\eqref{eq:2pt CTWSAW}.

\begin{proof}[Proof of \eqref{eq:SL assumption}.]
Suppose that $\gamma = \gamma_1 \circ (uv) \circ \gamma_2$.
The intersection local time obeys the inequality
\begin{equation}
    I(\gamma) \ge I(\gamma_1)+I(\gamma_2).
\end{equation}
Since the holding times of $\gamma_1$ and $\gamma_2$ are independent, so are the random variables $I(\gamma_1)$ and $I(\gamma_2)$.
This leads to the inequality $\rho(\gamma_1 \circ (uv) \circ \gamma_2) \leq \rho(\gamma_1)
\rho(\gamma_2)$ that was used to verify Assumption~\ref{Ass:G} in
Section~\ref{sec:SAW-AssG}.
Then \eqref{eq:SL assumption} follows exactly as in Section~\ref{sec:SAW-AssG}.
\end{proof}

\begin{proof}[Proof of \eqref{eq:Diff inequ assumption}.]
We start from \eqref{eq:dGlbSAW}, with the new interpretation of the weight $\rho$. Let $\gamma = \gamma_1 \circ (uv) \circ \gamma_2$.
The error in the upper bound $\rho(\gamma) \leq \rho(\gamma_1) \rho(\gamma_2)$
used to prove \eqref{eq:SL assumption} arises from
\begin{align}
    e^{-\lambda I(\gamma_1)}e^{-\lambda I(\gamma_2)} -
    e^{-\lambda I(\gamma)}
    & =
    e^{-\lambda I(\gamma_1)}e^{-\lambda I(\gamma_2)}
    \Big[1-e^{ -\lambda [I(\gamma)-(I(\gamma_1)+I(\gamma_2))] } \Big]
    \nonumber \\ & \le
    e^{-\lambda I(\gamma_1)}e^{-\lambda I(\gamma_2)}
     \lambda [I(\gamma)-(I(\gamma_1)+I(\gamma_2))]   .
\end{align}
By definition, $L_{z,\gamma} = L_{z,\gamma_1}+L_{z,\gamma_2}$. Therefore,
\begin{equation}
    I(\gamma)-(I(\gamma_1)+I(\gamma_2)) = \sum_{z \in \mathbb Z^d} (L_{z,\gamma_1}+L_{z,\gamma_2})^2-\left(L_{z,\gamma_1}^2+L_{z,\gamma_2}^2\right) =  2 \sum_{z\in \mathbb Z^d} L_{z,\gamma_1}L_{z,\gamma_2} .
\end{equation}
As a consequence,
\begin{equation}
    \rho(\gamma_1) \rho(\gamma_2) - \rho(\gamma)
    \le
    2\lambda\sum_{z\in \mathbb Z^d}
    \Big( \mathbb E \big(e^{-\lambda I(\gamma_1)} L_{z,\gamma_1}\big) \Big)
    \Big( \mathbb E \big(e^{-\lambda I(\gamma_2)} L_{z,\gamma_2}\big) \Big).
\end{equation}
Now, we sum over $\gamma$ as in \eqref{eq:dGlbSAW}.  The result is
\begin{align}
    &(G_\beta * J * G_\beta)(x) - \partial_\beta G_\beta(x)
    \nonumber \\ & \quad \le
    2\lambda     \sum_{u,v,z\in \mathbb{Z}^d}
    \sum_{\gamma_1 : 0 \rightarrow u}
    (\beta J)^{\gamma_1}
    \Big( \mathbb E \big(e^{-\lambda I(\gamma_1)} L_{z,\gamma_1}\big) \Big)
    J_{v-u}
    \sum_{\gamma_2 : v \rightarrow x}
    (\beta J)^{\gamma_2}
    \Big( \mathbb E \big(e^{-\lambda I(\gamma_2)} L_{z,\gamma_2}\big) \Big).
\end{align}
We appeal to \cite[Lemma~2.1]{BFS82} to see that
\begin{equation}
    \sum_{\gamma_1 : 0 \rightarrow u}
    (\beta J)^{\gamma_1}
    \Big( \mathbb E \big(e^{-\lambda I(\gamma_1)} L_{z,\gamma_1}\big) \Big)
        \le
    G_\beta(z) G_\beta(u-z),
\end{equation}
and similarly for the sum over $\gamma_2$.  We therefore obtain
\begin{align}
    &(G_\beta * J * G_\beta)(x) - \partial_\beta G_\beta(x)
    \nonumber \\ & \quad \le
    2\lambda     \sum_{z\in \mathbb{Z}^d}
    G_\beta(z)G_\beta(x-z)\sum_{u,v\in \mathbb Z^d}G_\beta(u-z)J_{v-u}G_\beta(z-v).
\end{align}
By replacing $u,v$ by $u-x,v-z$, this leads to
\begin{align}
    &(G_\beta * J * G_\beta)(x) - \partial_\beta G_\beta(x)
    \le
    2\lambda    ( G_\beta*H_\beta*G_\beta)(x)
\end{align}
with
\begin{equation}
\label{eq:HCTWSAW}
    H_\beta(x) = 2\lambda \delta_{0}(x) (G_\beta*J*G_\beta)(0).
\end{equation}
This completes the proof.
\end{proof}

\subsection{Bernoulli percolation}
\label{sec:perc}

\subsubsection{The model}

For an introduction to percolation theory, see \cite{Grim99}.
Let $d \ge 2$.
We consider Bernoulli bond percolation on the infinite graph whose vertex
set is $\mathbb Z^d$ and whose edge set $\mathcal E=\mathcal E_J$ consists of pairs $\{x,y\}$ with
$J_{y-x}>0$.  Let $\beta\in [0,(\max_{x\in \mathbb Z^d} J_x)^{-1}]$.
Edges are independently open with probability $\beta J_{y-x}$
and otherwise are closed.  We write $\{x\connect{}y\}$
for the event that $x$ and $y$ are connected by a path consisting of open bonds,
and define the \emph{two-point function}
 \begin{equation}\label{eq:def 2pt perco}
 	G_\beta(x,y)=G_\beta(y-x):=\mathbb P_\beta[x\connect{}y].
 \end{equation}

\subsubsection{Verification of Definition~\ref{Def:G}}

For $d \ge 2$,
the properties listed in Definition~\ref{Def:G} are standard facts about
percolation \cite{Grim99}.  The critical value $\beta_c$ separates the subcritical regime,
where $G_\beta$ decays exponentially, from the supercritical regime, where
there is a positive probability for the existence of an infinite connected
cluster \cite{Mens86,AB87}. We assume familiarity with two basic techniques: the BK inequality (van den Berg--Kesten) and Russo's formula \cite[Chapter~2]{Grim99}.

 \subsubsection{Verification of Assumption~\ref{Ass:G}}

\begin{proof}[Proof of \eqref{eq:SL assumption}]
A version of \eqref{eq:SL assumption} appeared in \cite[Lemma~2.4]{Hutc19_hyperbolic}.
We use the standard increasing coupling $\mathbb P$, as follows.
First, we assign independent uniform random variables $\eta_{u,v}$ in $[0,1]$ to each edge $\{u,v\}$.
Given a realisation of these random variables, and given $\beta\in [0,(\max_{x\in \mathbb Z^d} J_x)^{-1}]$, we define a percolation configuration $\omega_\beta\in \{0,1\}^{\mathcal E}$ as follows: $\omega_\beta(\{u,v\})=1$  if
$\eta_{u,v}<  \beta J_{v-u}$ (the bond $\{u,v\}$ is \emph{open}), and otherwise $\omega_\beta(\{u,v\})=0$. Below, we also view $\omega_\beta$ as a subgraph of $(\mathbb Z^d,\mathcal E)$ of vertex set $\mathbb Z^d$ and edge set $\{\{u,v\}\in \mathcal E: \omega_\beta(\{u,v\})=1\}$. We also define $\{x \connect{A\:} y\}$
 to be the event that $x$ is connected to $y$
by a path which does not pass through any vertex in $A^c$.
By definition of the coupling, if $\beta<\beta'$ then $\omega_{\beta'}$ is a subgraph of $\omega_\beta$, and, for every $x\in \mathbb Z^d$,
		\begin{equation}\label{eq:proof sl perco 1}
			G_\beta(x)-G_{\beta'}(x) = \mathbb{P}[ \{0\connect{\omega_\beta\:}x\}\setminus\{0\connect{\omega_{\beta'}\:} x\}].
		\end{equation}

For $\beta \in [0,(\max_{x\in \mathbb Z^d} J_x)^{-1}]$ and $z\in \mathbb Z^d$, we define $\mathcal C_{\beta}(z)$ to be the cluster of $z$ in $\omega_\beta$. For $\{u,v\}\in \mathcal E$, we define $\omega^{\{u,v\}}_\beta$ to be the percolation configuration obtained from $\omega_\beta$ by setting $\omega_{\beta}(\{u,v\})=0$,
and we define $\mathcal C_\beta^{\{u,v\}}(z)$ to be the cluster of $z$ in $\omega_\beta^{\{u,v\}}$.
For fixed $\beta'<\beta$ and $x\in \mathbb Z^d$, we claim that
the event on the right-hand side of \eqref{eq:proof sl perco 1} satisfies
\begin{multline}
\label{eq:proof perco sl 2}
 \{0\connect{\omega_\beta\:} x\}\setminus\{0\connect{\omega_{\beta'}\:} x\}
\\\subset \bigcup_{\{u,v\}\in \mathcal E} \Big\{u \in \mathcal{C}_{\beta'}(0) \Big\} \cap\{\omega_{\beta'}(\{u,v\})=0, \: \omega_{\beta}(\{u,v\})=1\} \cap \Big\{v\connect{\omega_{\beta}\setminus \mathcal{C}_{\beta'}(0) \:} x \Big\}.
 \end{multline}
The claim is justified as follows.
For every configuration in $\{0\connect{\omega_\beta\:} x\}\setminus\{0\connect{\omega_{\beta'}\:} x\}$, there must exist $u\in \mathcal C_{\beta'}(0)$ and $v \notin \mathcal C_{\beta'}(0)$ such that $\omega_\beta(\{u,v\})=1$ and  $v$ is connected to $x$ in $\omega_{\beta}$ without using the vertices in $\mathcal{C}_{\beta'}(0)$. This can be seen by exploring an open self-avoiding path from $0$ to $x$ in $\omega_\beta$ and marking its edge $\{u,v\}$ for which $u$ is the last vertex of $\mathcal C_{\beta'}(0)$ visited by this path.

Since $\omega_{\beta'}(\{u,v\})=0$, we can replace $\mathcal C_{\beta'}(0)$ by $\mathcal C_{\beta'}^{\{u,v\}}(0)$ in \eqref{eq:proof perco sl 2}. A union bound then gives
\begin{multline}\label{eq:proof perco sl 3}
	G_\beta(x)-G_{\beta'}(x) \\\leq \sum_{\{u,v\}\in \mathcal E} \mathbb P\Big[\Big\{u \in \mathcal{C}_{\beta'}^{\{u,v\}}(0) \Big\} \cap\{\omega_{\beta'}(\{u,v\})=0, \: \omega_{\beta}(\{u,v\})=1\} \cap \Big\{v\connect{\omega_{\beta}\setminus \mathcal{C}_{\beta'}^{\{u,v\}}(0) \:} x \Big\}\Big].
\end{multline}
Given $\{u,v\}\in \mathcal E$, we condition on the cluster
$\mathcal{C}^{^{\{u,v\}}}_{\beta'}(0)$.  This gives
\begin{multline}\label{eq:proof perco sl 4}
    \mathbb P\Big[\Big\{u \in \mathcal{C}_{\beta'}^{\{u,v\}}(0) \Big\} \cap\{\omega_{\beta'}(\{u,v\})=0, \: \omega_{\beta}(\{u,v\})=1\} \cap \Big\{v\connect{\omega_{\beta}\setminus \mathcal{C}_{\beta'}^{\{u,v\}}(0) \:} x \Big\}\Big]  \\= \sum_{C \ni 0,u} \mathbb P[\mathcal{C}^{^{\{u,v\}}}_{\beta'}(0) = C] \mathbb{P}[\omega_{\beta'}(\{u,v\})=0, \: \omega_{\beta}(\{u,v\})=1] \mathbb P [v\connect{\omega_{\beta}\setminus C\:} x ] ,
\end{multline}
because the three events on the right hand side of \eqref{eq:proof perco sl 4} depend on disjoint sets of edges and are therefore independent.
By definition of the coupling, $\mathbb{P}[\omega_{\beta'}(\{u,v\})=0, \: \omega_{\beta}(\{u,v\})=1] = \mathbb P [\eta_{u,v} \in [\beta' J_{v-u} ,\beta J_{v-u}) ]  = \beta-\beta'$,
and by inclusion of events,
 \begin{align}
 \mathbb P [v\connect{\omega_{\beta}\setminus C \:} x ]  &\leq \mathbb P[v\connect{\omega_{\beta}\:}x]=G_{\beta}(x-v),
  \\ \sum_{C \ni 0,u} \mathbb P[\mathcal C^{\{u,v\}}_{\beta'}(0) &= C]=\mathbb P[u\in \mathcal C_{\beta'}^{\{u,v\}}(0)] \leq G_{\beta'}(u).
 \end{align}
The last three observations, combined with \eqref{eq:proof perco sl 3}--\eqref{eq:proof perco sl 4},
complete the proof.
\end{proof}

\begin{figure}
\begin{center}
\includegraphics{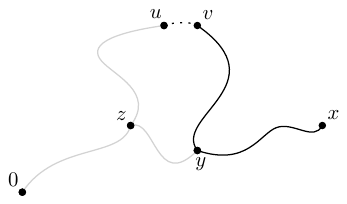}
\end{center}
\caption{Depiction of $(G_\beta*H_\beta*G_\beta)(x)$ for percolation.
Lines represent two-point functions and the gap between $u,v$ represents
a factor $J_{v-u}$.
The vertices $u,v,y,z$ are summed over $\mathbb Z^d$. }
\label{fig:perco H}
\end{figure}

We will prove the lower differential inequality \eqref{eq:Diff inequ assumption} with
 \begin{equation}
 \label{eq:Hperc}
H_\beta:= (G_\beta*J*G_\beta)\cdot G_\beta .
 \end{equation}
By definition, $H_0(x)=J_x\delta_0(x)=0$, and $H_\beta(-x)=H_\beta(x)$ for every $x \in \mathbb Z^d$
and all $\beta\in [0,\beta_c)$.
Since $G_\beta$ decays exponentially for $\beta\in (0,\beta_c)$,
the same is true for $H_\beta$.

\begin{proof}[Proof of \eqref{eq:Diff inequ assumption}]
The lower differential inequality \eqref{eq:Diff inequ assumption}, with
$H_\beta$ given by \eqref{eq:Hperc},
is a minor modification of a differential inequality for the susceptibility
that was first proved in \cite{AN84} before the advent of the BK inequality.
For a more modern proof, see \cite[Proposition~9.11]{Slad06}.
Although we have no new insights for the proof, we provide the argument for the sake of completeness. Let $0\leq \beta<\beta_c$.
By Russo's formula,
\begin{align}
    \partial_\beta  G_\beta(x) &= \sum_{u,v\in \mathbb Z^d}J_{v-u} \mathbb P_{\beta} [\{u,v\} \text{ closed and pivotal for } 0 \connect{} x] \nonumber \\
    &= \sum_{u,v\in \mathbb Z^d}J_{v-u} \mathbb P_{\beta} [ \{0 \connect{} u\}\cap  \{v \connect{} x\}\cap  \{u \connect{} v\}^c]
\end{align}
(to focus on the key ideas, we do not address the subtlety that Russo's formula initially applies only to events depending on
finitely many bonds).
It suffices to prove that
    \begin{multline}
    \label{eq:perclb}
        \mathbb P_{\beta} [ \{0 \connect{} u\}\cap  \{v \connect{} x\}\cap  \{u \connect{} v\}^c] \geq G_\beta(u) G_\beta(x-v)
        \\- \sum_{y,z \in \mathbb Z^d} G_\beta(z)G_\beta(u-z) G_\beta(y-z) G_\beta(y-v) G_\beta(x-y),
    \end{multline}
since this yields
\begin{equation}
    \partial_\beta G_\beta(x)
    \ge
    (G_\beta*(J-H_\beta)* G_\beta)(x) .
\end{equation}
Connections arising in  the proof of \eqref{eq:perclb}
are illustrated in Figure~\ref{fig:perco H}.

Given a set $A$ of vertices, we define $\{x\connect{}y \textup{ using }A\}$ to be the event that $x$ is
connected to $y$ and that every path that realises the connection must contain a vertex
in $A$.

To prove \eqref{eq:perclb},
we condition on the cluster $\mathcal C(u)$ of $u$.
This gives,
    \begin{align}\label{eq:lemperco1}
              \mathbb P_{\beta} [ \{0 \connect{} u\}\cap  \{v \connect{} x\}\cap  \{u \connect{} v\}^c] = \sum_{C\ni 0,u} \mathbb P_{\beta}[ \mathcal C(u)=C]\mathbb P_\beta[v\connect{C^c\:}x],
              \end{align}
where we used the facts that $\mathbb P_\beta[v\connect{C^c\:}x]=0$ if $v\in C$, and that the events $\{\mathcal C(u)=C\}$ and $\{v\connect{C^c\:}x\}$ are independent.
It follows by definition of the events that
\begin{equation}
	\mathbb P_\beta[v\connect{C^c\:}x]=G_\beta(x-v)-\mathbb P[v\connect{}x \textup{ using }C].
\end{equation}
If $\{v\connect{}x \textup{ using }C\}$ occurs, there must exist $y\in C$ such that $\{v\connect{} y\}\circ \{y\connect{} x\}$ occurs.
Therefore, by the BK inequality,
    \begin{align}\label{eq:lemperco2}
        \mathbb P_\beta[v\connect{C^c\:}x] \geq G_\beta(x-v)- \sum_{y \in \mathbb Z^d}\mathds{1}_{y\in C} G_\beta(y-v) G_\beta(x-y).
    \end{align}
    We insert \eqref{eq:lemperco2} into \eqref{eq:lemperco1} and obtain
    \begin{multline}\label{eq:lemperco3}
    	 \mathbb P_{\beta} [ \{0 \connect{} u\}\cap  \{v \connect{} x\}\cap  \{u \connect{} v\}^c]\geq G_\beta(u)G_\beta(x-v)
    	 -\\\sum_{y\in \mathbb Z^d}G_\beta(y-v)G_\beta(x-y)\sum_{C\ni 0,u,y}\mathbb P_\beta[\mathcal C(u)=C].
    \end{multline}
    Finally, we observe that $\sum_{C\ni 0,u,y}\mathbb P_\beta[\mathcal C(u)=C]=\mathbb P_\beta[0,u,y\textup{ lie in the same cluster}]$. If this event occurs, then there must be $z\in \mathbb Z^d$ such that $\{ 0 \connect{} z\}\circ\{ z \connect{} u\} \circ\{ z\connect{}y\}$. Combining this observation with the BK inequality gives
    \begin{equation}\label{eq:lemperco4}
    	\mathbb P_\beta[0,u,y\textup{ lie in the same cluster}]\leq \sum_{z\in \mathbb Z^d}G_\beta(z)G_\beta(u-z)G_\beta(y-z).
    \end{equation}
    The combination of \eqref{eq:lemperco4} and \eqref{eq:lemperco3} completes the proof.
\end{proof}

\subsection{$1$- and $2$-component $|\varphi|^4$ models}
\label{sec:phi4}

\subsubsection{The model}

Let $d\geq 2$. We define the $1$- and $2$-component $|\varphi|^4$ models on $\mathbb Z^d$,
as follows. Let $n\in \{1,2\}$ and let $x\cdot y$ denote the dot product of $x,y\in\mathbb R^n$. We
denote the Euclidean norm on $\mathbb R^n$ by $|\cdot|_2$; context will distinguish this from the norm on $\mathbb Z^d$ which is denoted in the same way. Let $\Lambda\subset \mathbb Z^d$ be finite, $\beta\geq 0$, $J$ an admissible interaction, and $F:(\mathbb R^n)^\Lambda\rightarrow \mathbb R$. The $n$-component $\varphi^4$ model on $\Lambda$ is the measure $\langle \cdot\rangle_{\Lambda,\beta}$ on $(\mathbb R^n)^\Lambda$ given by:
\begin{equation}\label{eq:def phi4}
	\langle F(\varphi)\rangle_{\Lambda,\beta}:=\frac{1}{Z_{\Lambda,\beta}}\int_{(\mathbb R^n)^\Lambda}F(\varphi)\exp(-\beta H_{\Lambda}(\varphi))\mathrm{d}s_{\Lambda}(\varphi),
\end{equation}
where
\begin{equation}
	H_{\Lambda}(\varphi):= - \frac 12 \sum_{x,y\in \Lambda}
J_{y-x}(\varphi_x\cdot \varphi_y), \qquad Z_{\Lambda,\beta}:=\int_{(\mathbb R^n)^\Lambda}\exp(-\beta H_{\Lambda}(\varphi))\mathrm{d}s_{\Lambda}(\varphi),
\end{equation}
and for $\lambda>0$ and $\mu\in \mathbb R$,
\begin{equation}
	\mathrm{d}s_{\Lambda}(\varphi):=
    \prod_{x\in \Lambda}g(|\varphi_x|^2_2)\mathrm{d}\varphi_x
    \qquad\text{ with }
    \qquad g(t):=\exp\left(-  \frac 14   \lambda t^2-  \frac 12  \mu t\right).
\end{equation}
Additionally, we denote by $\mathrm{d}m_{\lambda,\mu}(\varphi_x)$ the probability distribution on $\mathbb R^n$ with density proportional to $g(|\varphi_x|_2^2)\mathrm{d}\varphi_x$.
It is a classical consequence of Griffiths' \cite{Grif67} (for $n=1$) or Ginibre's \cite{Gini70} (for $n=2$) inequalities that the sequence of measures $\langle \cdot\rangle_{\Lambda,\beta}$ admits a weak limit as $\Lambda\nearrow \mathbb Z^d$. We denote the limiting measure by $\langle \cdot\rangle_{\beta}$.

The \emph{two-point function} is defined for $\beta\geq 0$ and $x\in \mathbb Z^d$ by
\begin{equation}
	G_\beta(x):=\langle \varphi_0^1\varphi_x^1\rangle_{\beta},
\end{equation}
where $\varphi^1=\varphi$ for $n=1$ and $\varphi=(\varphi^1,\varphi^2)$ for $n=2$.
By definition, $G_0=A\delta_0$ with
\begin{equation}
\label{eq:Aphi4}
	A=A(\lambda,\mu)=\int_{\varphi_0\in \mathbb R^n}(\varphi_0^1)^2\mathrm{d}m_{\lambda,\mu}(\varphi_0). \end{equation}
The fact that $A$ is not necessarily equal to $1$ violates Definition~\ref{Def:G}$(i)$ but this is not a problem, as explained in
Remark~\ref{Rem:A}.
As usual, we set $F_\beta=J*G_\beta$.

\subsubsection{The Brydges--Fr\"ohlich--Spencer random walk representation}

To prepare for the verification of the finite-difference upper
bound \eqref{eq:SL assumption} of Assumption~\ref{Ass:G}, we recall
the Brydges--Fr\"ohlich--Spencer (BFS) random walk expansion \cite{BFS82} (see also \cite{Lamm23} for a recent alternative perspective).
To state the expansion, we introduce the following definitions.

First, given $\mathbf{t}=(t_x)_{x\in \Lambda}\in \mathbb R_+^\Lambda$,
we define a measure on $(\mathbb R^n)^\Lambda$ and its associated
partition function by
\begin{align}
    \mathrm{d}s_{\Lambda,\mathbf{t}}(\varphi) &:= \prod_{x\in \Lambda}  g(|\varphi_x|^2_2+2t_x)\mathrm{d}\varphi_x ,\\
    Z_{\Lambda,\beta}(\mathbf{t}) &:= \int_{(\mathbb R^n)^\Lambda}\exp (-\beta H_{\Lambda}(\varphi) ) \mathrm{d}s_{\Lambda,\mathbf{t}}(\varphi).
\end{align}
We also define a normalised version of $Z_{\Lambda,\beta,J}(\mathbf{t})$ by
\begin{align}
    z_{\Lambda,\beta}(\mathbf{t}) &:= \frac{Z_{\Lambda,\beta}(\mathbf{t})}{Z_{\Lambda,\beta}}.
\end{align}
As in Section~\ref{sec:SAWmodel}, a
\emph{walk} $\gamma=(\gamma(0),\ldots,\gamma(|\gamma|))$ is a sequence of points in $\mathbb Z^d$ satisfying
$J_{\gamma(i+1)-\gamma(i)}>0$.
The \emph{local time} of $\gamma$ at $v\in \Lambda$ is defined by
$\ell_v(\gamma) = \sum_{i = 0}^{|\gamma|}\1_{\gamma(i) = v}$.
As in \eqref{eq:dnu-CTWSAW}, we define a measure on $(\mathbb R^+)^\Lambda$ by
\begin{align}
    \mathrm{d}\nu_{\Lambda,\gamma}(\mathbf{t}) &:= \prod_{v \in \Lambda}\left(\mathrm{1}_{\ell_v(\gamma)=0}\delta_0(t_v)\mathrm{d}t_v+ \mathrm{1}_{\ell_v(\gamma)\geq 1} \frac{t_v^{\ell_v(\gamma)-1}}{(\ell_v(\gamma)-1)!}\mathrm{d}t_v\right).
\end{align}

\begin{Prop}[The BFS expansion \cite{BFS82}]
\label{prop:BFS}
Let $\Lambda \subset \mathbb Z^d$ be finite and $\beta\geq 0$. Then
  \begin{equation}
    \langle \varphi_x^1 \varphi_y^1 \rangle_{\Lambda,\beta} = \sum_{\gamma: x \rightarrow y} (\beta J)^{\gamma} \int z_{\Lambda,\beta}(\mathbf{t}) \mathrm{d}\nu_{\Lambda,\gamma}(\mathbf{t}),
\end{equation}
where the sum runs overs paths $\gamma=(\gamma_1,\ldots,\gamma_{|\gamma|})$, and, as
usual, $(\beta J)^\gamma=\prod_{i=1}^{|\gamma|-1}\beta J_{\gamma_{i+1}-\gamma_i}$.
\end{Prop}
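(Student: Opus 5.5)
The plan is to prove the identity by iterated integration by parts at the current endpoint of the walk, following Brydges, Fr\"ohlich and Spencer \cite{BFS82}. This produces the walk step by step, and the local-time measure $\mathrm{d}\nu_{\Lambda,\gamma}$ comes out of the nested integrals over the shift variables.

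\emph{Step 1 (single-site identity).} Fix $t\ge 0$ and a site $w$. Since $g$ decays rapidly,
\begin{equation*}
\partial_{\varphi_w^1}\int_t^\infty g(|\varphi_w|_2^2+2s)\,\mathrm{d}s=\int_t^\infty \partial_s g(|\varphi_w|_2^2+2s)\,\mathrm{d}s=-g(|\varphi_w|_2^2+2t).
\end{equation*}
The left-hand side is really $\int_t^\infty 2\varphi_w^1 g'(\cdot)\,\mathrm{d}s$, so the actual identity is $\varphi_w^1\, g(|\varphi_w|_2^2+2t)=-\partial_{\varphi_w^1}\int_t^\infty g(|\varphi_w|_2^2+2s)\,\mathrm{d}s$. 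Take the unnormalised integral $\int \varphi_w^1\varphi_y^1 e^{-\beta H_\Lambda}\,\mathrm{d}s_{\Lambda,\mathbf t}$ and integrate by parts in $\varphi_w^1$. The derivative falls either on $\varphi_y^1$, giving $\delta_{wy}$, or on $e^{-\beta H_\Lambda}$, giving $\beta\sum_z J_{z-w}\varphi_z^1$. In both cases the weight at $w$ is replaced by $\int_{t_w}^\infty g(|\varphi_w|_2^2+2s_w)\,\mathrm{d}s_w$. For $n=2$, only first components appear, because $\partial_{\varphi_w^1}(\varphi_w\cdot\varphi_z)=\varphi_z^1$. Dividing by $Z_{\Lambda,\beta}$ gives
\begin{equation*}
\frac{1}{Z_{\Lambda,\beta}}\int \varphi_w^1\varphi_y^1 e^{-\beta H_\Lambda}\,\mathrm{d}s_{\Lambda,\mathbf t}=\int_{t_w}^\infty \Big(\delta_{wy}\, z_{\Lambda,\beta}(\mathbf t^{(w,s)})+\beta\sum_z J_{z-w}\,\frac{1}{Z_{\Lambda,\beta}}\int\varphi_z^1\varphi_y^1 e^{-\beta H_\Lambda}\,\mathrm{d}s_{\Lambda,\mathbf t^{(w,s)}}\Big)\mathrm{d}s,
\end{equation*}
where $\mathbf t^{(w,s)}$ denotes $\mathbf t$ with $t_w$ replaced by $s$.

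\emph{Step 2 (iteration).} I will start from $w=x$ and $\mathbf t=0$, and apply Step 1 $N$ times. After re-parametrising each integral $\int_{t_w}^\infty \mathrm{d}s$ as an increment of the local time at $w$, the terms ending with a $\delta$ are indexed by walks $\gamma:x\to y$ with $|\gamma|<N$. Each such term carries weight $(\beta J)^\gamma\int z_{\Lambda,\beta}\,\mathrm{d}\nu_{\Lambda,\gamma}$. Here a site visited $\ell$ times contributes an $\ell$-fold ordered integral of increments, which has total time $t_v$ and volume $t_v^{\ell-1}/(\ell-1)!$ in the marginal. This is precisely $\mathrm{d}\nu_{\Lambda,\gamma}$. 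The remaining term is a remainder $R_N$: a sum over $N$-step walks from $x$ of $(\beta J)^\gamma$ times a shifted correlation $\langle\varphi^1_{\gamma(N)}\varphi^1_y\rangle$ integrated against the same local-time measure.

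\emph{Step 3 (remainder vanishes).} This is the main obstacle. My first attempt will be a crude bound, with $|\Lambda|$ finite. Bound $|\varphi_w^1\varphi_y^1|e^{-\beta H_\Lambda}$ using $|H_\Lambda|\le C\sum_v|\varphi_v|_2^2$. Then use $g(u+2t)\le C e^{-\lambda t^2/2}e^{-c u^2}$, absorbing the $\mu$ term with the quartic. Thus each site with total local time $t_v$ contributes a factor $e^{-\lambda t_v^2/2}$, uniformly enough in $\varphi$ to be integrable. Integrating $t^{\ell-1}/(\ell-1)!\, e^{-\lambda t^2/2}$ gives $O(C^\ell/\Gamma(\ell/2))$, a superexponential decay in the number of visits. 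Since $\sum_v \ell_v=N+1$, the product over sites is at most $C^N/\Gamma(N/(2|\Lambda|))$, up to constants depending on $\Lambda$. The number of $N$-step walks is at most $|\Lambda|^N$. Hence $|R_N|\le (C\beta|\Lambda|)^N/\Gamma(N/(2|\Lambda|))\to 0$. The same estimate shows the walk series converges absolutely, and letting $N\to\infty$ gives the proposition. The delicate point is making the Gaussian-in-$t$ bound on $g$ uniform while the coupling term $\beta H_\Lambda$ is still present. If that causes trouble, I would first treat $\beta$ small, where the bound is clean, and then extend by analyticity in $\beta$ of both sides in finite volume.
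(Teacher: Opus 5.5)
Your proposal is correct, and it is essentially the standard Brydges--Fr\"ohlich--Spencer derivation by repeated integration by parts, using $\varphi_w^1 g(|\varphi_w|_2^2+2t)=-\partial_{\varphi_w^1}\int_t^\infty g(|\varphi_w|_2^2+2s)\,\mathrm{d}s$; the paper gives no proof of its own and relies on this argument through its citation of \cite{BFS82}. The point you flag as delicate in Step~3 is harmless: since $|H_\Lambda(\varphi)|\le\frac12\sum_v|\varphi_v|_2^2$ and, pointwise for $u,t\ge 0$, $g(u+2t)\le e^{\mu^2/\lambda}e^{-\lambda u^2/8}e^{-\lambda t^2/2}$, the remainder's integrand is bounded by $C_{\Lambda,\beta}\,e^{-\frac{\lambda}{2}\sum_v t_v^2}$ for every $\beta$, so no small-$\beta$ and analyticity detour is needed.
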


The BFS expansion matches the formula \eqref{eq:2pt CTWSAW} for the two-point function of the continuous-time weakly self-avoiding walk if we define the weight
\begin{equation}
	\rho(\gamma)=\int z_{\Lambda,\beta}(\mathbf{t}) \mathrm{d}\nu_{\Lambda,\gamma}(\mathbf{t}).
\end{equation}
However, the fact that this $\rho$ depends on $\beta$ makes the arguments of Section~\ref{sec:SAW} inapplicable, and we must proceed more delicately.

Our verification of Assumption~\ref{Ass:G} combines the BFS expansion
with the following two  monotonicity properties.
They use the measure $\langle \cdot\rangle_{\Lambda,\beta,\mathbf{t}}$ on $(\mathbb R^n)^\Lambda$ defined by replacing $\mathrm{d}s_{\Lambda}$ by $\mathrm{d}s_{\Lambda,\mathbf{t}}$ in \eqref{eq:def phi4}.

\begin{Lem}\label{lem:phi4 monot BFS}  Let $\Lambda\subset \mathbb Z^d$
be finite and $0\leq \beta'\leq \beta$. Then, for every $\mathbf{t}\in (\mathbb R^+)^\Lambda$,
\begin{equation}
	z_{\Lambda,\beta}(\mathbf{t})\leq z_{\Lambda,\beta'}(\mathbf{t}).
\end{equation}
\end{Lem}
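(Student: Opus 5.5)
The claim $z_{\Lambda,\beta}(\mathbf t)\le z_{\Lambda,\beta'}(\mathbf t)$ is equivalent to saying that $\beta\mapsto \log z_{\Lambda,\beta}(\mathbf t)=\log Z_{\Lambda,\beta}(\mathbf t)-\log Z_{\Lambda,\beta}$ is non-increasing. I will differentiate in $\beta$. Both partition functions are finite integrals with Gaussian-quartic integrands, so differentiation under the integral sign is justified. This gives
\begin{equation}
\partial_\beta \log z_{\Lambda,\beta}(\mathbf t)=\frac12\sum_{x,y\in\Lambda}J_{y-x}\Big(\langle \varphi_x\cdot\varphi_y\rangle_{\Lambda,\beta,\mathbf t}-\langle \varphi_x\cdot\varphi_y\rangle_{\Lambda,\beta}\Big).
\end{equation}
Since $J\ge 0$, it suffices to prove that, for every pair $x,y$,
\begin{equation}
\langle \varphi_x\cdot\varphi_y\rangle_{\Lambda,\beta,\mathbf t}\le\langle \varphi_x\cdot\varphi_y\rangle_{\Lambda,\beta}.
\end{equation}
Integrating this derivative bound over $[\beta',\beta]$ then gives the lemma.

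To compare the two correlations, I interpolate in the parameter $\mathbf t$. Set $\mathbf t(s)=s\mathbf t$ for $s\in[0,1]$. The measure $\langle\cdot\rangle_{\Lambda,\beta,s\mathbf t}$ has single-site weight
\begin{equation}
g(|\varphi_v|_2^2+2st_v)=\exp\Big(-\tfrac14\lambda|\varphi_v|_2^4-\big(\tfrac12\mu+\lambda st_v\big)|\varphi_v|_2^2+\text{const}\Big).
\end{equation}
So the effect of $\mathbf t$ is only to shift $\mu$ at each site to $\mu+2\lambda st_v$. Differentiating in $s$ gives
\begin{equation}
\partial_s\langle\varphi_x\cdot\varphi_y\rangle_{\Lambda,\beta,s\mathbf t}=-\lambda\sum_v t_v\Big(\langle (\varphi_x\cdot\varphi_y)\,|\varphi_v|_2^2\rangle-\langle\varphi_x\cdot\varphi_y\rangle\langle|\varphi_v|_2^2\rangle\Big).
\end{equation}
The lemma therefore reduces to the correlation inequality
\begin{equation}
\langle \varphi_x^i\varphi_y^i;\,(\varphi_v^j)^2\rangle\ge 0,
\end{equation}
for the ferromagnetic $|\varphi|^4$ measure with arbitrary site-dependent $\mu_v$. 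This is the second Griffiths inequality for $n=1$ and Ginibre's inequality for $n=2$. Both are already invoked in the paper for the infinite-volume limit, and the single-spin measures $g(|\varphi|^2)\,d\varphi$ with site-dependent quadratic terms fall within their scope, since they are even, rotation invariant and of Ginibre type. For $n=2$ I will expand $|\varphi_v|_2^2=(\varphi_v^1)^2+(\varphi_v^2)^2$ and $\varphi_x\cdot\varphi_y$ into components, and use Ginibre's inequality $\langle AB\rangle\ge\langle A\rangle\langle B\rangle$ for $A,B$ in the cone generated by products of $\varphi^1_u\varphi^1_w\pm\varphi^2_u\varphi^2_w$-type terms. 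The relevant observations are $(\varphi^1)^2+(\varphi^2)^2$ and $\varphi_x^1\varphi_y^1+\varphi_x^2\varphi_y^2$, and both belong to this cone.

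The main obstacle is confirming that the $n=2$ observables really sit in Ginibre's positive cone and that the measure with site-dependent $\mu_v$ satisfies his hypotheses. If this becomes awkward, I would instead cite the standard fact that $\langle\varphi_x\cdot\varphi_y\rangle$ is non-increasing in each $\mu_v$, which is exactly what the above derivative expresses.
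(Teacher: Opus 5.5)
Your proof is correct, but it is organised differently from the paper's.

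\textbf{The paper's route.} It uses the factorisation
$\prod_x g(|\varphi_x|_2^2)=f(\mathbf t)F_{\mathbf t}(\varphi)\prod_x g(|\varphi_x|_2^2+2t_x)$, with $F_{\mathbf t}(\varphi)=\exp(\lambda\sum_x t_x|\varphi_x|_2^2)$. This gives the exact identity $z_{\Lambda,\beta}(\mathbf t)=\bigl(f(\mathbf t)\langle F_{\mathbf t}\rangle_{\Lambda,\beta,\mathbf t}\bigr)^{-1}$. One application of Griffiths/Ginibre then finishes: $\langle F_{\mathbf t}\rangle_{\Lambda,\beta,\mathbf t}$ is non-decreasing in $\beta$. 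No differentiation of $\log z$ and no interpolation in $\mathbf t$ are needed.

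\textbf{Your route.} Differentiating in $\beta$ exhibits the lemma as the integrated form of the comparison $\langle\varphi_x\cdot\varphi_y\rangle_{\Lambda,\beta,\mathbf t}\le\langle\varphi_x\cdot\varphi_y\rangle_{\Lambda,\beta}$.
\begin{itemize}
\item That comparison is exactly Lemma~\ref{lem:BFS time decreases 2pt}. Both measures are $O(n)$-invariant, so $\langle\varphi_x\cdot\varphi_y\rangle=n\langle\varphi^1_x\varphi^1_y\rangle$. The paper proves that lemma without using the present one, so you could cite it with no circularity.
\item Your own proof of the comparison interpolates $s\mapsto s\mathbf t$ and uses $\langle\varphi_x\cdot\varphi_y;|\varphi_v|_2^2\rangle\ge 0$. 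This needs the correlation inequality only for the quadratic observable $|\varphi_v|_2^2$, not for the exponential $F_{\mathbf t}$. The paper instead writes $\langle A\rangle_{\Lambda,\beta}=\langle AF_{\mathbf t}\rangle_{\Lambda,\beta,\mathbf t}/\langle F_{\mathbf t}\rangle_{\Lambda,\beta,\mathbf t}$.
\end{itemize}

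\textbf{Relation between the two.} They are formally equivalent: $\partial_\beta\log\langle F_{\mathbf t}\rangle_{\Lambda,\beta,\mathbf t}$ is exactly minus your expression for $\partial_\beta\log z_{\Lambda,\beta}(\mathbf t)$.

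\textbf{Your $n=2$ concern.} This is no worse than the paper's own appeal to Ginibre. $F_{\mathbf t}$ is a positive power series in the $|\varphi_v|_2^2$, so the paper also needs $|\varphi_v|_2^2$ and $\varphi_x\cdot\varphi_y$ in Ginibre's cone for rotation-invariant single-site measures with site-dependent quadratic terms.

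The paper's route is shorter. Yours makes explicit the logical link between the two BFS monotonicity lemmas.
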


\begin{proof}
We first observe that
\begin{equation}
    \prod_{x\in \Lambda}g(|\varphi_x|^2_2)
    = f(\mathbf{t}) F_{\mathbf{t}}(\varphi)
    \prod_{x\in \Lambda}g(|\varphi_x|^2_2 + 2\mathbf{t}_x),
\end{equation}
with
\begin{equation}\label{eq:defF BFS}
	F_{\mathbf{t}}(\varphi):=\exp\Big(\sum_{x\in \Lambda} \lambda t_x|\varphi_x|^2_2\Big),
    \qquad
    f(\mathbf{t})=\exp\Big(\sum_{x\in \Lambda}\lambda t_x^2+\mu t_x\Big).
\end{equation}
It follows that
\begin{equation}
\label{eq:ZFt}
	Z_{\Lambda,\beta}
    =f(\mathbf{t})
    \int_{(\mathbb R^n)^\Lambda}F_{\mathbf{t}}(\varphi)
    \exp (-\beta H_{\Lambda}(\varphi) ) \mathrm{d}s_{\Lambda,\mathbf{t}}
    (\varphi),
\end{equation}
and therefore
\begin{equation}
    z_{\Lambda,\beta}(\mathbf{t})
    =\frac{Z_{\Lambda,\beta}(\mathbf{t})}{Z_{\Lambda,\beta}}
    =
    \frac{1}{f(\mathbf{t})}
    \frac{1}{\langle F_{\mathbf{t}}(\varphi)\rangle_{\Lambda,\beta,\mathbf{t}}}.
\end{equation}
By Griffiths' (for $n=1$) or Ginibre's (for $n=2$) inequality,
\begin{equation}
	\langle F_{\mathbf{t}}(\varphi)\rangle_{\Lambda,\beta,\mathbf{t}}
    \geq \langle F_{\mathbf{t}}(\varphi)\rangle_{\Lambda,\beta',\mathbf{t}}.
\end{equation}
This implies the desired inequality.
\end{proof}

\begin{Lem}
\label{lem:BFS time decreases 2pt}
Let $\Lambda\subset \mathbb Z^d$ be finite and $\beta\geq 0$. Then, for every $\mathbf{t}\in (\mathbb R^+)^\Lambda$ and every $x,y\in \Lambda$,
\begin{equation}
	\langle\varphi_x^1\varphi_y^1\rangle_{\Lambda,\beta,\mathbf{t}}\leq \langle\varphi_x^1\varphi_y^1\rangle_{\Lambda,\beta}.
\end{equation}
\end{Lem}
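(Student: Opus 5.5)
The plan is to express the $\mathbf t$-dependence of the measure as a reweighting by a product of functions that are decreasing in the single-site variables $|\varphi_z|_2^2$, and then apply Griffiths' inequality ($n=1$) or Ginibre's inequality ($n=2$). We use the factorisation from the proof of Lemma~\ref{lem:phi4 monot BFS}. Since
\begin{equation}
\mathrm{d}s_{\Lambda,\mathbf t}(\varphi)=\frac{1}{f(\mathbf t)F_{\mathbf t}(\varphi)}\,\mathrm{d}s_\Lambda(\varphi),
\end{equation}
we obtain
\begin{equation}
\langle \varphi_x^1\varphi_y^1\rangle_{\Lambda,\beta,\mathbf t}=\frac{\langle \varphi_x^1\varphi_y^1\, F_{\mathbf t}^{-1}\rangle_{\Lambda,\beta}}{\langle F_{\mathbf t}^{-1}\rangle_{\Lambda,\beta}},
\qquad F_{\mathbf t}(\varphi)^{-1}=\prod_{z\in\Lambda}\exp\big(-\lambda t_z|\varphi_z|_2^2\big).
\end{equation}
The claim is therefore equivalent to the negative-correlation inequality
\begin{equation}
\langle \varphi_x^1\varphi_y^1\,F_{\mathbf t}^{-1}\rangle_{\Lambda,\beta}\le\langle\varphi_x^1\varphi_y^1\rangle_{\Lambda,\beta}\,\langle F_{\mathbf t}^{-1}\rangle_{\Lambda,\beta}.
\end{equation}

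To prove it, we interpolate. Set $\mathbf t_s=s\mathbf t$ for $s\in[0,1]$ and $h(s)=\langle\varphi_x^1\varphi_y^1\rangle_{\Lambda,\beta,s\mathbf t}$. Up to normalisation, the single-site weight at $z$ under $\langle\cdot\rangle_{\Lambda,\beta,s\mathbf t}$ is $g(|\varphi_z|_2^2+2st_z)$. Differentiating in $s$ gives
\begin{equation}
h'(s)=-\sum_{z}\lambda t_z\Big(\langle\varphi_x^1\varphi_y^1|\varphi_z|_2^2\rangle_{s}-\langle\varphi_x^1\varphi_y^1\rangle_{s}\langle|\varphi_z|_2^2\rangle_{s}\Big).
\end{equation}
We use the parameter $s$, rather than perturbing directly in $\mathbf t$, so that at every $s$ the measure is again a ferromagnetic measure with an even single-site density of the allowed form. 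Its density is $\exp(-\tfrac14\lambda u^2-\tfrac12\mu' u)$ with $\mu'=\mu+2\lambda st_z$ and $u=|\varphi_z|_2^2$. For such measures the Griffiths--Ginibre inequality gives $\langle AB\rangle\ge\langle A\rangle\langle B\rangle$ for $A=\varphi_x^1\varphi_y^1$ and $B=|\varphi_z|_2^2$. For $n=1$, $B=(\varphi_z)^2$ is a product of spin monomials, so Griffiths' second inequality applies directly. For $n=2$, write $|\varphi_z|_2^2=(\varphi_z^1)^2+(\varphi_z^2)^2$; each term is a Ginibre-positive function, as is $\varphi_x^1\varphi_y^1$. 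The density $\exp(-\tfrac14\lambda u^2-\tfrac12\mu' u)$ is rotation invariant, so Ginibre's inequality applies.

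Each bracket in $h'(s)$ is therefore nonnegative, so $h'(s)\le0$. Integrating over $s\in[0,1]$ gives $h(1)\le h(0)$, which is the claim.

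The step we expect to be the main obstacle is justifying the hypotheses of Ginibre's inequality in the $2$-component case. Specifically, we must check that $|\varphi_z|_2^2$ together with $\varphi_x^1\varphi_y^1$ fall within the cone of positive-type functions (products of $\cos$ of angle sums times radial factors, in Ginibre's formulation) for the rotation-invariant single-site measure. We must also check that differentiation under the integral is legitimate; this follows from the $\exp(-\lambda|\varphi|^4)$ decay, since $\Lambda$ is finite.
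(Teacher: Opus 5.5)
Your overall scheme works, and most of it is correct: the identity $\mathrm{d}s_{\Lambda,\mathbf t}=\mathrm{d}s_\Lambda/(f(\mathbf t)F_{\mathbf t})$, the formula for $h'(s)$, the observation that each interpolating measure is again a $|\varphi|^4$ measure with $\mu$ replaced by $\mu+2\lambda s t_z$, and the $n=1$ case via Griffiths' second inequality.

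The problem is your justification for $n=2$. The function $(\varphi_z^2)^2$ is \emph{not} Ginibre-positive alongside $\varphi_x^1\varphi_y^1$; in fact different components are negatively correlated. Already at $\beta=0$ with $x=y=z$ and $u=|\varphi_z|_2^2$, rotation invariance gives
\[
\mathrm{Cov}\big((\varphi_z^1)^2,(\varphi_z^2)^2\big)=\tfrac18\mathbb E[u^2]-\tfrac14(\mathbb E u)^2=\tfrac18\big(\mathrm{Var}(u)-(\mathbb E u)^2\big).
\]
This is negative for $\lambda>0$; for example, it tends to $-\tfrac18$ in the XY limit $\mu=-\lambda$, $\lambda\to\infty$. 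So the two pieces of $|\varphi_z|_2^2=(\varphi_z^1)^2+(\varphi_z^2)^2$ cannot be handled separately, and the sentence ``each term is a Ginibre-positive function'' is false.

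What you need is that $|\varphi_z|_2^2=r_z^2$ is positive-type \emph{as a single radial function}. Write $\varphi_i=r_i(\cos\theta_i,\sin\theta_i)$ in polar coordinates. The cone spanned by $\prod_i r_i^{k_i}\cos(\sum_i n_i\theta_i)$ satisfies Ginibre's duplicate-variable condition for every rotation-invariant single-site measure. The reason is as follows.
\begin{itemize}
\item Using $ab\pm a'b'=\frac12[(a\pm a')(b+b')+(a\mp a')(b-b')]$, the duplicated integral becomes a positive combination of products of a radial integral and an angular integral.
\item In the radial integral, each factor $r^k\pm r'^k$ equals $(r-r')^{0\text{ or }1}$ times a nonnegative symmetric function. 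Odd powers of $r-r'$ integrate to zero under the swap $r\leftrightarrow r'$, so the radial integral is nonnegative.
\item The angular integral is the usual plane-rotator one, which is nonnegative.
\end{itemize}
The coupling $\varphi_u\cdot\varphi_v=r_ur_v\cos(\theta_u-\theta_v)$, the observable $\varphi_x^1\varphi_y^1=\frac12r_xr_y[\cos(\theta_x-\theta_y)+\cos(\theta_x+\theta_y)]$, and $r_z^2$ all lie in this cone. By contrast, $(\varphi_z^2)^2=\frac12r_z^2(1-\cos2\theta_z)$ does not. Your last paragraph points in this direction, but the argument itself must use $|\varphi_z|_2^2$ unsplit.

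With this repair your proof is correct, but it takes a longer route than the paper's. The paper uses the $\mathbf t$-measure as the reference measure:
\[
\langle\varphi_x^1\varphi_y^1\rangle_{\Lambda,\beta}=\frac{\langle\varphi_x^1\varphi_y^1F_{\mathbf t}\rangle_{\Lambda,\beta,\mathbf t}}{\langle F_{\mathbf t}\rangle_{\Lambda,\beta,\mathbf t}}.
\]
Since $F_{\mathbf t}=\exp(\sum_z\lambda t_z|\varphi_z|_2^2)$ is a positive-coefficient series in the $|\varphi_z|_2^2$, one application of Griffiths/Ginibre in the $\mathbf t$-measure gives a lower bound $\langle\varphi_x^1\varphi_y^1\rangle_{\Lambda,\beta,\mathbf t}$. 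You reweighted in the opposite direction, which produces $F_{\mathbf t}^{-1}$, and that has the wrong sign for the second Griffiths inequality. Your interpolation in $s$ is the differential version of the paper's one-step argument. Both routes rest on the same input, namely positivity of $|\varphi_z|_2^2$ in a $|\varphi|^4$ measure with shifted $\mu$; you state this explicitly, whereas the paper leaves it implicit.
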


\begin{proof}
As in \eqref{eq:ZFt}, with $F_{\mathbf{t}}$ defined by \eqref{eq:defF BFS},
\begin{equation}
	\langle \varphi_x^1\varphi_y^1\rangle_{\Lambda,\beta}
    =\frac{\langle\varphi_x^1\varphi_y^1 F_{\mathbf{t}}(\varphi)\rangle_{\Lambda,\beta,\mathbf{t}}}
    {\langle F_{\mathbf{t}}(\varphi)\rangle_{\Lambda,\beta,\mathbf{t}}}.
\end{equation}
Again, by Griffiths' (for $n=1$) or Ginibre's (for $n=2$) inequality,
\begin{equation}
	\frac{\langle\varphi_x^1\varphi_y^1
    F_{\mathbf{t}}(\varphi)\rangle_{\Lambda,\beta,\mathbf{t}}}
    {\langle F_{\mathbf{t}}(\varphi)\rangle_{\Lambda,\beta,\mathbf{t}}}
    \geq \langle\varphi_x^1\varphi_y^1\rangle_{\Lambda,\beta,\mathbf{t}}.
\end{equation}
This completes the proof.
\end{proof}

\subsubsection{Verification of Definition~\ref{Def:G}}

Let $d\geq 2$.
Although $G$ does not satisfy Definition~\ref{Def:G}$(i)$, this is  harmless, as explained in Remark~\ref{Rem:A}.
The remaining properties listed in Definition~\ref{Def:G} are classical facts
about the $1$- and $2$-component $|\varphi|^4$ model.
The monotonicity in $(ii)$ is a consequence of Griffiths' or Ginibre's inequality, and the differentiability is proved in \cite{Lebo72} for the Ising model and follows more generally for $1$- and $2$-component models using the extension of the Lebowitz inequality in \cite{BFS82}.
 Property~$(iii)$ is inherited from $J$. Property $(iv)$ follows by defining
\begin{equation}
\beta_c:=\inf\Big\{\beta \geq 0: \chi(\beta)=\sum_{x\in \mathbb Z^d}G_\beta(x)=\infty\Big\},
\end{equation}
and using the Simon--Lieb \cite{Simo80,Lieb80} (for $n=1$) or Rivasseau \cite{Riva80} (for $n=2$) inequality as in \cite[Section~2.5]{DT16}. From \cite{DT16}, we additionally obtain that $\{\beta \geq 0: \chi(\beta)=\infty\}$ is a closed set, which forces
\begin{equation}\label{eq:chi infinite for phi4}
\chi(\beta_c)=\infty.
\end{equation}
Finally, $(v)$ holds as a consequence of the aforementioned correlation inequalities, which imply left-continuity\footnote{Here we use
the fact that $G_\beta(x)$ is the increasing limit of the increasing continuous functions $\beta\mapsto \langle \varphi_0^1\varphi_x^1\rangle_{\Lambda,\beta,J}$ as $\Lambda$ approximates $\mathbb Z^d$.} of the map $\beta\mapsto G_\beta(x)$ on $[0,\beta_c]$ for every $x\in \mathbb Z^d$.

\subsubsection{Verification of Assumption~\ref{Ass:G}}

The verification of Assumption~\ref{Ass:G} proceeds by taking the infinite-volume limit of finite-volume versions of the two inequalities.  We first state two lemmas with the finite-volume inequalities, then verify Assumption~\ref{Ass:G}, and finally prove the two finite-volume lemmas.

For $\Lambda\subset \mathbb Z^d$ and $x,y\in \mathbb Z^d$, we write $G_{\Lambda,\beta}(x,y):=\langle \varphi_x\varphi_y\rangle_{\Lambda,\beta}$.
By definition, $G_{\Lambda,\beta}(x,y)=0$ if $x\notin\Lambda$ or $y\notin\Lambda$. We also define
\begin{equation}
F_{\Lambda,\beta}(x,y)=\Big(\sum_{z\in \mathbb Z^d}G_{\Lambda,\beta}(x,z)J_{y-z}\Big)\vee \Big(\sum_{z\in \mathbb Z^d}G_{\Lambda,\beta}(y,z)J_{x-z}\Big),
\end{equation}
so that $F_{\Lambda,\beta}(x,y)=F_{\Lambda,\beta}(y,x)$. When $\Lambda = \mathbb{Z}^d$,
we have $F_{\Lambda} = F$.

The following lemma is a finite-volume version of \eqref{eq:SL assumption}.

\begin{Lem}\label{lem:finite volume finite diff} Let $\Lambda\subset \mathbb Z^d$ be  finite. For every $0\leq \beta'\leq \beta$ and every $x\in \mathbb Z^d$, we have
\begin{equation}\label{eq:phi4G finite}
	G_{\Lambda,\beta}(0,x)
    \leq G_{\Lambda,\beta'}(0,x)
    +(\beta-\beta')\sum_{u,v\in \mathbb Z^d}G_{\Lambda,\beta}(0,u)J_{v-u}G_{\Lambda,\beta}(v,x).
    \end{equation}
\end{Lem}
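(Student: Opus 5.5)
The plan is to run the proof of \eqref{eq:SL assumption} for the self-avoiding walk (Section~\ref{sec:SAW-AssG}), with the BFS expansion of Proposition~\ref{prop:BFS} in place of the path expansion. The new feature is that the weight $\rho_\beta(\gamma):=\int z_{\Lambda,\beta}(\mathbf t)\,\mathrm d\nu_{\Lambda,\gamma}(\mathbf t)$ now depends on $\beta$. With this notation,
\begin{equation*}
G_{\Lambda,\beta}(0,x)-G_{\Lambda,\beta'}(0,x)=\sum_{\gamma:0\to x}\Big[(\beta J)^\gamma\rho_\beta(\gamma)-(\beta' J)^\gamma\rho_{\beta'}(\gamma)\Big].
\end{equation*}

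The first step removes the $\beta$-dependence of the weight. Lemma~\ref{lem:phi4 monot BFS} gives $z_{\Lambda,\beta}(\mathbf t)\le z_{\Lambda,\beta'}(\mathbf t)$ pointwise in $\mathbf t$, so integrating against $\nu_{\Lambda,\gamma}$ yields $\rho_\beta(\gamma)\le\rho_{\beta'}(\gamma)$. Replacing $\rho_{\beta'}$ by the smaller $\rho_\beta$ in the subtracted term only increases the bracket, which is therefore at most $\big((\beta J)^\gamma-(\beta'J)^\gamma\big)\rho_\beta(\gamma)$. Next I telescope $b^n-a^n=(b-a)\sum_k b^ka^{n-1-k}$ exactly as in \eqref{eq: SL step 1}. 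The target bound has $G_{\Lambda,\beta}$ on both sides of $J$, so I may bound each $\beta'^k$ by $\beta^k$. Marking the $(k+1)$-st step $(u,v)$, this gives
\begin{equation*}
G_{\Lambda,\beta}(0,x)-G_{\Lambda,\beta'}(0,x)\le(\beta-\beta')\sum_{u,v}\sum_{\gamma_1:0\to u}\sum_{\gamma_2:v\to x}(\beta J)^{\gamma_1}J_{v-u}(\beta J)^{\gamma_2}\rho_\beta(\gamma_1\circ(uv)\circ\gamma_2).
\end{equation*}

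The second step is the submultiplicativity $\rho_\beta(\gamma_1\circ(uv)\circ\gamma_2)\le\rho_\beta(\gamma_1)\rho_\beta(\gamma_2)$. Once this holds, the sums over $\gamma_1$ and $\gamma_2$ are, by Proposition~\ref{prop:BFS}, exactly $G_{\Lambda,\beta}(0,u)$ and $G_{\Lambda,\beta}(v,x)$, which proves \eqref{eq:phi4G finite}. Local times add under concatenation, and $\nu_{\Lambda,\gamma}$ is a product of Gamma laws whose shape parameters are the local times. Hence $\nu_{\Lambda,\gamma_1\circ\gamma_2}$ is the image of $\nu_{\Lambda,\gamma_1}\otimes\nu_{\Lambda,\gamma_2}$ under $(\mathbf t_1,\mathbf t_2)\mapsto\mathbf t_1+\mathbf t_2$, and it suffices to show
\begin{equation*}
z_{\Lambda,\beta}(\mathbf t_1+\mathbf t_2)\le z_{\Lambda,\beta}(\mathbf t_1)\,z_{\Lambda,\beta}(\mathbf t_2)\qquad\text{for all }\mathbf t_1,\mathbf t_2\in(\mathbb R^+)^\Lambda.
\end{equation*}

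This inequality is the main obstacle. My first attempt writes $z(\mathbf t_1+\mathbf t_2)=z(\mathbf t_1)\cdot Z_{\Lambda,\beta}(\mathbf t_1+\mathbf t_2)/Z_{\Lambda,\beta}(\mathbf t_1)$ and factorises $g(s+2t_1+2t_2)/g(s+2t_1)=e^{-\lambda t_2 s}\,e^{-\lambda t_2^2-\mu t_2-2\lambda t_1t_2}$. This gives
\begin{equation*}
\frac{Z_{\Lambda,\beta}(\mathbf t_1+\mathbf t_2)}{Z_{\Lambda,\beta}(\mathbf t_1)}=c(\mathbf t_1,\mathbf t_2)\,\big\langle e^{-\lambda\sum_x t_{2,x}|\varphi_x|_2^2}\big\rangle_{\Lambda,\beta,\mathbf t_1},\qquad z_{\Lambda,\beta}(\mathbf t_2)=c(0,\mathbf t_2)\,\big\langle e^{-\lambda\sum_x t_{2,x}|\varphi_x|_2^2}\big\rangle_{\Lambda,\beta},
\end{equation*}
with $c(\mathbf t_1,\mathbf t_2)\le c(0,\mathbf t_2)$ because $\mathbf t_1,\mathbf t_2\ge0$. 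It then remains to show that adding the extra single-site damping $e^{-\lambda t_{1,x}|\varphi_x|^2}$, which is what the $\mathbf t_1$-measure does, cannot increase the expectation of $e^{-\lambda\sum_x t_{2,x}|\varphi_x|^2}$. Equivalently, I need $\langle F_{\mathbf t_1}F_{\mathbf t_2}\rangle\ge\langle F_{\mathbf t_1}\rangle\langle F_{\mathbf t_2}\rangle$-type positive correlation for even functions of $|\varphi_x|^2$. This is a Griffiths (for $n=1$) or Ginibre (for $n=2$) correlation inequality in the same spirit as Lemmas~\ref{lem:phi4 monot BFS} and~\ref{lem:BFS time decreases 2pt}, where the functions $F_{\mathbf t}$ of \eqref{eq:defF BFS} appear. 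Since this is precisely the submultiplicativity proved in \cite{BFS82}, I would rely on that reference if the direct argument becomes delicate. I expect the remaining ingredients, namely the Gamma-convolution identity and the constant comparison, to be routine.
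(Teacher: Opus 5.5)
Your first step is sound. Lemma~\ref{lem:phi4 monot BFS} lets you replace $\rho_{\beta'}$ by $\rho_\beta$, and after telescoping (using $\beta'^k\le\beta^k$) the lemma reduces to showing, for fixed $u,v$,
\[
\sum_{\gamma_1:0\to u}\sum_{\gamma_2:v\to x}(\beta J)^{\gamma_1}(\beta J)^{\gamma_2}\int z_{\Lambda,\beta}(\mathbf t_1+\mathbf t_2)\,\mathrm d\nu_{\Lambda,\gamma_1}(\mathbf t_1)\,\mathrm d\nu_{\Lambda,\gamma_2}(\mathbf t_2)\le G_{\Lambda,\beta}(0,u)\,G_{\Lambda,\beta}(v,x).
\]
The gap is your route to this via $z_{\Lambda,\beta}(\mathbf t_1+\mathbf t_2)\le z_{\Lambda,\beta}(\mathbf t_1)z_{\Lambda,\beta}(\mathbf t_2)$, because the correlation inequality you invoke points the other way. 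Set $W(\mathbf s)=\int e^{-\beta H_\Lambda(\varphi)}\prod_y g(|\varphi_y|_2^2)e^{-\lambda s_y|\varphi_y|_2^2}\,\mathrm d\varphi_y$. Then, exactly,
\[
\frac{z_{\Lambda,\beta}(\mathbf t_1+\mathbf t_2)}{z_{\Lambda,\beta}(\mathbf t_1)\,z_{\Lambda,\beta}(\mathbf t_2)}=e^{-2\lambda\sum_y t_{1,y}t_{2,y}}\,\frac{W(\mathbf 0)\,W(\mathbf t_1+\mathbf t_2)}{W(\mathbf t_1)\,W(\mathbf t_2)}.
\]
The last ratio equals $\langle F_{\mathbf t_1}F_{\mathbf t_2}\rangle/(\langle F_{\mathbf t_1}\rangle\langle F_{\mathbf t_2}\rangle)$ in $\langle\cdot\rangle_{\Lambda,\beta,\mathbf t_1+\mathbf t_2}$, and Griffiths/Ginibre makes it at least $1$. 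Equivalently, the extra damping from $\mathbf t_1$ \emph{increases} the expectation of $e^{-\lambda\sum_y t_{2,y}|\varphi_y|_2^2}$; for one site at $\beta=0$ this is just log-convexity of a Laplace transform. So the positive correlation you call ``equivalent'' is the reverse of what you need. Once you discard $c(\mathbf t_1,\mathbf t_2)/c(0,\mathbf t_2)=e^{-2\lambda\sum_y t_{1,y}t_{2,y}}$, nothing is left to make the inequality hold.

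That factor cannot rescue the pointwise claim either. At $\beta=0$ you have $z_{\Lambda,0}(\mathbf t)=\prod_y\zeta(t_y)$ with $\zeta(t)=\int g(\varphi^2+2t)\,\mathrm d\varphi/\int g(\varphi^2)\,\mathrm d\varphi$. For $n=1$, $\mu=-\lambda$ and $\lambda\to\infty$ (the regime of Proposition~\ref{prop:cv of phi4 to Ising}), a Laplace computation gives $\zeta(t)\to(1-2t)^{-1/2}$ for $t<\tfrac12$. This limit is strictly log-convex, so $\zeta(0.4)\approx 2.24>\zeta(0.2)^2\approx 1.67$ for large $\lambda$. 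Taking $\mathbf t_1=\mathbf t_2$ equal to $0.2$ at one site gives a counterexample, and by continuity it persists for small $\beta>0$. No appeal to \cite{BFS82} can supply this inequality.

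The paper avoids any pointwise or pathwise comparison of weights. It writes $z(\mathbf t_1+\mathbf t_2)=z(\mathbf t_1)\cdot z(\mathbf t_1+\mathbf t_2)/z(\mathbf t_1)$ and, for fixed $\mathbf t_1$, sums over $\gamma_2$ first. The map $\mathbf t_2\mapsto z_{\Lambda,\beta}(\mathbf t_1+\mathbf t_2)/z_{\Lambda,\beta}(\mathbf t_1)$ is the BFS weight of the model with single-site weights $g(|\varphi_y|_2^2+2t_{1,y})$. So this sum equals $\langle\varphi_v^1\varphi_x^1\rangle_{\Lambda,\beta,\mathbf t_1}$, which Lemma~\ref{lem:BFS time decreases 2pt} bounds by $G_{\Lambda,\beta}(v,x)$. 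That lemma uses Griffiths/Ginibre in the correct direction, namely $\langle\varphi_v^1\varphi_x^1F_{\mathbf t_1}\rangle_{\Lambda,\beta,\mathbf t_1}\ge\langle\varphi_v^1\varphi_x^1\rangle_{\Lambda,\beta,\mathbf t_1}\langle F_{\mathbf t_1}\rangle_{\Lambda,\beta,\mathbf t_1}$. The remaining sum over $\gamma_1$ of $(\beta J)^{\gamma_1}\int z_{\Lambda,\beta}\,\mathrm d\nu_{\Lambda,\gamma_1}$ then gives $G_{\Lambda,\beta}(0,u)$. The comparison is valid only after summing over $\gamma_2$, at the level of two-point functions. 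Substituting this for your second step completes your argument.
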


For the finite-volume version of \eqref{eq:Diff inequ assumption}, we
define
\begin{equation}
    K_{\Lambda,\beta}(x, y) = G_{\Lambda,\beta}(x, y) + \beta F_{\Lambda,\beta}(x, y),
\end{equation}
\begin{align}
	S^{(1)}_{\Lambda,\beta}(0,x)&=
6\lambda \sum_{u,v\in \mathbb Z^d}
\sum_{z\in \mathbb Z^d} G_{\Lambda,\beta}(0,z)G_{\Lambda,\beta}(z,u)J_{v-u}G_{\Lambda,\beta}(v,z)G_{\Lambda,\beta}(z,x),
\\
	S^{(2)}_{\Lambda,\beta}(0,x)&=
3
\max \left(\frac{1}{A^2},1\right)
\sum_{u,v\in \mathbb Z^d}
\sum_{z\in \mathbb Z^d} K_{\Lambda,\beta}(0,z)K_{\Lambda,\beta}(z,u)J_{v-u}K_{\Lambda,\beta}(v,z)K_{\Lambda,\beta}(z,x),
\end{align}
and
\begin{equation}
    S_{\Lambda,\beta}(0,x)=S^{(1)}_{\Lambda,\beta}(0,x)\wedge S^{(2)}_{\Lambda,\beta}(0,x).
\end{equation}

\begin{Lem}\label{lem:finite volume lower diff inequ} Let $\Lambda\subset \mathbb Z^d$ be  finite. For every $\beta\geq 0$ and every $x\in \mathbb Z^d$, we have
\begin{equation}
\label{eq:phi4lb}
	\partial_\beta G_{\Lambda,\beta}(0,x)\geq \sum_{u,v\in \mathbb Z^d}G_{\Lambda,\beta}(0,u)J_{v-u}G_{\Lambda,\beta}(v,x)-S_{\Lambda,\beta}(0,x).
\end{equation}
\end{Lem}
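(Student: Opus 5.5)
The plan is to differentiate the finite-volume measure directly and reduce \eqref{eq:phi4lb} to a lower bound on a truncated four-point function. Since $\Lambda$ is finite, $\beta\mapsto G_{\Lambda,\beta}(0,x)$ is smooth, and differentiating \eqref{eq:def phi4} gives
\begin{equation*}
\partial_\beta G_{\Lambda,\beta}(0,x)=\frac12\sum_{u,v}J_{v-u}\Big(\langle \varphi_0^1\varphi_x^1\,(\varphi_u\cdot\varphi_v)\rangle_{\Lambda,\beta}-\langle\varphi_0^1\varphi_x^1\rangle_{\Lambda,\beta}\langle \varphi_u\cdot\varphi_v\rangle_{\Lambda,\beta}\Big).
\end{equation*}
I would expand $\varphi_u\cdot\varphi_v$ into components and write each four-point function as its Gaussian (pairing) part plus the Ursell function $U_4$. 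The pairings give $\langle\varphi_0^1\varphi_u^1\rangle\langle\varphi_v^1\varphi_x^1\rangle+\langle\varphi_0^1\varphi_v^1\rangle\langle\varphi_u^1\varphi_x^1\rangle$. Using the symmetry of $J$ and relabelling $u\leftrightarrow v$, the two pairings combine to cancel the factor $\frac12$, so they produce exactly $\sum_{u,v}G_{\Lambda,\beta}(0,u)J_{v-u}G_{\Lambda,\beta}(v,x)$. For $n=2$, the cross-component terms $\langle\varphi^1\varphi^1\varphi^2\varphi^2\rangle$ have no nonzero pairing because of $O(2)$ symmetry, so they contribute only through their own truncation. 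It therefore suffices to show that the total truncated contribution $\frac12\sum_{u,v} J_{v-u}U_4(0,x,u,v)$ (summed over components) is at least $-S^{(1)}_{\Lambda,\beta}(0,x)$ and at least $-S^{(2)}_{\Lambda,\beta}(0,x)$. I would prove these two lower bounds separately; together they give the bound with the minimum $S_{\Lambda,\beta}$.

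For $S^{(1)}$ I would use the BFS expansion (Proposition~\ref{prop:BFS}) to write the four-point function as a sum over pairs of walks $\gamma_1:0\to u$ and $\gamma_2:v\to x$, with a weight built from $z_{\Lambda,\beta}(\mathbf t_1+\mathbf t_2)$ integrated against $\mathrm d\nu_{\gamma_1}\mathrm d\nu_{\gamma_2}$. The product of two-point functions has the factorised weight instead. The difference between the joint weight and the factorised weight comes from the quartic term $\lambda\sum_y (t_{1,y}+t_{2,y})^2$, and in particular from its cross term $2\lambda\sum_z t_{1,z}t_{2,z}$. This is exactly the mechanism used for the continuous-time weakly self-avoiding walk in Section~\ref{sec: CTWSAW}. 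Lemmas~\ref{lem:phi4 monot BFS} and~\ref{lem:BFS time decreases 2pt} supply the needed monotonicity in the times $\mathbf t$, in place of the $\beta$-independence of $\rho$ that was available there. Bounding $1-e^{-a}\le a$ and then applying \cite[Lemma~2.1]{BFS82} to each walk carrying a local time at $z$ should give the bound $\sum_z G(0,z)G(z,u)G(v,z)G(z,x)$, up to the combinatorial constant $6\lambda$. This reproduces the Aizenman--Fröhlich--BFS tree bound $U_4\ge -c\lambda\sum_z GGGG$.

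For $S^{(2)}$ I would instead use the skeleton-type bound for $1$- and $2$-component models, namely the Brydges--Fröhlich--Sokal / Aizenman--Graham inequality $-U_4\le \sum_z \prod K$. The factor $K=G+\beta F$ enters from splitting off one $J$-step at the points where the walks meet, and the factor $\max(A^{-2},1)$ comes from normalising the single-site variance. The step I expect to be the main obstacle is checking that this bound holds with exactly these constants for $n=2$ as well, using Ginibre's inequalities. I would first try to derive it through the BFS expansion, by conditioning on the first intersection point $z$ of the two walks.
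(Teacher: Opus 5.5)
Your reduction (differentiate the Gibbs measure, extract the two pairings, then bound $U^{(4)}$ and $\tilde U^{(4)}$ from below) is the paper's. The gap is the mechanism you give for the $S^{(1)}$ bound, which does not work as stated.

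In the BFS representation,
$U^{(4)}=\sum_P\sum^P_{\gamma_1,\gamma_2}(\beta J)^{\gamma_1}(\beta J)^{\gamma_2}\int[z(\mathbf t^1+\mathbf t^2)-z(\mathbf t^1)z(\mathbf t^2)]\,\mathrm d\nu_{\gamma_1}(\mathbf t^1)\,\mathrm d\nu_{\gamma_2}(\mathbf t^2)$.
A lower bound therefore needs $z(\mathbf t^1+\mathbf t^2)\ge e^{-2\lambda\sum_z t^1_zt^2_z}\,z(\mathbf t^1)z(\mathbf t^2)$.

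Unlike the continuous-time walk, the joint and factorised weights do not differ only by the quartic cross term. Since $z(\mathbf t)=e^{-\sum_y(\lambda t_y^2+\mu t_y)}\langle e^{-\lambda\sum_y t_y|\varphi_y|_2^2}\rangle_{\Lambda,\beta}$, you also need
$\langle e^{-\lambda\sum_y(t^1_y+t^2_y)|\varphi_y|_2^2}\rangle_{\Lambda,\beta}\ge\langle e^{-\lambda\sum_y t^1_y|\varphi_y|_2^2}\rangle_{\Lambda,\beta}\,\langle e^{-\lambda\sum_y t^2_y|\varphi_y|_2^2}\rangle_{\Lambda,\beta}$.
This is a separate correlation inequality of Griffiths/Ginibre type (positive correlation of the $|\varphi_y|_2^2$).

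Lemmas~\ref{lem:phi4 monot BFS} and~\ref{lem:BFS time decreases 2pt} cannot supply it:
\begin{itemize}
\item The first compares two values of $\beta$ at fixed $\mathbf t$.
\item The second yields $\sum_{\gamma_2}(\beta J)^{\gamma_2}\int z(\mathbf t^1+\mathbf t^2)\,\mathrm d\nu_{\gamma_2}(\mathbf t^2)\le z(\mathbf t^1)\,G_{\Lambda,\beta}(v,x)$. That bounds the joint weight \emph{above} by the factorised one. This is the direction of Lebowitz's inequality, of \eqref{eq:SL assumption}, and of the local-time bound you cite, and it can never give a lower bound on $U^{(4)}$.
\end{itemize}
The paper imports the needed direction as \cite[(37)]{Froh82}: $|U^{(4)}|\le\sum_P\sum^P\int z(\mathbf t^1)z(\mathbf t^2)f_\lambda$ with $f_\lambda=1-\exp(-2\lambda\sum_z t^1_zt^2_z)$. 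Only after that does it use $f_\lambda\le 2\lambda\sum_z t^1_zt^2_z$ and \cite[(3.13)]{BFS83II}.

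For $S^{(2)}$ you leave the key step open. The missing idea is simpler than Aizenman--Graham or conditioning on a first intersection.
\begin{itemize}
\item In the same representation \cite[(37)]{Froh82}, use the other alternative: $f_\lambda\le 1$, and $f_\lambda=0$ unless $\gamma_1\cap\gamma_2\neq\emptyset$. A plain union bound then gives $f_\lambda\le\sum_z\mathds{1}_{z\in\gamma_1}\mathds{1}_{z\in\gamma_2}$.
\item Because the weight is the product $z(\mathbf t^1)z(\mathbf t^2)$, each pairing factorises into two one-walk quantities. The three pairings give the factor $3$.
\item It remains to show $\sum_{\gamma:a\to b}(\beta J)^\gamma\int z(\mathbf t)\mathds{1}_{z\in\gamma}\,\mathrm d\nu_\gamma(\mathbf t)\le\max(A^{-1},1)K(a,z)K(z,b)$. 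Cut $\gamma$ at a \emph{step} incident to a visit of $z$. Cutting at the vertex itself would count that visit in both pieces, so $\nu_\gamma$ would not be the convolution of the pieces' measures.
\item Then apply Lemma~\ref{lem:BFS time decreases 2pt} as in the proof of Lemma~\ref{lem:finite volume finite diff}. This gives $\beta G(a,z)F(z,b)+\beta F(a,z)G(z,b)\le K(a,z)K(z,b)$, which is where $K=G+\beta F$ comes from.
\item The only exceptional case is $a=b=z$. There the sum equals $G(a,a)\le A^{-1}G(a,a)^2$, because $G_{\Lambda,\beta}(a,a)\ge G_{\Lambda,0}(a,a)=A$. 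This is the source of $\max(A^{-2},1)$.
\end{itemize}
The argument is uniform in $\lambda$ and identical for $n=1,2$; for $\tilde U^{(4)}$ only the pairing $\{\{0,x\},\{u,v\}\}$ occurs. So the $n=2$ obstacle you anticipate does not arise.
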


The choice $S^{(1)}$ for $S$ in Lemma~\ref{lem:finite volume lower diff inequ}
comes with a prefactor $\lambda$, which is perfect for our application to weakly-coupled and spread-out $|\varphi|^4$ models.  However, we also wish to derive the counterpart of
Lemma~\ref{lem:finite volume lower diff inequ} for Ising and XY models in
Sections~\ref{sec:Ising}--\ref{sec:XY}, by taking a limit $\lambda \to \infty$.
For this, we will use the choice $S^{(2)}$ instead.

It is straightforward to deduce Assumption \ref{Ass:G} from
Lemmas~\ref{lem:finite volume finite diff}--\ref{lem:finite volume lower diff inequ}.
The lower differential inequality is in terms of
\begin{equation}
\label{eq:Hphi4}
    H_\beta:=H^{(1)}_\beta\wedge H^{(2)}_\beta,
\end{equation}
 \begin{equation}\label{eq:expression H1 spins}
 	H^{(1)}_\beta(x)=6\lambda \delta_0(x)(F_\beta*G_\beta)(x),
 \end{equation}
  \begin{equation}\label{eq:expression H2 spins}
H^{(2)}_\beta(x)=3\max \left(\frac{1}{A^2},1\right)
\big[(\delta_0+\beta J)*(\delta_0+\beta J)\big] (x)(K_\beta*J*K_\beta)(0),
\end{equation}
with
\begin{equation}
    K_\beta=G_\beta+\beta F_\beta=G_\beta*(\delta_0+\beta J).
\end{equation}

\begin{proof}[Proof of \eqref{eq:SL assumption}]
Let $0\leq \beta'\leq \beta<\beta_c$ and $x\in \mathbb Z^d$. By Griffiths' or Ginibre's inequality, for every $u,v\in \mathbb Z^d$, the sequence $G_{\Lambda_k,\beta'}(0,u)J_{v-u}
    G_{\Lambda_k,\beta}(v,x)$ increases monotonically to the limit $G_\beta(u)J_{v-u}G_\beta(x-v)$.
By applying Lemma~\ref{lem:finite volume finite diff} to the sequence of boxes $(\Lambda_k)_{k\geq 1}$, in conjunction with the monotone convergence theorem, we conclude
that  \eqref{eq:SL assumption} holds.
\end{proof}

\begin{proof}[Proof of \eqref{eq:Diff inequ assumption}]
Let $\beta<\beta_c$ and $x\in \mathbb Z^d$. For the infinite-volume limit of the right-hand side of \eqref{eq:phi4lb},
by monotonicity we have
 \begin{equation}
 	\lim_{k\rightarrow \infty}\sum_{u,v\in \mathbb Z^d}G_{\Lambda_k,\beta}(0,u)J_{v-u}G_{\Lambda_k,\beta}(v,x)=(G_\beta*J*G_\beta)(x),
 \end{equation}
 \begin{align}\nonumber
 	\lim_{k\rightarrow \infty}S_{\Lambda_k,\beta}^{(1)}(0,x)&=6\lambda\sum_{u,v,z\in \mathbb Z^d}G_\beta(z)G_\beta(u-z)J_{v-u}G_{\beta}(z-v)G_\beta(x-z)
 	\\&=(G_\beta*H_\beta^{(1)}*G_\beta)(x),
 \end{align}
and
 \begin{align}\nonumber
 	\lim_{k\rightarrow \infty}S_{\Lambda_k,\beta}^{(2)}(0,x)&=3\max \left(\frac{1}{A^2},1\right)  \sum_{u,v,z\in \mathbb Z^d} K_\beta(z)K_\beta(u-z)J_{v-u}K_\beta(z-v)K_\beta(x-z) 	\\&=(G_\beta*H_\beta^{(2)}*G_\beta)(x).
 \end{align}
The function $H_\beta =H^{(1)}_\beta\wedge H^{(2)}_\beta$ satisfies all the hypotheses stated in Assumption \ref{Ass:G}.
To complete the proof, it suffices to know that the finite-volume derivative
on the left-hand side of \eqref{eq:phi4lb} converges to the infinite-volume
derivative. This latter fact is a consequence of correlation inequalities; see \cite{Lebo72}
for the Ising model and \cite{BFS82} for the extension of the Lebowitz inequality
to more general $1$- and $2$-component spins.
This completes the proof.
\end{proof}

\begin{proof}[Proof of Lemma \textup{\ref{lem:finite volume finite diff}}]
Let $\Lambda\subset \mathbb Z^d$ be finite, $0\leq \beta'\leq \beta$, and $x\in \mathbb Z^d$. By Proposition~\ref{prop:BFS},
\begin{equation}\label{eq:pfdphi4 1}
	G_{\Lambda,\beta}(0,x)=\langle \varphi_0^1\varphi_x^1\rangle_{\Lambda,\beta}
    = \sum_{\gamma: 0 \rightarrow x} (\beta J )^{\gamma}
    \int z_{\Lambda,\beta}(\mathbf{t}) \mathrm{d}\nu_{\Lambda,\gamma}(\mathbf{t}).
\end{equation}
We apply the identity $b^{n}-a^{n} =(b-a) \sum_{k = 0}^{n-1}b^{k}a^{n-1-k}$
with $b=\beta$ and $a=\beta'$.
This gives
\begin{align}
\label{eq:phi4Gdecomp}
    G_{\Lambda,\beta}(0,x)
    & =
    \sum_{\gamma: 0 \rightarrow x} (\beta' J )^{\gamma}
    \int z_{\Lambda,\beta}(\mathbf{t}) \mathrm{d}\nu_{\Lambda,\gamma}(\mathbf{t})
    \\ \nonumber  & \qquad +
    (\beta-\beta')
    \sum_{u,v\in \mathbb Z^d}\sum_{\substack{\gamma_1:0\rightarrow u\\\gamma_2:v\rightarrow x}} (\beta' J)^{\gamma_1}  J_{v-u} (\beta J)^{\gamma_2}\int z_{\Lambda,\beta}(\mathbf{t}) \mathrm{d}\nu_{\Lambda,\gamma_1\circ (uv)\circ \gamma_2}(\mathbf{t}),
\end{align}
where $\gamma_1\circ (uv)\circ\gamma_2$ denotes the concatenation of $\gamma_1$, the step $uv$, and $\gamma_2$. By Lemma~\ref{lem:phi4 monot BFS} and Proposition~\ref{prop:BFS},
 the first term on the right-hand side of \eqref{eq:phi4Gdecomp}
is bounded above by the term $G_{\Lambda,\beta'}(0,x)$
appearing in \eqref{eq:phi4G finite}.

For the second term on the right-hand side of \eqref{eq:phi4Gdecomp}, we use the fact
that\footnote{The first equality can be seen as a kind of binomial theorem for the measures $\mathrm{d}\rho_{\Lambda,\gamma}$ on the occupation time $\mathbf{t}$.}
\begin{align}
	\int z_{\Lambda,\beta}(\mathbf{t}) \mathrm{d}\nu_{\Lambda,\gamma_1\circ (uv)\circ \gamma_2}(\mathbf{t})
    & =
    \int z_{\Lambda,\beta}(\mathbf{t}_1+\mathbf{t}_2) \mathrm{d}\nu_{\Lambda,\gamma_1}(\mathbf{t}_1)\mathrm{d}\nu_{\Lambda,\gamma_2}(\mathbf{t}_2)
    \nonumber \\ & =
    \int \mathrm{d}\nu_{\Lambda,\gamma_1}(\mathbf{t}_1)
    z_{\Lambda,\beta}(\mathbf{t}_1)
    \int \mathrm{d}\nu_{\Lambda,\gamma_2}(\mathbf{t}_2)
    \frac{z_{\Lambda,\beta}(\mathbf{t}_1+\mathbf{t}_2) } {z_{\Lambda,\beta}(\mathbf{t}_1)}
    .\label{eq:prooffinitediff phi41}
\end{align}
By Lemma~\ref{lem:BFS time decreases 2pt},
for every fixed $\mathbf{t}_1$,
\begin{equation}
	\sum_{\gamma_2:v\rightarrow x}(\beta J)^{\gamma_2}
    \int \frac{z_{\Lambda,\beta}(\mathbf{t}_1+\mathbf{t}_2) } {z_{\Lambda,\beta}(\mathbf{t}_1)}\D\nu_{\Lambda,\gamma_2}(\mathbf{t}_2)
    =\langle \varphi_v^1\varphi_x^1\rangle_{\Lambda,\beta,\mathbf{t}_1}
    \leq G_{\Lambda,\beta}(v,x).\label{eq:prooffinitediff phi42}
\end{equation}
We use \eqref{eq:prooffinitediff phi41}--\eqref{eq:prooffinitediff phi42} in
\eqref{eq:phi4Gdecomp} and perform the sum
over $\gamma_1$ to obtain the second term on the right-hand side of \eqref{eq:phi4G finite}.
This completes the proof.
\end{proof}

\begin{proof}[Proof of Lemma \textup{\ref{lem:finite volume lower diff inequ}}]
Let $\Lambda\subset \mathbb Z^d$ be a finite set containing $0$. Let $\beta\geq 0$ and $x\in \mathbb Z^d$. We consider the $1$- and $2$-component models simultaneously, by writing
$\varphi=(\varphi^1,\varphi^2)$ with  $\varphi^2=0$ when $n=1$.
To simplify the notation, throughout the proof we write simply $\langle \cdot\rangle=\langle\cdot\rangle_{\Lambda,\beta}$ and generally omit these subscripts.

We use Ursell's four-point functions:
\begin{equation}
     U^{(4)}(0,x,u,v) = \langle \varphi_0^1 \varphi_x^1 \varphi_u^1 \varphi_v^1 \rangle - \langle \varphi_0^1 \varphi_x^1 \rangle \langle \varphi_u \varphi_v^1\rangle-\langle \varphi_0^1 \varphi_u^1 \rangle \langle \varphi_v^1 \varphi_x^1 \rangle  - \langle \varphi_0^1 \varphi_v^1 \rangle \langle \varphi_u^1 \varphi_x^1 \rangle,
\end{equation}
and
\begin{equation}
	\tilde{U}^{(4)}(0,x,u,v)=\langle \varphi_0^1\varphi_x^1\varphi_u^2\varphi_v^2\rangle-\langle \varphi_0^1\varphi_x^1\rangle\langle \varphi_u^2\varphi_v^2\rangle.
\end{equation}
From \eqref{eq:def phi4}, we express the derivative in $\beta$ as:
\begin{equation}
		\partial_\beta\langle \varphi_0^1\varphi_x^1\rangle  			
=
\frac 12 \sum_{u,v\in \Lambda}
J_{v-u} \Big( U^{(4)}(0,x,u,v)+\tilde{U}^{(4)}(0,x,u,v)+\langle \varphi_0^1 \varphi_u^1 \rangle \langle \varphi_v^1 \varphi_x^1 \rangle  + \langle \varphi_0^1 \varphi_v^1 \rangle \langle \varphi_u^1 \varphi_x^1 \rangle \Big)\label{eq:phi4lb0}.
\end{equation}
By definition,
\begin{equation}\label{eq:phi4lb1}
\frac 12 \sum_{u,v\in \Lambda}
J_{v-u}\Big(\langle \varphi_0^1 \varphi_u^1 \rangle \langle \varphi_v^1 \varphi_x^1 \rangle  + \langle \varphi_0^1 \varphi_v^1 \rangle \langle \varphi_u^1 \varphi_x^1 \rangle\Big)=\sum_{u,v\in \mathbb Z^d}G_{\Lambda,\beta}(0,u)J_{v-u}G_{\Lambda,\beta}(v,x),
\end{equation}
which is the first term on the right-hand side of \eqref{eq:phi4lb}.

The subtracted term on the right-hand side of \eqref{eq:phi4lb} arises from
an upper bound on
\begin{equation}
 -U^{(4)}(0,u,v,x)=|U^{(4)}(0,x,u,v)|\quad  \textup{and } -\tilde{U}^{(4)}(0,x,u,v)=|\tilde{U}^{(4)}(0,x,u,v)|
 \end{equation}
(the equalities follow from Lebowitz's inequality \cite{Lebo74}).
We follow \cite[(37)]{Froh82} and use
\begin{equation}\label{eq:phi4lb2}
	|U^{(4)}(0,u,v,x)|\leq \sum_P\sum_{\gamma_1,\gamma_2}^P (\beta J)^{\gamma_1}(\beta J)^{\gamma_2}\int \mathrm{d}\nu_{\gamma_1}(\mathbf{t}^1)\mathrm{d}\nu_{\gamma_2}(\mathbf{t}^2)z(\mathbf{t}^1)z(\mathbf{t}^2)f_\lambda(\mathbf{t}^1,\mathbf{t}^2),
\end{equation}
where
$\sum_P$ is a sum over partitions in pairs (or \emph{pairings}) of $\{0,u,v,x\}$,  $\sum_{\gamma_1,\gamma_2}^P$ is a sum over paths $\gamma_1,\gamma_2$ which respect the partition $P$ (i.e.,, $\gamma_1$ connects the elements of the first pair of the partition, and $\gamma_2$ the elements of the second pair), and
\begin{equation}\label{eq:def f lambda}
	f_{\lambda}(\mathbf{t}^1,\mathbf{t}^2)=1-\exp\Big(-2\lambda \sum_{z\in \mathbb Z^d}\mathbf{t}^1_z\mathbf{t}^2_z\Big).
\end{equation}
We use the elementary bound
\begin{equation}\label{eq:phi4lb3}
	f_{\lambda}(\mathbf{t}^1,\mathbf{t}^2)\leq \Big(2\lambda \sum_{z\in \mathbb Z^d}\mathbf{t}^1_z\mathbf{t}^2_z\Big)\wedge \sum_{z\in \mathbb Z^d}\mathds{1}_{z\in \gamma_1\cap \gamma_2}.
\end{equation}
The term $|\tilde{U}^{(4)}(0,x,u,v)|$ satisfies a bound like \eqref{eq:phi4lb2}, where instead of a sum over all pairings $P$, we only have the term corresponding to the pairing $P=\{\{0,x\},\{u,v\}\}$.

Consider the first alternative on the right-hand side of \eqref{eq:phi4lb3}.
It follows from \cite[(3.13)]{BFS83II} that, for a fixed $z\in \mathbb Z^d$, and $\gamma:a\rightarrow b$,
\begin{equation}
	\sum_{\gamma:a\rightarrow b}(\beta J)^{\gamma}\int \mathrm{d}\nu_{\gamma}(\mathbf{t})z(\mathbf{t})\mathbf{t}_z\leq \langle \varphi_a^1\varphi_z^1\rangle\langle\varphi_z^1\varphi_b^1\rangle.
	\end{equation}
Since there are three pairings of $\{0,u,v,x\}$, this implies that
\begin{multline}
	2\lambda\sum_{z\in \mathbb Z^d}\sum_P\sum_{\gamma_1,\gamma_2}^P(\beta J)^{\gamma_1}(\beta J)^{\gamma_2}\int \mathrm{d}\nu_{\gamma_1}(\mathbf{t}^1)\mathrm{d}\nu_{\gamma_2}(\mathbf{t}^2)z(\mathbf{t}^1)z(\mathbf{t}^2) \mathbf{t}^1_z\mathbf{t}^2_z
	\\\leq 6\lambda \sum_{z\in \mathbb Z^d}\langle \varphi_0^1\varphi_z^1\rangle\langle \varphi_u^1\varphi_z^1\rangle \langle \varphi_v^1\varphi_z^1\rangle \langle \varphi_x^1\varphi_z^1\rangle
.
\label{eq:verif error phi4 1}
\end{multline}
The combination of \eqref{eq:phi4lb2} and \eqref{eq:verif error phi4 1}
leads to
\begin{multline}
\frac 12 \sum_{u,v\in\Lambda}
J_{v-u} \Big( U^{(4)}(0,x,u,v)+\tilde{U}^{(4)}(0,x,u,v)\Big)
\ge
\\
- 6
\lambda \sum_{u,v\in \mathbb Z^d}
\sum_{z\in \mathbb Z^d} G(0,z)G(z,u)J_{v-u}G(v,z)G(z,x).
\end{multline}
This gives the first half of Lemma~\ref{lem:finite volume lower diff inequ}, i.e., the lower bound with $S^{(1)}(0,x)$.

To obtain the lower bound with $S^{(2)}(0,x)$, we bound $f_\lambda$ by the second alternative on the right-hand side of \eqref{eq:phi4lb3}.
Recall that $K(s,t) = G(s,t)+\beta F(s,t)$.  We will prove that
\begin{multline}\label{eq: lemma verif error phi4 3}
	\sum_{z\in \mathbb Z^d}\sum_P\sum_{\gamma_1,\gamma_2}^P(\beta J)^{\gamma_1}(\beta J)^{\gamma_2}\int \mathrm{d}\nu_{\gamma_1}(\mathbf{t}^1)\mathrm{d}\nu_{\gamma_2}(\mathbf{t}^2)z(\mathbf{t}^1)z(\mathbf{t}^2) \mathds{1}_{z\in \gamma_1\cap \gamma_2}
	\\\leq 3 \max \left(\frac{1}{A^2},1\right)  \sum_{z\in \mathbb Z^d} K(0,z)K(z,u)K(v,z)K(z,x).
	\end{multline}
By \eqref{eq:phi4lb2},
this suffices, since it gives
(recall that a bound on $|U^{(4)}|$ also bounds $|\tilde{U}^{(4)}|$)
\begin{multline}
\frac 12 \sum_{u,v\in\Lambda}
J_{v-u} \Big( U^{(4)}(0,x,u,v)+\tilde{U}^{(4)}(0,x,u,v)\Big)
\ge
\\-
3
\max \left(\frac{1}{A^2},1\right)
\sum_{u,v\in \mathbb Z^d}
\sum_{z\in \mathbb Z^d} K(0,z)K(z,u)J_{v-u}K(v,z)K(z,x),
\end{multline}
which is the desired estimate with $S^{(2)}$.

It remains only to prove \eqref{eq: lemma verif error phi4 3}.
For this, it suffices to show that, for every $a,b$,
    \begin{equation}
        \sum_{\gamma: a \rightarrow b }(\beta J)^{\gamma}\int \mathrm{d}\nu_{\gamma}(\mathbf{t})\mathrm{d}z(\mathbf{t}) \mathds{1}_{z\in \gamma} \leq \max\left(\frac{1}{A},1\right) K(a,z) K(z,b). \label{eq:phi4lb6}
    \end{equation}
    We assume that $a,b,z\in \Lambda$ since otherwise the sum on the left-hand side of \eqref{eq:phi4lb6} is equal to $0$. If $a=b=z$, then
    \begin{align}\nonumber
    	\sum_{\gamma: a \rightarrow b }(\beta J)^{\gamma}\int \mathrm{d}\nu_{\gamma}(\mathbf{t})\mathrm{d}z(\mathbf{t}) \mathds{1}_{z\in \gamma}
    &=G(a,a) =\frac{1}{A}G(a,a)G_{\Lambda,0}(a,a)
    	\\&\leq \frac{1}{A}G(a,a)G(a,a)\nonumber =\frac{1}{A}G(a,z)G(z,b).
    \end{align}
    We may therefore assume that $a,b,z$ are not all equal. In this case,
    if $z \in \gamma$ then at least one of the following holds true: $(i)$ there exists $z'$ with $J_{z'-z}>0$, $\gamma_1:a\rightarrow z$, and $\gamma_2:z'\rightarrow b$ such that $\gamma = \gamma_1 \circ (zz') \circ\gamma_2$ (where $(zz')$ denotes the one-step walk from $z$ to $z'$); $(ii)$ there exists $z'$ with $J_{z-z'}>0$, $\gamma_1:a\rightarrow z'$, and $\gamma_2:z\rightarrow b$ such that $\gamma = \gamma_1 \circ (z'z)\circ \gamma_2$. The contributions of these two scenarios to the left-hand side of \eqref{eq:phi4lb6} are respectively denoted $({\rm I})$ and $({\rm II})$.
The term $({\rm I})$ is given by
    \begin{equation}
        ({\rm I}) = \sum_{z'\in \mathbb Z^d}\sum_{\substack{\gamma_1:a\rightarrow z\\\gamma_2:z'\rightarrow x}} (\beta J)^{\gamma_1} \beta J_{z'-z} (\beta J)^{\gamma_2}\int z(\mathbf{t}) \mathrm{d}\nu_{\gamma_1\circ (z,z')\circ \gamma_2}(\mathbf{t}).
    \end{equation}
Proceeding as in the proof of Lemma~\ref{lem:finite volume finite diff}, and using  $J_u=J_{-u}$, we obtain
    \begin{equation}
    \label{eq:IGF}
        ({\rm I}) \leq \sum_{z'\in \mathbb Z^d}G(a,z)\beta J_{z'-z} G(z',b)=\beta G(a,z)\sum_{z'\in \mathbb Z^d}G(b,z')J_{z-z'} \leq \beta G(a,z)F(z,b).
    \end{equation}
    Similarly,
    \begin{equation}
    \label{eq:IIFG}
    	({\rm II})\leq \beta F(a,z)G(z,b).
    \end{equation}
    The bounds \eqref{eq:IGF}--\eqref{eq:IIFG} give \eqref{eq:phi4lb6},
    which we have seen to be sufficient.
    This completes the proof.
\end{proof}

\subsection{The Ising model}
\label{sec:Ising}
To verify Assumption \ref{Ass:G} for the Ising model, we combine Lemmas~\ref{lem:finite volume finite diff}--\ref{lem:finite volume lower diff inequ} with Proposition \ref{prop:cv of phi4 to Ising} below to transfer the desired properties from $|\varphi|^4$ to Ising. Direct proofs can also be obtained via the random current representation \cite{Aize82,Dumi20} of the Ising model.
\subsubsection{The model}

Let $d\geq 2$. Let $\Lambda\subset \mathbb Z^d$ be finite, $\beta\geq 0$, $J$ an admissible interaction
, and $F:\{-1,1\}^\Lambda\rightarrow \mathbb R$. The Ising model on $\Lambda$ is the measure $\langle \cdot \rangle_{\Lambda,\beta}^{\rm Ising}$ on $\{-1,1\}^\Lambda$ given by
	\begin{equation}
		\langle F(\sigma)\rangle_{\Lambda,\beta}^{\rm Ising}:=\frac{1}{Z^{\rm Ising}_{\Lambda,\beta}}\sum_{\sigma \in \{-1,1\}^\Lambda}F(\sigma)\exp(-\beta H_{\Lambda}(\sigma)),
	\end{equation}
	where
	\begin{equation}
		H_{\Lambda}(\sigma)=
-
\frac 12 \sum_{x,y\in \Lambda}
J_{y-x}\sigma_x\sigma_y, \qquad Z_{\Lambda,\beta}^{\rm Ising}=\sum_{\sigma\in \{-1,1\}^\Lambda}\exp(-\beta H_{\Lambda}(\sigma)).
	\end{equation}
It is again a consequence of Griffiths' inequalities that the sequence of measures $\langle \cdot \rangle_{\Lambda,\beta}^{\rm Ising}$ admits a weak limit as $\Lambda \nearrow \mathbb Z^d$. We denote the limit by $\langle \cdot\rangle_\beta^{\rm Ising}$. The two-point function is defined for $\beta\geq 0$ and $x\in \mathbb Z^d$ by
\begin{equation}
	G_\beta(x):=\langle \sigma_0\sigma_x\rangle_{\beta}^{\rm Ising}.
\end{equation}

The $1$-component $\varphi^4$ model and the Ising model are expected to lie in the same universality class.  Support for this conjecture is provided by the fact that
the latter is a limit of the former.  Indeed, the \emph{normalised} single-site distribution
with $\mu=-\lambda$
obeys (in the sense of weak convergence)
\begin{equation}\label{eq:cv of phi4 to Ising} \frac{1}{z_\lambda}\exp\Big(-\frac{\lambda}{4}(\varphi_0^2-1)^2\Big){\rm d}\varphi_0\underset{\lambda \rightarrow \infty}{\longrightarrow} \frac{\delta_{-1}+\delta_1}{2}.
\end{equation}
Easy consequences of \eqref{eq:cv of phi4 to Ising} are formulated in the
next proposition. Recall the definition of $\langle \cdot\rangle_{\Lambda,\beta,J}=\langle \cdot\rangle_{\Lambda,\beta,J,\lambda,\mu}$ from \eqref{eq:def phi4},
and of $A(\lambda,\mu)$ from \eqref{eq:Aphi4}.

\begin{Prop}\label{prop:cv of phi4 to Ising} Let $\Lambda\subset \mathbb Z^d$ be finite and $\beta\geq 0$. Then, for every $x,y\in \Lambda$,
\begin{align}
    \lim_{\lambda \to \infty}
	\langle \varphi_x\varphi_y\rangle_{\Lambda,\beta,J,\lambda,-\lambda}
&=\langle \sigma_x\sigma_y\rangle_{\Lambda,\beta,J}^{\rm Ising},
	\\
\lim_{\lambda \to \infty}
	\partial_\beta\langle \varphi_x\varphi_y\rangle_{\Lambda,\beta,J,\lambda,-\lambda}&
=\partial_\beta\langle \sigma_x\sigma_y\rangle_{\Lambda,\beta,J}^{\rm Ising},
	\\
\lim_{\lambda \to \infty} A(\lambda,-\lambda)&=1.
\end{align}

\end{Prop}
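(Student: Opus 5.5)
The plan is to reduce everything to the weak convergence \eqref{eq:cv of phi4 to Ising} of single-site measures, exploiting that $\Lambda$ is finite. Write $\mathrm{d}m_\lambda$ for the normalised single-site measure $z_\lambda^{-1}e^{-\frac\lambda4(\varphi^2-1)^2}\mathrm{d}\varphi$; with $\mu=-\lambda$ we have $g(t)\propto e^{-\frac\lambda4(t-1)^2}$ up to a constant that cancels in all normalised expectations. Hence, for any polynomial $P$ in the spins,
\begin{equation*}
\langle P(\varphi)\rangle_{\Lambda,\beta,J,\lambda,-\lambda}=\frac{\int P(\varphi)e^{-\beta H_\Lambda(\varphi)}\prod_{x\in\Lambda}\mathrm{d}m_\lambda(\varphi_x)}{\int e^{-\beta H_\Lambda(\varphi)}\prod_{x\in\Lambda}\mathrm{d}m_\lambda(\varphi_x)}.
\end{equation*}
The third claim is the case $P=\varphi_0^2$ with $\Lambda=\{0\}$ and $\beta=0$, and the first claim is the case $P=\varphi_x\varphi_y$. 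The derivative claim follows since $\partial_\beta\langle\varphi_x\varphi_y\rangle=\tfrac12\sum_{u,v}J_{v-u}(\langle\varphi_x\varphi_y\varphi_u\varphi_v\rangle-\langle\varphi_x\varphi_y\rangle\langle\varphi_u\varphi_v\rangle)$. This is differentiation under the integral, justified for finite $\Lambda$ and fixed $\lambda$ by the quartic confinement. The Ising derivative has the same form. So all three limits reduce to showing that for every polynomial $P$ the numerator and denominator converge to their Ising counterparts, i.e.\ with $\prod\mathrm{d}m_\lambda$ replaced by $\prod\tfrac12(\delta_{-1}+\delta_1)$. The denominator limit is positive, so the ratio converges.

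Product measures of weakly convergent factors converge weakly on $\mathbb R^\Lambda$, a finite-dimensional space. The integrand $P(\varphi)e^{-\beta H_\Lambda(\varphi)}$ is continuous but unbounded, so I need uniform integrability. The step I expect to require care is a bound, uniform in $\lambda\ge1$, of the form
\begin{equation*}
\int e^{a\varphi^2}\mathrm{d}m_\lambda(\varphi)\le C(a)
\end{equation*}
for each fixed $a$. Then $|P|e^{\beta|H_\Lambda|}\le C\prod_x e^{a\varphi_x^2}$, using $|H_\Lambda|\le\tfrac12\sum J_{y-x}(\varphi_x^2+\varphi_y^2)/2$ and $|P|\le C\prod e^{\varphi_x^2}$. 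With a slightly larger exponent, this gives uniform integrability and hence convergence of the integrals.

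For the exponential moment bound, I substitute $s=\varphi^2-1$ near $|\varphi|=1$. The normaliser satisfies $z_\lambda\asymp\lambda^{-1/2}$ by Laplace's method. The numerator splits into two parts. The region $|\varphi|\le2$ contributes at most $e^{4a}$ times the total mass. On the region $|\varphi|>2$ we have $(\varphi^2-1)^2\ge\varphi^4/2$, so that part contributes at most $z_\lambda^{-1}\int_{|\varphi|>2}e^{a\varphi^2-\lambda\varphi^4/8}\mathrm{d}\varphi$, which is $O(\lambda^{1/2}e^{-c\lambda})$ and hence bounded. This completes the uniform bound, and weak convergence plus uniform integrability finishes all three limits.
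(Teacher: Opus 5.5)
Your proposal is correct and follows the route the paper has in mind. The paper gives no separate proof and presents the three limits as easy consequences of the single-site weak convergence \eqref{eq:cv of phi4 to Ising}. Your uniform bound on $\int e^{a\varphi^2}\,\mathrm{d}m_\lambda(\varphi)$ supplies exactly the uniform integrability that argument needs. It lets you pass from weak convergence of the finite product measures to convergence of the unbounded integrands $P(\varphi)e^{-\beta H_\Lambda(\varphi)}$, which covers the four-point functions in $\partial_\beta$ and the second moment defining $A$. One cosmetic point: your tail estimate $O(\lambda^{1/2}e^{-c\lambda})$ requires $\lambda\gtrsim a$, which is harmless because only large $\lambda$ matters.
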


\subsubsection{Verification of Definition~\ref{Def:G}}

For $d\geq 2$, the properties of the Ising model listed in Definition \ref{Def:G} are classical facts that
can be derived using the same arguments as for the $1$-component $\varphi^4$ model.
We omit further details.
We stress that we
define $\beta_c$ as
\begin{equation}
\beta_c:=\inf\Big\{\beta \geq 0: \chi(\beta)=\sum_{x\in \mathbb Z^d}G_\beta(x)=\infty\Big\}.
\end{equation}

\subsubsection{Verification of Assumption~\ref{Ass:G}}

\begin{proof}[Proof of \eqref{eq:SL assumption}] Let $\Lambda\subset \mathbb Z^d$ be finite, $0\leq \beta'\leq \beta<\beta_c$, and $x\in \mathbb Z^d$.
The combination of Lemma~\ref{lem:finite volume finite diff} and Proposition~\ref{prop:cv of phi4 to Ising} leads to
\begin{equation}
	\langle \sigma_0\sigma_x\rangle_{\Lambda,\beta}^{\rm Ising}\leq \langle \sigma_0\sigma_x\rangle_{\Lambda,\beta'}^{\rm Ising}+(\beta-\beta')\sum_{u,v\in \mathbb Z^d}\langle \sigma_0\sigma_u\rangle_{\Lambda,\beta}^{\rm Ising}J_{v-u}\langle \sigma_v\sigma_x\rangle_{\Lambda,\beta}^{\rm Ising}.
\end{equation}
We can then pass to the limit $\Lambda\nearrow \mathbb Z^d$ by using Griffiths' inequalities in the same manner as for the $1$-component $\varphi^4$ model. This concludes the proof.
\end{proof}

We will prove the differential lower bound with
\begin{equation}
\label{eq:HIsing}
	H_\beta(x)=3\big[ (\delta_0+\beta J)*(\delta_0+\beta J)\big] (x)(K_\beta*J*K_\beta)(0),
\end{equation}
where $K_\beta=G_\beta+\beta F_\beta$.
By definition, $H_\beta$ satisfies all the properties listed below \eqref{eq:Diff inequ assumption}.

\begin{proof}[Proof of \eqref{eq:Diff inequ assumption}] Let $\Lambda\subset \mathbb Z^d$ be finite, $0\leq \beta<\beta_c$, and $x\in \mathbb Z^d$.
The combination of Lemma~\ref{lem:finite volume lower diff inequ}
(with $S^{(2)}$) and Proposition~\ref{prop:cv of phi4 to Ising}
(with the contribution from $\tilde U^{(4)}$ omitted) leads to
\begin{multline}
	\partial_\beta\langle \sigma_0\sigma_x\rangle_{\Lambda,\beta}^{\rm Ising}\geq \sum_{u,v\in \mathbb Z^d}\langle \sigma_0\sigma_u\rangle_{\Lambda,\beta}^{\rm Ising}J_{v-u}\langle \sigma_v\sigma_x\rangle_{\Lambda,\beta}^{\rm Ising}
	\\
- 3
\sum_{u,v\in \mathbb Z^d}
\sum_{z\in \mathbb Z^d} K_{\Lambda,\beta}^{\rm Ising}(0,z)K_{\Lambda,\beta}^{\rm Ising}(z,u)J_{v-u}K_{\Lambda,\beta}^{\rm Ising}(v,z)K_{\Lambda,\beta}^{\rm Ising}(z,x),
\end{multline}
where $K^{\rm Ising}_{\Lambda,\beta}$ is defined similarly as for the  $\varphi^4$ model.
We can again take the infinite-volume limit by appealing to \cite{Lebo72}.
This gives
\begin{equation}
	\partial_\beta G_\beta(x)\geq (G_\beta*(J-H^{}_\beta)*G_\beta)(x),
\end{equation}
and completes the proof.
\end{proof}

\subsection{The XY model}
\label{sec:XY}

Assumption \ref{Ass:G} can be verified for the XY model by combining Lemmas~\ref{lem:finite volume finite diff}--\ref{lem:finite volume lower diff inequ} with Proposition \ref{prop:cv of phi4 to XY} below to transfer inequalities from the $2$-component $|\varphi|^4$ model to the XY model.
Direct proofs can be obtained using the BFS expansion of the XY model (see \cite{Lamm23}).

\subsubsection{The model}

	Let $d\geq 2$. Let $\Lambda\subset \mathbb Z^d$ be finite, $\beta\geq 0$, and $J$ an admissible interaction.
Let $\mathbb S^1$ denote the unit circle in $\mathbb R^2$,
and let $\mathrm d \sigma_x$ denote the Haar measure on $\mathbb S^1$.
Let $F: (\mathbb S^1)^\Lambda\rightarrow \mathbb R$.
    The XY model on $\Lambda$ is the measure $\langle \cdot \rangle_{\Lambda,\beta}^{\rm XY}$ on $(\mathbb S^1)^\Lambda$ given by
	\begin{equation}
		\langle F(\sigma)\rangle_{\Lambda,\beta}^{\rm XY}:=\frac{1}{Z^{\rm XY}_{\Lambda,\beta}}\int_{(\mathbb S^1)^\Lambda} F(\sigma)\exp(-\beta H_{\Lambda}(\sigma)) \prod_{x\in \Lambda}\mathrm d \sigma_x,
	\end{equation}
	where
	\begin{equation}
		H_{\Lambda}(\sigma)=-\sum_{\{x,y\}\subset \Lambda}J_{y-x}(\sigma_x \cdot \sigma_y), \qquad Z_{\Lambda,\beta}^{\rm XY}=\int_{(\mathbb S^1)^\Lambda}\exp(-\beta H_{\Lambda}(\sigma)) \prod_{x\in \Lambda}\mathrm d \sigma_x.
	\end{equation}
By Ginibre's inequalities, the sequence of measures $\langle \cdot \rangle_{\Lambda,\beta}^{\rm XY}$ admits a weak limit as $\Lambda \nearrow \mathbb Z^d$. We denote it by $\langle \cdot\rangle_\beta^{\rm XY}$. The two-point function is defined for $\beta\geq 0$ and $x\in \mathbb Z^d$ by
\begin{equation}
	G_\beta(x):=\langle \sigma^1_0\sigma^1_x\rangle_{\beta}^{\rm XY}.
\end{equation}
The $2$-component $|\varphi|^4$ model and the XY model are expected to lie in the same universality class.
As in \eqref{eq:cv of phi4 to Ising}, there is convergence of the
$|\varphi|^4$ single-spin distribution:
\begin{equation}\label{eq:cv of phi4 to XY}
\frac{1}{z_\lambda}\exp\Big(-\frac{\lambda}{4}(|\varphi_0|_2^2-1)^2\Big){\rm d}\varphi_0\underset{\lambda \rightarrow \infty}{\longrightarrow} \mathrm d \sigma_0.
\end{equation}
Easy consequences of \eqref{eq:cv of phi4 to XY} are formulated in the
next proposition. Recall the definition of $\langle \cdot\rangle_{\Lambda,\beta,J}=\langle \cdot\rangle_{\Lambda,\beta,J,\lambda,\mu}$ from \eqref{eq:def phi4},
and of $A(\lambda,\mu)$ from \eqref{eq:Aphi4}.

\begin{Prop}\label{prop:cv of phi4 to XY}
Let $\Lambda\subset \mathbb Z^d$ be finite and $\beta\geq 0$. Then, for every $x,y\in \Lambda$,
\begin{align}
	\lim_{\lambda\to\infty}
\langle \varphi^1_x\varphi^1_y\rangle_{\Lambda,\beta,\lambda,-\lambda}&=\langle \sigma^1_x\sigma^1_y\rangle_{\Lambda,\beta}^{\rm XY},
	\\
\lim_{\lambda\to\infty}
	\partial_\beta\langle \varphi^1_x\varphi^1_y\rangle_{\Lambda,\beta,\lambda,-\lambda}&= \partial_\beta\langle \sigma^1_x\sigma^1_y\rangle_{\Lambda,\beta}^{\rm XY},
	\\
\lim_{\lambda\to\infty}
A(\lambda,-\lambda)&= \frac12.\label{eq: cv A XY}
\end{align}

\end{Prop}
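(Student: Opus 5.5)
The statement to prove is Proposition~\ref{prop:cv of phi4 to XY}: for fixed finite $\Lambda$ and fixed $\beta$, the $2$-component $|\varphi|^4$ correlations converge to the XY correlations as $\lambda\to\infty$ with $\mu=-\lambda$, together with their $\beta$-derivatives, and $A(\lambda,-\lambda)\to\frac12$. The plan is to reduce everything to weak convergence of the single-site measure \eqref{eq:cv of phi4 to XY}, upgraded to convergence of integrals of functions with polynomial growth. Since $\Lambda$ is finite, each quantity is a ratio of finite-dimensional integrals, so the whole argument is a dominated-convergence exercise.

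\textbf{Single-site convergence.} With $\mu=-\lambda$, up to a constant we have $g(t)\propto \exp(-\frac{\lambda}{4}(t-1)^2)$, so $\mathrm{d}m_{\lambda,-\lambda}$ is the normalised measure appearing in \eqref{eq:cv of phi4 to XY}. Use polar coordinates $\varphi=r\theta$ with $\theta\in\mathbb S^1$. The measure factorises into the uniform measure on $\theta$ times a radial density proportional to $r\,e^{-\lambda(r^2-1)^2/4}$ on $(0,\infty)$. By Laplace's method, this radial law concentrates at $r=1$. Moreover, all its moments satisfy $\mathbb E[r^k]\to1$, with uniform-in-$\lambda\ge1$ bounds on $\mathbb E[e^{c r^2}]$ for every $c$, because the density is dominated by $C r e^{-(r^2-1)^2/4}$ after normalisation. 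The normaliser is of order $\lambda^{-1/2}$, uniformly comparable. Consequently $\int h\,\mathrm dm_{\lambda,-\lambda}\to\int h\,\mathrm d\sigma$ for every continuous $h$ with $|h(\varphi)|\le Ce^{c|\varphi|^2}$. Applying this to $h=(\varphi^1)^2$ gives $A(\lambda,-\lambda)\to\int (\sigma^1)^2\,\mathrm d\sigma=\frac12$ by rotational symmetry, which is \eqref{eq: cv A XY}.

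\textbf{Finite-volume correlations.} Write
\[
\langle\varphi^1_x\varphi^1_y\rangle_{\Lambda,\beta,\lambda,-\lambda}=\frac{\int \varphi^1_x\varphi^1_y\,e^{-\beta H_\Lambda(\varphi)}\prod_{z\in\Lambda}\mathrm dm_{\lambda,-\lambda}(\varphi_z)}{\int e^{-\beta H_\Lambda(\varphi)}\prod_{z\in\Lambda}\mathrm dm_{\lambda,-\lambda}(\varphi_z)}.
\]
The integrands are continuous, and since $|H_\Lambda(\varphi)|\le \sum_z|\varphi_z|^2$ (using $\sum_x J_x=1$), they are bounded by $C\prod_z e^{\beta|\varphi_z|^2}$. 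Product measures of weakly convergent factors converge weakly. Combined with the uniform exponential-moment bound above, which gives uniform integrability, both numerator and denominator converge to their XY counterparts. The limiting denominator is positive, so the ratio converges. One point to check is the normalisation of $H_\Lambda$: it is written with $\frac12\sum_{x,y}$ for $\varphi^4$ and $\sum_{\{x,y\}}$ for XY. These coincide because $J_0=0$, so the limiting measure is exactly $\langle\cdot\rangle^{\rm XY}_{\Lambda,\beta}$.

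\textbf{Derivatives.} Differentiating in $\beta$ gives
\[
\partial_\beta\langle\varphi^1_x\varphi^1_y\rangle=\langle\varphi^1_x\varphi^1_y(-H_\Lambda)\rangle-\langle\varphi^1_x\varphi^1_y\rangle\langle -H_\Lambda\rangle,
\]
and the same formula holds for XY. Each term is a ratio of integrals of continuous functions with polynomial-times-$e^{\beta\sum|\varphi_z|^2}$ growth. These converge by the same argument, so the derivatives converge.

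The only real obstacle is the uniform integrability. Weak convergence alone does not control the unbounded observables $\varphi^1_x\varphi^1_y$, $H_\Lambda$, or the weight $e^{-\beta H_\Lambda}$. I would handle it through the explicit radial density, using $(r^2-1)^2\ge \frac12 r^4-1$ to bound $\sup_{\lambda\ge1}\int e^{c|\varphi|^2}\,\mathrm dm_{\lambda,-\lambda}<\infty$.
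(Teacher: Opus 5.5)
Your approach is correct and is what the paper intends. The paper gives no proof and presents the proposition as an easy consequence of the single-site weak convergence \eqref{eq:cv of phi4 to XY}. Your reduction to that convergence plus uniform exponential integrability makes this rigorous, including the check that the two normalisations of $H_\Lambda$ agree because $J_0=0$, and the covariance formula for the $\beta$-derivative.

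One step fails as written: your justification of $\sup_{\lambda\ge1}\int e^{c|\varphi|^2}\,\mathrm{d}m_{\lambda,-\lambda}<\infty$.
\begin{itemize}
\item The normalised radial density is not dominated by $Cre^{-(r^2-1)^2/4}$ uniformly in $\lambda$, since its value at $r=1$ is of order $\sqrt{\lambda}$.
\item The inequality $(r^2-1)^2\ge\frac12 r^4-1$ produces a factor $e^{\lambda/4}$, which a normaliser of order $\lambda^{-1/2}$ cannot absorb.
\end{itemize}
Instead, substitute $u=|\varphi|^2=1+v$; in two dimensions $\mathrm{d}\varphi=\frac12\,\mathrm{d}u\,\mathrm{d}\theta$. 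Then complete the square in the numerator and restrict the denominator to $v\ge 0$:
\[
\int e^{c|\varphi|^2}\,\mathrm{d}m_{\lambda,-\lambda}(\varphi)=\frac{\int_{-1}^{\infty}e^{c(1+v)-\lambda v^2/4}\,\mathrm{d}v}{\int_{-1}^{\infty}e^{-\lambda v^2/4}\,\mathrm{d}v}\le\frac{e^{c+c^2/\lambda}\sqrt{4\pi/\lambda}}{\frac12\sqrt{4\pi/\lambda}}=2e^{c+c^2/\lambda}.
\]
Equivalently, under $m_{\lambda,-\lambda}$ the angle is uniform and independent of $|\varphi|^2$, and $|\varphi|^2$ is an $N(1,2/\lambda)$ variable conditioned to be nonnegative. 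This gives both the weak convergence and the uniform integrability at once. With it in place, the rest of your argument (product measures, ratios, derivatives) goes through unchanged.
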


\subsubsection{Verification of Definition~\ref{Def:G}}

For $d\geq 2$, the properties of the XY model listed in Definition \ref{Def:G} are classical facts that can be
derived using the  same arguments as for the $2$-component $|\varphi|^4$ model.
We omit the details. Again we define $\beta_c$ as
\begin{equation}
\beta_c:=\inf\Big\{\beta \geq 0: \chi(\beta)=\sum_{x\in \mathbb Z^d}G_\beta(x)=\infty\Big\}.
\end{equation}

\subsubsection{Verification of Assumption~\ref{Ass:G}}

The infinite-volume limit of the derivative is again justified
using correlation inequalities \cite{Lebo72,BFS82}.
We omit the details, which proceed as they do for the Ising model.  The limiting
value of $A$ in \eqref{eq: cv A XY} gives rise to an extra factor $4$, with the result
that
\begin{equation}
\label{eq:HXY}
	H_\beta(x)=12 \big[ (\delta_0+\beta J)*(\delta_0+\beta J) \big] (x) (K_\beta*J*K_\beta)(0),
\end{equation}
with $K_\beta=G_\beta+\beta F_\beta$.

\subsection{Lattice trees}
\label{sec:LT}

\subsubsection{The model}
\label{sec:LTmodel}

Let $d > 2$, $J$ be an admissible kernel, and let $E = \{\{x,y\}: J_{y-x}>0 \}$.
A \emph{lattice tree} is a finite acyclic subgraph of
the infinite graph $(\mathbb Z^d, E)$.
We write $|T|$ for the number of bonds (also called edges) in $T$.
For $p \ge 0$ and for a lattice tree $T$, let
\begin{equation}
    (p J)^T = \prod_{\{u,v\} \in T}p J_{v-u}.
\end{equation}
Let $\mathcal{T}_{0,x}$ denote the set of lattice trees containing
the vertices $0$ and $x$, and let $\mathcal T_0 = \mathcal T_{0,0}$.
The \emph{two-point function}, \emph{one-point function}, and \emph{susceptibility}
are defined by
\begin{align}
    \rho_p(x) = \sum_{T \in \mathcal{T}_{0,x}} (pJ) ^{T},
    \qquad
    g_p = \sum_{T \in \mathcal{T}_{0}} (pJ) ^{T} = \rho_p(0),
    \qquad
    \hat\chi(p) = \sum_{x\in \mathbb Z^d} \rho_p(x).
\end{align}
We also define
\begin{align}
    \hat\xi(p)^2 &= \frac{1}{\hat\chi(p)}\sum_{x \in \mathbb{Z}^d} |x|_2^2
    (J*\rho_p)(x).
\end{align}
 A standard subadditivity argument  \cite{BFG86,HS90b}
shows that the susceptibility is finite below a critical point $p_c\in (0,\infty)$,
and that it diverges at $p_c$ at least as fast as
\begin{equation}
\label{eq:chihat-diverges}
    \hat\chi(p) \gtrsim (p_c-p)^{-1/2}.
\end{equation}
Since a lattice tree $T$ has $|T|+1$ vertices, $g_p$ and $\hat \chi(p)$ are related by
\begin{align}
\label{eq:dg}
    \frac{\mathrm{d}(pg_p)}{\mathrm{d}p} & = \sum_{T \in \mathcal{T}_{0}}(|T|+1) (pJ)^T
    = \sum_{T \in \mathcal{T}_{0}}\sum_{x\in T} (pJ)^T =\hat\chi(p).
\end{align}

In dimensions $d>8$, lattice trees should have critical exponents
$\gamma = \frac 12$ and $\nu = \frac 14$ \cite{HS90b,HS92c},
so this does not fit immediately into our black box which
is designed for situations where $\gamma =1$ and $\nu = \frac 12$.
To fix this mismatch, we make the following change of variables.
For $p \in [0,p_c)$, let
\begin{equation}
    \beta =\beta(p) = pg_p.
\end{equation}
We denote the inverse
of the strictly increasing function $p\mapsto \beta(p)$
by $p=p(\beta)$.

The critical point is defined by $ \beta_c= \sup_{p \le p_c}\beta(p)$.
 A priori, it is possible that $\beta_c=\infty$
because we do not initially know that the critical one-point function $g_{p_c}$ is finite.
The possibility that $\beta_c=\infty$ is allowed in Assumption~\ref{Ass:G}.
A posteriori, we do learn that $\beta_c$ is finite (when $d>8$ and $\sigma_J$ is large).

Let $\deg_T(x)$ denote the number of bonds in the lattice tree $T$ that
are incident to the vertex $x\in T$.
We consider the following reduced set of trees:
\begin{equation}
    \mathcal T^1_{0,0} = \{0\}, \qquad
    \mathcal{T}^1_{0,x} = \{T \in \mathcal{T}_{0,x} | \deg_{T}(x) = 1 \}
    \;\; \text{if $x \neq 0$},
\end{equation}
and define a modified \emph{two-point function} by
\begin{equation}
    G_{\beta}(0) = 1, \qquad
    G_\beta(x) =
    \sum_{T \in \mathcal{T}^1_{0,x}} (p(\beta)J) ^{|T|} \;\;\text{if $x \neq 0$}.
\end{equation}

We will show that $G_\beta$ satisfies Assumption~\ref{Ass:G},  and
satisfies Assumption~\ref{Ass:E}
when $d>8$ and $\sigma_J$ is
sufficiently large.  This implies that the conclusion of
Theorem~\ref{thm:maintheorem-betac} applies
to $G_\beta$ with
\begin{equation}
    \chi(\beta) = \sum_{x\in \mathbb Z^d} G_\beta(x),
    \qquad
    \xi(\beta)^2 = \frac{1}{\chi(\beta)}\sum_{x\in \mathbb Z^d}|x|_2^2 G_\beta(x).
\end{equation}
In particular, $\beta_c<\infty$.
We also show that $\chi(\beta_c)=\infty$, so
the conclusions of Theorem~\ref{Thm:gamma-nu} also apply to $G_\beta$.

Those conclusions do not immediately refer to the original parametrisation
of the model in terms of $p$, rather than $\beta$.
Therefore, before verifying
Assumptions~\ref{Ass:G} and \ref{Ass:E}, we show how to draw conclusions for
$\rho(p)$, $g_p$, $\hat\chi(p)$ and $\hat\xi(p)$.

\begin{Thm}
\label{Thm:LTp}
Let $d>8$. For every admissible $J$ with $R_J$ sufficiently large, for every $p <p_c$, and with $E=O(R_J^{-d})$,
\begin{gather}
\label{eq:LTrhobd}
        \rho_p(x)  \lesssim
     \delta_0(x)
    + \frac{1}{\sigma_J^d}
    \left(\frac{\sigma_J}{\sigma_J \vee|x|} \right)^{d-2-\varepsilon}
    \exp\left(-\bfc\frac{|x|}{\xi(\beta)}\right),
    \\
\label{eq:LTchibd}
    \hat\chi(p) \asymp \frac{1}{(p_c-p)^{1/2}},
    \\
\label{eq:LTxibd}
     \frac{1}{(p_c-p)^{\frac 14 -E}}
     \lesssim \hat\xi(p)
     \lesssim \frac{1}{(p_c-p)^{\frac 14 +E}},
     \\
\label{eq:LTgbd}
     g_{p_c}-g_p \asymp  (p_c-p)^{1/2} .
\end{gather}
\end{Thm}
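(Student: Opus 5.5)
The plan is to take as given what is verified later in this section: $G_\beta$ satisfies Definition~\ref{Def:G} and Assumptions~\ref{Ass:G} and~\ref{Ass:E}, and $\chi(\beta_c)=\infty$. Then Theorems~\ref{thm:maintheorem-betac} and~\ref{Thm:gamma-nu} apply in the variable $\beta=pg_p$. The remaining work is to translate their conclusions back to the variable $p$. Two ingredients do this: a pointwise comparison between $\rho_p$ and $G_{\beta(p)}$, and the identity $\D\beta/\D p=\hat\chi(p)$ from \eqref{eq:dg}.

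\textbf{Step 1: pointwise comparison.} For $x\neq 0$ we have $\mathcal T^1_{0,x}\subset\mathcal T_{0,x}$, so $G_\beta(x)\le\rho_p(x)$. Conversely, take $T\in\mathcal T_{0,x}$ and cut it at $x$. This gives a tree in which $x$ has degree one along the branch towards $0$, together with the union of the remaining branches at $x$. Dropping the mutual avoidance constraint, the remaining branches are bounded by the one-point function, so
\[
G_\beta(x)\le\rho_p(x)\le g_p\,G_\beta(x) \qquad (x\ne 0),
\]
with $\rho_p(0)=g_p$ and $G_\beta(0)=1$.

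Next we bound $g_p$. We have $g_p\ge 1$. Since $\beta_c<\infty$ by Theorem~\ref{thm:maintheorem-betac}, and $p\,g_p\le\beta_c$ with $p_cg_{p_c}\ge 1$ (from $\beta_c\ge 1$), the ratio $g_p/g_{p_0}$ stays bounded for $p\ge p_0$. Small $p$ is trivial. Hence $g_p\asymp 1$ uniformly on $[0,p_c]$.

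With this, \eqref{eq:LTrhobd} follows immediately from \eqref{eq: target estimate below beta(delta)}. Summing the comparison gives $\hat\chi(p)\asymp\chi(\beta(p))$. Summing it against $|x|_2^2$ (after convolving with $J$) gives $\hat\xi(p)^2\asymp\xi(\beta(p))^2$.

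\textbf{Step 2: the susceptibility.} By \eqref{eq:dg}, $\beta_c-\beta(p)=\int_p^{p_c}\hat\chi(q)\,\D q$. This requires $\beta(p)\uparrow\beta_c$ as $p\uparrow p_c$, which holds by the definition of $\beta_c$ and monotonicity. Combining \eqref{eq:gamma} with Step~1 gives, for $f:=\hat\chi$,
\[
\frac{1}{f(p)}\asymp\int_p^{p_c}f(q)\,\D q=:I(p).
\]
Since $I'=-f\asymp -1/I$, we get $(I^2)'\asymp -1$. Integrating with $I(p_c)=0$ yields $I(p)\asymp(p_c-p)^{1/2}$, hence $f(p)\asymp(p_c-p)^{-1/2}$, which is \eqref{eq:LTchibd}. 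The lower bound is also consistent with \eqref{eq:chihat-diverges}.

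\textbf{Step 3: correlation length and one-point function.} Inserting $\chi\asymp(p_c-p)^{-1/2}$ into \eqref{eq:nu} and using $\hat\xi^2\asymp\xi^2$ gives $\hat\xi(p)^2$ between $(p_c-p)^{-\frac12(1-2E)}$ and $(p_c-p)^{-\frac12\frac{1+E}{1-E}}$, up to constants. Adjusting the $O(E)$ in the exponent gives \eqref{eq:LTxibd}.

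For \eqref{eq:LTgbd}, write
\[
g_{p_c}-g_p=\frac{\beta_c-\beta}{p_c}+\beta\Big(\frac1{p_c}-\frac1p\Big).
\]
The first term is $\asymp(p_c-p)^{1/2}$ by Step~2. The second term is $O(p_c-p)$, which is of lower order for $p$ near $p_c$. For $p$ bounded away from $p_c$ both sides are of order one.

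\textbf{Main obstacle.} I expect the delicate point to be the boundary behaviour at $p_c$: justifying $\beta(p)\to\beta_c$, using $\chi(\beta_c)=\infty$ (rather than a finite limit) in the integration of Step~2, and showing that $g_{p_c}$ is finite so that $g_p\asymp 1$ holds up to and including $p_c$. For this I would use monotone convergence together with the finiteness of $\beta_c$ supplied by Theorem~\ref{thm:maintheorem-betac}.
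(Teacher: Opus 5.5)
Your proposal is correct and follows the same route as the paper. Both arguments compare $\rho_p$ pointwise with $G_{pg_p}$, transfer the comparison to $\hat\chi$ and $\hat\xi$, apply Theorem~\ref{Thm:gamma-nu}, and convert $\beta_c-\beta$ into $(p_c-p)^{1/2}$ using $\D\beta/\D p=\hat\chi(p)$. Your integration of $(I^2)'\asymp-1$ in $p$ is the paper's integration of $\D p/\D\beta\asymp\beta_c-\beta$ in $\beta$, written in the other variable.

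The one real difference is how the comparison factor is bounded. You use $\rho_p\le g_p\,G_{pg_p}$ and $g_p\le g_{p_c}=\beta_c/p_c$. The paper instead uses $\rho_p(x)\le G_{pg_p}(x)\prod_{z:J_z>0}(1+G_{pg_p}(z))$ and controls the product with $G_{pg_p}(z)\le\bfC\sigma_J^{-d}$ and $R_J\asymp\sigma_J$. The paper's version gives a $J$-independent constant directly, and that uniformity is what gives the $\sigma_J^{-d}$ in \eqref{eq:LTrhobd} its meaning. Your version needs a $J$-uniform lower bound on $p_c$, which ``small $p$ is trivial'' leaves implicit. It is easy to supply: counting plane trees gives $\sum_{T\in\mathcal T_0,\,|T|=n}J^T\le 4^n$. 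Hence $p_c\ge\frac14$, and with $\beta_c\le 2$ you get $g_p\le 2/p_c\le 8$.

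One correction: $\chi(\beta_c)=\infty$ is not verified elsewhere in the section. The paper derives it inside this very proof, so you cannot take it as given. Your Step~1 supplies it at once, since $\chi(\beta(p))\ge\hat\chi(p)/g_p$ and \eqref{eq:chihat-diverges} holds.
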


\begin{Rem}
It is a consequence of \eqref{eq:betacbd} that $1 \le \beta_c \le 1+O(E)$.
In fact, this occurs due to a cancellation in $p_cg_{p_c}$,
because $p_c\sim e^{-1}$ and $g_{p_c}\sim e$ under the hypotheses
of Theorem~\ref{Thm:LTp}, and also for the nearest-neighbour model for $d$ sufficiently large \cite{MS13,KS24,Penr94}.
\end{Rem}

\begin{proof}[Proof of Theorem~\textup{\ref{Thm:LTp}}]
We assume that $G_\beta$ has been proven to satisfy Assumptions~\ref{Ass:G} and \ref{Ass:E}
for kernels $J$ with range $R_J$ sufficiently large, so
that the conclusion of Theorem~\ref{thm:maintheorem-betac} applies to $G_\beta$.

A tree $T \ni x$ can be decomposed into at most $(2R_J+1)^d$ trees in $\mathcal{T}^1_{x,x}$,
since the degree of $x$ in $(\mathbb Z^d,E)$ is $(2R_J+1)^d-1$.  Therefore,
together with a trivial lower bound,
we have
\begin{equation}\label{eq:comp rho G}
    G_{p g_p}(x) \le \rho_p(x) \leq G_{p g_p}(x)
    \Big( 1+ \max_{z:J_z>0}G_{pg_p}(z) \Big)^{(2R_J+1)^d}
    .
\end{equation}

By Theorem~\ref{thm:maintheorem-betac},
$G_{p g_p}(z) \leq  \bfC \sigma_J^{-d}$.
Since $R_J \asymp \sigma_J$ for every admissible kernel (here we use Definition~\ref{Def:J}$(iii)$), it follows that
\begin{equation}
    G_{p g_p}(x) \le \rho_p(x) \lesssim  G_{p g_p}(x) \le
     \delta_0(x)
    + \frac{\bfC}{\sigma_J^d}
    \left(\frac{\sigma_J}{\sigma_J \vee|x|} \right)^{d-2-\varepsilon}
    \exp\left(-\bfc\frac{|x|}{\xi(\beta)}\right).
\end{equation}
This proves \eqref{eq:LTrhobd}.

Equation \eqref{eq:comp rho G} also shows that
\begin{equation}
    \chi(pg_p) \le \hat\chi(p) \lesssim \chi(pg_p),
    \qquad
    \hat\xi(p) \asymp \xi(pg_p).
\end{equation}
In particular, the relation between $\hat\chi$ and $\chi$ implies that
$\beta_c=p_cg_{p_c}$, and then \eqref{eq:chihat-diverges} implies that $\chi(\beta_c)=\infty$.   This tells us that the conclusions of Theorem~\ref{Thm:gamma-nu} apply to $\chi(\beta)$, $\xi(\beta)$, and $\beta_c$.
In particular, with $E=O(R_J^{-d})$ small, we
see that
 \begin{align}
 \label{eq:chihatLT}
    \frac{1}{p_cg_{p_c}-pg_p}
    &\le \hat{\chi}(p)
    \lesssim
    \frac{1}{p_cg_{p_c}-pg_p },
\\
    \left( \frac{1}{p_cg_{p_c}-pg_p} \right)^{1-E}
    &\lesssim
    \frac{\hat{\xi}(p)^2}{\sigma_J^2}
    \lesssim
    \left( \frac{1}{p_cg_{p_c}-pg_p} \right)^{1+E}
    .
\label{eq:xihatLT}
 \end{align}
To obtain the bounds \eqref{eq:LTchibd}--\eqref{eq:LTgbd} from the above,
we need to compare $p_c g_{p_c}-p g_p$ and $p_c-p$.
For this, we apply \eqref{eq:dg} to obtain
\begin{equation}
    \frac{ \mathrm{d} p(\beta)}{\mathrm{d}\beta} = \frac{1}{\hat{\chi}(p(\beta) )}.
\end{equation}
With \eqref{eq:chihatLT}, this gives
\begin{equation}
    \beta_c-\beta\lesssim
     \frac{ \mathrm{d} p(\beta)}{\mathrm{d}\beta}
     \leq \beta_c-\beta,
\end{equation}
and then integration over the interval $[\beta,\beta_c]$ gives
\begin{equation}
    (\beta_c-\beta)^2\lesssim p(\beta_c)-p(\beta) \le \frac{1}{2} (\beta_c-\beta)^2.
\end{equation}
Since $\beta   = pg_p$, this implies that
\begin{equation}
\label{eq:LTpg}
    \sqrt{2}(p_c-p)^{1/2}\leq p_c g_{p_c}-p g_p \lesssim (p_c-p)^{1/2}.
\end{equation}
With \eqref{eq:chihatLT}--\eqref{eq:xihatLT}, this proves \eqref{eq:LTchibd}--\eqref{eq:LTxibd}, and hence that the critical
exponents
for the original problem are $\gamma= \frac 12$ and $\nu = \frac 14 \pm O(E)$.
It also follows from \eqref{eq:LTpg} that
\begin{equation}
    \frac{\sqrt{2}}{p_c}(p_c-p)^{1/2}\Big[ 1 - \frac{g_p}{\sqrt{2}}(p_c-p)^{1/2} \Big]
    \le
    g_{p_c}-g_p
    \lesssim \frac{1}{p_c} (p_c-p)^{1/2},
\end{equation}
which proves \eqref{eq:LTgbd}.
This completes the proof.
\end{proof}

\subsubsection{Verification of Definition~\ref{Def:G}}

It follows from its definition that $G_\beta$ satisfies the following properties: $G_0=\delta_0$, $G_\beta$ is monotone and differentiable
 (by the inverse function theorem) for $\beta\in [0,\beta_c)$, and $G_\beta$ is $\mathbb Z^d$-symmetric.

To see that the function $x \mapsto G_\beta(x)$ decays exponentially
for each fixed $\beta\in [0,\beta_c)$,
we observe that
a tree containing $0,x$ must contain at least $|x|/R_J$ bonds.
Given $p<p_c$, let $q=\frac12 (p+p_c)$.  Then $q<p_c$ and
\begin{equation}
    G_\beta(x) \le \rho_{p(\beta)}(x) \le
    \sum_{\substack{T \in \mathcal{T}_{0,x} \\ |T| \ge |x|/R_J}} (pJ)^T
    \le
    \left(\frac{p}{q}\right)^{|x|/R_J} \sum_{T \in \mathcal{T}_{0,x}} (qJ)^T
    \le
    \left(\frac{p}{q}\right)^{|x|/R_J} \hat\chi(q).
\end{equation}
Since $\hat{\chi}(q)<\infty$, this proves the desired exponential decay.

Finally, the limit $\lim_{\beta \uparrow \beta_c}G_{\beta}(x) = G_{\beta_c}(x)$
exists by monotone convergence.

\subsubsection{Verification of Assumption~\ref{Ass:G}}

Since $G_\beta(0)=G_{\beta'}(0)=1$,
there is nothing to prove for $x=0$.  We therefore assume that $x\neq 0$.
We begin with a decomposition of  a tree $T \in \mathcal{T}^1_{0,x}$
into subtrees.
The \emph{backbone} $\Gamma_{0,x}$ of $T$ is
the unique path in $T$ between $0$ and $x$.
In particular, $\Gamma_{0,0}=\{0\}$.
We order the backbone vertices by setting $y<z$
for $y,z \in \Gamma_{0,x}$ if $y$ is closer to $0$ than $z$ (for the graph metric on $T$).
Similarly, we order the backbone bonds:
the least bond is adjacent to $0$ and the
last bond is adjacent to $x$.

Deletion of the backbone bonds from $T$ leaves
behind non-intersecting connected subgraphs of $T$
which we refer to as \emph{ribs} $R_0, \ldots, R_{x}$.
Given a directed bond $e = (u,v) \in \Gamma_{0,x}$ with $u \leq v$,
we define $R_e$ to be the union of the rib $R_u$ containing $u$
and the bond $\{u,v\}$.
With these definitions, $T$ is the bond-disjoint union of the $R_e$, as $e$ ranges over the
backbone bonds of $T$. The requirement that $T\in \mathcal{T}^1_{0,x}$ ensures that
when $e_n$ is its last edge, the tree $R_{e_n}$ includes  the vertex $x$ which is
the entire rib $R_x$, so nothing is omitted from $T$ in the union of all $R_e$.
Thus, we have
\begin{equation}
\label{eq:Tdecomp}
    T = \bigsqcup_{e \in \Gamma_{0,x}} R_e ,
    \qquad |T| = \sum_{e \in \Gamma_{0,x}}|R_{e} |
\end{equation}
For an illustration, see Figure~\ref{fig:backbone}.

The verification of Assumption~\ref{Ass:G} is
a small variant of the skeleton inequality arguments
of \cite{BFG86,HS90b,TH87}, as follows.

\begin{figure}[t]
\begin{center}
\includegraphics[scale = 1.1]{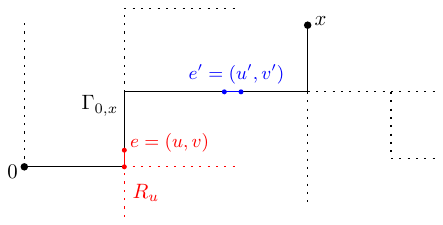}
\caption{An example of a tree $T \in \mathcal{T}^1_{0,x}$ with
its backbone $\Gamma_{0,x}$ in black, and its ribs $R_y$ $(y\in \Gamma_{0,x})$ in dotted lines.
The tree $R_e$ is coloured red.  The tree $R_{e'}$ is coloured blue;
in this case $R_{u'}$ is the single vertex $\{u'\}$.}
\label{fig:backbone}
\end{center}
\end{figure}

\begin{proof}[Proof of \eqref{eq:SL assumption}.]
As mentioned above, we assume that $x\neq 0$.
For $0\leq \beta'<\beta<\beta_c$, let $p = p(\beta)$ and $p'= p(\beta')$.
The identity
\begin{equation}
    \prod_{i=1}^n a_i -  \prod_{i=1}^n b_i
    =
    \sum_{i=1}^n \Big( \prod_{\ell<i}b_\ell\Big)\Big( \prod_{j>i}a_j\Big)(a_i-b_i)
\end{equation}
holds for any real numbers $a_i,b_i$ (an empty product equals $1$).  In particular, by \eqref{eq:Tdecomp},
\begin{equation}
    (pJ)^{T}-(p'J)^{T} = \sum_{e \in \Gamma_{0,x}} (p'J)^{\sqcup_{f <e} R_f}
    (pJ)^{\sqcup_{g>e} R_g} \left((pJ)^{R_e}-(p'J)^{R_e}\right).
\end{equation}
Given a (directed) backbone bond $e=(u,v)$,
we define trees $T_e^- \in \mathcal{T}_{0,u}^1$ and $T_e^+ \in \mathcal{T}_{v,x}^1$ by
\begin{equation}
    T_e^- = \bigsqcup_{f <e} R_f,  \qquad  T_e^+ = \bigsqcup_{g >e} R_g.
\end{equation}
An example of this decomposition is illustrated in Figure~\ref{fig:decomptrees}. With this notation, we have
\begin{equation}
    (pJ)^{T}-(p'J)^{T}
    = \sum_{e \in \Gamma_{0,x}} (p'J)^{T_e^{-}}
    \left((pJ)^{R_e}-(p'J)^{R_e}\right)
    (pJ)^{T_e^{+}} .
\end{equation}
After summation over
all $T\in \mathcal{T}^1_{0,x}$, this gives
\begin{equation}
\label{eq:LT1}
    G_{\beta}(x)-G_{\beta'}(x)
    =
    \sum_{T\in \mathcal{T}^1_{0,x}}
    \sum_{e \in \Gamma_{0,x}} (p'J)^{T_e^{-}} \left((pJ)^{R_e}-(p'J)^{R_e}\right) (pJ)^{T_e^{+}}
    .
\end{equation}

\begin{figure}[h]
\begin{center}
\includegraphics[scale = 1.1]{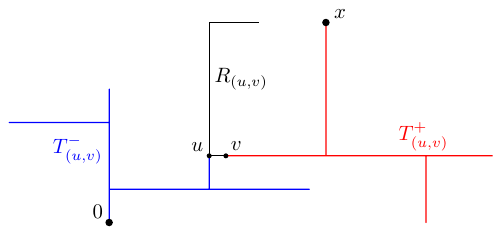}
\caption{An example of a tree $T \in \mathcal{T}^1_{0,x}$, backbone bond $e=(u,v)$, and the decomposition into $T^-_e$, $R_e$, $T^+_e$.
If $u=0$ then $T^-_{(u,v)}$ is empty, and if $v=x$ then
$T^+_{(u,v)}$ is empty.}
\label{fig:decomptrees}
\end{center}
\end{figure}

Fix $e=(u,v)$ with $u<v$.
In the summation on the right-hand side of \eqref{eq:LT1} there are avoidance
constraints between $T_e^-$, the rib $R_u$, the directed edge $(u,v)$, and $T_{(u,v)}^+$. More precisely, it is required that they are bond disjoint and that their union forms a lattice tree.
We write $\1_{C}(T^-,R,(u,v),T^+)$ for the indicator that these constraints hold.  We can then reorganise the summation in \eqref{eq:LT1} as
\begin{align}
    &G_\beta(x)-G_{\beta'}(x)
    \\ \nonumber & \qquad = \sum_{u,v \in \mathbb{Z}^d}
    \sum_{\substack{T^- \in \mathcal{T}^1_{0,u} \\ R \in \mathcal{T}_{u,u}\\ T^+ \in \mathcal{T}^1_{v,x}} }
    (p'J)^{T^{-}} J_{v-u}\left(p(pJ)^{R}-p'(p'J)^{R}\right) (pJ)^{T^{+}} \1_{C}(T^-,R,(u,v),T^+)
    .
\label{eq: maindecompLT}
\end{align}
To prove  \eqref{eq:SL assumption}, we simply bound the indicator function by $1$ and obtain
\begin{align}
    G_\beta(x)-G_{\beta'}(x) &\le
    (G_{\beta'}*(pg_p-p'g_{p'})J*G_\beta)(x)
    \nonumber \\ & =
    (\beta-\beta')(G_{\beta'}*J*G_\beta)(x)
    .
\end{align}
This completes the proof of \eqref{eq:SL assumption}.
\end{proof}

\begin{proof}[Proof of \eqref{eq:Diff inequ assumption}]
We divide \eqref{eq: maindecompLT} by $p-p'$ and then take the limit $p' \rightarrow p$.
By the chain rule and the fact that $\tfrac{\mathrm{d}}{\mathrm{d}p}p(pJ)^R=(1+|R|)(pJ)^R$, we obtain
\begin{align}
     \frac{\partial G_\beta(x)}{\partial \beta}
     \hat{\chi}(p)
     = \sum_{u,v \in \mathbb{Z}^d}
     \sum_{\substack{T^- \in \mathcal{T}^1_{0,u} \\ R \in \mathcal{T}_{u,u}\\ T^+ \in \mathcal{T}^1_{v,x}} }
     (pJ)^{T^{-}} J_{v-u} (1+|R|) (pJ)^{R}(pJ)^{T^{+}} \1_{C}(T^+,T^-,R,(u,v)).
\end{align}
Since the number of vertices in $R$ is $1+|R|$ (i.e., $1+|R|=\sum_{y\in \mathbb Z^d}\mathds{1}_{y\in R}$), this gives
\begin{equation}\label{eq:expression derivative g lattice trees}
    \frac{\partial G_\beta(x)}{\partial \beta}
    =
    \frac{1}{\hat{\chi}(p)}
    \sum_{y \in \mathbb{Z}^d}
    \sum_{u,v \in \mathbb{Z}^d}
    \sum_{\substack{T^- \in \mathcal{T}^1_{0,u} \\ R \in \mathcal{T}_{u,y}
    \\ T^+ \in \mathcal{T}^1_{v,x}}}
    (pJ)^{T^{-}} J_{v-u} (pJ)^{R}(pJ)^{T^{+}} \1_{C}(T^+,T^-,R,(u,v)).
\end{equation}
We then write $\1_C$ as $1-(1-\1_C)$. As in the derivation of \eqref{eq:SL assumption} above, the contribution involving ``$1$'' in \eqref{eq:expression derivative g lattice trees} is equal to
\begin{equation}
    \frac{1}{\hat\chi(p)} \hat\chi(p) (G_{\beta}*J*G_\beta)(x)
    =
    (G_{\beta}*J*G_\beta)(x),
\end{equation}
which is the leading term in \eqref{eq:Diff inequ assumption}.

The factor $1-\mathds{1}_C$ enforces certain intersections to occur.
There are three possible (and non-exclusive) types of intersections:  $T^-$ and $R$ intersect, $R$ and $T^+$ intersect, or $T^-$ and $T^+$ intersect. See Figure~\ref{fig:LTH} for an illustration. Therefore, the contribution coming from the $1-\1_C$ term in \eqref{eq:expression derivative g lattice trees} is
bounded by $(\mathrm{I})+(\mathrm{II})+(\mathrm{III})$, where
\begin{align}
    (\mathrm{I}) = \frac{1}{\hat{\chi}(p)}
    \sum_{y \in \mathbb{Z}^d}
    \sum_{u,v \in \mathbb{Z}^d}
     \sum_{\substack{w \in \mathbb{Z}^d\\ w \neq u} }
    \sum_{\substack{T^- \in \mathcal{T}^1_{0,u} :\,  T^-\ni w \\  R \in \mathcal{T}_{u,y} :\, R\ni w   \\ T^+ \in \mathcal{T}^1_{v,x}}} (pJ)^{T^{-}} J_{v-u} (pJ)^{R}(pJ)^{T^{+}} , \\
    (\mathrm{II}) = \frac{1}{\hat{\chi}(p)}
   \sum_{y \in \mathbb{Z}^d}
    \sum_{u,v \in \mathbb{Z}^d}
    \sum_{w \in \mathbb{Z}^d }
    \sum_{\substack{T^- \in \mathcal{T}^1_{0,u}\\ R \in \mathcal{T}_{u,y} :\, R\ni w   \\ T^+ \in \mathcal{T}^1_{v,x} :\, T^+\ni w   }} (pJ)^{T^{-}} J_{v-u} (pJ)^{R}(pJ)^{T^{+}}
    ,\\
    (\mathrm{III})  = \frac{1}{\hat{\chi}(p)}
    \sum_{y \in \mathbb{Z}^d}
    \sum_{u,v \in \mathbb{Z}^d}
    \sum_{w \in \mathbb{Z}^d }
    \sum_{\substack{T^- \in \mathcal{T}^1_{0,u} :\, w \in T^-\\
    R \in \mathcal{T}_{u,y}\\
    T^+ \in \mathcal{T}^1_{v,x} :\:w \in T^+}} (pJ)^{T^{-}} J_{v-u} (pJ)^{R}(pJ)^{T^{+}}.
\label{eq: 3errors}
\end{align}
The restriction $w\neq u$ in (I) occurs because the
intersection between $T^-$ and $R$
must be in addition to
the common point $u$ that both of these trees are required to contain simply
by definition.

\begin{figure}[htb]
\begin{center}
\includegraphics[scale = 0.8]{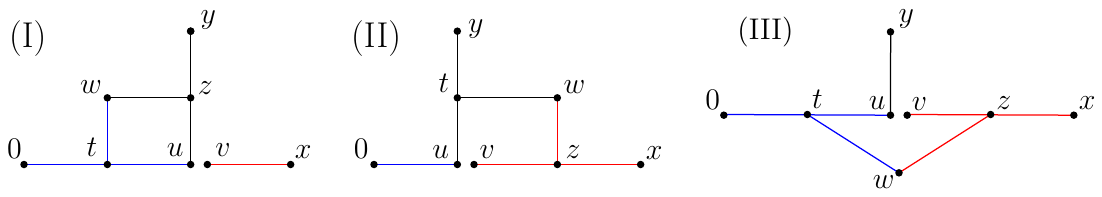}
\caption{An illustration of the error terms $({\rm I})$, $({\rm II})$, $({\rm III})$. For clarity, branches
not contributing to intersections are not shown.}
\label{fig:LTH}
\end{center}
\end{figure}

\begin{figure}[htb]
\begin{center}
\includegraphics[scale = 0.85]{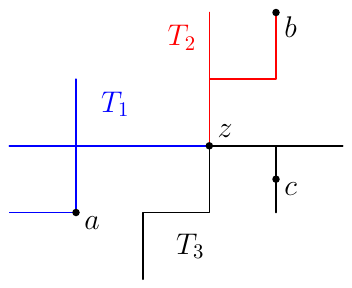}
\caption{An example of a decomposition for the tree graph inequality.}
\label{fig:treegraph}
\end{center}
\end{figure}

To estimate these terms, we use a tree-graph inequality.
If a tree $T$ contains three points $a,b,c$, then there exists a unique point $z$ such that the three points are bond-disjointly connected to $z$ in $T$. We can therefore divide $T$ into
three subtrees: $T_1 \in \mathcal{T}_{a,z}^1$, $T_2 \in \mathcal{T}_{b,z}^1$ and $T_3 \in \mathcal{T}_{c,z}$.
An example illustrating this decomposition is given in Figure~\ref{fig:treegraph}.
After removing the avoidance constraint between the subtrees,
we find that the sums over $T^-$ and $T^+$ in
\eqref{eq: 3errors} can be bounded above as
 \begin{align}
    \sum_{T^- \in \mathcal{T}^1_{0,u} :\, w \in T^-} (pJ)^{T^{-}} &\leq \sum_{z \in \mathbb{Z}^d}\sum_{\substack{T_1  \in \mathcal{T}^1_{0,z} \\T_2  \in \mathcal{T}^1_{w,z}
    \\
    T_3\in \mathcal{T}^1_{z,u}}}  (pJ)^{T_1}  (pJ)^{T_2}  (pJ)^{T_3}
    \leq \sum_{z \in \mathbb{Z}^d} G_{\beta}(z)G_{\beta}(z-w) G_{\beta}(u-z),
    \\
    \sum_{T^+ \in \mathcal{T}^1_{v,x} :\, w \in T^+} (pJ)^{T^{+}}& \leq
    \sum_{t \in \mathbb{Z}^d}\sum_{\substack{T_1  \in \mathcal{T}^1_{v,t}
    \\
    T_2  \in \mathcal{T}^1_{w,t}  \\ T_3\in \mathcal{T}^1_{t,x}}}  (pJ)^{T_1}  (pJ)^{T_2}  (pJ)^{T_3}
    \leq  \sum_{t  \in \mathbb{Z}^d} G_{\beta}(t-v)G_{\beta}(t-w) G_{\beta}(x-t).
 \end{align}
The factor $1/\hat{\chi}(p)$ in $(\mathrm{III})$ is cancelled by the sums over $R$ and $y$.
Inserting the above bounds into $(\mathrm{III})$ yields \begin{equation}
    (\mathrm{III})\leq \Big(G_\beta *\Big[[G_{\beta} * J *G_\beta]\cdot[G_\beta*G_\beta] \Big]*G_\beta\Big)(x).
 \end{equation}
An almost identical calculation for the terms $(\mathrm{I})$ and $(\mathrm{II})$ shows that the sum $(\mathrm{I})+(\mathrm{II})+(\mathrm{III})$ is bounded by $(G_\beta*H_\beta*G_\beta)(x)$, where
\begin{equation}
\label{eq:HLT}
    H_{\beta} = \Big(\big[G_{\beta} \cdot [G_{\beta}*G_{\beta}*G_{\beta}] - \delta_0 \big] *J
    +[J*G_{\beta}]\cdot [G_{\beta}*G_{\beta}*G_{\beta}]
    +[G_{\beta}*J*G_{\beta}] \cdot[ G_{\beta}*G_{\beta}]\Big).
\end{equation}
A detail for the bound on $(\mathrm{I})$ is that the square in its depiction in Figure~\ref{fig:LTH}
cannot have all four of its vertices be identical in the intersection of $T^-$ and $R$, since the sum is constrained by $w\neq u$.
The subtracted delta function eliminates the main contribution to this scenario. This concludes the proof.
\end{proof}

\subsection{Verification of Assumption~\ref{Ass:E} for each application}
\label{sec:AssE}

We now verify Assumption~\ref{Ass:E} for
each of the models discussed in Sections~\ref{sec:SAW}--\ref{sec:LT}.
Recall from \eqref{eq:definition error} the definition
\begin{equation}
	E(\beta) =
    \sup_{0\leq t\leq \beta}\left(\Vert H_t \Vert_1 + \frac{\Vert |x|_2^2\cdot H_t \Vert_1}{\xi (t)^2}\right),
\end{equation}
where $H_\beta$ is the model-dependent function that occurs in
Assumption~\ref{Ass:G}.
Assumption~\ref{Ass:E} asserts that
\begin{equation}
\label{eq:AssEconc}
    E(\beta(\deltamain)) < \deltamain,
\end{equation}
where $\deltamain$ is given by Theorem~\ref{thm:maintheorem}.
By definition,
\begin{equation}
	E(\beta) \le E_0(\beta)+ E_2(\beta)
\end{equation}
with
\begin{equation}
\label{eq:Eapp}
     E_0(\beta)=
     \sup_{0\leq t\leq \beta} \Vert H_t \Vert_1
     , \quad
      E_2(\beta)=
     \sup_{0\leq t\leq \beta} \frac{\Vert |x|_2^2\cdot H_t \Vert_1}{\xi (t)^2}
    .
\end{equation}

The function $H_\beta$ is identified
for self-avoiding walk in \eqref{eq:HSAW},
for continuous-time self-avoiding walk in \eqref{eq:HCTWSAW},
for percolation in \eqref{eq:Hperc},
for $|\varphi|^4$ in \eqref{eq:expression H1 spins},
for the Ising model in \eqref{eq:HIsing},
for the XY model in \eqref{eq:HXY},
and for lattice trees in \eqref{eq:HLT}.
Explicitly, with $K_\beta=G_\beta+\beta F_\beta$,
\begin{align}
\label{eq:HSAWII}
    H^{\rm SAW}_\beta &=\lambda \delta_{0}(G_\beta*F_\beta) , \\
\label{eq:HCTWSAWII}
    H_\beta^{\rm CTWSAW} &= 2\lambda \delta_{0}(G_\beta*F_\beta),\\
\label{eq:Hphi4II}
    H^{|\varphi|^4}_\beta &=6\lambda \delta_0 (F_\beta*G_\beta) , \\
\label{eq:HIsingII}
    H_\beta^{\rm Ising} &=3\big[ (\delta_0+\beta J)*(\delta_0+\beta J)\big] (K_\beta*J*K_\beta)(0),\\
\label{eq:HXYII}
    H_\beta^{\rm XY} &=12 \big[ (\delta_0+\beta J)*(\delta_0+\beta J) \big] (K_\beta*J*K_\beta)(0),\\
\label{eq:HpercII}
    H^{{\rm{perc}}}_\beta  &= G_\beta \cdot (G_\beta*F_\beta) , \\
\label{eq:HLTII}
    H^{{\rm{LT}}}_{\beta}
    &= (G_{\beta} \cdot [G_{\beta}*G_{\beta}*G_{\beta}] - \delta_0 )*J
    + F_{\beta}\cdot (G_{\beta}*G_{\beta}*G_{\beta})
    \nonumber \\ & \qquad\qquad + (G_{\beta}*F_{\beta}) \cdot ( G_{\beta}*G_{\beta}) .
\end{align}
By \eqref{eq:bound on beta(delta)} and the fact that
$\deltamain < \frac 12$, we have $\beta(\deltamain) < 2$, so
factors of $\beta$ are not problematic.
In all cases, $E_0(\beta)=\|H_\beta\|_1$, but we do not know that the ratio in the definition of $E_2(\beta)$ in \eqref{eq:Eapp} is monotone in $t$, so we cannot ignore the supremum for $E_2$.

Since we have verified Assumption~\ref{Ass:G}, Theorem~\ref{thm:maintheorem} tells us that, for every $x\in\mathbb{Z}^d$ and every $\beta
\in [0,\beta(\deltamain)]$, we have the bound
\begin{equation}
\label{eq:AssEhyp}
	G_\beta(x)\leq \delta_0(x)
    + \frac{ \bfC}{\sigma_J^d}
    \left(\frac{\sigma_J}{\sigma_J \vee|x|} \right)^{d-2-\varepsilon}
    \exp\left(-\bfc\frac{|x|}{\xi(\beta)}\right).
\end{equation}
Under Assumption~\ref{Ass:G},
we also have the bound \eqref{eq:Fmainbd}, which states that
for every $x\in\mathbb{Z}^d$ and every $\beta
\in [0,\beta(\deltamain)]$,
\begin{equation}
\label{eq:EhypF}
	F_\beta(x)\leq
    \frac 12 \frac{\bfC}{\sigma_J^d}
    \left(\frac{\sigma_J}{\sigma_J \vee|x|} \right)^{d-2-\varepsilon}
    \exp\left(-\bfc\frac{|x|}{\xi(\beta)}\right).
\end{equation}
The constants $\bfc,\bfC,\deltamain$ depend only on $c_0,\varepsilon,d$.
In particular, they do not depend on the admissible kernel $J$ nor on any
particular model in our applications.
For nearest-neighbour models (for which $\sigma_J=1$) with an inherent small
parameter $\lambda$, we will prove that $E(\beta(\deltamain)) < O(\lambda)$.
This establishes \eqref{eq:AssEconc} once we take $\lambda$ sufficiently small.
For spread-out models (which have large $\sigma_J$), we will instead prove
that \eqref{eq:AssEhyp} implies that $E(\beta(\deltamain)) < O(\sigma_J^{-d})$.
This establishes \eqref{eq:AssEconc} once we take $\sigma_J$ sufficiently large.

The $L^1$ norm of $H_\beta$ for self-avoiding walk and $|\varphi|^4$ is an open
bubble diagram, and for Ising and XY something quite similar is true.
The $L^1$ norm of $H^{\rm perc}_\beta$ is the open triangle
diagram, and the $L^1$ norm of $H^{\rm LT}_\beta$ involves the open square
diagram.
The early mathematical development of the theory of high-dimensional
critical phenomena was based on the observation that when the critical exponent
$\eta$ is equal to its mean-field value $\eta=0$, the critical bubble, triangle and
square diagrams are respectively finite above dimensions $4$, $6$ and $8$.
This was an important step in the understanding, in the 1980s, that $4$, $6$ and $8$ are the upper critical
dimensions for the models \cite{Aize82,Froh82,AN84,BFG86}.
The finiteness of the diagrams above the upper critical dimension was first
proved in various contexts for self-avoiding walk \cite{BS85,HS92a}, spin
systems \cite{Aize82,Froh82}, percolation \cite{HS90a}, and lattice trees \cite{HS90b}.
As outlined in Section~\ref{sec:previous-work},
our contribution is to give a
new and unified approach to the subject.

\subsubsection{Self-avoiding walk and $|\varphi|^4$}
\label{sec:SAWII}

Fix any $\beta \in [0,\beta(\deltamain)]$.
By \eqref{eq:HSAWII}--\eqref{eq:Hphi4II}, it suffices to consider
\begin{equation}
	H_\beta(x) =\lambda  (G_\beta*F_\beta)(0)  \delta_{0}(x) ,
\end{equation}
since the factors $2$ or $6$ are of no significance.
In this case, $E_2(\beta)=0$ and
$E(\beta)= \lambda (G_\beta*F_\beta)(0)$.

Let $\bar G_\beta = G_\beta -\delta_0$.  Then
\begin{equation}
    G_\beta*F_\beta = F_\beta + \bar G_\beta*F_\beta.
\end{equation}
By \eqref{eq:EhypF}, $F_\beta(0)\lesssim \sigma_J^{-d}$.
Also, by \eqref{eq:AssEhyp} and \eqref{eq:EhypF},
\begin{align}
    (\bar G_\beta*F_\beta) (0)
    & \lesssim
    \frac{1}{\sigma_J^d}\frac{1}{\sigma_J^d}
    \sum_{x\in \mathbb Z^d}
    \left(\frac{\sigma_J}{\sigma_J \vee|x|} \right)^{2d-4-2\varepsilon}.
\end{align}
For dimensions $d>4+2\varepsilon$,
the sum is bounded for the nearest-neighbour $J$ (for which $\sigma_J=1$), so $E(\beta) =
\lambda
(G_\beta*F_\beta)(0) = O(\lambda)$.
This bound is uniform in $\beta$, and in particular it holds at
$\beta(\deltamain)$.

For the spread-out case, by Riemann sum approximation,
if $d>4+2\varepsilon$ then
\begin{align}
\label{eq:ESAWbd}
    (\bar G_\beta*F_\beta) (0)
    & \lesssim
    \frac{1}{\sigma_J^d}
    \int_{\mathbb R^d} \frac{1}{1\vee |u|^{2d-4-2\varepsilon}} \mathrm{d}u
    \lesssim
    \frac{1}{\sigma_J^d}
    ,
\end{align}
so $E(\beta)=O(\sigma_J^{-d})$.  Again, the bound holds at
$\beta(\deltamain)$.

\subsubsection{Ising and XY models}

Let $t\leq \beta(\deltamain)\leq 2$.
By \eqref{eq:HIsingII}--\eqref{eq:HXYII},
it suffices to consider $H_t= \big[ (\delta_0+t J)*(\delta_0+t J) \big] (K_t*J*K_t)(0)$.
Its zeroth and second moments satisfy
\begin{align}
    \sum_{x\in \mathbb Z^d}H_t(x)
    & =\sum_{x\in \mathbb Z^d}\big[ \delta_{0}(x)+2t J_x+ t^2(J*J)(x)  \big]  (K_t*J*K_t)(0)
    \nonumber \\
    & \le
    \big[ 1+4+4    \big]  (K_t*J*K_t)(0),
\\
    \frac{1}{\xi(t)^2}\sum_{x\in \mathbb Z^d}|x|^2_2 H_t(x)
    & =
    \frac{1}{\xi(t)^2}\sum_{x\in \mathbb Z^d}|x|^2_2\big[ \delta_{0}(x)+2t J_x+ t^2(J*J)(x)  \big]  (K_t*J*K_t)(0)
    \nonumber \\
    & \le
    \frac{1}{\xi(t)^2}
    \big[ 1+4\sigma_J^2 + 8\sigma_J^2   \big]  (K_t*J*K_t)(0).
\end{align}
By Proposition~\ref{prop: rough bounds chi xi}, $\xi(t) \ge \xi(0)=\sigma_J$.
Also, by definition of $K_t$,
\begin{equation}
\label{eq:KJKspin}
    (K_t*J*K_t)(0)
    =
    (G_t*F_t)(0) + 2t(F_t*F_t)(0) + t^2 (F_t *J*F_t)(0).
\end{equation}
It therefore suffices to prove that the right-hand side of \eqref{eq:KJKspin} is bounded by a multiple of
$\sigma_J^{-d}$.
The first two terms are indeed bounded by a multiple of
$\sigma_J^{-d}$, exactly as in Section~\ref{sec:SAWII}.

For the last term, it suffices to show that
\begin{equation}
\label{eq:JFwant}
    (J*F_t)(x) \lesssim
    \frac{1}{\sigma_J^d}
    \left(\frac{\sigma_J}{\sigma_J \vee|x|} \right)^{d-2-\varepsilon},
\end{equation}
since then the computations of Section~\ref{sec:SAWII} can be applied.
By \eqref{eq:EhypF},
\begin{equation}\label{eq:JFwant'}
    (J*F_t)(x) \lesssim
    \frac{1}{\sigma_J^d}\sum_{y\in \mathbb Z^d}
    J_y \left(\frac{\sigma_J}{\sigma_J \vee|x-y|} \right)^{d-2-\varepsilon}
    .
\end{equation}
We divide the sum according to whether (i) $|y|\le |x|/2$, or
(ii) $|y|\ge |x|/2$.
In case (i), up to a constant we may replace $|x-y|$ by $|x|$ and then
bound the resulting sum over $y$ by $1$, so this contribution does satisfy the bound \eqref{eq:JFwant}.
In case (ii), since $J$ has finite range $R_J$, the summand is nonzero only if $|x|\le 2R_J \le 2c_0^{-1}\sigma_J$, where $c_0\in (0,1]$ is given by Definition~\ref{Def:J}.
The contribution to the sum \eqref{eq:JFwant'} from case (ii) is therefore bounded by
\begin{equation}
    \frac{1}{\sigma_J^d}\cdot 1
    =
    \frac{1}{\sigma_J^d}
    \left( \frac{ 2c_0^{-1}\sigma_J}{2c_0^{-1}\sigma_J\vee |x|}\right)^{d-2-\epsilon}
    \le
    \frac{1}{\sigma_J^d}
    \left( \frac{ 2c_0^{-1}\sigma_J}{ \sigma_J\vee |x|}\right)^{d-2-\epsilon}.
\end{equation}
This completes the proof.

\subsubsection{Bernoulli percolation}

For $E_0(t)$, we use
\begin{align}
    \|H_t\|_1 & = (G_t*G_t*F_t)(0) = F_t(0) + 2(\bar G_t*F)(0) + (\bar G_t*\bar G_t*F_t)(0).
\end{align}
The first term on the right-hand side is $O(\sigma^{-d})$ by \eqref{eq:EhypF},
and the second term  is $O(\sigma_J^{-d})$ by \eqref{eq:ESAWbd}.  For the third
term, we again use Riemann sum approximation to see that,
for $d>6+3\varepsilon$,
\begin{align}
\label{eq:Epercbd}
    (\bar G_t* \bar G_t* F_t) (0)
    & \lesssim
    \frac{1}{\sigma_J^d}
    \int_{\mathbb R^d} \int_{\mathbb R^d}\left(
    \frac{1}{1\vee |u|}
    \frac{1}{1\vee |v|}
    \frac{1}{1\vee |u-v|}\right)^{d-2-\varepsilon}
    \mathrm{d}u \, \mathrm{d}v
    \lesssim
    \frac{1}{\sigma_J^d}
    .
\end{align}
This shows that $E_0(t)=O(\sigma_J^{-d})$.

We did not use the exponential decay for $E_0(t)$, but for $E_2(t)$ we will,
as follows.
Let
\begin{equation}
    W_t(x) = \frac{|x|^2}{\xi(t)^2} G_t(x) = \frac{|x|^2}{\xi(t)^2} \bar G_t(x).
\end{equation}
Then
\begin{equation}
    E_2(t) = \sum_{x\in \mathbb Z^d} W_t(x)(G_t*F_t)(x) =(W_t*G_t*F_t)(0).
\end{equation}
Since $s^2e^{-\bfc s}$ is bounded for $s \in [0,1]$, it follows from \eqref{eq:AssEhyp}
that
\begin{equation}
\label{eq:Kexpbd}
    W_t(x) \lesssim
    \frac{1}{\sigma_J^d}
    \left(\frac{\sigma_J}{\sigma_J \vee|x|} \right)^{d-2-\varepsilon}.
\end{equation}
The factors in $(W_t*G_t*F_t)(0)$ therefore obey the same estimates as the factors
in \eqref{eq:Epercbd}, so we also have $E_2(t)=O(\sigma^{-d})$.

\subsubsection{Lattice trees}

By definition of $H_t$ in \eqref{eq:HLTII}, and with $\bar G_t = G_t -\delta_0$,
\begin{align}
    E_0(t) &= (G_t^{*4}(0)-1) + 2(F_t*G_t^{*3}(0))
    \nonumber \\ & =
    3\bar G_t(0) + 6 \bar G_t^{*2}(0) + 3\bar G_t^{*3}(0) + \bar G_t^{*4}(0)
    \nonumber \\ & \qquad
     +2F_t(0) +6(F_t*\bar G_t)(0) + 6 (F_t * \bar G_t^{*2})(0) + 2(F_t * \bar G_t^{*3})(0).
\label{eq:ELTbd}
\end{align}
A bound $O(\sigma_J^{-d})$ was obtained above for the first three
terms in each of second and third lines of \eqref{eq:ELTbd},
assuming $d>6+3\varepsilon$.  The last terms on each of those lines
obey the same upper bound,
which is a Riemann sum approximation to
\begin{equation}
    \frac{1}{\sigma_J^d}
    \int_{\mathbb R^d}  \int_{\mathbb R^d}  \int_{\mathbb R^d}
    \left(
    \frac{1}{1\vee |u|} \frac{1}{1\vee |v-u|}\frac{1}{1\vee |w-v|}\frac{1}{1\vee |w|}
    \right)^{d-2-\varepsilon}
    \mathrm{d}u\, \mathrm{d}v\, \mathrm{d}w.
\end{equation}
The integral converges for $d>8+4\varepsilon$, so $E_0(t)=O(\sigma_J^{-d})$.

The estimate for $E_2(t)$ follows similarly, again using \eqref{eq:Kexpbd}.  We present the details only for the contribution from the first term in \eqref{eq:HLTII}.  For this term, we seek a bound $O(\sigma_J^{-d})$ on
\begin{equation}
    \frac{1}{\xi(t)^2}
    \sum_{x,y\in \mathbb Z^d}
    |x|_2^2 G_{\beta}(y) [G_{\beta}*G_{\beta}*G_{\beta}](y) J(x-y).
\end{equation}
We can replace $|x|_2^2$ by $|y|_2^2 + |x-y|_2^2$, since the cross term gives zero contribution to the sum by the $\mathbb Z^d$-symmetry.
For $|y|_2^2$, we use $\sum_{x\in \mathbb Z^d}J(x-y)=1$ and bound  $|y|_2^2G_t(y)/\xi(t)$ using \eqref{eq:Kexpbd}.  This brings us to
an expression that we have treated already for $E_0(t)$.
For $|x-y|_2^2$, we use
\begin{equation}
    \frac{1}{\xi(t)^2} \sum_{x\in \mathbb Z^d} |x-y|_2^2 J(x-y)
    = \frac{\sigma_J^2}{\xi(t)^2} = \frac{\xi(0)^2}{\xi(t)^2} \le 1
    ,
\end{equation}
due to the monotonicity of $\xi$ proved in Proposition~\ref{prop: rough bounds chi xi}.  The remaining sum over $y$ has been shown in the bound on $E_0(t)$ to be $O(\sigma_J^{-d})$.  This completes the proof.

\appendix

\section{Random walk theorems}
\label{appendix: rw}

In this appendix, we prove the averaged and pointwise
anti-concentration and Green function estimates stated in
Theorem~\ref{thm:estimate RW} and
Proposition~\ref{Prop:Green}, respectively.
We prove Theorem~\ref{thm:estimate RW} in Section~\ref{sec:regrw},
and use it to prove Proposition~\ref{Prop:Green} in Section~\ref{sec:Green-pointwise}.
An important preliminary ingredient is the anti-concentration estimate
proved in Section~\ref{sec:Esseen}.

\subsection{Esseen's anti-concentration estimate}
\label{sec:Esseen}

A fundamental ingredient in our proof of Theorem \ref{thm:estimate RW} is the following result from Esseen \cite{Esse66}.
For its statement, for any $M\geq 0$ and $z\in \mathbb R^d$, we introduce the set
\begin{equation}
D(z;M):=\{x\in \mathbb R^d:
\min_{1 \le i \le d}|x_i-z_i|\leq M\}.
\end{equation}
 Recall from \eqref{eq:B-box-def} that for $y\in \mathbb R^d$ we set $B_M(y)=\{z\in \mathbb R^d: |z-y|\leq M\}$, and we also write $B_M=B_M(0)$.
A random variable $Y=(Y^{(1)},\ldots,Y^{(d)})$ on $\mathbb R^d$ is called \emph{sign invariant} if
the $2^d$
random vectors $(\pm Y^{(1)},\ldots,\pm Y^{(d)})$ have the same distribution.

\begin{Thm}[\hspace{1pt}{\cite[Theorem~3]{Esse66}}]
\label{thm:esseen}
Let $d\geq 1$. There exists $C=C(d)>0$ such that the following holds. Let $n\geq 1$, and let $Y_1,\ldots,Y_n$ be mutually independent and sign-invariant random variables
on $\mathbb R^d$. Set $S_n:=Y_1+\ldots+Y_n$. Then, for every $M\geq 1$,
\begin{equation}
	\sup_{y\in \mathbb R^d}\mathbb P[S_n\in B_M(y)]\leq \frac{C}{\Big(\displaystyle\sum_{k=1}^n\big(1-\sup_{z\in \mathbb R^d}\mathbb P[Y_k\in D(z;M)]\big)\Big)^{d/2}}.
\end{equation}
\end{Thm}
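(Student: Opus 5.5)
The plan is to exploit sign invariance by conditioning on the absolute values of the coordinates. This reduces the problem to products of one-dimensional Rademacher sums, which the Littlewood--Offord inequality controls.

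\emph{Step 1 (decoupling).} Write $Y_k^{(i)}=\epsilon_k^{(i)}a_k^{(i)}$ with $a_k^{(i)}=|Y_k^{(i)}|$. Sign invariance means that, after enlarging the probability space, we may take the signs $(\epsilon_k^{(i)})_{k\le n,\,i\le d}$ to be i.i.d.\ Rademacher variables independent of $\mathbf a=(a_k^{(i)})$. (Coordinates with $a_k^{(i)}=0$ can be given an arbitrary independent sign.) Conditionally on $\mathbf a$, the $d$ coordinates of $S_n$ are independent Rademacher sums $\sum_k \epsilon_k^{(i)}a_k^{(i)}$. Hence
\begin{equation}
\mathbb P[S_n\in B_M(y)\mid \mathbf a]=\prod_{i=1}^d \mathbb P\Big[\Big|\sum_{k}\epsilon_k^{(i)}a_k^{(i)}-y_i\Big|\le M\ \Big|\ \mathbf a\Big].
\end{equation}

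\emph{Step 2 (Littlewood--Offord in each coordinate).} Let $N_i=\#\{k:a_k^{(i)}>M\}$. By the Erdős--Littlewood--Offord inequality, a Rademacher sum in which at least $N_i$ coefficients exceed $M$ hits a fixed interval of length $2M$ with probability at most $C_1/\sqrt{1\vee N_i}$. To justify this, first condition on the signs of the small coefficients. The statement then follows from Sperner's theorem, or alternatively from the Kolmogorov--Rogozin inequality. The closed-interval/strict-inequality issue is borderline and costs at most a factor $2$, by splitting the interval in two. Next, let $N=\#\{k: Y_k\notin D(0;M)\}$, which is the number of $k$ such that every coordinate of $Y_k$ satisfies $|Y_k^{(i)}|>M$. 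Then $N_i\ge N$ for every $i$, and therefore
\begin{equation}
\mathbb P[S_n\in B_M(y)\mid\mathbf a]\le \min\big(1,\,C_1^d N^{-d/2}\big),
\end{equation}
uniformly in $y$.

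\emph{Step 3 (averaging over $N$).} The variable $N$ is a sum of independent Bernoulli variables with means $1-\mathbb P[Y_k\in D(0;M)]\ge 1-\sup_z\mathbb P[Y_k\in D(z;M)]$. Its mean therefore satisfies $\mu\ge \Sigma:=\sum_k(1-\sup_z \mathbb P[Y_k\in D(z;M)])$. By a Chernoff bound, $\mathbb P[N<\mu/2]\le e^{-\mu/8}$. Splitting the expectation on $\{N\ge \mu/2\}$ and its complement gives
\begin{equation}
\sup_y\mathbb P[S_n\in B_M(y)]\le C_1^d(\mu/2)^{-d/2}+e^{-\mu/8}\le C(d)\,\mu^{-d/2}\le C(d)\,\Sigma^{-d/2}.
\end{equation}
The case $\Sigma\le 1$ is trivial, since the probability is at most $1$.

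I expect the main obstacle to be Step 2: stating a clean version of the one-dimensional Littlewood--Offord bound that tolerates arbitrary small coefficients and handles the boundary case of coefficient exactly $M$. I would first try to reduce it to Erdős's Sperner argument after conditioning on the small terms. Failing that, I would use the Fourier estimate $\mathbb P[|X-y|\le M]\le CM\int_{|t|\le 1/M}\prod_k|\cos(ta_k)|\,dt$ together with $|\cos s|\le e^{-cs^2}$ for $|s|\le 1$, restricted to the $N_i$ large coefficients, after rescaling to $a_k\wedge M$.
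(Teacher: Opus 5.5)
Your argument is correct. The paper gives no proof of Theorem~\ref{thm:esseen}: it imports the result as a black box from Esseen, whose argument is Fourier-analytic, controlling concentration functions by integrals of characteristic functions over a box of side of order $1/M$. Your route is genuinely different and self-contained. Sign invariance lets you write $Y_k^{(i)}=\epsilon_k^{(i)}a_k^{(i)}$ with Rademacher signs independent of the moduli. The conditional probability then factorises over coordinates, and everything reduces to Erd\H{o}s's Sperner bound plus a Chernoff estimate.

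The boundary issue you flag in Step~2 does not actually arise. $N_i$ counts coefficients strictly larger than $M$, so two comparable sign patterns give sums differing by more than $2M$, and these cannot both lie in a closed interval of length $2M$.

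Your route is elementary and works for every $M>0$. It also proves more than stated, with $\mathbb P[Y_k\notin D(0;M)]$ in place of $1-\sup_z\mathbb P[Y_k\in D(z;M)]$. In fact, because you treat coordinates separately, you never need all the large coordinates to occur at the same index $k$. Applying Chernoff to each $N_i$ and taking a union bound over $i$ gives
\[
\sup_{y}\mathbb P[S_n\in B_M(y)]\le C(d)\Big(\min_{1\le i\le d}\sum_{k=1}^n\mathbb P\big[|Y_k^{(i)}|>M\big]\Big)^{-d/2}.
\]
A symmetric walk with $\mathbb P[X_1\notin B_M]\ge a_0$ has $\mathbb P[|X_1^{(i)}|>M]\ge a_0/d$ for every $i$. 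So this bound would yield Corollary~\ref{cor: coro of esseen} directly, without the paper's blocking into groups of $d$ steps.

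What the Fourier route buys in exchange is independence from the Rademacher structure. The same method gives the Kolmogorov--Rogozin inequality for arbitrary independent summands, whereas your conditioning step uses sign invariance in an essential way.
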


As stated, Theorem~\ref{thm:esseen} is not well-suited for regular random walks. Indeed, to use it directly would require showing that for a generic $(\creg,\Creg)$-regular random walk $X$ there exists $M>0$ such that $\mathbb P[X_1\notin D(z;M)]\geq c_0(d,\creg,\Creg)>0$, uniformly in $z$. This property is false in general: it does not hold for the simple random walk on $\mathbb Z^d$ with $z=0$ and $M=1$. Nevertheless, as we will see below in \eqref{eq: nonlazyness}, $(\creg,\Creg)$-regular random walks satisfy a closely related bound of the form $\mathbb P[X_1\notin B_M]\geq c_1(d,\creg,\Creg)>0$, for an $M$ depending on the variance $\sigma^2$ of $X_1$. The purpose of the next result is
to adapt Theorem~\ref{thm:esseen} to apply under this alternative (weaker) assumption.

We say that a random variable $Y$ on $\mathbb R^d$ is \emph{symmetric} if
the $2^d d!$
random vectors $(\pm Y^{(\pi(1))},\ldots,\pm Y^{(\pi(d))})$, where $\pi$ ranges over all permutations of $\{1,\ldots,d\}$, have the same distribution. This is a stronger condition than being sign invariant. A random walk $X=(X_k)_{k\geq 0}$ on $\mathbb R^d$ is called \emph{symmetric} if $X_1$ (and therefore $X_{i+1}-X_i$ for every $i\geq 2$) is symmetric.

\begin{Coro}\label{cor: coro of esseen} Let $d\geq 1$ and $a_0>0$.
Let $X=(X_k)_{k\geq 0}$ be a symmetric random walk started at $0$. Let $M\geq 1$ and assume that
\begin{equation}
	\mathbb P[X_1\notin B_M]\geq a_0.
\end{equation}
Then there exists $C=C(a_0,d)>0$ such that, for every $n\geq 1$,
\begin{equation}\label{eq:target esseen}
	\sup_{y\in \mathbb R^d}\mathbb P[X_n\in B_M(y)]\leq \frac{C}{n^{d/2}}.
\end{equation}
\end{Coro}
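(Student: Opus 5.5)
The plan is to reduce to Theorem~\ref{thm:esseen} by grouping steps into blocks and applying a random rotation/coordinate trick so that the blocks escape every slab $D(z;M)$ with probability bounded below. Concretely, I would write $X_n$ as a sum of $\lfloor n/d\rfloor$ independent blocks of $d$ consecutive increments, plus a remainder. The remainder only shifts $y$, and since the bound is a supremum over $y$ it can be absorbed by conditioning on it. Each block $Y=\xi_1+\dots+\xi_d$ is sign invariant, because the increments are symmetric and independent. It therefore suffices to show that
\[
\sup_{z}\mathbb P[Y\in D(z;M')]\le 1-c(a_0,d)
\]
for some $M'$ comparable to $M$. Then Theorem~\ref{thm:esseen}, applied with $M'$, gives $\sup_y\mathbb P[X_n\in B_{M'}(y)]\le C(n/d)^{-d/2}$. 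Since $B_M\subset B_{M'}$ when $M'\ge M$, this yields \eqref{eq:target esseen}.

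\textbf{Reduction to one coordinate per step.} By symmetry under coordinate permutations and a union bound, the hypothesis $\mathbb P[|X_1|>M]\ge a_0$ gives $\mathbb P[|X_1^{(i)}|>M]\ge a_0/d$ for each $i$. To leave $D(z;M')$, the block needs every coordinate to satisfy $|Y^{(i)}-z_i|>M'$. The difficulty is that a single increment may be large in only one coordinate, as for simple random walk. This is why I would use $d$ increments per block: heuristically, a different increment makes each coordinate large.

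\textbf{Main estimate (expected obstacle).} For fixed $i$, condition on everything except the sign of the $i$-th coordinate of the increments. By sign invariance, those signs are independent fair signs. Let $s_i=\sum_j |\xi_j^{(i)}|$-type quantities be realised as $\sum_j \epsilon_j a_j$ with $\epsilon_j$ Rademacher and $a_j\ge0$ fixed. If some $a_j>2M'$, then for any $z_i$ at most one of the two values $\pm a_j+(\text{rest})$ can lie within $M'$ of $z_i$. Hence $\mathbb P[|Y^{(i)}-z_i|\le M'\mid\cdot]\le 1/2$.

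The hard part is the joint event across all $d$ coordinates. For this I would use the product structure of the sign randomisation: the signs of different coordinates are independent. Conditioning on the absolute values, the events $\{|Y^{(i)}-z_i|>M'\}$ are therefore conditionally independent. Each has conditional probability at least $1/2$ on the event $A_i=\{\max_j|\xi_j^{(i)}|>2M'\}$. So
\[
\mathbb P[Y\notin D(z;M')]\ge 2^{-d}\,\mathbb P\Big[\bigcap_i A_i\Big].
\]

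\textbf{Lower bound on $\mathbb P[\bigcap_i A_i]$.} It remains to bound $\mathbb P[\bigcap_i A_i]$ from below. The event contains $\bigcap_{i}\{|\xi_i^{(i)}|>2M'\}$, which by independence has probability $\prod_i\mathbb P[|X_1^{(i)}|>2M']$. Taking $M'=M/2$ and using the bound from the reduction, this is at least $(a_0/d)^d$. Because Theorem~\ref{thm:esseen} requires $M'\ge1$, I would instead keep $M'=M$ and handle the factor $2$ as follows: replace $\{a_j>2M'\}$ by $\{a_j>M'\}$ and use the bound $\le 1/2$ with closed versus open intervals. If that fails, I would cover $B_M(y)$ by $2^d$ boxes of radius $M/2$ when $M\ge2$, and treat $M<2$ by comparison with radius $1$, changing constants only. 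Combining the steps gives $\sup_z\mathbb P[Y\in D(z;M)]\le1-2^{-d}(a_0/d)^d$, and Esseen's theorem concludes.
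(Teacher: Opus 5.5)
Your proof is correct, and its architecture is the paper's. You use blocks of $d$ increments, with the $i$-th increment of a block making the $i$-th coordinate large. This yields a uniform slab bound $\sup_z\mathbb P[X_d\in D(z;M)]\le 1-c(a_0,d)$, which is then fed into Theorem~\ref{thm:esseen}. The difference lies in how that slab bound is proved.

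\textbf{The paper's route.} It uses sign invariance of $X_d$ to move $z$ into the closed negative orthant. It then exhibits the event $\bigcap_i\{X_i-X_{i-1}\in\mathcal E_i\}$, with $\mathcal E_i=\{x_i>M,\ x_j\ge0\ (j\ne i)\}$. On this event $X_d^{(i)}>M\ge M+z_i$ for every $i$, which gives $c=(a_0/(d2^d))^d$.

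\textbf{Your route.} You write each coordinate of the block as a Rademacher sum whose signs are independent of the absolute values. You use conditional independence of the coordinates given the absolute values. You then use the two-point observation that flipping one summand of size $a>M$ moves the sum by $2a>2M$. This avoids reflecting $z$ and gives the better constant $c=2^{-d}(a_0/d)^d$. The modest cost is setting up the sign representation, which needs an auxiliary independent sign randomisation to handle zero coordinates.

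\textbf{Two small points.} First, your hesitation about needing $a_j>2M'$ is unfounded. The two points $r\pm a_j$ are at distance $2a_j$, which exceeds $2M$ as soon as $a_j>M$. So they cannot both lie in a closed interval of length $2M$. Hence $M'=M$ with $A_i=\{\max_j a^{(i)}_j>M\}$ works directly; neither $M'=M/2$ nor the covering fallback is needed, and the requirement $M\ge1$ in Theorem~\ref{thm:esseen} is respected. Second, for $n<d$ there is no complete block, so use the trivial bound $1\le d^{d/2}n^{-d/2}$ there. For $n\ge d$, use $\lfloor n/d\rfloor\ge n/(2d)$ rather than $n/d$, as the paper does.
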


\begin{proof}
By Theorem~\ref{thm:esseen}, it would be sufficient to show that there exists $c>0$ such that
\begin{equation}
	\sup_{z\in \mathbb R^d}\mathbb P[X_1\in D(z;M)]\leq 1-c.
\end{equation}
However, with our hypotheses, this is not necessarily true.
Instead, we prove that there exists $c_1=c_1(a_0,d)>0$ such that
\begin{equation}\label{eq:pbe1}
	\sup_{z\in \mathbb R^d}\mathbb P[X_d\in D(z;M)]\leq 1-c_1,
\end{equation}
as follows.

For $1\leq i \leq d$, let $\mathcal E_i:=\{x\in \mathbb R^d: x_i> M, \: x_{j}\geq 0 \: \text{if}\; j\neq i\}$. Since $X_1$ is symmetric, we have
\begin{equation}
	\mathbb P[X_1\in \mathcal E_i]\geq \frac{a_0}{2d\cdot 2^{d-1}}=:\varepsilon.
\end{equation}
Again by symmetry,
\begin{equation}
    \max_{z\in \mathbb R^d}\mathbb P[X_d\in D(z;M)]
    =\max_{z \in \mathbb R^d: \: \max_{1 \le i \le d}z_i \le 0}
    \mathbb P[X_d\in D(z;M)].
\end{equation}
Finally, if $z\in \mathbb R^d$ with $z_i\leq 0$ for all $i$, then
\begin{equation}
	\mathbb P[X_d\notin D(z;M)]\geq \mathbb P\Big[\bigcap_{i=1}^d \{X_{i}-X_{i-1}\in \mathcal E_i\}\Big]\geq \varepsilon^d=:c_1.
\end{equation}
This proves \eqref{eq:pbe1}.

We now prove \eqref{eq:target esseen}. If $n\leq d-1$, we just bound the probability by $1$. Assume that $n\geq d$. In this case, we set $\ell:=\lfloor n/d\rfloor$,
and for $1\leq k \leq \ell$ define
\begin{equation}
	Y_k:= \sum_{j=(k-1)d+1}^{kd}(X_j-X_{j-1}), \qquad Y_{\ell+1}:=X_n-(Y_1+\ldots+Y_\ell),
\end{equation}
so that
\begin{equation}
	X_n=Y_1+\ldots+Y_{\ell+1}.
\end{equation}
The random variables $Y_1,\ldots,Y_{\ell+1}$ are mutually independent and sign invariant. Also, for every $1\leq k\leq \ell$, $Y_k$ has the same law as $X_d$. We apply Theorem \ref{thm:esseen} to the random variables $Y_1,\ldots,Y_{\ell+1}$ and find $C_1=C_1(d)>0$ such that for every $n\geq d$,
\begin{align}
	\sup_{y\in \mathbb R^d}\mathbb P[X_n\in B_M(y)]&\leq \frac{C_1}{\Big((1-\mathbb P[Y_{\ell+1}\in D(z;M)])+\displaystyle\sum_{k=1}^\ell\big(1-\sup_{z\in \mathbb R^d}\mathbb P[Y_k\in D(z;M)]\big)\Big)^{d/2}}\notag
	\\&\leq \frac{C_1}{\Big(\ell \big(1-\sup_{z\in \mathbb R^d}\mathbb P[X_d\in D(z;M)]\big)\Big)^{d/2}}\leq \frac{C_2}{n^{d/2}},
\end{align}
where in the second line we used \eqref{eq:pbe1}, and where $C_2=C_2(a_0,d)>0$. As a result, for every $n\geq 1$,
\begin{equation}
	\sup_{y\in \mathbb R^d}\mathbb P[X_n\in B_M(y)]\leq \frac{C}{n^{d/2}},
\end{equation}
where $C:=d^{d/2}\vee C_2$. This concludes the proof.
\end{proof}

\subsection{Regular random walk: proof of Theorem~\ref{thm:estimate RW}}
\label{sec:regrw}

We now prove Theorem~\ref{thm:estimate RW}.
Recall from Definition~\ref{def:regularrw} that a
$(\cM,\CM)$-\emph{regular} random walk on $\mathbb R^d$
has $X_0=0$,
variance $\sigma^2=\mathbb E[|X_1|_2^2]<\infty$,
is invariant under
permutation of coordinates and/or replacement of a coordinate by its negative, and
its moment generating function $M(t) = \mathbb E [e^{t(\mathbf{e}_1 \cdot X_1)/\sigma}]$ obeys
\begin{align}
    M(\cM) \le \CM.
\end{align}

It follows from the elementary inequality \eqref{eq: general bound laplace transform-bis}
that, for $0 \le t \le \cM$,
\begin{equation}
\label{eq:t0def}
    M(t) \le 1 + \frac{t^2}{2d} + \frac{t^4}{\cM^4}M(\cM)
    \le
    1 + \frac{t^2}{4}  \left( \frac{2}{d} +\frac{4t^2\CM}{\cM^4} \right).
\end{equation}
Therefore, for $d>2$, we can choose
\begin{equation}
    t_0=\frac{\cM^2}{2}\sqrt{\frac{d-2}{d\CM}} \wedge \cM\wedge 1\in (0,1]
\end{equation}
to achieve
\begin{equation}
\label{eq:t0def}
    M(t) \le 1+\frac{t^2}{4}\quad \text{for all} \quad t \in [0,t_0].
\end{equation}
We prove the anti-concentration part of Theorem~\ref{thm:estimate RW},
in the next proposition.

\begin{Prop}
\label{prop:anti}
{\rm (Anti-concentration inequality).} Let $d>2$.
There is a constant $\CAC$ (depending on $d,\creg,\Creg$) such that
for every $m\ge 1$, every $y\in \mathbb R^d$, and every $\tau \in [0,t_0]$,
\begin{equation}
\label{eq:add-to-i}
    \mathbb P[X_m\in B_\sigma(y)]
    \leq
    \frac{\CAC}{m^{d/2}}
    e^{m\tau^2/8}
    e^{- \tau |y|/ 2\sigma  }
    .
\end{equation}
\end{Prop}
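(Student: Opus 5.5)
The plan is to combine two ingredients: a uniform anti-concentration bound without any exponential factor, obtained from Corollary~\ref{cor: coro of esseen}, and an exponential tilt (Chernoff) argument that uses \eqref{eq:t0def}.

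\emph{Step 1: non-laziness.} First we check that a regular walk is not too concentrated at scale $\sigma$. We need $\mathbb P[X_1\notin B_{M}]\ge a_0(d,\creg,\Creg)>0$ with $M=\sigma/2$, say. We have $\mathbb E[|X_1|_2^2]=\sigma^2$. The moment generating function bound gives a uniform fourth moment: $\mathbb E[(\mathbf e_1\cdot X_1)^4]\le C\sigma^4$, since $u^4\le C e^{\creg u}+Ce^{-\creg u}$ and symmetry lets us use $M(\creg)\le \Creg$. On $B_{\sigma/2}$ we have $|X_1|_2^2\le d\sigma^2/4$, so Paley--Zygmund-type reasoning gives
\[
\sigma^2\le \tfrac{d}{4}\sigma^2+\mathbb E[|X_1|_2^2\1_{X_1\notin B_{\sigma/2}}]\le \tfrac d4\sigma^2+ (C\sigma^4)^{1/2}\mathbb P[X_1\notin B_{\sigma/2}]^{1/2}.
\]
This only helps if $d/4<1$, so the radius has to be smaller: we take $M=\sigma/(2\sqrt d)$, for which $|X_1|_2^2\le \sigma^2/4$ on $B_M$. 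Corollary~\ref{cor: coro of esseen} needs $M\ge1$, so when $\sigma$ is small we rescale space; the corollary applies on $\mathbb R^d$ and its statement is scale-invariant after dividing $X$ by $M$. This yields $\sup_y\mathbb P[X_m\in B_{M}(y)]\le C m^{-d/2}$. A box $B_\sigma(y)$ is covered by $(2\sqrt d+1)^d$ boxes of radius $M$, so $\sup_y\mathbb P[X_m\in B_\sigma(y)]\le C' m^{-d/2}$.

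\emph{Step 2: exponential tilting.} To get the factor $e^{-\tau|y|/2\sigma}$ we cannot simply multiply by the tilt pointwise. Instead we split at time $\lceil m/2\rceil$ and write $X_m=X_{m/2}+(X_m-X_{m/2})$. If $X_m\in B_\sigma(y)$, then one of the two halves has $\infty$-norm at least $(|y|-\sigma)/2$, so some coordinate $i$ and sign exceed this. Say the first half is the large one. Conditioning on it and using Step~1 for the other half (uniform over the shift) gives
\[
C (m/2)^{-d/2}\,\mathbb P\bigl[\pm\mathbf e_i\cdot X_{m/2}\ge (|y|-\sigma)/2\bigr].
\]
By Markov's inequality and \eqref{eq:t0def}, this last probability is at most $M(\tau)^{m/2}e^{-\tau(|y|-\sigma)/2\sigma}\le e^{m\tau^2/8}e^{\tau/2}e^{-\tau|y|/2\sigma}$. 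The factor $e^{\tau/2}\le e$ is absorbed since $\tau\le t_0\le1$. The case where the second half is large is identical by symmetry and independence, conditioning on the first half. The case $m=1$ is trivial, bounding by $1\cdot e^{-\tau|y|/2\sigma}$ times Markov. Summing the $2d$ coordinate/sign cases and the two halves gives the claim, with $\CAC$ depending only on $d,\creg,\Creg$.

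The main obstacle is Step~1, namely obtaining a uniform non-laziness constant $a_0$ at a scale comparable to $\sigma$ using only the moment generating function bound. That is why we shrink the box to radius $\sigma/(2\sqrt d)$ and pay a covering constant.
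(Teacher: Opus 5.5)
Your proof is correct and follows essentially the same route as the paper. The paper also first obtains a uniform $m^{-d/2}$ bound from a non-laziness estimate, Corollary~\ref{cor: coro of esseen} and a covering argument (this is Lemma~\ref{lem: weak anticonc}), and then splits at time $\lceil m/2\rceil$, conditions on one half, and applies a Chernoff bound using $M(\tau)\le 1+\tau^2/4$. Your small variations are all valid and, if anything, slightly tidier than the paper's presentation: the explicit radius $\sigma/(2\sqrt d)$ with a Euclidean-norm fourth-moment bound, the rescaling needed to meet the hypothesis $M\ge 1$, and conditioning on the independent increment instead of using time reversal.
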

We will apply two special cases of \eqref{eq:add-to-i}, obtained
from two choices of $\tau$.  The first is
\begin{align}
\label{eq:it0}
    \mathbb P[X_m\in B_\sigma(y)]
    &\leq
    \frac{\CAC e^{t_0^2/8}}{m^{d/2}}
    \exp\left(-t_0 \frac{|y|}{2 \sigma \sqrt{m}} \right)
      \qquad (\tau =t_0/\sqrt{m}).
\end{align}
The anti-concentration bound \eqref{eq:ac-estimate} of Theorem~\ref{thm:estimate RW} follows immediately from
\eqref{eq:it0}, once we require that
\begin{equation}
\label{eq:crw-anti-concentration}
    \crw \le \frac{t_0}{2}, \qquad \Crw \ge \CAC e^{t_0^2/8}.
\end{equation}
For the second choice,
given $s \in [0,1]$, let $\tau =s \wedge t_0$.  Then
$m^2\tau /8 \le m^2 s/8$ and  $-\tau \le - t_0 s$ (since $0\leq s\le 1$ and $0\leq t_0\le 1$), so
\eqref{eq:add-to-i} implies that
\begin{align}
\label{eq:is}
    \mathbb P[X_m\in B_\sigma(y)]
    &\leq
    \frac{\CAC }{m^{d/2}} e^{ms^2/8}
    \exp\left(- \frac{t_0s|y|}{2\sigma}\right)
      \qquad (\tau = s \wedge t_0  ).
\end{align}

The constants $C_i$ below depend only on $\creg$, $\Creg$, and $d$. Before proving Proposition \ref{prop:anti}, we start with a weaker estimate.

\begin{Lem}\label{lem: weak anticonc} Let $d\geq 1$. There exists $C=C(\creg,\Creg,d)>0$ such that, for every $m\geq 1$ and every $y\in \mathbb R^d$,
\begin{equation}
	\mathbb P[X_m\in B_\sigma(y)]\leq \frac{C}{m^{d/2}}.
\end{equation}
\end{Lem}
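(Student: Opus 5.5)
The plan is to deduce Lemma~\ref{lem: weak anticonc} from Corollary~\ref{cor: coro of esseen}. That corollary is applied with a scale $M$ proportional to $\sigma$, and the result is then transferred to boxes of radius exactly $\sigma$ by a covering argument. The regular walk is symmetric by definition, so the only hypothesis to check is a non-laziness bound $\mathbb P[X_1\notin B_M]\ge a_0$ with $a_0>0$ depending only on $(\creg,\Creg,d)$.

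\textbf{Step 1 (non-laziness).} By symmetry, $\mathbb E[(\mathbf e_1\cdot X_1)^2]=\sigma^2/d$. Take $M=\sigma/(2\sqrt d)$. On the event $\{|X_1|\le M\}$ we have $(\mathbf e_1\cdot X_1)^2\le \sigma^2/(4d)$. Hence
\begin{equation*}
\frac{\sigma^2}{d}\le \frac{\sigma^2}{4d}+\mathbb E\big[(\mathbf e_1\cdot X_1)^2\mathds 1_{|X_1|>M}\big].
\end{equation*}
The last expectation is at most $\mathbb E[(\mathbf e_1\cdot X_1)^4]^{1/2}\,\mathbb P[X_1\notin B_M]^{1/2}$ by Cauchy--Schwarz. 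The fourth moment is bounded using regularity: since $u^4\le C(\creg)\cosh(\creg u)$ and $U=(\mathbf e_1\cdot X_1)/\sigma$ is symmetric, we get $\mathbb E[U^4]\le C\,M(\creg)\le C\Creg$. Combining these gives $\mathbb P[X_1\notin B_M]\ge a_0:=(3/(4d))^2/(C\Creg)>0$.

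\textbf{Step 2 (the condition $M\ge1$).} Corollary~\ref{cor: coro of esseen} requires $M\ge1$, but $\sigma$ may be small since the walk lives on $\mathbb R^d$. This is only a normalisation issue. Rescaling the walk by $\lambda>0$ maps $B_M$ to $B_{\lambda M}$, preserves symmetry, and leaves the normalised moment generating function unchanged. So we may rescale so that $M\ge1$. Alternatively, one notes that Esseen's theorem is scale invariant. Either way, Corollary~\ref{cor: coro of esseen} gives $\sup_y\mathbb P[X_n\in B_M(y)]\le C(a_0,d)\,n^{-d/2}$.

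\textbf{Step 3 (covering).} The box $B_\sigma(y)$ is covered by at most $(4\sqrt d+1)^d$ translates of $B_M$, since $\sigma/M=2\sqrt d$. A union bound then yields $\mathbb P[X_m\in B_\sigma(y)]\le C\,m^{-d/2}$ with $C$ depending only on $(\creg,\Creg,d)$.

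The main obstacle is Step 1, namely obtaining a uniform non-laziness constant. The fourth-moment bound from the moment generating function is the key point, because it prevents the variance from coming only from rare large jumps. The remaining steps are routine.
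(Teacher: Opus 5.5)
Your proposal is correct and follows the paper's proof essentially step for step. It uses the same non-laziness bound $\mathbb P[X_1\notin B_{c\sigma}]\ge a_0$, obtained by Cauchy--Schwarz with a fourth moment controlled by the moment generating function; it then applies Corollary~\ref{cor: coro of esseen} at a scale $M$ proportional to $\sigma$ and covers $B_\sigma(y)$ by translates of $B_M$. Your version is slightly more careful in two places. You work with the coordinate $\mathbf{e}_1\cdot X_1$, whose second moment is exactly $\sigma^2/d$, whereas the paper writes $1=\mathbb E[|X_1/\sigma|^2]$ for the sup-norm, which only holds as $\ge 1/d$. You also rescale to meet the requirement $M\ge 1$, which the paper passes over when it applies the corollary with $M=\delta\sigma$.
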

\begin{proof} Let $\tilde X_1=X_1/\sigma$ and $\delta>0$. By the Cauchy--Schwarz inequality,
\begin{equation}
	1=\mathbb E[|\tilde{X}_1|^2]\leq \delta^2+\mathbb E[\mathds{1}_{|\tilde{X}_1|> \delta}|\tilde{X}_1|^2]\leq \delta^2+\sqrt{\mathbb P[|\tilde{X}_1|> \delta]\mathbb E[|\tilde{X}_1|^4]}.
\end{equation}
This gives
\begin{equation}
\frac{(1-\delta^2)^2}{\mathbb E[|\tilde{X}_1|^4]}\leq  \mathbb P[|X_1|> \delta\sigma].
\end{equation}
By symmetry,
for any $t \ge 0$ we have
 \begin{align}
 \label{eq:2dM}
    \mathbb E[ e^{t |\tilde X_1|}]
    & \le
    2d \, \mathbb E \big[e^{t (\mathbf{e}_1 \cdot\tilde  X_1)} \1_{|\tilde{X}_1|= (\mathbf{e}_1 \cdot\tilde  X_1)} \big]
    \le
    2d \, M(t),
 \end{align}
so
\begin{equation}
	\frac{\cM^4}{4!}\mathbb E[|\tilde X_1|^4]\leq \mathbb E[e^{\cM |\tilde X_1|}]
    \leq 2d\CM.
\end{equation}
Therefore, by choosing $\delta=\delta(\cM,\CM,d)$ small enough, we obtain
\begin{equation}
\label{eq: nonlazyness}
	\mathbb P[X_1\notin B_{\delta\sigma}]\geq \delta.
\end{equation}
The inequality \eqref{eq: nonlazyness} is a form of ``non-lazyness'' of the random walk.

Now, thanks to Corollary \ref{cor: coro of esseen} (applied to $X$, $a_0=\delta$, and $M=\delta\sigma$), there exists $C_1>0$ which depends on $\creg$, $\Creg$, and $d$, such that for every $m\geq 1$ and every $y\in \mathbb R^d$,
\begin{equation}\label{eq:proof lemma 1 rw 1}
	\mathbb P[X_m\in B_{\delta \sigma}(y)]\leq \frac{C_1}{m^{d/2}}.
\end{equation}
Then, after adding the contributions to the big box from small boxes, \eqref{eq:proof lemma 1 rw 1} gives the existence of $C_2=C_2(\creg,\Creg,d)>0$ such that, for every $m\geq 1$,
\begin{equation}\label{eq:proof lemma 1 rw 2}
	\sup_{y\in \mathbb R^d}\mathbb P[X_m\in B_{\sigma}(y)]\leq \frac{C_2}{m^{d/2}}.
\end{equation}
This concludes the proof.
\end{proof}

We are now in a position to prove Proposition \ref{prop:anti}.

\begin{proof}[Proof of Proposition~\textup{\ref{prop:anti}}]

By Lemma \ref{lem: weak anticonc}, there exists $C>0$ such that for every $m\geq 1$ and every $y\in \mathbb R^d$,
\begin{equation}\label{eq:proof rw 1}
	\mathbb P[X_m\in B_{\sigma}(y)]\leq \frac{C}{m^{d/2}}.
\end{equation}
To improve \eqref{eq:proof rw 1} with an exponential factor, we proceed as follows.
Suppose that $X_m\in B_{\sigma}(y)$.
If $|y|\le \sigma$ then we can insert the exponential factor in
the upper bound \eqref{eq:proof rw 1} at the cost of increasing $C$, so
we assume now that $|y|>\sigma$.

Let $k=\lceil m/2 \rceil$.  Either $|X_k| \le (|y|-\sigma)/2$ or $|X_k|\ge (|y|-\sigma)/2$.
In the former case, $|X_m-X_k|\ge(|y|-\sigma)/2$.  Therefore,
\begin{align}
    \mathbb P[X_m\in B_{\sigma}(y)]
    & \le
    \mathbb P\Big[X_m\in B_{\sigma}(y) \:\Big|\: |X_m-X_k|\ge (|y|-\sigma)/2\Big]
    \mathbb P[|X_m-X_k|\ge (|y|-\sigma)/2]
    \nonumber \\ & \;\; +
    \mathbb P\Big[X_m\in B_{\sigma}(y) \:\Big|\: |X_k|\ge(|y|-\sigma)/2\Big]
    \mathbb P[|X_k|\ge(|y|-\sigma)/2]
    .
\label{eq:proof_rw_1.5}
\end{align}
For the conditional probabilities, we first observe that
$(X_0,X_1,\ldots,X_m)$ has the same law as $(X_m-X_m,\ldots, X_m-X_0)$.
With $k'=\lfloor m/2\rfloor$, this gives
\begin{equation}
	\mathbb P\Big[X_m\in B_{\sigma}(y) \:\Big|\: |X_m-X_k|\ge (|y|-\sigma)/2\Big]
    =\mathbb P\Big[X_m\in B_{\sigma}(y) \:\Big|\: |X_{k'}|\ge (|y|-\sigma)/2\Big].
\end{equation}
This allows the two terms in \eqref{eq:proof_rw_1.5} to be bounded in the same way.
By the Markov property and \eqref{eq:proof rw 1},
\begin{equation}
    \mathbb P\Big[X_m\in B_{\sigma}(y) \:\Big|\: |X_{k'}|\ge (|y|-\sigma)/2\Big]
    \le
    \frac{C}{(m-k')^{d/2}}.
\end{equation}

With \eqref{eq:proof rw 1}, and with $\ell=k=\lceil m/2 \rceil$ or $\ell =k' = m-k=
\lfloor m/2 \rfloor$ giving the
maximum of the two options on the right-hand side, this gives
the existence of $C_1$ such that
\begin{equation}
\label{eq:proof rw 4}
	\mathbb P[X_m \in B_\sigma(y)]
    \leq
    \frac{C_1}{m^{d/2}}\mathbb P[|X_{\ell}|\geq (|y|-\sigma)/2].
\end{equation}
Let $Z_j$ denote the first coordinate of $X_j$.
By symmetry, $\mathbb E[Z_1^2]=d^{-1}\sigma^2$ and
\begin{equation}
\label{eq: proof consequence regularity 1}
	\mathbb P[|X_{\ell}|\geq (|y|-\sigma)/2]
    \leq (2d) \mathbb P[Z_{\ell} \geq (|y|-\sigma)/2].
\end{equation}
The moment generating function $M(t)=\mathbb E[\exp(tZ_1/\sigma)]$ is well defined for $|t|\leq\cM$.
Let $t_0$ be chosen in \eqref{eq:t0def} so that
$M(t)\leq 1+\frac{t^2}{4}$ for all $|t|\leq t_0$.
By Markov's inequality, if $\tau \in [0,t_0]$ then
\begin{equation}
\label{eq:proof rw 5}
	\mathbb P[Z_{\ell}\geq (|y|-\sigma)/2]
    \leq [M(\tau)]^{\lceil m/2 \rceil}\exp\left(-\frac{\tau}{2}\frac{|y|-\sigma}{\sigma }\right)
    \le e^{t_0/2}\exp\left(\frac{\tau^2 m}{8} -\frac{\tau}{2}\frac{|y|}{\sigma }\right).
\end{equation}
The combination of
\eqref{eq:proof rw 4} and \eqref{eq:proof rw 5}
gives the existence of $\CAC$ and completes the proof.
\end{proof}

The following theorem is a restatement of the Green function
estimate \eqref{eq:bound massive Green function regular walk} of Theorem~\ref{thm:estimate RW}. In order to achieve the constant $\crw$ in \eqref{eq:bound massive Green function regular walk},
we now strengthen the demand in \eqref{eq:crw-anti-concentration} to require that
\begin{equation}
    \crw \le \frac{t_0}{4\sqrt{d}}.
\end{equation}
The proof of Theorem~\ref{thm:estimate RW-app}
uses the following elementary fact.
Given $\lambda >0$, for $x \ge 0$ let   $f(x)=x^{-d/2}e^{-\lambda/\sqrt{x}}$.
Since $f'(x)=f(x)[-\frac{d}{2x}+ \frac{\lambda}{2x^{3/2}}]$, the function
$f$ has a unique maximum at $x=(\lambda/d)^2$, and
\begin{equation}
\label{eq:fmax}
    f(x_0)
    \le f(x) \le f((\lambda/d)^2)=(d/\lambda)^de^{-d}  \quad \text{if $x_0\leq x\le (\lambda/d)^2$.}
\end{equation}

\begin{Thm}[Green function estimate for regular random walks]
\label{thm:estimate RW-app}
Let $d>2$.
For every $\cM,\CM>0$, there exist $\mathsf{C}_{\textup{RW}}=\Crw(\cM,\CM,d)$ and $\crw=\crw(\cM,\CM,d)>0$ such that, for every $(\cM,\CM)$-regular random walk $X$ (started at $0$) on $\mathbb R^d$ of law $\mathbb P$,
Green function $\mathbb G$, and variance $\sigma^2$, every $\mu \in [0,1]$, and every $y\in \mathbb Z^d$,
	\begin{equation}
\label{eq:Green function regular walk}
		\mathbb G_\mu (B_\sigma(y))
    \leq \Crw\left(\frac{\sigma}{\sigma\vee |y|}\right)^{d-2} \exp\left(-\sqrt{1-\mu}\frac{t_0}{4\sqrt{d}}\frac{|y|}{\sigma}\right) .
	\end{equation}
\end{Thm}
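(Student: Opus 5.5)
The plan is to sum the anti-concentration estimate of Proposition~\ref{prop:anti} over $m$, splitting the sum according to how $m$ compares with $(|y|/\sigma)^2$ and with $(1-\mu)^{-1}$. For $m=0$ the contribution is $\1_{0\in B_\sigma(y)}$, which is nonzero only if $|y|\le\sigma$, and then it is absorbed into $\Crw$. For $|y|\le \sigma$ I will use only the crude bound \eqref{eq:proof rw 1}, which gives $\sum_{m\ge1}Cm^{-d/2}<\infty$ because $d>2$. The exponential factor is at least $e^{-t_0/(4\sqrt d)}$ in this range, so this case is done. From now on assume $|y|>\sigma$ and write $r=|y|/\sigma$.

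For the algebraic part I take $\mu=1$ and apply \eqref{eq:it0}, i.e. $\mathbb P[X_m\in B_\sigma(y)]\le C m^{-d/2}e^{-t_0 r/(2\sqrt m)}$. When $m\le r^2$, the function $f(x)=x^{-d/2}e^{-\lambda/\sqrt x}$ with $\lambda=t_0r/2$ is bounded as in \eqref{eq:fmax} by $C r^{-d}$ on the relevant range. Alternatively, substituting $m=r^2u$ gives a Riemann sum of $r^{2-d}\int_0^1 u^{-d/2}e^{-t_0/(2\sqrt u)}\,du$. Either way the sum over $m\le r^2$ is at most $C r^{2-d}$. The sum over $m>r^2$ of $Cm^{-d/2}$ is also at most $C r^{2-d}$. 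This proves the bound with exponent $d-2$ when $\mu$ is close to $1$, say when $\sqrt{1-\mu}\,r\le 1$, since the exponential factor is then harmless.

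For the massive part with $\sqrt{1-\mu}\,r>1$, I use \eqref{eq:is} with a free parameter $s\in[0,1]$. This gives $\mathbb P[X_m\in B_\sigma(y)]\le Cm^{-d/2}e^{ms^2/8}e^{-t_0 s r/2}$. I choose $s=\sqrt{1-\mu}\le 1$, so that $\mu^m e^{ms^2/8}\le e^{-m(1-\mu)}e^{m(1-\mu)/8}\le 1$. Summing over $m$ then yields $C e^{-t_0\sqrt{1-\mu}\,r/2}$. That exponential rate is better than the $t_0/(4\sqrt d)$ I need, but I still have to recover the polynomial factor $r^{2-d}$.

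To get it, I split the sum over $m$ at $r^2$. For $m\ge r^2$ I use the massive bound above, restricted to $m\ge r^2$, to get $\sum_{m\ge r^2} Cm^{-d/2}e^{-t_0 sr/2}\le Cr^{2-d}e^{-t_0 sr/2}$. For $m<r^2$ I interpolate between \eqref{eq:it0} and \eqref{eq:is}: I take the geometric mean of the two bounds, or equivalently use \eqref{eq:add-to-i} with $\tau=\sqrt{1-\mu}\wedge t_0$ and keep half of the exponent to absorb $e^{m\tau^2/8}$ against $\mu^m$. This gives $C m^{-d/2}e^{-t_0r/(4\sqrt m)}e^{-t_0 s r/4}$, and the earlier computation bounds its sum over $m<r^2$ by $Cr^{2-d}e^{-t_0sr/4}$. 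The factor $1/\sqrt d$ in the statement leaves slack for this halving and for the comparison between the norms $|\cdot|$ and $|\cdot|_2$.

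The main obstacle is the bookkeeping needed to balance $e^{m\tau^2/8}$ against $\mu^m$ while keeping the polynomial prefactor $r^{2-d}$ uniformly in $\mu$. Regularity enters only through $t_0$ and $\CAC$, so the resulting constants depend only on $(\creg,\Creg,d)$ as the statement requires.
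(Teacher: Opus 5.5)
Your proof is correct, but you organise the massive case differently from the paper.

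\textbf{The paper's route.} After the same treatment of $\mu=1$, the paper reduces (by monotonicity in $\mu$) to large $as$, where $a=\frac{t_0}{2}\frac{|y|}{\sigma}$ and $s=\sqrt{1-\mu}$. It then splits the sum at the mass scale $m=d/s^2$ rather than at $r^2$, and uses one bound at a time:
\begin{itemize}
\item for $m\le d/s^2$ it uses only \eqref{eq:it0}; once $as\ge d^{3/2}$, the summand $m^{-d/2}e^{-a/\sqrt m}$ is increasing on this range, so this part is $\lesssim (d/s^2)^{1-d/2}e^{-as/\sqrt d}$;
\item for $m\ge d/s^2$ it uses only \eqref{eq:is}, with tail $\sum_{m\ge d/s^2}m^{-d/2}\lesssim s^{d-2}$.
\end{itemize}
In both pieces the polynomial factor comes out as $s^{d-2}$. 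It is converted into $a^{-(d-2)}$ by spending half of the exponential rate, using that $(as)^{d-2}e^{-c\,as}$ is bounded. Evaluating $e^{-a/\sqrt m}$ at $m=d/s^2$ is what produces the $\sqrt d$ in the rate $t_0/(4\sqrt d)$.

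\textbf{Your route.} Taking the geometric mean of \eqref{eq:it0} and \eqref{eq:is} decouples the two effects instead:
\begin{itemize}
\item $\mu^m e^{ms^2/16}\le 1$ absorbs the growth;
\item the factor $e^{-t_0 r/(4\sqrt m)}$ yields $r^{2-d}$ through the massless computation;
\item $e^{-t_0 s r/4}$ supplies the decay.
\end{itemize}
No trade between polynomial and exponential factors is needed. This gives a shorter argument and the better rate $t_0/4$. So the $\sqrt d$ in the statement is pure slack for you; no norm comparison is involved, since \eqref{eq:it0} and \eqref{eq:is} already use the same norm $|\cdot|$ as the statement. Moreover, your interpolated bound holds for every $m\ge 1$. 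Hence neither your split at $r^2$ nor the separate case $\sqrt{1-\mu}\,r\le 1$ is actually needed.

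\textbf{One small inaccuracy.} Applying \eqref{eq:add-to-i} once with $\tau=\sqrt{1-\mu}\wedge t_0$ is not equivalent to the geometric mean, because it loses the factor $e^{-t_0 r/(4\sqrt m)}$. The correct single-$\tau$ version is $\tau=\tfrac12\bigl(t_0/\sqrt m+\sqrt{1-\mu}\wedge t_0\bigr)$, combined with convexity of $\tau\mapsto\tau^2$.
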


\begin{proof}
We first consider $\mu=1$, for which there is no exponential decay.
For  $|y|<2 \sigma$, the desired bound follows from \eqref{eq:it0} together with the fact
that $\sum_{m\geq 1}\tfrac{1}{m^{d/2}}$ is finite when $d>2$.
For the case $|y|\geq 2\sigma$,
we consider large and small $m$ separately.
By Lemma \ref{lem: weak anticonc},
\begin{equation}
	\sum_{m\geq (|y|/\sigma)^2}\mathbb P[X_m\in B_\sigma(y)]
	\leq
	C_1 \sum_{m\geq (|y|/\sigma)^2}m^{-d/2}	
	\leq
	C_2\left(\frac{\sigma}{|y|}\right)^{d-2}.
\end{equation}
For small $m$, we use
\eqref{eq:it0} and apply \eqref{eq:fmax}
to see that
\begin{equation}
	\mathbb P[X_m\in B_\sigma(y)]
    \leq \frac{C_3}{m^{d/2}}\exp\left(-c \frac{|y|}{ \sigma \sqrt{m}}\right)
    \leq C_4\left(\frac{\sigma}{|y|}\right)^d.
\end{equation}
Therefore,
\begin{equation}
	\sum_{ m \leq (|y|/\sigma)^2}\mathbb P[X_m\in B_\sigma(y)]\leq C_5 \left(\frac{\sigma}{|y|}\right)^{d-2}.
\end{equation}
This proves \eqref{eq:Green function regular walk} for the case $\mu=1$.

Let
\begin{equation}
    a = \frac{t_0}{2} \frac{|y|}{\sigma},
    \qquad
    s =\sqrt{1-\mu}.
\end{equation}
For $as \le K$ (any fixed $K>0$) the exponential factor in
\eqref{eq:Green function regular walk} plays no role,
so \eqref{eq:Green function regular walk} holds in this case for all $\mu$ by
using monotonicity in $\mu$ with the result for $\mu=1$.
It therefore suffices to assume in the following that
$as$ is bounded below by whatever value is
convenient.
Fix $s\in (0,1]$.
We again consider large and small $m$ separately, but
now with a different division between large and small.

Consider the contribution to the Green function due to
$m \le d/s^2$.  In this case, we simply use $\mu \le 1$, and then apply
\eqref{eq:it0}.  This gives
\begin{align}
    \sum_{m \le d/s^2} \mu^m \mathbb P[X_m\in B_\sigma(y)]
    & \le
    \CAC e^{t_0^2/8}
    \sum_{m \le d/s^2}   \frac{1}{m^{d/2}} e^{-a/\sqrt{m}}.
\label{eq:m-le-ds2}
\end{align}
We apply \eqref{eq:fmax} a second time
to see that the terms in the sum on the above right-hand side are bounded above
by their value at $m=(a/d)^2$.
We can assume that $d/s^{2} \le (a/d)^2$, because this is a statement
that $as$ is bounded below and we have already dealt with the case when
$as$ is bounded above.  In this case, the terms in the sum on the
right-hand side of \eqref{eq:m-le-ds2} are bounded above by their value for $m=d/s^2$,
and we obtain the desired estimate via
\begin{align}
    \sum_{m \le d/s^2}   \frac{1}{m^{d/2}} e^{-a/\sqrt{m}}
    & \lesssim
    s^{-2} s^{d} e^{-as/\sqrt{d}}
    \\ \nonumber &=
    s^{d-2} e^{-as/2\sqrt{d}}(as)^{-(d-2)}[(as)^{d-2}e^{-as/2\sqrt{d}}]
    \\ \nonumber & \lesssim
    a^{-(d-2)}e^{-as/2\sqrt{d}} .
\label{eq:small-m}
\end{align}

We now turn to the remaining case, which is $m \ge d/s^2 $.
We use $\mu^m =(1-s^2)^m \le e^{-s^2m}$ and apply \eqref{eq:is}.
Therefore $\mu^m e^{s^2m/8} \le 1$, and we find that
\begin{align}
    \sum_{m \ge d/s^2} \mu^m \mathbb P[X_m\in B_\sigma(y)]
    & \le
    \CAC e^{-as/2}
    \sum_{m \ge d/s^2}   \frac{1}{m^{d/2}}
    \\ \nonumber & \lesssim
    e^{-as/2} s^{d-2}
    \lesssim a^{-(d-2)} e^{-as/4},
\end{align}
where the last inequality is as in \eqref{eq:small-m}.

This completes the proof.
\end{proof}

The following is an example of a regular random walk which does not
obey a pointwise version of \eqref{eq:Green function regular walk}.
The averaging in Theorem~\ref{thm:estimate RW-app} therefore plays an important role.

\begin{Ex}
\label{ex:rw}
Let $d>2$ and $N >1$ be integers.
Consider the random walk on $\mathbb Z^d$ whose transition probabilities are given, for $1\leq i \leq d$, by
$\mathbb P[X_1=\pm N\mathbf{e}_i ]=\frac{1}{2d}$.
Then  $\sigma=N$ and
\begin{equation}
    M(s) = \mathbb E[e^{s (\mathbf{e}_1\cdot X_1)/\sigma}] = \frac{d-1}{d}+ \frac{\cosh(s)}{d} .
\end{equation}
Therefore $M(1) \le 1+e$ for every $N$, so the
walk is uniformly regular in $N$.
At criticality,
a non-averaged estimate of \eqref{eq:Green function regular walk} would have
to state that
\begin{equation}
	\mathbb G_1(0,N\mathbf{e}_1)
    \leq \frac{\Crw}{\sigma^d}\left(\frac{\sigma}{\sigma \vee |N\mathbf{e}_1|}\right)^{d-2}
    =
    \frac{\Crw}{N^d}.
\end{equation}
This cannot hold for every $N$ because
$\mathbb G_1(0, N\mathbf{e}_1)\geq \frac{1}{2d}$.
\end{Ex}

\subsection{Pointwise estimates: proof of Proposition~\ref{Prop:Green}}
\label{sec:Green-pointwise}

We now apply Theorem~\ref{thm:estimate RW-app} to prove its pointwise counterpart
for any random walk on $\mathbb Z^d$ whose transition function is given by
an admissible kernel, as defined in Definition~\ref{Def:J}.

Let $J$ be an admissible kernel.
In particular, there is a constant $c_0>0$ such that
\begin{equation}\label{eq: new req J}
    c_0R_J \le \sigma_J,
    \qquad
    J_{x} \le c_0^{-1}R_J^{-d} \quad (x\in \mathbb Z^d).
\end{equation}
Let $\mathbb P_J$ denote the law of the random walk $(X_k)_{k\geq 0}$ started at $0$ and of step distribution given by $J$, and let $\sigma_J^2:=\mathbb E_J[|X_1|_2^2]$. For $\mu\in [0,1]$ and $x\in \mathbb Z^d$, the Green
function is $\mathbb C_\mu(x)=\sum_{m\geq 0}\mu^m \mathbb P_J[X_m=x]$, and we
define the moment generating function $M_J(s)=\mathbb E_J[\exp(s(\mathbf{e}_1)\cdot X_1 )]$.

We restate Proposition~\ref{Prop:Green} here as Proposition~\ref{Prop:Greenbd}.
For the nearest-neighbour $J$, it provides a different proof of the Green function
estimate of \cite[Proposition~2.1]{Slad23_wsaw}.
For the specific spread-out $J$ of \eqref{eq:Jso},
Proposition~\ref{Prop:Greenbd} improves the Green function estimate of
\cite[Proposition~B.1]{LS25a},  which
has less precise control in terms of $\sigma_J$.

\begin{Prop}
\label{Prop:Greenbd}
Let $d>2$.
There exist $\cGreen,\CGreen>0$, which depend on $d$ and $c_0$ but not on $J$, such that for every $\mu\leq 1$, every $m \ge 1$, and every $x\in \mathbb Z^d$,
     \begin{align}
    \label{eq:antix-app}
	&\mathbb P_{J}[X_m =x ]
    \leq \frac{\CGreen}{\sigma_J^d}\frac{1}{m^{d/2}}
    \exp\left(-\cGreen \frac{|x|}{\sigma_J\sqrt{m}}\right),
    \\
\label{eq:GJxbd-bis}
	&\mathbb C_\mu (x)
    \leq \delta_0(x)+ \frac{\CGreen}{\sigma_J^d}
    \left( \frac{\sigma_J}{\sigma_J \vee|x| } \right)^{d-2}
    \exp\left(-\cGreen\sqrt{1-\mu}\frac{|x|}{\sigma_J } \right).
\end{align}
\end{Prop}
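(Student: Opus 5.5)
The plan is to deduce the pointwise bounds of Proposition~\ref{Prop:Greenbd} from the averaged bounds of Theorem~\ref{thm:estimate RW-app}. The extra ingredient is the pointwise bound $J_x\le c_0^{-1}R_J^{-d}$, which gives one step of smoothing.

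First, check that the $J$-walk is $(\creg,\Creg)$-regular with constants depending only on $(d,c_0)$. Since $|X_1|\le R_J\le c_0^{-1}\sigma_J$ almost surely, we have $M_J(s/\sigma_J)\le e^{s/c_0}$. Taking $\creg=1$ and $\Creg=e^{1/c_0}$ therefore works, and symmetry follows from Definition~\ref{Def:J}. Theorem~\ref{thm:estimate RW-app} and Proposition~\ref{prop:anti} then hold for the $J$-walk with constants depending only on $(d,c_0)$.

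For \eqref{eq:antix-app}, fix $m\ge 2$ and condition on $X_{m-1}$:
\begin{equation*}
\mathbb P_J[X_m=x]=\sum_y \mathbb P_J[X_{m-1}=y]\,J_{x-y}\le c_0^{-1}R_J^{-d}\,\mathbb P_J[X_{m-1}\in \Lambda_{R_J}(x)].
\end{equation*}
We have $R_J^{-d}\le \sigma_J^{-d}$ (since $\sigma_J\le R_J$). The box $\Lambda_{R_J}(x)$ is covered by $O(c_0^{-d})$ boxes $B_{\sigma_J}(z_i)$ whose centres satisfy $|z_i|\ge |x|-R_J$. Applying \eqref{eq:it0} to each of these boxes bounds the right-hand side by $C\sigma_J^{-d}(m-1)^{-d/2}e^{-c(|x|-R_J)/(\sigma_J\sqrt{m-1})}$. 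The shift by $R_J\le c_0^{-1}\sigma_J$ costs at most a factor $e^{c/c_0}$, and $(m-1)^{-d/2}\le 2^{d/2}m^{-d/2}$.

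For $m=1$ a direct argument suffices: $J_x\le c_0^{-1}\sigma_J^{-d}$, and $J_x=0$ unless $|x|\le R_J\le c_0^{-1}\sigma_J$, so the exponential factor is bounded below by a constant.

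For \eqref{eq:GJxbd-bis}, write
\begin{equation*}
\mathbb C_\mu(x)=\delta_0(x)+\mu\sum_y J_{x-y}\,\mathbb C_\mu(y)\le \delta_0(x)+c_0^{-1}R_J^{-d}\,\mathbb G_\mu(\Lambda_{R_J}(x)).
\end{equation*}
Cover $\Lambda_{R_J}(x)$ by $O(c_0^{-d})$ boxes $B_{\sigma_J}(z_i)$ and apply Theorem~\ref{thm:estimate RW-app} to each. Each such box contributes $\Crw\bigl(\sigma_J/(\sigma_J\vee|z_i|)\bigr)^{d-2}e^{-c\sqrt{1-\mu}|z_i|/\sigma_J}$.

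The main obstacle is the bookkeeping needed to replace $|z_i|$ by $|x|$. When $|x|\ge 2c_0^{-1}\sigma_J$, we have $|z_i|\ge |x|/2$, and this costs only constants. When $|x|$ is smaller, the target bound is of order $\sigma_J^{-d}$, so it suffices to use the trivial bound $\mathbb G_\mu(B_{\sigma_J}(z))\le\Crw$ together with $\sqrt{1-\mu}\le 1$, which bounds the exponential factor below by a constant. Collecting the constants, all of which depend only on $(d,c_0)$, gives $\CGreen$ and $\cGreen$.
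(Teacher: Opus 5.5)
Your proposal is correct and follows essentially the same route as the paper. You prove uniform regularity of the $J$-walk from $R_J\le c_0^{-1}\sigma_J$, unaverage one step using $J_x\le c_0^{-1}R_J^{-d}$, cover $\Lambda_{R_J}(x)$ by $O(c_0^{-d})$ boxes of size $\sigma_J$, apply the averaged bounds \eqref{eq:it0} and Theorem~\ref{thm:estimate RW-app}, and split according to whether $|x|$ is large compared with $c_0^{-1}\sigma_J$. The remaining differences are cosmetic: you use regularity constants $(1,e^{1/c_0})$ instead of $(2,e^{2/c_0})$, and you treat \eqref{eq:antix-app} before \eqref{eq:GJxbd-bis}.
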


\begin{proof}
To begin, we notice that
\begin{equation}\label{eq:gfestimate1new}
	M_J(2/\sigma_J)
    =
    \sum_{x\in \Lambda_R}\exp(2(\mathbf{e}_1\cdot x )/\sigma_J)J_x
    \leq
    \exp(2R_J/\sigma_J)\leq \exp(2/c_0),
\end{equation}
so the random walk is $(2,\exp(2/c_0))$-regular, uniformly in $R_J\geq 1$.
We can therefore conclude from Theorem~\ref{thm:estimate RW-app} that,
with the $d$- and $c_0$-dependent
constants $(\crw,\Crw)$ determined by $(\creg,\Creg)=(2,\exp(2/c_0))$,
and for every $y\in\mathbb Z^d$,
\begin{equation}\label{eq:gfestimate3new}
	\mathbb C_\mu(\Lambda_{\sigma_J}(y))
    \leq \Crw\left(\frac{\sigma_J}{\sigma_J\vee |y|}\right)^{d-2}
    \exp\left(-\crw\sqrt{1-\mu}\frac{|y|}{\sigma_J}\right).
\end{equation}

We first prove \eqref{eq:GJxbd-bis}, by
``unaveraging'' \eqref{eq:gfestimate3new}. For this,
we start by using $J_x\leq c_0^{-1} R_J^{-d}$ (by \eqref{eq: new req J}) and the definition of $R_J$
to see that, for $m\geq 1$,
\begin{align}
\label{eq:unav1}
    \mathbb P_J[X_{m}=x] &\leq \sum_{u\in \Lambda_{R_J}(x)\setminus\{x\}} \mathbb P_J[X_{m-1}=u]J_{u-x}
   \leq  \frac{1}{c_0R_J^d}\mathbb P_J[X_{m-1}\in \Lambda_{R_J}(x)].
\end{align}
The Green function therefore obeys
\begin{align}
    \mathbb C_\mu(x) &= \delta_{0}(x) +\sum_{m \ge 1} \mu^m \mathbb P_J[X_{m}=x]
    \le
    \delta_{0}(x)
    + \frac{1}{c_0 R_J^d} {\mathbb C}_\mu(\Lambda_{R_J}(x))
    .
\end{align}
With \eqref{eq:gfestimate3new}, this is almost what we need, but there
is a mismatch between the boxes of size $R_J$ and $\sigma_J$.
To deal with this, we let $b=\lceil c_0^{-1}\rceil$
and $B=(2b+1)^d$, so that, by \eqref{eq:gfestimate3new}
\begin{align}
    {\mathbb C}_\mu(\Lambda_{R_J}(x))
    &\le
    \sum_{j\in \Lambda_{b}}{\mathbb C}_\mu(\Lambda_{\sigma_J}(x+j\sigma))
    \\ &
    \le
    B \Crw \sup_{j\in \Lambda_{b}}
    \left(\frac{\sigma_J}{\sigma_J\vee |x+j\sigma_J|}\right)^{d-2}
    \exp\left(-\crw\sqrt{1-\mu}\frac{|x+j\sigma_J|}{\sigma_J}\right).
    \nonumber
\end{align}
Suppose first that $|x| \le 2 b\sigma_J$.  Then,
\begin{equation}
    \frac{\sigma_J}{\sigma_J\vee |x+j\sigma_J|}
    \le 1
    \le
    2b
    \frac{\sigma_J}{\sigma_J\vee |x|},
\end{equation}
\begin{equation}
    \exp\left(-\crw\sqrt{1-\mu}\frac{|x+j\sigma_J|}{\sigma_J}\right)
    \le 1
    \le
    e^{\crw 2b}
    \exp\left(-\crw\sqrt{1-\mu}\frac{|x|}{\sigma_J}\right).
\end{equation}
These give the desired bound for the case $|x| \le 2 b\sigma_J$, with suitable constants
$\cGreen,\CGreen$.
Suppose instead that $|x| \ge 2 b\sigma_J$.  In this case, for every $j\in \Lambda_b$,
\begin{equation}
    |x+j\sigma_J| \ge |x| - b\sigma_J \ge \frac 12 |x|,
\end{equation}
which implies the desired bound \eqref{eq:GJxbd-bis} with suitable constants
$\cGreen,\CGreen$.

We prove \eqref{eq:antix-app} similarly.
For $m=1$ it is straightforward, so we assume $m \ge 2$.
We start with \eqref{eq:unav1} and apply the averaged anti-concentration
bound of \eqref{eq:it0}, and $\sigma_J \le R_J$, to obtain
\begin{align}
    \mathbb P_J[X_{m}=x]
    & \leq \frac{B}{c_0
    \sigma_J^d}
    \frac{\CAC e^{t_0^2/8}}{(m-1)^{d/2}}
    \sup_{j\in\Lambda_{b}}
    \exp\left(-t_0 \frac{|x+j\sigma_J|}{2\sigma_J \sqrt{m-1}}\right)
    \nonumber \\ & \le
    \frac{B}{c_0
    \sigma_J^d}
    \frac{2^{d/2}\CAC e^{t_0^2/8}}{m^{d/2}}
    e^{2t_0 b   }
    \sup_{j\in\Lambda_{b}}
    \exp\left(-
    \frac{t_0}{4} \frac{|x|}{\sigma_J \sqrt{m}}\right).
\end{align}
This gives \eqref{eq:antix-app} and, after relaxing the values of $\cGreen,\CGreen$
if necessary, completes the proof.
\end{proof}

\section{Convolution estimates}\label{appendix:conv estimates}

In this appendix, we collect two convolution estimates.
First, we prove Lemma~\ref{lemma:fstarg}, which we restate as
the following lemma.

\begin{Lem}
Let $a,b,c_1,c_2,\sigma,\xi >0$, $\mu >0$ and $\varepsilon \in [ 0,1]$.
Suppose that the functions $f,g: \mathbb Z^d \to [0,\infty)$ satisfy
\begin{align}
    f(x)
    &\le
    c_1\frac{1}{\sigma^d} \left( \frac{\sigma}{\sigma\vee |x|}\right)^{d-2-\varepsilon}
    e^{-a|x|/\xi},
    \\
    g(\Lambda_\xi(x))
    &\le
    c_2\left( \frac{\xi}{\xi\vee |x|}\right)^{d-2}
    e^{-b|x|/\xi}.
\end{align}
Then, there is a constant $C_{a,\mu}$ such that, for every $|x| \ge 2 (\sigma \vee \xi)$,
\begin{equation}
\label{eq:fstargapp}
    \sum_{y \notin \Lambda_{ \mu \xi}(0)} f(y)g(x-y)
    \le \frac{c_1}{\sigma^2|x|^{d-2}} \Big(\frac{|x|}{\sigma}\Big)^\varepsilon
    \left(
    2^{d} \|g\|_1  e^{-a|x|/2\xi}
    +
    c_2C_{a,\mu}\Big(\frac{\xi}{|x|}\Big)^\varepsilon e^{-b|x|/2\xi}
    \right).
\end{equation}
\end{Lem}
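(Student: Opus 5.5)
Split the sum over $y\notin\Lambda_{\mu\xi}(0)$ into two regions: $A_1=\{y:|x-y|\le |x|/2\}$, where $y$ is close to $x$, and $A_2=\{y\notin\Lambda_{\mu\xi}(0): |x-y|>|x|/2\}$. In $A_1$ we have $|y|\ge |x|/2\ge \sigma$. So $f(y)\le c_1\sigma^{-d}(2\sigma/|x|)^{d-2-\varepsilon}e^{-a|x|/2\xi}$, which we pull out of the sum, leaving at most $\|g\|_1$. Rewriting $\sigma^{-d}(\sigma/|x|)^{d-2-\varepsilon}=\sigma^{-2}|x|^{-(d-2)}(|x|/\sigma)^{\varepsilon}$ and using $2^{d-2-\varepsilon}\le 2^d$ gives the first term of \eqref{eq:fstargapp}.

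In $A_2$ the bound on $g$ is only available averaged over boxes of side $\xi$. So tile $\mathbb Z^d$ by boxes $\Lambda_\xi(z)$ with centres $z$ on a lattice of spacing $\asymp\xi$. On each box, bound $f$ by its supremum over the box. Since $|y|\ge\mu\xi$, the ratio $\sup_{\Lambda_\xi(z)}f/f(z')$ is controlled for any reference point $z'$ in the box, up to a constant depending on $\mu$ and $a$. This holds because $|y|$ varies by at most $O(\xi)$ within a box, and $|y|\ge \mu\xi$ keeps the power-law factor comparable. More simply, bound $\sup f$ over the box by the power law evaluated at $(|z|-\xi)\vee\mu\xi$, times $e^{-a(|z|-\xi)/\xi}$. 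Then
\[
\sum_{y\in A_2} f(y)g(x-y)\le \sum_z \Big(\sup_{\Lambda_\xi(z)} f\Big)\, g(\Lambda_\xi(x-z)).
\]
For the relevant boxes, $|x-z|\gtrsim |x|/2-\xi$. Now $g(\Lambda_\xi(x-z))\le c_2(\xi/|x|)^{d-2}2^{d}e^{-b|x-z|/\xi}$, and after the shift $e^{-b|x-z|/\xi}\lesssim e^{-b|x|/2\xi}$, using $|x|\ge2\xi$ to absorb the constants.

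The remaining sum over centres is $\sum_z \sup_{\Lambda_\xi(z)}f$. The number of box centres at distance about $k\xi$ is $O(k^{d-1})$. With the power $|z|^{-(d-2-\varepsilon)}$ and the exponential factor $e^{-ak}$ this sum converges, and it is at most $C_{a,\mu}\,c_1\sigma^{-d}(\sigma/\xi)^{d-2-\varepsilon}$. The boxes near the origin, at distance between $\mu\xi$ and $\xi$, contribute the same order, with a constant depending on $\mu$. Combining,
\[
c_1c_2C_{a,\mu}\sigma^{-2-\varepsilon}\xi^{\varepsilon}|x|^{-(d-2)}e^{-b|x|/2\xi}
=\frac{c_1}{\sigma^2|x|^{d-2}}\Big(\frac{|x|}{\sigma}\Big)^{\varepsilon}c_2C_{a,\mu}\Big(\frac{\xi}{|x|}\Big)^{\varepsilon}e^{-b|x|/2\xi},
\]
which is exactly the second term of \eqref{eq:fstargapp}.

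The main obstacle is this bookkeeping in the discretisation step. One must make sure that the constant depends only on $a,\mu,d$ and not on the ratio $\xi/\sigma$. This is where the cutoff $|y|\ge\mu\xi$ is used: it keeps $f$ at scale $\xi$ rather than $\sigma$, so no factor $(\xi/\sigma)^{2+\varepsilon}$ appears beyond the one that produces $\xi^\varepsilon\sigma^{-2-\varepsilon}$.
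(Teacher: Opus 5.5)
Your proof works in outline but uses a different decomposition from the paper, and one step needs a small repair.

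The paper splits at $|y|=|x|/2$ rather than at $|x-y|=|x|/2$. On $|y|>|x|/2$ it uses the same pointwise bound on $f$ and $\|g\|_1$ that you use on $A_1$. On $\mu\xi<|y|\le |x|/2$ it uses dyadic annuli $A_k=\{2^k\mu\xi<|y|\le 2^{k+1}\mu\xi\}$ and bounds $f$ uniformly on each. It then covers $A_k$ by $O(2^{kd})$ boxes of radius $\mu\xi$ centred in $A_k$. Every such centre satisfies $|x-y|\ge |x|/2$, so the hypothesis gives $c_2(2\xi/|x|)^{d-2}e^{-b|x|/2\xi}$ per box, and $\sum_k 2^{k(2+\varepsilon)}e^{-a2^k\mu}$ produces $C_{a,\mu}$.

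You instead tile all of $\mathbb Z^d\setminus\Lambda_{\mu\xi}(0)$ at the single scale $\xi$. The factor $e^{-a|y|/\xi}$ then controls the far boxes, including those with $|y|>|x|$, which the paper sends to the $\|g\|_1$ term. This avoids the dyadic bookkeeping. What the paper's truncation at $|x|/2$ buys is the exact lower bound $|x-y|\ge|x|/2$ at every box centre. Make sure your supremum is over $\Lambda_\xi(z)\cap A_2$, as your bound via $(|z|-\xi)\vee\mu\xi$ implicitly does. Otherwise boxes near the origin would pick up $f(0)$ and reintroduce a factor $(\xi/\sigma)^{d-2-\varepsilon}$.

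That exact lower bound is what your write-up lacks. With a fixed tiling, a box meeting $A_2$ only has $|x-z|>|x|/2-\xi$. The hypothesis then yields $e^{-b|x-z|/\xi}\le e^{b}e^{-b|x|/2\xi}$, and $|x|\ge 2\xi$ does not absorb the factor $e^{b}$. For $|x|$ close to $2\xi$ a tile centre can lie close to $x$. Applying the hypothesis there gives a bound of order $c_2$, whereas the target needs order $c_2e^{-b}$.

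Since $C_{a,\mu}$ may not depend on $b$, replace the tiling as follows. Take the boxes $\Lambda_\xi(z)$ with $z$ ranging over a maximal subset of $A_2$ whose points are pairwise at $\ell^\infty$-distance greater than $\xi$.
\begin{itemize}
\item By maximality these boxes cover $A_2$.
\item Every centre has $|x-z|>|x|/2$, so the hypothesis gives $c_2(2\xi/|x|)^{d-2}e^{-b|x|/2\xi}$ directly.
\item The boxes of radius $\xi/2$ around the centres are disjoint, so there are still $O(k^{d-1})$ centres with $|z|\in[k\xi,(k+1)\xi)$.
\end{itemize}
The rest of your estimate then goes through unchanged. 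In the paper's application $b=\crw\sqrt{1-Z}\le \crw$, so a $b$-dependent constant would be harmless there, but it is not what the lemma asserts.
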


\begin{proof}
We first observe that we may assume that $\mu\in (0,1)$.
Indeed, if $\mu \ge 1$ then we can bound the left-hand side of \eqref{eq:fstargapp}
by the left-hand side with $\mu = \frac 12$.  That new left-hand side is bounded by the corresponding right-hand side, which suffices.

The first term in the upper bound on the convolution arises from
\begin{align}
    \sum_{y \notin \Lambda_{|x|/2}(0)} f(y)g(x-y)
    & \le
    \frac{c_1}{\sigma^d}
    \left( \frac{\sigma}{ |x|/2}\right)^{d-2-\varepsilon}e^{-a|x|/2\xi} \sum_{y \notin \Lambda_{|x|/2}(0)}  g(x-y)
    \nonumber \\ & \le
\frac{c_1 }{\sigma^{2}|x|^{d-2}}\Big(\frac{|x|}{\sigma}\Big)^\varepsilon 2^d
    e^{-a|x|/2\xi}\|g\|_1.
\label{eq:fg1}
\end{align}

The remaining contribution to \eqref{eq:fstargapp} is due to the sum over $y \in \Lambda_{|x|/2}(0)
\setminus \Lambda_{\mu\xi}(0)$.
We do not have a
pointwise hypothesis on $g$, so averaging is required.
For $k \ge 0$, we define the annulus $A_k:= \{z\in \mathbb Z^d: 2^{k} \mu \xi < |z| \le  2^{k+1}\mu\xi\}$. We decompose the annulus
$\Lambda_{|x|/2}(0) \setminus \Lambda_{\mu \xi }(0)$
into the annuli $A_k$, with $0 \le k \le \log_2(|x|/2\mu \xi)-1$.
This gives
\begin{equation}
    \sum_{y \in \Lambda_{|x|/2}(0)} f(y)g(x-y)
    \le
    \sum_{k=0}^{\log_2(|x|/2 \mu\xi)-1}\sum_{y\in A_k }
    f(y)g(x-y).
\end{equation}
By hypothesis, if $k\geq 0$ and $y \in A_k$, we have the uniform bound
\begin{equation}
    f(y)
    \le
    \frac{c_1}{\sigma^d}
    \left(\frac{\sigma}{\sigma \vee 2^{k} \xi \mu}\right)^{d-2-\varepsilon} e^{-a 2^{k} \mu\xi/\xi}  \le  \frac{c_1}{\sigma^{2+\varepsilon}}
    \left(\frac{1}{ 2^{k} \xi \mu}\right)^{d-2-\varepsilon}
    e^{-a 2^{k} \mu} .
\end{equation}
Also, since $\Lambda_{\mu \xi}(x-y)\subset \Lambda_{ \xi}(x-y)$,
\begin{align}
    \sum_{y\in A_k }g(x-y)
    & \le
    2^{kd} \sup_{y\in A_k}g(\Lambda_{\mu \xi}(x-y))
    \le
    2^{kd} \sup_{y\in A_k}g(\Lambda_{\xi}(x-y)).
\end{align}
It follows from
the hypothesis on $g$, and the fact that $|y|\le |x|/2 $,
that
\begin{equation}
    \sup_{y\in A_k}g(\Lambda_{\xi}(x-y))
    \le
    c_2\left( \frac{\xi}{|x|/2}\right)^{d-2}
    e^{-b|x|/2\xi}.
\end{equation}
We abbreviate the notation for the annulus of interest
by defining $A:=\{y \in \Lambda_{|x|/2}(0)
\setminus \Lambda_{\mu\xi}(0)\}$.
Altogether, we find that
\begin{align}
    \sum_{y \in A} f(y)g(x-y)
&\le
    \frac{c_1}{\sigma^{2+\varepsilon}}
    \left(\frac{1}{|x|} \right)^{d-2-\varepsilon}
    \left(\frac{\xi}{|x|}\right)^{\varepsilon}
    c_2   e^{-b|x|/2\xi}
    \nonumber \\ & \quad \times
    \Big[\left(\frac{2}{\mu}\right)^{d-2}\sum_{k\geq 0}  e^{-a 2^k  \mu}
    2^{k(2+\varepsilon)}\Big]
    .
\end{align}
The factor in square brackets is bounded by a constant $C_{a,\mu}$ which depends on $a,\mu,d$.
We therefore obtain the desired upper bound
\begin{equation}
    \frac{c_1}{\sigma^2|x|^{d-2}}\left(\frac{|x|}{\sigma}\right)^\varepsilon c_2 C_{a,\mu}\left( \frac{\xi}{|x|}\right)^{\varepsilon} e^{-b|x|/2\xi} .
\end{equation}
This completes the proof.
\end{proof}

Next, we prove Lemma~\ref{lemma:fstarf}, which we restate as follows.

\begin{Lem}
Let $p,a>0$.
For $i=1,2$, suppose that $f_i \in \ell^1(\mathbb Z^d)$
satisfy $0 \le f_i(x) \le a(1\vee |x|)^{-p}$ for all $x\in\mathbb Z^d$.
Let $k\ge 1$. Then
\begin{equation}
    (f_1*f_2)(x)
    \le
    \frac{a}{(1\vee |x|)^p}
    \Big(  \frac{1}{k^{p}}  \|f_1\|_1
    + 2^{p} \sum_{y \in \Lambda_{k|x|}(0)}(f_1(y)+f_2(y)) \Big).
\end{equation}
\end{Lem}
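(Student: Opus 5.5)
We split the convolution sum $(f_1*f_2)(x)=\sum_{y}f_1(y)f_2(x-y)$ according to whether $y$ is near $0$, near $x$, or far from both. Throughout we may assume $|x|\ge 1$; for $x=0$ the factor $(1\vee|x|)^{-p}=1$ and the claim follows from $f_2\le a$, which gives $(f_1*f_2)(0)\le a\|f_1\|_1$, while the right-hand side is at least $a\,\|f_1\|_1/k^p$. The case $k=1$ needs care, but we will see the split below handles it.

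\textbf{Region 1: $|y|\le |x|/2$.} Here $|x-y|\ge |x|/2$, so $f_2(x-y)\le a\,2^p(1\vee|x|)^{-p}$. Summing $f_1(y)$ over $|y|\le|x|/2\le k|x|$ gives a contribution at most
\[
\frac{a\,2^p}{(1\vee|x|)^p}\sum_{y\in\Lambda_{k|x|}(0)}f_1(y).
\]

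\textbf{Region 2: $|y|>|x|/2$ and $|x-y|\le k|x|$.} Here $f_1(y)\le a\,2^p(1\vee|x|)^{-p}$. Summing $f_2(x-y)$ over $x-y\in\Lambda_{k|x|}(0)$ gives a contribution at most
\[
\frac{a\,2^p}{(1\vee|x|)^p}\sum_{z\in\Lambda_{k|x|}(0)}f_2(z).
\]

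\textbf{Region 3: $|y|>|x|/2$ and $|x-y|>k|x|$.} Here $f_2(x-y)\le a(k|x|)^{-p}$, and summing $f_1(y)$ over all $y$ gives a contribution at most
\[
\frac{a}{(1\vee|x|)^p}\,\frac{1}{k^p}\,\|f_1\|_1 .
\]

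The three regions cover all $y$, so adding the contributions gives exactly the claimed bound with constants $1/k^p$ and $2^p$. Nothing here is delicate; the only point requiring attention is checking that every $y$ lies in one of the three regions, which holds for any $k\ge1$ because Region 2 and Region 3 together exhaust $\{|y|>|x|/2\}$.
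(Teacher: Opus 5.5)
For $x\neq 0$ your three-region argument is correct and is essentially the paper's proof. The paper writes the sum as $\sum_y f_1(x-y)f_2(y)$ and splits into $y\in\Lambda_{|x|/2}(x)$, $y\in\Lambda_{k|x|}(0)\setminus\Lambda_{|x|/2}(x)$ and $y\notin\Lambda_{k|x|}(0)$. After relabelling $y\leftrightarrow x-y$ these are your Regions 1--3, with the same pointwise bounds; the only difference is that you make the third region disjoint from the first.

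The step that fails is your treatment of $x=0$. You show that $(f_1*f_2)(0)\le a\|f_1\|_1$ and that the right-hand side is at least $a\|f_1\|_1/k^p$. For $k>1$ these two facts do not combine into the claimed inequality. Your remark that $k=1$ ``needs care'' is backwards: $k=1$ is the only case in which this argument closes.

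In fact no argument can work at $x=0$. Let $f_1$ equal $a$ at $\mathbf{e}_1$ and $0$ elsewhere, and let $f_2$ equal $a$ at $-\mathbf{e}_1$ and $0$ elsewhere. The hypotheses hold and $(f_1*f_2)(0)=a^2$. The right-hand side at $x=0$ is only $a\cdot a/k^p=a^2k^{-p}$, because $f_1(0)=f_2(0)=0$ kills the sum over $\Lambda_{k|x|}(0)$. So the lemma is false at $x=0$ for every $k>1$ and must be read with $x\neq 0$.

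That restriction costs nothing: in its only use, in Proposition~\ref{prop:intermscales}, one has $|x|\ge 2\sigma$. The paper's proof carries the same implicit restriction, since its third case turns $|y|\ge k|x|$ into the factor $k^{-p}(1\vee|x|)^{-p}$, which needs $|x|\ge 1$. You should drop your $x=0$ paragraph and state the restriction $x\neq0$ explicitly.
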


\begin{proof}
We divide the sum $\sum_y f_1(x-y)f_2(y)$ into three parts:
\begin{equation}
    y \in \Lambda_{|x|/2}(x), \qquad y \in \Lambda_{k|x|}(0)\setminus \Lambda_{|x|/2}(x),
    \qquad y \notin \Lambda_{k|x|}(0).
\end{equation}
For the first case, we have $|y| \ge |x|/2$, so by hypothesis,
and by $\Lambda_{|x|/2}(0) \subset \Lambda_{k|x|}(0)$,
\begin{align}
    \sum_{y \in \Lambda_{|x|/2}(x)} f_1(x-y)f_2(y)
    & \le
    \frac{2^p a}{(1\vee |x|)^p} \sum_{y \in \Lambda_{k|x|}(0)}f_1(y).
\end{align}
For the second case, $|x-y|\ge |x|/2$, so
\begin{align}
    \sum_{y \in \Lambda_{k|x|}(0)\setminus \Lambda_{|x|/2}(x)} f_1(x-y)f_2(y)
    & \le
    \frac{2^p a}{(1\vee |x|)^p} \sum_{y \in \Lambda_{k|x|}(0)}f_2(y).
\end{align}
For the third case, $|y|\ge k|x|$, so
\begin{align}
    \sum_{y \notin \Lambda_{|x|}(x)} f_1(x-y)f_2(y)
    & \le
    \frac{a}{k^p(1\vee |x|)^p} \|f_1\|_1.
\end{align}
This completes the proof.
\end{proof}

\section*{Acknowledgements}
\noindent
We thank Christophe Garban and Yvan Velenik for useful comments.
 This project has received funding from the Swiss National Science
Foundation and the NCCR SwissMAP. HDC acknowledges the support from the Simons
collaboration on localization of waves. RP acknowledges the support of the Swiss National Science Foundation through a Postdoc.Mobility grant.
 The work of GS was supported in part by the Natural Sciences and Engineering Research Council of Canada (NSERC), Grant no. GR010086.
GS thanks the
Mathematics Section
of the University of Geneva for kind hospitality during a visit
at the start of this work.

\bibliographystyle{plain}
\bibliography{bib}

\end{document}